\documentclass[11pt,leqno]{article}

\usepackage{latexsym,amssymb,amsthm}
\usepackage{amsmath}
\usepackage{hyperref}
\usepackage{nicefrac}
\usepackage[usenames]{color}
\usepackage{todonotes}
\usepackage{tikz}
\usetikzlibrary{calc}
\usepackage{pgfplots}
\usetikzlibrary{positioning}
\usepackage{subcaption}
\usepackage{wrapfig}
\usepackage[export]{adjustbox}
\usepackage{makecell}

\pagestyle{headings}

\numberwithin{equation}{section}

\hoffset-1cm
\voffset-1cm
\textwidth15cm \textheight22cm


\newcommand{\dd}{\mathrm{d}}
\newcommand{\bp}{\begin{proof}}
\newcommand{\ep}{\end{proof}}

\newcommand{\Rand}[1]{\marginpar{#1}}
\renewcommand{\Rand}[1]{}

\newcommand{\be}[1]{\Rand{\vspace{0,6cm}\tt #1}\begin{equation}\label{#1}}
\newcommand{\beL}[1]{\Rand{\vspace{0,6cm}\tt #1}\begin{lemma}\label{#1}}
\newcommand{\el}{\end{lemma}}
\newcommand{\belC}[2]{\Rand{\vspace{0,6cm}\tt #1}\begin{lemma}[#2]\label{#1}}
\newcommand{\beP}[1]{\Rand{\vspace{0,6cm}\tt #1}\begin{proposition}\label{#1}}
\newcommand{\bePC}[2]{\Rand{\vspace{0,6cm}\tt #1}\begin{proposition}[#2]\label{#1}}
\newcommand{\beD}[1]{\Rand{\vspace{0,6cm}\tt #1}\begin{definition}\label{#1}}
\newcommand{\beT}[1]{\Rand{\vspace{0,6cm}\tt #1}\begin{theorem}\label{#1}}
\newcommand{\beC}[1]{\Rand{\vspace{0,6cm}\tt #1}\begin{corollary}\label{#1}}
\newcommand{\bea}[1]{\Rand{\vspace{0,7cm}\tt #1}\begin{eqnarray}\label{#1}}

\marginparwidth2.5cm

\newcommand{\wt}{\widetilde}

\setlength{\parindent}{15pt}

\def\CA{\mathcal{A}}
\def\CB{\mathcal{B}}
\def\CC{\mathcal{C}}

\def\CM{\mathcal{M}}

\def\CF{\mathcal{F}}

\def\CG{\mathcal{G}}
\def\CH{\mathcal{H}}
\def\CI{\mathcal{I}}
\def\CL{\boldsymbol{\mathcal{L}}}

\def\CU{\mathcal{U}}

\def\CW{\mathcal{W}}
\def\CX{\mathcal{X}}

\DeclareMathAlphabet{\mathpzc}{OT1}{pzc}{m}{it}

\newcommand{\uu}{\underline{u}}

\newcommand{\uur}{\underline{\underline{r}}}
\newcommand{\uuh}{\underline{\underline{h}}}

\newcommand{\e}{\mathrm{e}}

\renewcommand{\d}{{\rm d}}

\def\G{\mathbb{G}}

\def\M{\mathbb{M}}
\def\N{\mathbb{N}}
\def\R{\mathbb{R}}

\def\E{\mathbb{E}}

\def\T{\mathbb{T}}
\def\U{\mathbb{U}}

\newcommand{\Tto}{{_{\D \Longrightarrow \atop t \to \infty}}
}

\newcommand{\Tno}{{_{\D \Longrightarrow \atop n \to \infty}}
}

\newcommand{\ntoo}{{_{\D \longrightarrow \atop n \to \infty}}
}
\newcommand{\utoo}{{_{\D \Longrightarrow \atop u \to \infty}}
}

\newcommand{\mtoo}{{_{\D \Longrightarrow \atop m \to \infty}}
}
\newcommand{\D}{\displaystyle}

\newcommand{\wh}{\widehat}
\newcommand{\eea}{\end{eqnarray}}
\newcommand{\bean}{\begin{eqnarray*}
}
\newcommand{\eean}{\end{eqnarray*}
}

\newcommand{\Prodl}{\prod\limits}

\DeclareMathOperator{\supp}{supp}


 \makeatletter
 \@addtoreset{equation}{section}
 \makeatother

 \newcounter{extralabel}[section]

 \newtheorem{ittheorem}{Theorem}
 \newtheorem{itlemma}{Lemma}
 \newtheorem{itproposition}{Proposition}
 \newtheorem{itdefinition}{Definition}
 \newtheorem{itcorollary}{Corollary}
 \newtheorem{itconjecture}{Conjecture}
 \newtheorem{itremark}{Remark}
 \newtheorem{itassumption}{Assumption}
 \newtheorem{itexample}{Example}

 \newenvironment{theorem}{\addtocounter{extralabel}{1}
 \begin{ittheorem}}{\end{ittheorem}}

 \newenvironment{lemma}{\addtocounter{extralabel}{1}
 \begin{itlemma}}{\end{itlemma}}

 \newenvironment{proposition}{\addtocounter{extralabel}{1}
 \begin{itproposition}}{\end{itproposition}}

 \newenvironment{definition}{\addtocounter{extralabel}{1}
 \begin{itdefinition}}{\end{itdefinition}}

 \newenvironment{corollary}{\addtocounter{extralabel}{1}
 \begin{itcorollary}}{\end{itcorollary}}

 \newenvironment{remark}{\addtocounter{extralabel}{1}
 \begin{itremark}}{\end{itremark}}

  \newenvironment{example}{\addtocounter{extralabel}{1}
  \begin{itexample}}{\end{itexample}}

\newtheorem{zz}{\bf zzz}

\newtheorem{yy}{\bf yyy}


\usepackage{cases}

\usepackage{scalerel,stackengine}
\stackMath
\newcommand\reallywidehat[1]{%
\savestack{\tmpbox}{\stretchto{%
\scaleto{%
\scalerel*[\widthof{\ensuremath{#1}}]{\kern-.6pt\bigwedge\kern-.6pt}%
{\rule[-\textheight/2]{1ex}{\textheight}}%
}{\textheight}%
}{0.5ex}}%
\stackon[1pt]{#1}{\tmpbox}%
}
\parskip 1ex


\begin{document}


\title{Continuum graph dynamics via population dynamics:\\
well-posedness, duality and equilibria}

\author{
Andreas Greven$^1$,
Frank den Hollander$^2$,
Anton Klimovsky$^3$,
Anita Winter$^4$
}

\date{\today}

\maketitle

\small
\begin{abstract}

This paper introduces \textit{graphemes}, a novel framework for constructing and analyzing stochastic processes that describe the evolution of large dynamic graphs.  Unlike graphons, which are well-suited for studying static dense graphs and which are closely related to the Aldous-Hoover representation of exchangeable random graphs, graphemes allow for a modeling of the full space-time evolution of \textit{dynamic} dense graphs, beyond the exchangeability and the subgraph frequencies used in graphon theory. A grapheme is defined as an equivalence class of triples, consisting of a Polish space, a symmetric $\{0,1\}$-valued connection function on that space (representing edges connecting vertices), and a sampling probability measure.  

We focus on graphemes embedded in \textit{ultrametric} spaces, where the ultrametric encodes the \textit{genealogy} of the graph evolution, thereby drawing a direct connection to population genetics. The grapheme framework emphasizes the embedding, in particular, in Polish spaces, and uses stronger notions of equivalence (homeomorphism and isometry) than the exchangeability underlying the Aldous-Hoover representation. We construct grapheme-valued Markov processes that arise as limits of finite graph evolutions, driven by rules analogous to the Fleming-Viot, Dawson-Watanabe and McKean-Vlasov processes from population genetics. We establish that these grapheme dynamics are characterized by well-posed martingale problems, leading to strong Markov processes with the Feller property and continuous paths (i.e., diffusions). We further derive duality relations by using coalescent processes, and identify the equilibria of dynamic graphemes, showing that these are linked to classical distributions arising in population genetics and can therefore be non-trivial. 

Our approach extends and modifies previous work on graphon dynamics \cite{AHR21}, by providing a more general framework that includes a natural representation of the history of the graph. This allows for a rigorous treatment of the dynamics via martingale problems, and yields a characterization of non-trivial equilibria.
\end{abstract}

\medskip\noindent
\emph{Keywords:}
Graph-valued Markov processes, graphemes, marked graphemes, duality, martingale problem, genealogy, population genetics.\\
\emph{MSC 2020:}
05C80, 60J68, 60J70, 92D25.\\
\emph{Acknowledgement:}
AG was supported by the Deutsche Forschungsgemeinschaft (through grant DFG-GR 876/16-2 of SPP-1590). FdH was supported by the Netherlands Organisation for Scientific Research (through NWO Gravitation Grant NETWORKS-024.002.003), and by the Alexander von Humboldt Foundation. AK was supported by the Deutsche Forschungsgemeinschaft (through Project-ID 443891315 within SPP 2265 and through Project-ID 412848929). AW was supported by the Deutsche Forschungsgemeinschaft (through grant DFG-SPP-2265).
\normalsize

\noindent
\hspace*{0.3cm} {\footnotesize $^{1)}$
Department Mathematik, Universit\"at Erlangen-N\"urnberg, Cauerstrasse 11,
91058 Erlangen, Germany\\
greven@mi.uni-erlangen.de}\\
\hspace*{0.3cm} {\footnotesize $^{2)}$
Mathematisch Instituut, Universiteit Leiden, Niels Bohrweg 1, 2333 CA  Leiden, The Netherlands\\
denholla@math.leidenuniv.nl}\\
\hspace*{0.3cm} {\footnotesize $^{3)}$
Institut f\"ur Mathematik, Emil-Fischer-Strasse 30, 97074 W\"urzburg, Germany\\
anton.klymovskiy@mathematik.uni-wuerzburg.de}\\
\hspace*{0.3cm} {\footnotesize $^{4)}$
Universit\"at Essen, Fakult\"at f\"ur Mathematik, Thea-Leymann-Strasse, 45141 Essen,  Germany\\
anita.winter@uni-due.de}

\small
\tableofcontents
\normalsize

\pagestyle{myheadings}

\markright{\jobname}


\section{Introduction}
\label{s.introduction}

This paper addresses the challenge of constructing and analyzing stochastic processes that describe the evolution of large dynamic graphs.  Classical approaches to graph limits, such as graphons \cite{LovaszSzegedy2006graphon}, are well-suited for describing \textit{static} properties of dense graphs (see Appendix \ref{ss.graphons}). However, they do not naturally capture the \textit{dynamics} of how a graph changes over time, particularly the full history of these changes. Our goal is to develop a rigorous framework -- called grapheme -- for studying graph-valued Markov processes that captures both their short-term behavior and their long-term equilibria. A key requirement is a suitable state space that encodes not only the current graph structure but also its \textit{genealogy}, reflecting how the graph evolved from its initial state to its present state. Such a framework has potential applications in areas where dynamic networks play a crucial role, including complex systems.

In the remainder of this section we argue why graphemes are needed to capture the dynamics of finite and countable graphs (Section \ref{ss.need}), provide an example of a grapheme (Section \ref{ss.example}), provide a formal definition of a grapheme (Section \ref{ss.defgrapheme}), define the class of grapheme dynamics that will be the focus of our paper (Section \ref{ss.dyn}), list the tools that we need along the way (Section \ref{ss.tools}), and give an outline of the remainder of the paper (Section \ref{ss.outline}). 


\subsection{The need for graphemes}
\label{ss.need}

To address this challenge, we introduce a novel concept called a \textit{grapheme}. Intuitively, a grapheme represents a (finite or) countable graph \textit{embedded} in a suitably chosen Polish space\footnote{= a complete, separable metric space}. The choice of embedding is crucial. Since we would ultimately like to model the evolution of the graph, the embedding should allow us to track how individual vertices and edges change \textit{location} within a continuous embedding space as the graph evolves, capturing the \textit{space-time path process}. The embedding itself is dynamic, evolving in concert with the graph to faithfully record its historical trajectory. We will consider embeddings that evolve in a Markovian way equivalent to the evolution of stochastic processes in populations genetics.


\subsection{An example}
\label{ss.example}

\begin{example}[A Dynamic Social Network]
\label{exm:dynamic-social-network}
{\rm Consider a social network where individuals (vertices) exert influence on each other (edges). New individuals can join the network, and existing individuals can change their connections based on influence.  Let us trace a simplified evolution (see Figure~\ref{fig:exm_graph_evolution_arcs_line}):

\begin{figure}[htbp]
\centering
\begin{subfigure}{0.48\linewidth}
    \centering
    \begin{tikzpicture}[node/.style={circle, draw, fill=black, minimum size=0.2cm, inner sep=0pt}, scale=0.8]
        \draw[thick] (0,0) -- (4,0);
        \node[node, label=below:$x_1$] (x1) at (0,0) {};
        \node[node, label=below:$x_2$] (x2) at (1,0) {};
        \node[node, label=below:$x_3$] (x3) at (2,0) {};
    \end{tikzpicture}
    \caption{\small Initial state: Three vertices, no connections.}
    \label{fig:fv_graph_arcs_t0}
\end{subfigure}\hfill
\begin{subfigure}{0.48\linewidth}
    \centering
    \begin{tikzpicture}[node/.style={circle, draw, fill=black, minimum size=0.2cm, inner sep=0pt},scale=0.8]
        \draw[thick] (0,0) -- (4,0);
        \node[node, label=below:$x_1$] (x1) at (0,0) {};
        \node[node, label=below:$x_2$] (x2) at (1,0) {};
        \node[node, label=below:$x_3$] (x3) at (2,0) {};
        \draw[thick] (x1) to[bend left=45] (x2);
    \end{tikzpicture}
    \caption{\small $t=1$: $x_1$ influences $x_2$.}
    \label{fig:fv_graph_arcs_t1}
\end{subfigure}

\bigskip

\begin{subfigure}{0.48\linewidth}
    \centering
    \begin{tikzpicture}[node/.style={circle, draw, fill=black, minimum size=0.2cm, inner sep=0pt},scale=0.8]
        \draw[thick] (0,0) -- (4,0);
        \node[node, label=below:$x_1$] (x1) at (0,0) {};
        \node[node, label=below:$x_2$] (x2) at (1,0) {};
        \node[node, label=below:$x_3$] (x3) at (2,0) {};
        \draw[thick] (x1) to[bend left=45] (x2);
        \draw[thick] (x2) to[bend left=45] (x3);
        \draw[thick] (x1) to[bend left=45] (x3);
    \end{tikzpicture}
    \caption{\small $t=2$: $x_1$ influences $x_3$.}
    \label{fig:fv_graph_arcs_t2}
\end{subfigure}\hfill
\begin{subfigure}{0.48\linewidth}
    \centering
    \begin{tikzpicture}[node/.style={circle, draw, fill=black, minimum size=0.2cm, inner sep=0pt},scale=0.8]
        \draw[thick] (0,0) -- (4,0);
        \node[node, label=below:$x_1$] (x1) at (0,0) {};
        \node[node, label=below:$x_2$] (x2) at (1,0) {};
        \node[node, label=below:$x_3$] (x3) at (2,0) {};
        \node[node, label=below:$x_4$] (x4) at (3,0) {};
        \draw[thick] (x1) to[bend left=45] (x2);
        \draw[thick] (x1) to[bend left=45] (x3);
        \draw[thick] (x2) to[bend left=45] (x3);
    \end{tikzpicture}
    \caption{\small $t=3$: $x_4$ is added (no connections).}
    \label{fig:fv_graph_arcs_t3}
\end{subfigure}

\bigskip

\begin{subfigure}{\linewidth}
    \centering
    \begin{tikzpicture}[node/.style={circle, draw, fill=black, minimum size=0.2cm, inner sep=0pt},scale=0.7]
        \draw[thick] (0,0) -- (4,0);
        \node[node, label=below:$x_1$] (x1) at (0,0) {};
        \node[node, label=below:$x_2$] (x2) at (0.8,0) {};
        \node[node, label=below:$x_3$] (x3) at (1.6,0) {};
        \node[node, label=below:$x_4$] (x4) at (2.4,0) {};
        \node[node, label=below:$x_5$] (x5) at (3.2,0) {};
        \draw[thick] (x1) to[bend left=45] (x2);
        \draw[thick] (x1) to[bend left=45] (x3);
        \draw[thick] (x1) to[bend left=45] (x5);
        \draw[thick] (x2) to[bend left=45] (x3);
        \draw[thick] (x2) to[bend left=45] (x5);
        \draw[thick] (x3) to[bend left=45] (x5);
    \end{tikzpicture}
    \caption{\small $t=4$: $x_5$ joins and connects to $x_1$.}
    \label{fig:fv_graph_arcs_t4}
\end{subfigure}
\caption{\small Evolution of the graph. Vertices are represented as black dots on the segment $[0,1]$, and edges are represented by arcs above the segment.}
\label{fig:exm_graph_evolution_arcs_line}
\end{figure}

\begin{enumerate}
\item 
\textbf{Initial State (t=0):} We start with three individuals, $x_1$, $x_2$, $x_3$, represented as points in the unit interval $[0,1]$. Denote by $\mu_0 := \frac{1}{3}(\delta_{x_1}+\delta_{x_2}+\delta_{x_3})$ the \textit{sampling measure} of the vertices. Initially, there are no connections (no influence). We also define an initial embedding \textit{ultrametric} $r_0$ on these points. Since there is no influence, all individuals are genealogically distant: $r_0(x_i, x_j) = \infty$ for $i \neq j$ and $r_0(x_i,x_i):=0$. For practical purposes and to ensure boundedness, we work with the transformed ultrametric $1 - \e^{-r_t}$, and so, initially, the transformed distances between $x_i$ and $x_j$ are $1$ for $i \neq j$ and $0$ for $i=j$.
\item 
\textbf{Time Step 1:}  Suppose that $x_1$ influences $x_2$. This creates an edge between them in the graph.  Critically, we update the (transformed) ultrametric to reflect the new connection: $r_1(x_1, x_2):=1-\e^{-1}$.  This value represents the shortened genealogical distance due to the influence event.  The distances to $x_3$ remain at $r_1(x_i,x_3):=1$, $i=1,2$, and the sampling measure does not evolve: $\mu_1 := \mu_0$.
\item 
\textbf{Time Step 2:} Suppose that $x_3$ is influenced by $x_1$. The graph becomes fully connected. The (transformed) ultrametric is updated: $r_2(x_i,x_j) := 1 - \e^{-1}$, $1\le i<j\le 3$. The sampling measure still does not change: $\mu_2 := \mu_1$.
\item 
\textbf{Time Step 3:} A new vertex $x_4$ joins the network, initially unconnected to others. We update the sampling measure: $\mu_3 := \frac{1}{4}(\delta_{x_1}+\delta_{x_2}+\delta_{x_3}+\delta_{x_4})$. The (transformed) ultrametric distance from $x_4$ to the existing vertices is $r_3(x_4, x_i) := 1$ for $i\in\{1,2,3\}$, while we keep $r_3(x_i,x_j):=1-\e^{-1}$ for $1\le i<j\le 3$.
\item 
\textbf{Time Step 4:} A new vertex $x_5$ is added and connects to $x_1$, $x_2$, $x_3$ and $x_4$.  We update the sampling measure: $\mu_4 := \frac{1}{5}(\delta_{x_1}+\delta_{x_2}+\delta_{x_3}+\delta_{x_4}+\delta_{x_5})$. The (transformed) ultrametric is also updated: $r_4(x_i, x_j) := 1-\e^{-1}$, $i\not= j\in\{1,2,3,5\}$, while we keep $r_3(x_4, x_i) := 1$ for $i\in\{1,2,3,5\}$.
\end{enumerate}
Consider a \textit{connection function} $h_t \colon [0,1]^2 \to \{0, 1\}$ encoding the time-$t$ connectivity of the network. The embedding $r_t$ and the connection function can be derived from $r_t$: for instance, $h_t(x_i, x_j) = 1$ if and only if
$r_t(x_i, x_j)=1-\e^{-t}$ (or some other threshold related to $t$).
} \hfill $\spadesuit$
\end{example}

The above example illustrates how the grapheme, which is represented by the triple $([0,1], h_t, \mu_t)$, and the associated (transformed) ultrametric $r_t$ evolve \textit{together} and capture the dynamic relationships in the network. The ultrametric distance path $(r_t(x_i, x_j))_{t\ge 0}$ encodes the genealogical history of influence between individuals $x_i$ and $x_j$.


\subsection{Definition of graphemes}
\label{ss.defgrapheme}

Example \ref{exm:dynamic-social-network} motivates the following definition.

\begin{definition}[Grapheme]
\label{def.grapheme}
{\rm A grapheme is an equivalence class of triples $(\mathcal{I}^*, h, \mu)$, where:
\begin{itemize}
\item 
$\mathcal{I}^* = (\mathcal{I}, \tau)$ is a \textit{Polish space}: $\mathcal{I}^*$ provides the underlying space in which the vertices of the graph are embedded.
\item 
$h$ is a measurable, symmetric, $\{0,1\}$-valued \textit{connection function} on $(\mathcal{I} \times \mathcal{I}) \setminus D_{\mathcal{I}\times\mathcal{I}}$, where $D_{\mathcal{I}\times\mathcal{I}} = \{(x,x)\colon\, x \in \mathcal{I}\}$ is the diagonal:  $h$ determines the presence or absence of edges: $h(x, y) = 1$ if there is an edge between vertices at locations $x$ and $y$; $h(x, y) = 0$ if there is no edge.
\item 
$\mu$ is a \textit{sampling measure} on $\mathcal{I}^*$: $\mu$ is a probability measure that tells us how to sample vertices from the underlying space $\mathcal{I}$; we use $\mu$ to relate finite subgraphs to infinite graphemes, allowing us to study the properties of the grapheme through its finite substructures.
\end{itemize}
We will consider three types of equivalence relations between tuples $(\mathcal{I}^*, h, \mu)$, each with increasing levels of strictness:
\begin{enumerate}
\item 
\textbf{$\langle \mathcal{I}^*, h, \mu \rangle$ (Modulo distributions of finite subsamples\footnote{= Modulo exchangeability (see Remark~\ref{rem:exchangeable-arrays-graphemes})}):} Two graphemes are equivalent under this relation if they have the same \textit{statistical} properties, i.e., the distribution of finite subgraphs (sampled according to $\mu$) is the same. This notion is conceptually similar to the graphon plus exchangeability perspective (see Section~\ref{ss.graphons} and Remark~\ref{rem:exchangeable-arrays-graphemes}), but, crucially, we \textit{require} $h$ to be $\{0,1\}$-valued, representing the definite presence or absence of edges. The latter is in contrast to graphons (see Section~\ref{ss.graphons}), which allow for edge probabilities different from $\{0, 1\}$.
\item 
\textbf{$\{ \mathcal{I}^*, h, \mu \}$ (Modulo homeomorphism):} Two graphemes are equivalent under this relation if there exists a measure-preserving \textit{homeomorphism} (a continuous bijection with a continuous inverse) between their underlying Polish spaces that also preserves the connection function $h$.  This type of equivalence respects the \textit{topology} of the embedding space.
\item 
\textbf{$[\mathcal{I}^*, h, \mu ]$ (Modulo isometry):} Two graphemes are equivalent under this relation if there exists a measure-preserving \textit{isometry} (a distance-preserving bijection) between their underlying Polish spaces that also preserves the connection function $h$. This equivalence respects the \textit{metric} structure of the embedding space.
\end{enumerate}
}\hfill$\spadesuit$
\end{definition}

We will denote the sets of equivalence classes under these relations by $\mathbb{G}^{\langle\rangle}$, $\mathbb{G}^{\{\}}$, $\mathbb{G}^{[]}$, respectively.  We will primarily work with the strongest equivalence $\mathbb{G}^{[]}$, because it is a Polish space (see Section~\ref{s.evol}) and allows us to utilize powerful tools from stochastic analysis, particularly the theory of martingale problems, in order to construct and analyze our dynamic graph processes.

A key element of our approach is the use of \textit{ultrametric}\footnote{We expect that this is not the only class of interesting embeddings, but it is a natural one for tracking history.} measure spaces for the embedding space $\mathcal{I}^*$, generated by the genealogy of the evolving graph. An ultrametric space is a metric space where the triangle inequality is strengthened to the \textit{ultrametric inequality}:
\begin{align}
d(x, z) \leq \max\{d(x, y), d(y, z)\} \quad \text{for all } x, y, z \in \mathcal{I}^*.
\end{align}
This choice is deeply connected to the \textit{genealogy} of models in population genetics, which will serve as the foundation for our graph evolution rules. In these models, individuals have ancestors and descendants, and the ultrametric distance between two individuals can represent their \textit{genealogical distance}, for instance, how far back in time we must go to find their \emph{most recent common ancestor}.

By embedding our graphs in ultrametric spaces (specifically, spaces from the class $\mathbb{U}_1$ studied in \cite{GrevenPfaffelhuberWinter2013}), we are essentially encoding the \textit{history} or the \textit{genealogy} of the evolution of the graph in the metric itself. This allows us to naturally capture the space-time path process of the evolving graph. Local changes in the graph, corresponding to events like vertex resampling or birth/death, are reflected in changes of the genealogy and, consequently, in the ultrametric.


\subsection{Dynamics of graphemes}
\label{ss.dyn}

Next, we outline the dynamics driving grapheme evolution and preview our key findings. We will construct stochastic processes on the space of graphemes $\mathbb{G}^{[]}$ that arise as limits of finite graph evolutions as the number of vertices tends to infinity. These evolutions will be based on rules inspired by models in population genetics, providing a powerful link. See Table~\ref{tab:evolution_rules} for a summary, and below for more details and examples.

\begin{table}[htbp]
\centering
\begin{tabular}{|l|l|l|}
\hline
\textbf{Evolution Rule} & \textbf{Population Dynamics Analogue} & \textbf{Graph Effect} \\ \hline
Fleming-Viot & Resampling & Components merging \\
Dawson-Watanabe & Birth-Death & Vertex addition/removal \\
McKean-Vlasov & Immigration/Emigration & \makecell{Vertex replacement from\\external source} \\ \hline
\end{tabular}
\caption{\small Analogy between grapheme evolution rules and population dynamics mechanisms.}
\label{tab:evolution_rules}
\end{table}

We will consider several classes of dynamics,:
\begin{itemize}
\item 
\textbf{Fleming-Viot (FV) type dynamics:}  These dynamics are analogous to resampling processes in population genetics and allows for infinitely many types (= connected components). In the graph context, this corresponds to selecting pairs of vertices, choosing one vertex of the two, called looser, and rewiring the looser to the connected component of the winner. The vertex set is not changing over time. See Figure~\ref{fig:fv_graph_evolution_arcs_line_new}.

\begin{figure}[htbt]
\centering
\begin{adjustbox}{minipage=\linewidth, scale=0.8} 
\begin{subfigure}{0.48\linewidth}
    \centering
    \begin{tikzpicture}[node/.style={circle, draw, fill=black, minimum size=0.2cm, inner sep=0pt},scale=0.8]
        \draw[thick] (0,0) -- (4,0);
        \node[node, label=below:$x_1$] (x1) at (0,0) {};
        \node[node, label=below:$x_2$] (x2) at (1.33,0) {};
        \node[node, label=below:$x_3$] (x3) at (2.66,0) {};
        \node[node, label=below:$x_4$] (x4) at (4,0) {};
    \end{tikzpicture}
    \caption{Initial state: Four vertices, no connections.}
    \label{fig:fv_graph_arcs_t0_new}
\end{subfigure}\hfill
\begin{subfigure}{0.48\linewidth}
    \centering
    \begin{tikzpicture}[node/.style={circle, draw, fill=black, minimum size=0.2cm, inner sep=0pt},scale=0.8]
        \draw[thick] (0,0) -- (4,0);
        \node[node, label=below:$x_1$] (x1) at (0,0) {};
        \node[node, label=below:$x_2$] (x2) at (1.33,0) {};
        \node[node, label=below:$x_3$] (x3) at (2.66,0) {};
        \node[node, label=below:$x_4$] (x4) at (4,0) {};
        \draw[thick] (x1) to[bend left=45] (x2);
    \end{tikzpicture}
    \caption{\small $t=1$: $x_1$ influences $x_2$.}
    \label{fig:fv_graph_arcs_t1_new}
\end{subfigure}

\bigskip

\begin{subfigure}{0.48\linewidth}
    \centering
    \begin{tikzpicture}[node/.style={circle, draw, fill=black, minimum size=0.2cm, inner sep=0pt},scale=0.8]
        \draw[thick] (0,0) -- (4,0);
        \node[node, label=below:$x_1$] (x1) at (0,0) {};
        \node[node, label=below:$x_2$] (x2) at (1.33,0) {};
        \node[node, label=below:$x_3$] (x3) at (2.66,0) {};
        \node[node, label=below:$x_4$] (x4) at (4,0) {};
        \draw[thick] (x1) to[bend left=45] (x2);
        \draw[thick] (x3) to[bend left=45] (x4);
    \end{tikzpicture}
    \caption{\small $t=2$: $x_3$ influences $x_4$.}
    \label{fig:fv_graph_arcs_t2_new}
\end{subfigure}\hfill
\begin{subfigure}{0.48\linewidth}
    \centering
    \begin{tikzpicture}[node/.style={circle, draw, fill=black, minimum size=0.2cm, inner sep=0pt},scale=0.8]
        \draw[thick] (0,0) -- (4,0);
        \node[node, label=below:$x_1$] (x1) at (0,0) {};
        \node[node, label=below:$x_2$] (x2) at (1.33,0) {};
        \node[node, label=below:$x_3$] (x3) at (2.66,0) {};
        \node[node, label=below:$x_4$] (x4) at (4,0) {};
        \draw[thick] (x1) to[bend left=45] (x2);
        \draw[thick] (x1) to[bend left=45] (x3);
        \draw[thick] (x2) to[bend left=45] (x3);
    \end{tikzpicture}
    \caption{\small $t=3$: $x_1$ influences $x_3$.}
    \label{fig:fv_graph_arcs_t3_new}
\end{subfigure}
\end{adjustbox}
\caption{\small Evolution of the graph under the Fleming-Viot type dynamics. Vertices are black dots on $[0,1]$, edges are arcs.}
\label{fig:fv_graph_evolution_arcs_line_new}
\end{figure}

\item 
\textbf{Dawson-Watanabe (DW) type dynamics:} These dynamics are generalizations of birth-death processes. In the graph context, the latter corresponds to adding new vertices (birth) or removing existing vertices (death), along with their associated edges, which affects both the size (= the number of edges) and the connectivity of the graph. See Figure \ref{fig:dw_graph_evolution_arcs_line}.

\begin{figure}[htbt]
\centering
\begin{adjustbox}{minipage=\linewidth, scale=0.8} 
\begin{subfigure}{0.48\linewidth}
    \centering
    \begin{tikzpicture}[node/.style={circle, draw, fill=black, minimum size=0.2cm, inner sep=0pt},scale=0.8]
        \draw[thick] (0,0) -- (3,0);
        \node[node, label=below:$x_1$] (x1) at (0,0) {};
        \node[node, label=below:$x_2$] (x2) at (1.5,0) {};
        \node[node, label=below:$x_3$] (x3) at (3,0) {};
	\draw[thick] (x1) to[bend left=45] (x2);
    \end{tikzpicture}
    \caption{\small Initial state: Three vertices, one connection}
    \label{fig:dw_graph_t0}
\end{subfigure}\hfill
\begin{subfigure}{0.48\linewidth}
    \centering
    \begin{tikzpicture}[node/.style={circle, draw, fill=black, minimum size=0.2cm, inner sep=0pt},scale=0.8]
        \draw[thick] (0,0) -- (4,0);
        \node[node, label=below:$x_1$] (x1) at (0,0) {};
        \node[node, label=below:$x_2$] (x2) at (1.33,0) {};
        \node[node, label=below:$x_3$] (x3) at (2.66,0) {};
        \node[node, label=below:$x_4$] (x4) at (4,0) {};
        \draw[thick] (x1) to[bend left=45] (x2);
        \draw[thick] (x1) to[bend left=45] (x4);
        \draw[thick] (x2) to[bend left=45] (x4);
    \end{tikzpicture}
    \caption{\small $t=1$: $x_4$ is added (``born'' to $x_1$) and connects to $x_1$ and its friend $x_2$.}
    \label{fig:dw_graph_t1}
\end{subfigure}

\bigskip

\begin{subfigure}{\linewidth}
    \centering
    \begin{tikzpicture}[node/.style={circle, draw, fill=black, minimum size=0.2cm, inner sep=0pt},scale=0.8]
        \draw[thick] (0,0) -- (4,0);
        \node[node, label=below:$x_2$] (x2) at (1.33,0) {};
        \node[node, label=below:$x_3$] (x3) at (2.66,0) {};
        \node[node, label=below:$x_4$] (x4) at (4,0) {};
        \draw[thick] (x2) to[bend left=45] (x4); 
    \end{tikzpicture}
    \caption{\small $t=3$: $x_1$ is removed (``dies'').}
    \label{fig:dw_graph_t4}
\end{subfigure}
\end{adjustbox}
\caption{\small Evolution of the graph under the Dawson-Watanabe dynamics. Vertices are black dots on $[0,1]$, edges are arcs. Birth events add a new vertex, initially unconnected. Death events remove a vertex and all its incident edges.}
\label{fig:dw_graph_evolution_arcs_line}
\end{figure}

\item 
\textbf{McKean-Vlasov (McKV) type dynamics:} These dynamics involve replacing vertices with new vertices drawn according to an external source distribution. This is analogous to immigration/emigration in population models, and allows to model an influx of new structures into the graph. See Figure~\ref{fig:mkv_graph_evolution_arcs_line}.

\begin{figure}[htbt]
\centering
\begin{adjustbox}{minipage=\linewidth, scale=0.8} 
\begin{subfigure}{0.48\linewidth}
    \centering
    \begin{tikzpicture}[node/.style={circle, draw, fill=black, minimum size=0.2cm, inner sep=0pt},scale=0.8]
        \draw[thick] (0,0) -- (3,0);
        \node[node, label=below:$x_1$] (x1) at (0,0) {};
        \node[node, label=below:$x_2$] (x2) at (1.5,0) {};
        \node[node, label=below:$x_3$] (x3) at (3,0) {};
        \draw[thick] (x2) to[bend right=45] (x1);
        \draw[thick] (x2) to[bend left=45] (x3);
    \end{tikzpicture}
    \caption{\small Initial state: Three individuals, no connections.}
    \label{fig:mkv_graph_t0}
\end{subfigure}\hfill
\begin{subfigure}{0.48\linewidth}
    \centering
    \begin{tikzpicture}[node/.style={circle, draw, fill=black, minimum size=0.2cm, inner sep=0pt},scale=0.8]
        \draw[thick] (0,0) -- (3,0);
        \node[node, label=below:$x_1$] (x1) at (0,0) {};
        \node[node, label=below:$x_3$] (x3) at (3,0) {};
        \node[node, label=below:$x_4$] (x4) at (1,0) {};
    \end{tikzpicture}
    \caption{\small $t=1$: $x_2$ is removed and replaced by the ``newcomer'' $x_4$}
    \label{fig:mkv_graph_t1}
\end{subfigure}
\end{adjustbox}
\caption{\small Evolution of the graph under the McKean-Vlasov dynamics. Vertices are black dots on $[0,1]$, edges are arcs. Vertices are replaced by new vertices from an external source.}
\label{fig:mkv_graph_evolution_arcs_line}
\end{figure}

\end{itemize}

We will also consider extensions of these basic dynamics:

\begin{itemize}
\item 
\textbf{Marked Graphemes:} Vertices can have associated types (drawn from a Polish space $V$), and the evolution rules can depend on these types, allowing for graphs with heterogeneous node properties.
\item 
\textbf{Random-size Graphemes:} The number of vertices can change over time, allowing for graphs that grow or shrink.
\item 
\textbf{Non-completely connected components:} After introducing mechanisms for adding and removing edges independently, we can model graphs where connected components are not necessarily complete subgraphs.
\end{itemize}


\subsection{Tools}
\label{ss.tools}

Our work builds upon and extends previous research on graph limits, particularly the theory of graphons \cite{LovaszSzegedy2006graphon} and recent work on graphon dynamics \cite{AHR21}. While graphons provide a powerful framework for studying static properties of dense graphs, they do not naturally capture the dynamics of graph evolution.

The work in \cite{AHR21} introduced diffusion-like graphon-valued processes, also drawing inspiration from population genetics, but it left open several important questions:
\begin{itemize}
\item 
Are the graphon-valued processes strong Markov?
\item 
Can the graphon-valued processes be described by a generator acting on a dense class of test functions?
\item 
Do there exist non-trivial equilibria?
\end{itemize}
Our grapheme approach settles these questions. By embedding graphs in Polish spaces and leveraging the genealogical structure of population models, we are able to construct well-behaved Markov processes with continuous paths, characterize them by using martingale problems, establish duality relations, and identify non-trivial equilibria.  The use of ultrametric spaces, driven by the underlying genealogy, is crucial for capturing the space-time evolution of the graph in a \emph{natural and minimal way}.

Our main results show that grapheme dynamics are mathematically well-behaved.  Specifically, we show the following:
\begin{enumerate}
\item 
\textbf{Well-Posed Martingale Problems:} Grapheme dynamics are characterized by well-posed martingale problems.
\item 
\textbf{Strong Markov Processes with the Feller Property:} Grapheme processes are strong Markov processes with the Feller property.
\item 
\textbf{Diffusion Processes:} Grapheme processes are diffusions, meaning that they have continuous paths in the space of graphemes.
\item 
\textbf{Equilibria and Long-Term Behavior:} Grapheme processes have equilibria that can be non-trivial.
\item 
\textbf{Duality:} Grapheme processes are dual to coalescent processes.
 \item \textbf{Approximation by Finite Graphs:} Grapheme processes can be approximated by sequences of finite graph evolutions.
\end{enumerate}


\subsection{Outline}
\label{ss.outline}

The remainder of the paper is organized as follows:
\begin{itemize}
\item 
\textbf{Section \ref{s.main-results}} states our main theorems.
\item 
\textbf{Section \ref{s.evol}} formally defines the state spaces for graphemes and the associated topologies.
\item 
\textbf{Section \ref{s.exmart}} specifies the martingale problems for the grapheme dynamics, focusing on the operators that play the role of generators.
\item 
\textbf{Sections \ref{s.state}--\ref{s.compo}} provides the proofs of our main theorems, including results on well-posedness, duality, equilibria, and path properties. (We leverage results from the theory of genealogy-valued diffusions \cite{GrevenPfaffelhuberWinter2013,DGP12,DG19,ggr_GeneralBranching} to establish our claims.)
\item 
\textbf{Section \ref{s.discussion}} discusses our findings, relates them to graphon dynamics, and outlines future research directions.
\item 
\textbf{Appendix \ref{ss.graphons}} briefly reviews related literature on graphon dynamics.
\item
\textbf{Appendix \ref{s.connections}} list connections to the literature.
\end{itemize}


\section{Main results}
\label{s.main-results}

In this section we present our main results concerning the evolution of finite $n$-graphs under stochastic rules and their convergence to grapheme-valued limiting processes. We define Markovian evolution rules inspired by population dynamics -- namely, Fleming-Viot (FV), Dawson-Watanabe (DW), and McKean-Vlasov (McKV) processes -- and establish their convergence to grapheme-valued diffusions, characterized via martingale problems. Our results generalize prior work \cite{AHR21} by incorporating genealogical structures, with detailed statements organized as evolution rules (Section~\ref{ss.examples}), main theorems (Section~\ref{sss.theorem}), consequence and extensions (Section \ref{sss.conex}).


\subsection{Evolution rules for finite graphs}
\label{ss.examples}

The finite graphs evolve as \emph{Markov pure-jump processes} inducing a grapheme $([n],h,\mu)$, where $h$ is the connection matrix and $\mu$ is the sampling measure, with $\supp(\mu) = [n]$. Write $\G_n$ to denote the set of associated equivalence classes, and put $\G_\infty = \cup_{n\in\N} \G_n$ (which is needed when the number of vertices may vary). We refer to the elements as \emph{finite} graphemes. The evolution rules below describe transitions for vertices and edges occurring at certain rates. In Section~\ref{sss.compcase}, we give the evolution rules on which we focus. In Section~\ref{sss.cmd} we look at the connection matrix. In Section~\ref{sss.goalsgen} we state our objectives. In Section~\ref{sss.link}, we make the link with processes of evolving populations and their genealogies, define our grapheme process $\CG$ and lift our graph dynamics specified by the simple rules to a $\G^{[]}_n$-valued stochastic grapheme process.


\subsubsection{Two classes of evolution rules from population genetics}
\label{sss.compcase}

We now specify the evolution rules governing finite graphs, cf.~Table~\ref{tab:evolution_rules}, categorized into two classes: those preserving completely connected components and extensions allowing non-complete connectivity.

\medskip\noindent
${\bf (I)}$ We first focus on evolution rules for graphs with \emph{completely connected components} (= all clusters or cliques are complete subgraphs). These have a version with a fixed number of vertices (Fleming-Viot) and a version with a randomly evolving number of vertices (Dawson-Watanabe), and each has a corresponding rule of immigration and emigration (McKean-Vlasov rule), both for fixed size and variable size. The evolution rules carry these names because for a population sizes tending to infinity these processes are the \emph{diffusion limits} of the Moran dynamics and Galton-Watson dynamics, respectively.

We next specify the \emph{rules} and the \emph{initial states}.


\paragraph*{$\blacktriangleright$ The Fleming-Viot evolution rule.}

At rate $d \in (0,\infty)$ pick a pair of vertices and perform the following transition:
\begin{itemize}
\item[-]
If the vertices belong to the same component, then do nothing.
\item[-]
If the vertices belong to different components, then throw a fair coin to decide which vertex is the winner, respectively, the looser, remove all the edges of the looser and replace them by edges to all the vertices in the connected component of the winner.
\end{itemize}


\paragraph*{$\blacktriangleright$ The Dawson-Watanabe evolution rule.}

At rate $b \in (0,\infty)$ pick a vertex and perform the following transition:
\begin{itemize}
\item[-]
Throw a fair coin. If the outcome is $1$, then add a new vertex and add edges between the new vertex and all the vertices in the component of the chosen vertex. If the outcome is $0$, then delete the chosen vertex and all its edges.
\end{itemize}


\paragraph*{$\blacktriangleright$ The McKean-Vlasov evolution rule.}

$\mbox{}$

\medskip\noindent
$\bullet$ \emph{Fixed size}:
At rate $c \in [0,\infty)$ pick a vertex, \emph{remove} the chosen vertex and all its edges, and \emph{add} a new vertex drawn from a source $S$ of labels according to a probability measure $\theta$ on $S$ (which is a Polish space), and \emph{connect} it by edges to all the vertices with the same label. Note that for $\theta$ diffusive, the new vertex enters without edges, which leads to a simple rule: the new vertex becomes a new connected component and forgets its label. For atomic $\theta$, on the other hand, a vertex is potentially added to an existing completely connected component, and the weights of the atoms in $\theta$ become relevant parameters.

\medskip\noindent
$\bullet$ \emph{Variable size}: Perform the addition and removal of vertices independently at rate $c$.

\medskip
For FV + McKV (= resampling + immigration and emigration) the number of vertices is preserved, while for DW + McKV (= birth or death + immigration and emigration) the number of vertices varies. All four rules have the property that they preserve \emph{complete connectedness}.


\paragraph*{$\blacktriangleright$ Possible extensions.}

We can add further evolution rules with interesting new features. To do so, we give vertices a \emph{type} drawn from a fixed type-set $V$. Then we can reformulate the rules in (I) by requiring edges to be present if and only if the two vertices share the same type, and in (II) by adding new rules for the addition and removal of edges.

\medskip
It is possible to add a \emph{bias of types} in the resampling mechanism. For instance, the fair coin in the FV-rule is replaced by a biased coin, with a bias that depends on the types of the pair of vertices chosen (= \emph{selection}). Another option is to add to resampling in the marked case a change of type: a vertex receives a type drawn independently according to a probability measure $\theta_v \in \CM_1(V)$ that depends on the type $v$ of the chosen vertex (= \emph{mutation}). It is also possible to evolve via resampling in larger sweeps. For instance, in the FV-rule a positive fraction of the vertices is chosen, rather than a pair of vertices, and all are connected to the winner (= \emph{Cannings} resampling).

\medskip\noindent
${\bf (II)}$ We next focus on evolution rules for graphs with \emph{non-completely connected components} (= not all clusters or cliques are complete subgraphs). Here we use the marked version of (I) with some new rules.


\paragraph*{$\blacktriangleright$ Reformulation of (I).}

\begin{itemize}
\item[--]
Grow the graph according to the FV-rule, with the modification that the looser adopts the type of the winner.
\item[--]
Grow the graph according to the DW-rule, with the modification that the new vertex adopts the type of the chosen vertex.
\item[--]
Grow the graph according to the McKV-rule, with the modification that the new vertex receives a type that is drawn independently according to a probability measure $\theta \in \CM_1(V)$.
\end{itemize}

\paragraph*{$\blacktriangleright$ Addition of edges.}
Apply the same rules as above. In addition, for each pair of vertices carrying the same type but having no edge between them, at rate $a^+ \in [0,\infty)$ add an edge.


\paragraph*{$\blacktriangleright$ Removal of edges.}

Apply the same rules as above. In addition, for each pair of vertices carrying the same type and having an edge between them, at rate $a^-\in [0,\infty)$ remove the edge.

\medskip
The latter two evolution rules do \emph{not} preserve complete connectedness. We can run the evolution as in (I), but with the two rules above added (which are switched off when $a^\pm=0$). The new Markov pure-jump processes allow for equilibria in which the components of a single type are random in number and in size, and in which edges evolve randomly.


\paragraph*{Generalizations of the addition and removal mechanism, non-Markovian $\G^{\langle\rangle}$-valued process.}

An interesting extension is to consider non-Markovian processes with values in $\G^{\langle\rangle}$ obtained by constructing a Markov process with values in $\G^{[]}$ and letting $a^+,a^-$ be \emph{bounded measurable functions} of the state that depend on the age and on the current size of the subpopulation. This is treatable with the same methods, but generates a non-Markovian process on $\G^{\langle\rangle}$. Note that age and subfamily size are observables in the current state of the genealogy, namely, distance and corresponding measure charged to a ball with a radius equal to the distance.


\paragraph*{Initial conditions in (I) and (II).}

For finite graphs we can work with all the elements of $\G_n$ or $\G_\infty$ as initial condition. It is only in the $n \to \infty$ limit that we need to impose certain restrictions.

\begin{remark}[Connection to~\cite{AHR21}]
\label{r.1685}
{\rm Adding types in $V=[0,1]$ to the underlying Fleming-Viot process and setting $h((i,v),(j,v))=1$ for $i,j \in U,v \in V$ and $0$ elsewhere, we obtain a model with types and values in $\G^{[],V}$. Now, compared to Fleming-Viot, further vertices are connected in the case of the Fisher-Wright rule, since they have different ancestors at time $0$ but have the \emph{same type} (recall that descendants have the type of the ancestor). This gives the \emph{Fisher-Wright} process of~\cite{AHR21}, as opposed to the Fleming-Viot process we focus on here.} \hfill$\spadesuit$
\end{remark}


\subsubsection{Connection matrix distribution}
\label{sss.cmd}

Let $\CI^*=(\CI,\tau)$ be a \emph{Polish space}, with $\CI \neq \emptyset$ a set and $\tau$ a topology, $\CB_\CI$ the Borel-$\sigma$-algebra of subsets of $\CI$, $h$ a $\CB_\CI$-measurable symmetric $\{0,1\}$-valued \emph{connection function} on $(\CI \times \CI) \setminus D_{\CI\times\CI}$, with $D_{\CI\times\CI}$ the diagonal of $\CI\times\CI$, and $\mu$ a \emph{sampling measure}\footnote{a probability measure} on $\CI^*$. (For graphemes it is crucial that the connection function $h$ is $\{0,1\}$-valued, which is \emph{different} from graphons where $\wt h \in \wt \CW$ is based on a function $h$ that is $[0,1]$-valued.)

If we draw an $\N$-sample by using a \emph{non-atomic} (= diffuse) sampling measure $\mu$, setting
\begin{equation}
X = (x_i)_{i \in \N}
\end{equation}
drawn from $\CI$ according to $\mu^{\otimes\N}$. In other words $x_i$, $i \in \N$ are i.i.d.~$\mu$-distributed $\CI$-valued random variables. Consider the matrix
\begin{equation}
H = \big(h(x_i,x_j)\big)_{(i,j) \in (\N \times \N) \setminus D_{\N\times\N}},
\end{equation}
then we obtain a \emph{countable graph} $\CG$ embedded in $\CI$, of which the vertices are represented by $X$ and the edges are represented by $H$, in the sense that the vertices are labelled by $\N$ while vertices $i$ and $j$ are connected by an edge if and only if $h(x_i,x_j) = 1$  (see Fig.~\ref{fig-grapheme}).

\begin{figure}[htbp]
\begin{center}
\setlength{\unitlength}{1cm}
\begin{picture}(10,3)(0,-1.5)
{\thicklines
\qbezier(0,0)(5,0)(10,0)
\qbezier(1,0)(1.5,0.5)(2,0)
\qbezier(1.5,0)(3.5,1)(5.5,0)
\qbezier(4,0)(6,1)(8,0)
\qbezier(8,0)(8.5,0.5)(9,0)
}
\put(10.5,-.1){$\CI$}
\put(5,-0.6){$X$}
\put(5,0.6){$H$}
\put(1,0){\circle*{0.25}}
\put(1.5,0){\circle*{0.25}}
\put(2,0){\circle*{0.25}}
\put(3.5,0){\circle*{0.25}}
\put(4,0){\circle*{0.25}}
\put(5.5,0){\circle*{0.25}}
\put(6.5,0){\circle*{0.25}}
\put(8,0){\circle*{0.25}}
\put(9,0){\circle*{0.25}}
\end{picture}
\end{center}
\vspace{-1.3cm}
\caption{\small Representation of a graph as a grapheme: $\CI$ space, $X$ vertices, $H$ edges.}
\label{fig-grapheme}
\vspace{0.3cm}
\end{figure}

\noindent
Note that without loss of generality we may assume that $\supp(\mu) = \CI$. Also note that graphs are automatically simple: no self-loops and no multiple edges are present. If we allow $\mu$ to have atoms, then we have to sample \emph{without replacement} in order to preserve this property. The connection function can be trivially extended to $\CI \times \CI$ by setting $h=0$ on $D_{\CI \times \CI}$, which we do henceforth.

Two cases are of interest: the set of vertices generated by the sampled infinite sequence for $\CG$ is
\begin{itemize}
\item[(I)]
\emph{deterministic} when $\mu$ is atomic;
\item[(II)]
\emph{random} when $\mu$ is non-atomic (= diffuse) and all sample points are different.
\end{itemize}
In case (I), the cardinality of the vertex set of $\CG$ equals $|\supp(\mu)|$, which can be finite or countably infinite. In case (II), the cardinality is countably infinite. In case (II), a sampled sequence contains only different points and all sequences that we can sample are $\mu^{\otimes \N}$-a.s.\ equivalent, in the sense that they are statistically indistinguishable: all finite samples have the same distribution and so the \emph{equivalence class is deterministic}. The mixed case is of no interest, as we will see below.

For $n\in\N$, the $n$-sample $(x_i)_{i \in [n]}$ is drawn from $\CI$ according to $\mu^{\otimes n,\downarrow}$, i.e., without replacement, which generates a random finite graph $\CG_n$. For $m \in [n]$, draw $m$ vertices \emph{uniformly at random without replacement} from the $n$-sample. Denote the distribution of its connection matrix by $\nu^{(m),n}$. As $n\to\infty$, $\nu^{(m),n}$ converges weakly $\mu^{\otimes \N}$-a.s.\ to a connection-matrix distribution $\nu_m$. Moreover, as $m \to \infty$, $\nu_m$ converges weakly $\mu^{\otimes \N}$-a.s.\ (as a projective limit) to a \emph{connection-matrix distribution}
\begin{equation}
\label{e510}
\nu \in \CM_1(\{0,1\}^{(\N\times \N) \setminus D}),
\end{equation}
which is the \emph{characteristic object} describing the equivalence class of statistically indistinguishable countable graphs. Therefore, instead of $(\CI,\tau),h,\mu$ labelling a grapheme, we might have taken $(\CI,\tau),\nu,\mu$, because $h$ is the random variable on $\CI \times \CI$ realising $\nu$. Thus, in case (II), even though the graph generated by $\CG$ is random, its connection-matrix distribution $\nu$ is \emph{deterministic}, contains all the information on the edges, and does not use properties of the embedding other than its existence. Note that if we have a \textit{diffuse} and an atomic component, then the latter is statistically not visible in the countable graph, and $\nu$ just depends on the sample we draw from the diffuse part of the measure, because the graph is \emph{simple}.

Note that $\nu$ determines all the subgraph densities of $\CG$ (see~\eqref{eq:homdensity}), as these are normalised expectations under the sampling distribution for samples of size $k \in [n]$, arising from the law of large numbers as the sample size tends to infinity. They also determine the graphon corresponding to the graph (see Figs.~\ref{fig:graph-and-graphon}).


\subsubsection{Main goals for general grapheme dynamics}
\label{sss.goalsgen}

Our task is to verify the following (which we will work out in Sections~\ref{s.evol}--\ref{s.duality} for the simple finite-graph dynamics given in Section~\ref{ss.examples}):
\begin{itemize}
\item
Let $\CG^{(n)} = (\CG_t^{(n)})_{t \geq 0}$ be a Markov pure-jump process based on a triple $([n],h,\mu)$ with $\supp(\mu) = [n]$. As $n\to\infty$, after the finite graphemes have been suitably \emph{embedded} in a continuum Polish space, this process converges to a limit process $\CG=(\CG_t)_{t \geq 0}$ in $\G^{[]}$.
\item
The limit process is described by a
\begin{equation}
\label{e336}
(\wh\CL^\ast,\wh\Pi^\ast,\wh\Gamma^\ast)\text{-well-posed martingale problem},
\end{equation}
with $\wh\CL^\ast$ a \emph{linear operator} on a measure-determining and convergence-determining subalgebra $\wh \Pi^\ast$ of $C_b(\wt \G^{[]},\R)$ playing the role of a generator, with $\wh\Pi^\ast$ an \emph{algebra of polynomials} (to build in \emph{sampling} of \emph{finite substructures}) on $\G^{[]}$ playing the role of a domain of test functions for $\wh\CL^\ast$, and $\wh\Gamma^\ast$ an \emph{initial law} on $\G^{[]}$. The solution is unique and defines a strong Markov process on $\G^{[]}$ with the Feller property.
\item
The same holds for the functional $\nu = (\nu_t)_{t \geq 0}$, viewed as process on $\G^{\langle\,\rangle}$, via its own
\begin{equation}
\label{e544}
\left(\CL^\ast,\Pi^\ast,\Gamma^\ast\right)\text{-well-posed ``martingale problem"},
\end{equation}
with $\CL^\ast$ a linear operator on $C_b(\G^{\langle\rangle},\R)$, $\Pi^\ast$ an algebra of polynomials on $\G^{\langle\rangle}$, and $\Gamma^\ast$ an initial law on $\G^{\langle\rangle}$. The solution is unique and defines a Markov process on a subset of $\G^{\langle\rangle}$. However, proving existence of the solution requires getting a solution of \eqref{e336} for some natural choice of $\CI^\ast$ and $\mu$. Note here that $\G^{\langle\rangle}$ is only a measurable subset of a Polish space $\wt \CW$ and is not closed in $\wt \CW$. Therefore the \emph{``martingale problem"} in~\eqref{e544} allows for many nice calculations, but is \emph{not} of the standard form. This is only the case when we consider it on $\G^{[]}$, as a martingale problem of a functional, and after passing to a closed subspace. Otherwise, the tools available from stochastic analysis are severely restricted. To resolve this problem we must use the property that the dynamics immediately moves into a smaller space.
\item
The process of connection matrix distributions $\nu=(\nu_t)_{t \geq 0}$ associated with $\CG$, which arises also from the process $\nu^{(n)}=(\nu_t^{(n)})_{t \geq 0}$ associated with $\CG^{(n)}=(\CG_t^{(n)})_{t \geq 0}$ in the limit $n\to\infty$, may be either \emph{autonomous} (i.e., not feel the refinement of the $\langle\rangle$-equivalence class) or may evolve in the \emph{random environment} given by~\eqref{e508} (i.e., feel the refinement). In the autonomous case, the \emph{equilibria} of $\CG=(\CG_t)_{t \geq 0}$ are described by quantities arising from the distribution of the connection-matrix of sampled subgraphs of the finite graph dynamics.
\end{itemize}


\subsubsection{From population dynamics to grapheme dynamics}
\label{sss.link}

In~\cite{AHR21}, stochastic processes in \emph{population dynamics} were used to build a dynamics on the space $\D$ of graphons, in particular, the Fisher-Wright diffusion and the measure-valued Fleming-Viot diffusion. The graph evolution rules given above also stem from population dynamics and lead to Markov processes that are already used in population genetics. The new idea is to take into account additional information, namely, the \emph{genealogical structure} of the population, which are the key tool for graphemes. We will use graphemes to resolve two sets of issues left open in \cite{AHR21}: martingale problem characterisation, proof of the \emph{strong Markov property} and the \emph{Feller property}, and construction of dynamics with \emph{non-trivial equilibria}.

To achieve this, we will exploit the framework of \emph{duality} and \emph{well-posed martingale problems}. For the latter, the key idea is to obtain \emph{functions of the elements in $\G^\ast$} of the form $((\CI^\ast,h,\mu))$, with $\ast$ and the outer bracket standing for $[],\{\},\langle\rangle$, by taking a sample of size $n$ according to $\mu$, considering functions $\varphi$ of graphs with $n$ vertices and their edges, and choosing equal weights to get elements of $\G_n$. These $\varphi$ are evaluated on the sample of size $n$ from $\CI$. We define an $n$-monomial on $\G$ by taking as value of the function the \emph{$n$-sample expectation} of $\varphi$ (see Section~\ref{ss.space}). Such functions are called \emph{polynomials} and will appear as test functions specifying the domain of the operator in the martingale problem.

In this section, we define the grapheme process $\CG$ as a \emph{functional} of a \emph{genealogy-valued diffusion} $\CU$. In Section~\ref{sss.theorem} we provide a characterisation via a well-posed martingale problem and via a limit of finite grapheme evolutions. The evolution rules described above (Fleming-Viot, Dawson-Watanabe, McKean-Vlasov) look at the \emph{vertex population} exactly like the transitions for the population of individuals in the Moran, respectively, Galton-Watson branching model with immigration/emigration from a source, and are well known in population genetics. In particular, their $n \to \infty$ limit \emph{measure-valued} diffusions have been established a long time ago see~\cite{Daw93,EK86}. The corresponding processes of evolving \emph{genealogies} have been studied more recently. The evolution of these genealogies can be described as a Markov process and can be shown to converge in the limit as $n\to\infty$, as follows.

As explained in \cite{GrevenPfaffelhuberWinter2013}, the \emph{encoding of genealogies} of a stochastically evolving population (in the $n \to \infty$ limit) runs via
\begin{align}
\label{e660}
\text{equivalence classes } [U^*,\mu]
\end{align}
with $U^* = (U,r)$, $U$ a set, $r$ an ultrametric on $U$, $\mu$ a probability measure on $U^*$, and \emph{equivalence} taken with respect to \emph{measure-preserving isometries} of the \emph{supports of the measure} $\mu$. This leads to a state space equipped with a \emph{Polish topology} known as
\begin{align}
\label{a666}
\text{$\U_1$ of genealogies, \quad $\U_1^V$ of $V$-marked genealogies,}
\end{align}
respectively, with state spaces
\begin{align}
\label{e670}
\U_\mathrm{fin},\,\U_{\mathrm{fin}}^V,
\end{align}
when $\mu$ is a \emph{finite} measure, so that we have a \emph{population size} $\bar \mu=\mu (U)$ and a \emph{sampling measure} $\wh \mu=\mu/\bar \mu$.

The interpretation is as follows: $U$ is the set of individuals, the metric is given by the genealogical distance between two individuals, i.e., twice the time back to the most recent common ancestor or $2t$ if there is none by time $t$ (in particular, we have an \emph{ultrametric}), while $\mu$, respectively, $\wh \mu$ is the so-called \emph{sampling measure}, allowing to draw samples of individuals for the vertices and hence also for the edges. Convergence in the topology of $\U_1,\U$ means that the distributions of sampled \emph{finite} subspaces converge (as finite ultrametric measure spaces). Recall that an ultrametric space has the geometric property that it can be decomposed in disjoint balls with an arbitrary radius that are open and closed.

We observe that we can turn our finite graph dynamics from Section~\ref{sss.compcase}, first into a $\G_n$- or $\G_\infty$-valued process and then into a \emph{$\G_n^{[]}$- or $\G_\infty^{[]}$-valued grapheme dynamics}, by defining an ultrametric $r_t$ on $[n]$, by setting the distance equal to twice the time back to the most recent common ancestor of the two individuals and equal to $2t$ if there is none, thereby turning the graph into a grapheme. Here, $\G_n^{[]}$ denotes the subspace of graphemes from $\G^{[]}$ for which the sampling measure is concentrated on $n$ points.


\paragraph*{Basic idea.}

In the literature, on the spaces in \eqref{a666} and \eqref{e670}, \emph{diffusions} $(\CU_t)_{t \geq 0}$ have been constructed describing genealogies for the classical measure-valued population models, called Fleming-Viot or Dawson-Watanabe diffusions, arising as limits of the genealogies in the Moran and Galton-Watson model,  (see~\cite{GrevenPfaffelhuberWinter2013,DGP12,DG19,ggr_GeneralBranching}), which we will use later on.

The idea to study grapheme processes $\CG$ with values in $\G^{[]}$ (as specified in Section~\ref{sss.compcase}) is to consider $\U_1$-valued processes
\begin{equation}
\label{e873}
\CU=(\CU_t)_{t \geq 0} \text{  with  } \CU_t =[U^*_t,\mu_t]=[U_t,r_t,\mu_t]
\end{equation}
to get a space $(U,r)$ in which we embed our countable graph from the sample sequence.

\begin{definition}[Grapheme processes associated with $\U$-valued diffusions $\CU$]
\label{def.graphproc}
$\mbox{}$\\
{\rm (a) Consider the case where $\CU_0=[\{1\}, \underline{\underline{0}},\delta_1]$, the \emph{single root case} (i.e., every vertex is on its own at time $0$), from which we construct the general case by a simple operation, namely, by picking an operation $\vdash$ on $\U$ and taking $(\CG_0 \vdash \CG^\ast_t)_{t \geq 0}$, with $\CG^\ast$ the solution of the single root initial state (see the paragraph above~\eqref{e1094}).\\
(b) At every time $t \geq 0$, choose a representative $(U_t,r_t,\mu_t)$ of $\CU_t$. Define the triplet (viewed as a pre-grapheme)
\begin{align}
\label{e678}
\text{ $(U^*_t, h_t,\mu_t)$ with $h_t(x,y) = 1$ if and only if $r_t(x,y) \leq 2t$,}
\end{align}
called the grapheme process $\CG$ associated with $\CU$, i.e., the connection-matrix is chosen so as to incorporate the genealogy. (The last restriction means that at time $t$ there are edges exactly for the pairs of vertices with the same common ancestor at time $0$.) The $2t$ bound in \eqref{e678} relates to the genealogical distance (twice the time back to the common ancestor).
\\
(c) The equivalence class $[(U_t,r_t,h_t,\mu_t)]$ depends on the equivalence class in $\G^{[]}$ only via the equivalence class of the triple $(U_t,r_t,\mu_t)$ in $\U_1$ or $\U$, i.e., on $\CU_t$, and defines a grapheme $\CG_t$ in $\G^{[]}$, and therefore defines a
\begin{equation}\label{e684}
\text{$\G^{[]}$-valued process $(\CG_t)_{t \geq 0}$ for each process $\U_1$-valued $(\CU_t)_{t \geq 0}$ prescribed.}
\end{equation}
We may think of \eqref{e684} as associating with every $\U_1$-valued, respectively, $\U$-valued path a new path with values in $\G^{[]}$, respectively, $\bar \G^{[]}$.}
\hfill$\spadesuit$
\end{definition}

\begin{remark}[$\G_n$- or $G_\infty^{[]}$-grapheme dynamics]\label{r.888}
{\rm Note that if we take the version of the finite-$n$ genealogical process, say $\CU^n$, then \emph{via the finite-graph dynamics introduced in Section~\ref{sss.compcase}} we obtain a version of the process $\CG^n$ by applying~\eqref{e678} and generating a process $\CG^n$ with values in $\G^{[]}_n$.} \hfill$\spadesuit$
\end{remark}

The connection via genealogies with the $\U_1$-valued dynamics allows us to exploit the following:
\begin{itemize}
\item[--]
The grapheme process $\CG=(\CG_t)_{t \geq 0}$ arises as a functional of the $\U_1$-valued process $\CU=(\CU_t)_{t \geq 0}$, respectively, the $\U$-valued process, and can be characterised by a martingale problem.
\item[--]
Standard tools can be used to show that $\CG$ has the \emph{strong Markov} and the \emph{Feller property}, and has \emph{continuous} paths.
\item[--]
\emph{Convergence} of the finite grapheme processes $\CG^n$ to the continuum grapheme processes $\CG$ on $\G^{[]}$ can be proved.
\item[--]
The identification of the \emph{dual} process as coalescent driven graphemes can be obtained as well.
\item[--]
The \emph{genealogical information} encoded in the space in which the graph is embedded can be used to get information on the process $\nu=(\nu_t)_{t \geq 0}$, inducing a process $\CG^\uparrow$ with values in $\G^{\langle\rangle}$.
\item[--]
Explicit formulas can be derived for \emph{equilibria} in terms of certain classical key distributions in population dynamics and statistics.
\end{itemize}


\subsection{Main theorems: Grapheme dynamics}
\label{sss.theorem}

We now state our core results, showing how finite graph dynamics scale to continuous processes with desirable properties. Section~\ref{sss.four} states four theorems plus corollaries that concern the evolution rules in class (I) in Section~\ref{sss.compcase}. They are formulated in such a way that we can easily modify them to get similar statements for \emph{all} the models treated in this paper. Section \ref{sss.stateprop} discusses state space properties that are important for the construction. Section \ref{sss.conex} provides extensions of the main theorems to the evolution rules in class (II) in Section~\ref{sss.compcase}.

The key objects appearing in the theorems will be defined, justified and explained in Sections~\ref{s.evol}--\ref{s.duality}, and rely on the theory of \emph{genealogy-valued diffusions} built up in \cite{GrevenPfaffelhuberWinter2013,DGP12,DG19}, which we explain as we go along. The proofs of the theorems are given in Sections~\ref{s.state}--\ref{s.compo}. By bringing the \emph{genealogies} explicitly into focus, we can extend and generalise the population-dynamic approach advocated in \cite{AHR21}. 


\subsubsection{Four theorems}
\label{sss.four}

We consider the $n \to \infty$ limit dynamics arising from the evolution rules (I) for the $\G^{[]},\G^{\langle\rangle}$-valued versions and for the $V$-marked $\G^{[],V},\G^{\langle\rangle,V}$-valued versions. We show below that this dynamics is the process $\CG$ alluded to earlier.


\paragraph*{Initial states.}

To properly understand the scope and implications of our grapheme evolution theorems, it is essential to define the \emph{admissible initial states} for our grapheme evolutions, and \emph{admissible initial laws} supported on these initial states. We introduce the sets of graphemes
\begin{equation}
\label{e849}
\wt \G_{\rm comp}^\ast, \wt \G_{\rm ultr}^\ast \text{ with } \ast = [],\{\},\langle\rangle \text{ and } \sim \text{ standing for `with bar' or `without bar'},
\end{equation}
i.e., all graphemes with \emph{non-degenerate} (= positive measure) \emph{completely connected components} that form a subset where $h(i,j)$ is always $1$ or $0$ when $i$ or $j$ are not in the subset, respectively, with possible embedding in a specific class of Polish metric spaces, namely, those with an ultrametric, i.e., in an element of $\U_1$ or $\U$ depending on $\sim$. Note that $\wt \G^\ast_{\rm comp}$ is an invariant set for our evolution mechanisms (except the one where we add insertion/deletion), but is not closed topologically. For $\wt \G_{\rm ultr}^{[]}$ we require $h$ to be \emph{a coarser ultrametric than $r$}. We note that $\wt \G^{[]}_{\rm ultra}$ is a Polish space that is dynamically closed and
\begin{equation}
\label{e862}
\wt \G^\ast_{\rm comp} \subseteq \wt \G^\ast_{\rm ultr}.
\end{equation}

\begin{remark}[General initial states]
\label{r.938}
{\rm The restriction of the initial state is needed but is harmless, at least for the classes of dynamics we consider (as we will show in Section~\ref{s.nccomp} but not formulate here). From every $\CG \in \G^{[]}$ that can occur as limit of our finite-graph dynamics the process enters at \emph{infinite} rate into the subset of restricted states, jumping from $\wt \G \setminus \G_{\rm comp}$ to the dust state for $h$ (preserving its $\wt \G^\ast_{\rm comp}$ component in the transition), and remains there. Indeed, only finitely many initial vertices have descendants at time $t >0$, all initial edges have been replaced at time $t>0$, and the process is in $\wt \G^\ast_{\rm comp}$. Only in such a state can edges be preserved for the future, since they appear for macroscopic sets of vertices (see Remark~\ref{r.968} below), as can be seen from the approximation with finite grapheme dynamics, or directly from the martingale problem by duality. Hence, we have here exactly the right state space for our dynamics.}\hfill$\spadesuit$
\end{remark}

In Sections~\ref{s.evol}--\ref{s.duality}, we will explain at length the various ingredients that appear below in the statemens on the grapheme diffusion.

We need the concept of \emph{order} for operators that are not classical differential operators.

\begin{definition}[Order of operators]
\label{def.1947}
{\rm An operator $\Omega$ is called first order if $\Omega(\Phi^2)-2\Phi \Omega \Phi$ $= 0$ for all $\Phi \in D(\Omega)$ (= the domain of $\Omega$), and is called second order if it is not first order and $\Omega(\Phi^3)-3 \Phi \Omega (\Phi^2)-3\Phi^2 \Omega (\Phi) = 0$ for all $\Phi \in D(\Omega)$. (This is the same algebraic characterisation as for first-order or second-order differential operators acting on twice-differentiable functions on $\R$.) See~\cite{DGP12} for more details and examples.  This algebraic definition extends the notion of order beyond the usual context of differential operators.} \hfill$\spadesuit$
\end{definition}

\begin{theorem}[Grapheme diffusions]
\label{th.963-I}
$\mbox{}$\\
\textup{(a)} The process $\CG$ defined in~\eqref{e684} and $\CG_0 \in \wt \G^\ast_{\rm ultr}$ is the unique solution of the
\begin{equation}
(\wh\CL^\ast,\wh\Pi^\ast,\wh\Gamma^\ast)\text{-martingale problem},
\end{equation}
where $\wh\CL^\ast$ (the generator) is the extension of the operator $\CL$ corresponding to the $\U_1$-valued processes, respectively, the $\U$-valued process lifted to $\G^{[]}$, $\wh\Pi^\ast$ (the domain) is the algebra of test functions on $\G^{[]}$, and $\wh\Gamma^\ast \in \CM_1(\G^{[]}_{\rm ultr})$ is the initial law. Furthermore:
\begin{itemize}
\item
The process $\CG$ a.s.\ has paths with values in $\wt \G^{[]}_{\rm comp}$ for $t >0$ and $\wt \G^{[]}_{\rm ultr}$ for $t \geq 0$.
\item
For positive times the grapheme connection function of a representative of $\CG$ a.s.\ is continuous and has completely connected components.
\item
The grapheme process $\CG$ has a dual (see Theorem~\ref{th.853}).
\end{itemize}
\textup{(b)} The $\G^{\langle\,\rangle}$-valued process denoted $\CG^\uparrow$, induced by the process $(\nu_t)_{t \geq 0}$ via the functional $\nu$ arising from the $\G^{[]}$-valued process $\CG$, is the unique solution of the (compare here Remark~\ref{r.964})
\begin{equation}
(\CL^\ast,\Pi^\ast,\Gamma^\ast)\text{-martingale problem}
\end{equation}
on the closure of $\G^{\langle\rangle}_{\rm comp}$, where $\CL^\ast$ is the operator induced by $\wh \CL^\ast$ on $\Pi^\ast$, which is a subalgebra of $\wh \Pi^\ast$ of test functions identifying the equivalence class in $\G^{\langle\,\rangle}$ of $\CG$, and $\Gamma^\ast$ is the initial law induced by $\Gamma$.\\
\textup{(c)} $\CG$ and $\CG^\uparrow$ are strong Markov processes with the Feller property (both w.r.t.\ their natural filtration) and with continuous paths, and $\wh \CL^\ast$ and $\CL^\ast$ are second order operators\footnote{Definition~\ref{def.1947}}, (i.e., $\CG$ and $\CG^\uparrow$ are diffusions). \qed
\end{theorem}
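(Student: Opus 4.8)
The plan is to derive Theorem~\ref{th.963-I}(c) as a consequence of the martingale-problem characterisations in parts (a) and (b) together with the corresponding structural properties of the underlying $\U_1$- and $\U$-valued genealogy diffusions $\CU$ established in \cite{GrevenPfaffelhuberWinter2013,DGP12,DG19}, which are already cited and may be assumed. Recall that $\CG$ is by construction (Definition~\ref{def.graphproc}) a measurable functional of $\CU$, and that $\wh\CL^\ast$ is the lift of the generator $\CL$ of $\CU$. So the first step is to transport each of the three qualitative properties --- strong Markov, Feller, continuous paths --- from $\CU$ to $\CG$, and then to deduce the analogues for $\CG^\uparrow$ from those of $\CG$ via the further functional $\nu$.

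First I would record that well-posedness of the $(\wh\CL^\ast,\wh\Pi^\ast,\wh\Gamma^\ast)$-martingale problem on the Polish space $\wt\G^{[]}_{\rm ultr}$ (part (a)) already gives, by the standard equivalence between well-posed martingale problems and strong Markov families (Ethier--Kurtz \cite{EK86}, Thm.~4.4.2 and the surrounding material), a strong Markov process whose semigroup is uniquely determined; one checks measurability of $\CG_0\mapsto$ law of $\CG$ in the usual way. For the Feller property I would argue that $\wh\Pi^\ast$ is, by the hypotheses collected in part (a) (it is measure- and convergence-determining, an algebra of polynomials dense in $C_b$), large enough that the semigroup $(\wh P_t)$ maps $C_b$ into $C_b$ and $\wh P_t\Phi\to\Phi$ as $t\downarrow 0$ for $\Phi\in\wh\Pi^\ast$; continuity in the initial state follows from continuity of the map $\CU_0\mapsto\CU$ in law (a property of the genealogy diffusions) composed with continuity of the functional $\CU_t\mapsto\CG_t$ in the relevant topology, which is where the choice of $\wt\G^{[]}_{\rm ultr}$ as a \emph{closed} subspace of the Polish grapheme space is used. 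Continuity of paths is inherited directly: $\CU$ has continuous paths in $\U_1$, and $t\mapsto\CG_t$ is the image of $t\mapsto(U_t,r_t,\mu_t)$ under a path-continuous operation (the threshold $r_t(x,y)\le 2t$ moves continuously with $t$ in the Gromov-weak-type topology on graphemes), so $\CG$ has continuous paths a.s.; one must check that the threshold does not introduce jumps, i.e.\ that the $\mu_t\otimes\mu_t$-measure of the set $\{r_t = 2t\}$ vanishes, which holds because the genealogical diffusions have no atoms in their distance distributions (dust-freeness / the structure of $\U_1$). For $\CG^\uparrow$ one repeats the argument: $\nu$ is a measurable, and in the autonomous case continuous, functional of $\CG$, so strong Markov, Feller and path-continuity descend to the closure of $\G^{\langle\rangle}_{\rm comp}$; here one invokes part (b) for the well-posed ``martingale problem'' on that closed space.

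For the diffusion claim --- that $\wh\CL^\ast$ and $\CL^\ast$ are second-order in the algebraic sense of Definition~\ref{def.1947} --- I would verify the two identities directly on the polynomial test functions. Writing $\Omega=\wh\CL^\ast$, one checks $\Omega(\Phi^2)-2\Phi\,\Omega\Phi\neq 0$ for some $\Phi$ (so $\Omega$ is not first order) and $\Omega(\Phi^3)-3\Phi\,\Omega(\Phi^2)+3\Phi^2\,\Omega\Phi$ ... more precisely the identity in Definition~\ref{def.1947}, for all $\Phi\in\wh\Pi^\ast$. Since $\wh\CL^\ast$ is the lift of the Fleming-Viot/Dawson-Watanabe genealogy generator $\CL$, and those generators are known to decompose into a resampling (growth) part that is first order and a (co-)variation part that is genuinely second order --- this is exactly the computation carried out for $\U_1$-valued diffusions in \cite{DGP12} --- the identities for $\CL$ pass to $\wh\CL^\ast$ because composing with the functional $\CU\mapsto\CG$ and restricting to the subalgebra of grapheme polynomials is an algebra homomorphism that commutes with the generator action on the common core; the same reasoning then gives the second-order property of $\CL^\ast$ as the operator induced by $\wh\CL^\ast$ on the subalgebra $\Pi^\ast$. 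Thus ``diffusion'' means precisely: strong Markov, Feller, continuous paths, and generator algebraically of second order.

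The main obstacle I expect is the continuity of paths and of the semigroup at the \emph{grapheme} level, i.e.\ controlling the connection function $h_t$ defined by the moving threshold $r_t(x,y)\le 2t$. The functional $\CU_t\mapsto\CG_t$ is not obviously continuous for every fixed $t$: a small perturbation of the ultrametric $r_t$ near the value $2t$ can flip edges on a set of vertices of positive $\mu_t$-measure, producing a discontinuity in any topology fine enough to see subgraph densities. The resolution has to use a genuine structural input about the genealogy diffusions --- that the law of the pairwise distance $r_t(x,y)$ under $\mu_t\otimes\mu_t$ has no atom at the level $2t$ for $t>0$, equivalently that the process lies in $\wt\G^{[]}_{\rm comp}$ for $t>0$ with completely connected components separated from the threshold, exactly as asserted in part (a) and Remark~\ref{r.938}. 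So the crux of the proof of (c) is really bookkeeping that routes the hard analytic content through parts (a)--(b) and through the cited genealogy-diffusion results, rather than proving anything new about paths; the care needed is in making the functional-of-a-diffusion argument precise on the closed subspaces $\wt\G^{[]}_{\rm ultr}$ and $\overline{\G^{\langle\rangle}_{\rm comp}}$ where the standard martingale-problem machinery applies.
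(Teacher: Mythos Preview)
Your proposal is essentially correct and follows the paper's own strategy: transfer the strong Markov, Feller, and path-continuity properties from the genealogy diffusion $\CU$ (known from \cite{GrevenPfaffelhuberWinter2013,DGP12,DG19}) to $\CG$ via the functional in Definition~\ref{def.graphproc}, then push down to $\CG^\uparrow$ via $\nu$, and verify the second-order identity by reducing to the corresponding computation for $\CL$ in \cite{DGP12}. The paper organises this as a transfer proposition (Proposition~\ref{prop.2351}) and then lists the relevant references for $\CU$, exactly the ``bookkeeping'' you describe.

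The one place where the paper's execution differs from yours is the continuity obstacle you correctly isolate. You resolve it by arguing that $\mu_t\otimes\mu_t(\{r_t=2t\})=0$ (no atom at the moving threshold), appealing to dust-freeness. The paper instead exploits a structural observation that makes this step essentially automatic: on $\wt\G^{[]}_{\rm ultr}$ the connection function $h$ is itself a (coarser) \emph{ultrametric}, taking the three values $0,1,\infty$ (or $0,e^{-1},1$ after transformation), so that $(\CI,r,h,\mu)$ is a bi-ultrametric measure space with $h$ a coarsening of $r$; see \eqref{S1}--\eqref{S4}. Since balls in ultrametric spaces are simultaneously open and closed, the $2t$-ball decomposition (which gives the completely connected components) is stable under Gromov-weak perturbation of $(U_t,r_t,\mu_t)$, and convergence of $\CU^{(n)}_t\to\CU_t$ in $\U_1$ immediately yields convergence of the $h$-component without any separate ``no atom at the threshold'' argument. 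This is also what makes the second-order verification clean: the resampling operator acts on $\varphi_h(\underline{\underline{h}})$ and $\varphi_r(\underline{\underline{r}})$ by the \emph{same} merging-of-variables rule, so the algebraic identity from \cite{DGP12} applies verbatim to both factors and hence to their products in $\wh\Pi^\ast$. Your algebra-homomorphism phrasing is equivalent, but the paper's ``$h$ is a second ultrametric of the same form as $r$'' makes the transfer more transparent.
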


\begin{corollary}[State properties]
\label{cor.932}
$\CG^{a,c,\theta}$ moves in positive time from every initial state (including sparse states) to a state that has finitely many ($c=0$) or infinitely countably many ($c>0$) completely connected components. In the former case, the number of components is random with a distribution that can be identified with the help of duality. \qed
\end{corollary}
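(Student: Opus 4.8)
The plan is to transfer the statement to known path and duality properties of the genealogy-valued diffusion $\CU$ of which $\CG^{a,c,\theta}$ is a functional. First I would use Definition~\ref{def.graphproc} to rewrite the connection rule $h_t(x,y)=1\Leftrightarrow r_t(x,y)\le 2t$: since $r_t$ is an ultrametric, the relation ``$r_t(x,y)\le 2t$'' is an equivalence relation whose classes are the closed $r_t$-balls of radius $2t$, so the completely connected components of a representative of $\CG_t$ are exactly these balls, and the non-degenerate ones are the $2t$-balls carrying positive $\mu_t$-mass. Writing $N_t$ for their number and recalling from Theorem~\ref{th.963-I}(a) that for $t>0$ the state lies genuinely in $\wt\G^{[]}_{\rm comp}$ (so there are no partially connected clusters), the corollary reduces to: $N_t<\infty$ a.s.\ for all $t>0$ when $c=0$; $N_t$ countably infinite a.s.\ for all $t>0$ when $c>0$; and, for $c=0$, identification of the one-dimensional law of $N_t$.

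For $c=0$, where $\CU$ is the Fleming-Viot (resp.\ Dawson-Watanabe) genealogy of \cite{GrevenPfaffelhuberWinter2013,DGP12}, I would argue along two complementary lines. The first is coming down from infinity: for every $t>0$ these genealogies are a.s.\ represented by a compact ultrametric measure space, hence totally bounded, so only finitely many balls of any radius $\varepsilon>0$ exist; taking $\varepsilon=2t$ yields $N_t<\infty$ for all $t>0$ a.s., and re-derives along the way the assertion $\CG_t\in\wt\G^{[]}_{\rm comp}$ anticipated in Remark~\ref{r.938}. The second is duality: by Theorem~\ref{th.853}, $N_t$ --- the number of time-$0$ ancestors with surviving descendants at time $t$ --- is the block-counting process at time $t$ of the dual coalescent run from the configuration dual to $\CG_0$ (entrance from $\infty$ in the sparse/diffuse case, from a finite configuration when there are finitely many initial components; this is Kingman's coalescent in the pure Fleming-Viot diffuse case). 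Approximating $N_t$ from below by $N_t^{(n)}$, the number of components of the $n$-sample --- a bounded polynomial lying in $\wh\Pi^\ast$ --- and letting $n\to\infty$, duality then identifies the law of $N_t$ with the classical coming-down-from-infinity block-count distribution (explicitly Tavar\'e's formula in the pure Kingman case), which is the ``distribution identifiable with the help of duality'' in the statement.

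For $c>0$, $\CU$ is the Fleming-Viot genealogy \emph{with immigration} of \cite{DG19}, and with non-atomic source $\theta$ each immigration event founds a fresh component. Decomposing the population alive at time $t$ according to the most recent event on each ancestral line (time $0$, or an immigration event in $(0,t)$), ancestral lines originating at distinct times sit at mutual $r_t$-distance exceeding $2t$ unless they have already coalesced, so every immigration event in $(0,t)$ with surviving, not-yet-absorbed descendants contributes a distinct positive-mass $2t$-ball. Since the immigration events with surviving descendants at any fixed $t>0$ are a.s.\ countably infinite --- equivalently the mutation-driven Fleming-Viot measure is a.s.\ purely atomic with infinitely many atoms for $t>0$, a property to be extracted from \cite{DG19} --- we obtain that $N_t$ is countably infinite a.s. Finally, for an arbitrary, possibly sparse, initial grapheme, Remark~\ref{r.938} together with the $\vdash$-operation of Definition~\ref{def.graphproc}(a) shows that the process enters $\wt\G^{[]}_{\rm comp}$ instantaneously --- all initial edges are replaced and only finitely many initial vertices keep descendants --- so the analysis above applies from time $0+$ on, which gives the corollary for every initial state.

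The hard part will be the $c>0$ case: showing that there are countably infinitely many positive-mass $2t$-balls uniformly in $t>0$ requires the precise Poissonian ``age-of-immigration'' decomposition of the Fleming-Viot-with-immigration genealogy and the a.s.\ atomicity with infinitely many atoms of its measure, and one must rule out the degenerate scenario in which immigrant families are absorbed into older ones so rapidly that only finitely many survive at radius $2t$. The remaining steps --- the compactness/coming-down-from-infinity input for $c=0$, the duality of Theorem~\ref{th.853} together with the attendant polynomial approximation, and the reduction of general initial states via Remark~\ref{r.938} --- are essentially bookkeeping on top of results already quoted in the paper.
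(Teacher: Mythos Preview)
Your proposal is correct and follows the same route as the paper: the corollary is treated there as an immediate consequence of Theorem~\ref{th.963-I}(a) (paths are in $\wt\G^{[]}_{\rm comp}$ for $t>0$), Remark~\ref{r.938} (instantaneous entrance from general initial states), and the duality of Theorem~\ref{th.853} with the Kingman-type coalescent (with killing at rate $c$), whose block count comes down from infinity when $c=0$ and stays countably infinite when $c>0$ via the cemetery mechanism. Your write-up is in fact more explicit than the paper's, which essentially leaves the corollary as a \qed after the statement; one small correction is that the relevant reference for the Fleming-Viot genealogy \emph{with immigration} is \cite{GSW2016} rather than \cite{DG19} (the latter covers the Dawson-Watanabe case).
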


The process $\CG$ arises as a limit of finite-grapheme evolutions induced by the finite-graph evolutions of Section~\ref{sss.compcase}, which justifies its operator $\wh \CL^\ast$ as follows.

\begin{theorem}[Grapheme approximations]
\label{th.963-II}
$\mbox{}$\\
For $n\in\N$, let $\CG^{a,c,\theta;(n)}$ (with $a=d$ or $a=bn$ for $FV$, respectively, $DW$), be the finite-grapheme process starting with $n$ vertices and with a uniform sampling measure, and evolving according to the rules specified above. If, for $\CG_0^{a,c,\theta} \in \CM_1(\wt \G^{[]}_{\rm ultr})$,
\begin{equation}
\mathrm{LAW}\left[\CG_0^{a,c,\theta; (n)}\right] \Tno \mathrm{LAW}\left[\CG_0^{a,c,\theta}\right],
\end{equation}
then
\begin{equation}
\label{e978}
\mathrm{LAW}\left[(\CG_t^{a,c,\theta;(n)})_{t \geq 0}\right] \Tno
\mathrm{LAW}\left[(\CG_t^{a,c,\theta})_{t \geq 0}\right] \text{ in $C ([0,\infty),\wt \G^{[]})$}
\end{equation}
with $\wt\G=\G$, respectively, $\bar \G$, where $\Tno$ denotes weak convergence in $D([a,\infty),\wt \G^{[]})$, the space of c\`adl\`ag paths. \qed
\end{theorem}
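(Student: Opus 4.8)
The strategy is to reduce the statement to the already available path-space convergence of genealogy-valued processes and then transfer along the functional $\CU\mapsto\CG$ of Definition~\ref{def.graphproc}. First I would observe that, after forgetting the connection function and looking only at the vertex population and its genealogy, the finite-graph rules of Section~\ref{sss.compcase} are \emph{verbatim} the Moran resampling dynamics (for $FV$), the Galton--Watson branching dynamics (for $DW$), and their immigration/emigration variants (for $McKV$); correspondingly, the finite genealogical processes $\CU^{(n)}=(\CU^{(n)}_t)_{t\ge0}$ of Remark~\ref{r.888} converge in $D([0,\infty),\U_1)$, respectively $D([0,\infty),\U)$ in the $DW$ case, to the Fleming--Viot, respectively Dawson--Watanabe, genealogy diffusion $\CU=(\CU_t)_{t\ge0}$ entering Definition~\ref{def.graphproc}. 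This is exactly the content of \cite{GrevenPfaffelhuberWinter2013,DGP12,DG19,ggr_GeneralBranching}, together with the hypothesis $\mathrm{LAW}[\CG^{a,c,\theta;(n)}_0]\Rightarrow\mathrm{LAW}[\CG^{a,c,\theta}_0]$, which lifts (through the embedding defining $\CG_0$ from $\CU_0$) to convergence of the initial genealogies. For $c>0$ one works with $V$-marked genealogies carrying the source $\theta$; for $DW$ one adds the total-mass coordinate, and the target path space is $C([0,\infty),\bar\G^{[]})$.

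Second, I would promote Definition~\ref{def.graphproc} to a map on path space, $\Psi\colon D([0,\infty),\U_1)\to D([0,\infty),\G^{[]})$, sending $(\CU_t)_{t\ge0}$ to $(\CG_t)_{t\ge0}$ with $\CG_t$ the equivalence class of $(U_t,r_t,h_t,\mu_t)$, $h_t(x,y)=\1\{r_t(x,y)\le 2t\}$ as in~\eqref{e678}. By Definition~\ref{def.graphproc}(c) this is well defined, and since sampling finite substructures and thresholding are measurable, $\Psi$ is Borel. The crux is that $\Psi$ is continuous at $\mathrm{LAW}[\CU]$-almost every path: on a $\U_1$-convergent sequence the sampled finite ultrametric measure spaces converge, so a finite sampled grapheme of $\CU^{(n)}$ converges to the one of $\CU$ provided the thresholds match, i.e.\ provided that for the limit $(\mu_t\otimes\mu_t)\{(x,y)\colon r_t(x,y)=2t\}=0$ and, more generally, $\delta\mapsto(\mu_t\otimes\mu_t)\{r_t\le\delta\}$ is continuous at $\delta=2t$. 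I would establish this "no atom at the coalescence threshold'' property from the structure of the limit diffusion: the $\mu_t\otimes\mu_t$-law of $r_t(x,y)$ for two independently $\mu_t$-sampled points is the law of twice the coalescence time, at time $t$, of the time-inhomogeneous Kingman-type dual coalescent started from two lineages, which has a density on $(0,2t)$ and no mass at $2t$ because the limiting genealogy is dust-free / comes down from infinity (this is precisely where the diffusion limit, as opposed to finite $n$, is used); combined with the requirement built into $\wt\G^{[]}_{\rm ultr}$ that initial cross-component distances are $+\infty$ (hence never equal $2t$), this holds for every $t>0$, for $\mathrm{LAW}[\CU]$-a.e.\ path. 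Continuity of $\Psi$ at such paths follows, the continuous mapping theorem yields the weak convergence in $D([0,\infty),\wt\G^{[]})$, and since the limit has continuous paths by Theorem~\ref{th.963-I}(c) this upgrades to convergence in $C([0,\infty),\wt\G^{[]})$.

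As a more robust alternative — to be used wherever the continuous-mapping argument is delicate — I would run the Ethier--Kurtz scheme directly on $\wt\G^{[]}$: (i) \emph{compact containment} of $\{\CG^{a,c,\theta;(n)}\}_n$, inherited from the known compact containment of the approximating genealogies; (ii) \emph{tightness} in $D([0,\infty),\wt\G^{[]})$ via the Jakubowski/Aldous criterion applied to the polynomials of $\wh\Pi^{\ast}$, using that they separate points and that $\sup_n\EX\,\vert\wh\CL^{\ast,(n)}\Phi(\CG^{(n)}_{\cdot})\vert<\infty$ for $\Phi\in\wh\Pi^{\ast}$; (iii) \emph{identification of limit points}, each subsequential limit solving the $(\wh\CL^{\ast},\wh\Pi^{\ast})$-martingale problem, because the discrete pre-generators converge to $\wh\CL^{\ast}$ uniformly on $\wh\Pi^{\ast}$ (a direct computation matching the rates $d$, $bn$, $c$ against the action of $\wh\CL^{\ast}$ on sampling polynomials, with the $1/n$ corrections vanishing) and the compensated polynomials are uniformly integrable; (iv) \emph{uniqueness}, from the well-posedness in Theorem~\ref{th.963-I}(a), so the full sequence converges. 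The $\G^{\langle\rangle}$-statement for $\CG^{\uparrow}$ then follows by composing with the (measurable, and $\mathrm{LAW}[\CG]$-a.e.\ continuous) projection $\G^{[]}\to\G^{\langle\rangle}$ that reads off the connection-matrix distribution, invoking Theorem~\ref{th.963-I}(b).

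The main obstacle is the second step: $h_t$ is extracted from $r_t$ by a \emph{hard threshold} at level $2t$, an operation that is genuinely discontinuous on $\U_1$, so the whole argument rests on the "no atom at $2t$'' property of the limit genealogy holding \emph{simultaneously for all} $t>0$ along almost every path — a single exceptional time would inject a spurious jump and destroy path continuity. Proving this, i.e.\ absolute continuity of sampled coalescence times together with the dust-free property of the limiting genealogy, uniformly along the path, is where the real work lies; the compact-containment, tightness and generator-convergence inputs, while lengthy, are routine given the cited genealogy-valued diffusion theory and the well-posedness already established in Theorem~\ref{th.963-I}.
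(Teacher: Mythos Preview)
Your overall architecture coincides with the paper's: reduce to the known path-space convergence $\CU^{(n)}\to\CU$ of the genealogy-valued Moran/Galton--Watson approximations (this is precisely what is quoted from \cite{GrevenPfaffelhuberWinter2013,DGP12,DG19,ggr_GeneralBranching,GSW2016}), and then transfer along the functional $\CU\mapsto\CG$ of Definition~\ref{def.graphproc}. This transfer is exactly Proposition~\ref{prop.2351}(a), and your continuous-mapping formulation of it is correct. The Ethier--Kurtz alternative you sketch is sound but is not the route the paper takes.

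The one place where you and the paper diverge is the handling of the threshold $r_t\le 2t$. You treat the possible atom of $\mu_t\otimes\mu_t$ at $\{r_t=2t\}$ as the central difficulty and propose to rule it out via absolute continuity of the dual coalescence time. The paper bypasses this analytic step by a structural observation: on $\wt\G^{[]}_{\rm ultr}$ the function $h$ is \emph{itself} an ultrametric (coarser than $r$), so the grapheme is a bi-ultrametric measure space; since balls in an ultrametric space are clopen, the $2t$-ball decomposition of $\CU_t$ is a continuous functional and its convergence is part of the Gromov-weak convergence $\CU^{(n)}_t\to\CU_t$ (Sections~\ref{ss.algeb}--\ref{ss.polish} and the proof of Proposition~\ref{prop.2351}(a)). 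Your ``no atom at $2t$'' condition is in fact what underlies this, but in the present models it is almost free rather than the hard part: for two $\mu_t$-sampled leaves the backward lineages either merge at some $\tau\in(0,t]$, giving $r_t=2(t-\tau)<2t$, or remain distinct at time $0$ and land on distinct ancestors in $\CU_0$, giving $r_t=2t+r_0(\cdot,\cdot)>2t$; hence $\{r_t=2t\}$ is $\mu_t\otimes\mu_t$-null for \emph{every} $t>0$ simultaneously, without invoking density of hitting times. So your plan is correct, but you are overestimating the obstacle: the ultrametric/clopen-ball viewpoint is the shortcut that makes the ``simultaneous for all $t$'' issue disappear.
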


\begin{corollary}[General initial states]\label{cor.1013}
If the process starts in $\G^{[]}$ outside the closure of $\G^{[]}_{\rm comp}$, then convergence still holds with an initial value given by the completely connected component part and given by the $h$-dust case (i.e., $h \equiv 0$) with the remaining weight, so that an instantaneous jump in $h$ at time $t=0$ occurs.
\end{corollary}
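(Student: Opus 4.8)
The plan is to reduce the claim to the already-established convergence of the finite-grapheme processes from an admissible initial state (Theorem~\ref{th.963-II}) together with the instantaneous-entrance mechanism described in Remark~\ref{r.938}. The starting observation is that any $\CG_0 \in \G^{[]}$ arising as a limit of our finite-graph dynamics decomposes into a \emph{completely connected component part}, which lives in $\wt\G^{[]}_{\rm comp}$, and a \emph{remaining weight} on which the connection function is forced to the dust state $h\equiv 0$ at any positive time. Concretely, I would first show that for each fixed small $\delta>0$ the grapheme obtained by running the dynamics for time $\delta$ and then relabelling time, started from $\CG_0$, is (a.s.) in $\wt\G^{[]}_{\rm comp}$: only finitely many initial vertices have descendants at time $\delta$, every initial edge has been replaced, and the non-component weight has been mapped to $h\equiv 0$. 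This is exactly the content of Remark~\ref{r.938}, applied at the finite-$n$ level where it is an elementary fact about the pure-jump dynamics, and then passed to the limit using Theorem~\ref{th.963-II} on the time-shifted process.

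Next I would assemble the convergence statement. Write $\Phi_\delta(\CG_0)$ for the (random) state reached at time $\delta$; by the previous step $\Phi_\delta(\CG_0)$ is supported on $\wt\G^{[]}_{\rm comp}\subseteq\wt\G^{[]}_{\rm ultr}$, and as $\delta\downarrow 0$ it converges in $\G^{[]}$ to the grapheme $\CG_0^\circ$ obtained from $\CG_0$ by replacing $h$ with $h\equiv 0$ on the non-component weight and keeping the completely connected component part — this is the asserted initial value with the instantaneous jump in $h$. Using the Feller property and the strong Markov property of $\CG$ from Theorem~\ref{th.963-I}(c), together with the convergence in Theorem~\ref{th.963-II} applied with initial laws converging to $\mathrm{LAW}[\Phi_\delta(\CG_0^{(n)})]$, one obtains
\begin{equation}
\mathrm{LAW}\left[(\CG_t^{a,c,\theta;(n)})_{t\ge 0}\right]
\Tno
\mathrm{LAW}\left[(\CG_t^{a,c,\theta})_{t\ge 0}\right],
\end{equation}
where the limit process has $\CG_0 = \CG_0^\circ$ (the dust-modified state) but $\CG_{0-}$ equal to the original prescribed state, so that the path has a single discontinuity at $t=0$ and is continuous thereafter. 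The standard way to make this rigorous is to combine two limits: first $n\to\infty$ for the process started at time $\delta$ (covered by Theorem~\ref{th.963-II}), then $\delta\downarrow 0$, controlling the exchange of limits by a uniform-in-$n$ estimate showing that $\CG_t^{(n)}$ is within the $\epsilon$-neighbourhood of $\wt\G^{[]}_{\rm comp}$ for all $t\ge\delta$ with probability tending to $1$ as $\delta\downarrow 0$.

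The main obstacle, as for all statements in this circle, is the analytic awkwardness of the state space near non-restricted states: $\wt\G^{[]}_{\rm comp}$ is \emph{not closed}, so the instantaneous jump at $t=0$ means the limiting path is genuinely c\`adl\`ag rather than continuous at the origin, and one cannot simply invoke continuity of the map ``initial state $\mapsto$ law of the process''. What must be done carefully is to quantify the entrance behaviour — i.e., to show that the rate of leaving any fixed neighbourhood of $\G^{[]}\setminus\wt\G^{[]}_{\rm comp}$ is infinite \emph{uniformly enough} (in $n$ and in the starting point) that the $\delta\downarrow 0$ limit of $\Phi_\delta$ exists and equals the dust-modified state, and that no mass is lost in the $h$-component during the collapse. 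For the genealogical coordinate $\CU$ this is the classical ``coming down from infinity''-type estimate for the $\U_1$-valued Fleming--Viot / Dawson--Watanabe diffusion, available from \cite{GrevenPfaffelhuberWinter2013,DGP12}; the new work is to transfer it through the functional $\CG = \CG(\CU)$ of \eqref{e678}, using that the threshold $2t$ in the definition of $h_t$ forces all pre-$\delta$ edges to disappear, and to check that the polynomial test functions in $\wh\Pi^\ast$ separate the dust-modified state from its approximants, so that the weak limit is correctly identified.
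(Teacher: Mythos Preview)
Your overall framework (run to time $\delta$, apply Theorem~\ref{th.963-II}, then send $\delta\downarrow 0$) is a reasonable scaffold, but you have misidentified the mechanism that does the work, and this leaves a genuine gap.

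The point is that for an initial state outside $\wt\G^{[]}_{\rm comp}$ the finite-$n$ grapheme process $\CG^{(n)}$ is \emph{not} the functional $\CG(\CU^{(n)})$ of \eqref{e678}: the formula $h_t(x,y)=1 \Leftrightarrow r_t(x,y)\le 2t$ presupposes an admissible $h_0$, whereas for general $h_0$ the edges of $\CG^{(n)}$ evolve according to the graph rules of Section~\ref{sss.compcase} and can differ from the formula-defined edges. Consequently, ``coming down from infinity'' for $\CU$ and the threshold $2t$ in \eqref{e678} say nothing about what happens to the \emph{non-clique} edges of $\CG^{(n)}$; the genealogy $\CU^{(n)}$ does not see $h_0$ at all, so transferring its estimates through the functional cannot by itself force the extra edges to vanish. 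Your proposed ``new work'' is thus aimed at the wrong object.

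What the paper actually does (inside the proof of Proposition~\ref{prop.2351}, step~(i)) is a direct time-scale separation at level $n$. Decompose $h_0$ into its completely connected component part and the remainder. For the remainder, consider the edges between distinct weakly completely connected components: under the resampling rule each such edge is lost (when its endpoint is the looser) or duplicated (when it is the winner) in an unbiased way, and the total number of these inter-component edges undergoes changes of order $n$ per unit time, whereas the vertex frequencies of the components themselves fluctuate only at order $1$. Hence, as $n\to\infty$, the inter-component edge count hits $0$ before the component masses move, which is precisely the instantaneous collapse of the non-comp part of $h_0$ to dust. Once this is established, the formula \eqref{e678} applies from time $0^+$ on and the convergence of $\CU^{(n)}$ to $\CU$ carries the rest. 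If you want to keep your $\delta$-shift architecture, this two-scale argument is exactly the uniform-in-$n$ estimate you need to insert where you currently invoke the $\CU$-entrance law.
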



\paragraph*{Long-time behaviour and equilibrium.}

Next, we discuss the \emph{long-time behaviour}. Here the problem arises that the ultrametric spaces $(U_t,r_t)$ degenerate because pairs of individuals without a most recent common ancestor have distance $2t$, which diverges as $t \to \infty$. This means that the equilibrium state may have distances equal to $\infty$, and so we have to extend $\U_1$ and $\U$, or we have to transform the ultrametric $r_t$ to $1-\e^{-r_t}$ (which is a \emph{new ultrametric} for which distance $\infty$ becomes distance $1$). Therefore we now pass to transformed states, which we indicate by writing $\mathrm{LAW}^+$ instead of $\mathrm{LAW}$. Note that, viewed in $\G^{\{\}}$ or $\G^{\langle\rangle}$, our states do \emph{not} change: they remain equivalent to the untransformed states.

\begin{theorem}[Grapheme equilibria]
\label{th.963-III}
$\mbox{}$\\
(a) The grapheme dynamics with $\CG_0^{a,c,\theta} \in \G^{[]}_{\rm ultr}$ converges to a unique equilibrium in $\CM_1(\wt \G_{\rm ultr})$, with $\sim$ denoting bar or not bar, i.e.,
\begin{equation}
\label{e970}
\mathrm{LAW}^+ \left[\CG^{a,c,\theta}_t \right] \Tto \nu^{a,c,\theta},
\end{equation}
with $a=d$ or $b$ for $FV$, respectively, $DW$. For $c>0$ this equilibrium is non-trivial and has completely connected components, while for $c=0$ it is trivial and equal to $\delta_{K}$ with $K$ the complete graph.\\
(b) The equilibrium on $\G^{[]}$, respectively, $\bar \G^{[]}$ is determined by the entrance law of the dual process, respectively, the conditional dual process, run for an infinite time (see Theorem~\ref{th.853} and Remark~\ref{r.cond}).\\
(c) The equivalence class of the frequency vector (w.r.t.\ permutations) of the different completely connected components is a random element in $\ell_1(\N)$ whose law
\begin{equation}
\label{e1444}
\Gamma^{a,c,\theta}, \quad a=b,d,
\end{equation}
can be computed (see Section~\ref{sss.proc}) and uniquely identifies the law of the connection-matrix distribution in the equilibrium of $\CG^\downarrow$ induced by $\nu$ on $\G^{\langle\rangle}$. \qed
\end{theorem}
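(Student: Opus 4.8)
The plan is to prove Theorem~\ref{th.963-III} by reducing each of the three assertions to the corresponding structural fact about the underlying genealogy-valued diffusion $\CU$ (Fleming-Viot, respectively, Dawson-Watanabe with immigration/emigration at rate $c$), and then transporting the result through the functional $\CG=\CG(\CU)$ defined in~\eqref{e678}. Since $\CG$ is a continuous functional of $\CU$ (after the transformation $r_t\mapsto 1-\e^{-r_t}$, which is why we write $\mathrm{LAW}^+$), weak convergence of $\mathrm{LAW}[\CU_t]$ to an equilibrium on the transformed $\U_1$, respectively, $\U$, pushes forward to weak convergence of $\mathrm{LAW}^+[\CG^{a,c,\theta}_t]$, giving~\eqref{e970}. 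The existence and uniqueness of the equilibrium for $\CU$ is available from the theory of genealogy-valued diffusions \cite{GrevenPfaffelhuberWinter2013,DGP12,DG19}: for $c>0$ the immigration term prevents the genealogy from "coming down to a single point" and the process is ergodic with a non-degenerate stationary law; for $c=0$ the genealogical distances all diverge like $2t$, so in the $1-\e^{-r_t}$ coordinates every pair of points has distance $\to 1$, the measure concentrates on a single "cloud at distance $1$", and under the map~\eqref{e678} this is exactly $\delta_K$ with $K$ the complete graph (since $r_t(x,y)\le 2t$ holds eventually for all sampled pairs). This settles (a).

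For part (b) the plan is to use the duality established in Theorem~\ref{th.853}. The grapheme diffusion $\CG$ (equivalently $\CU$) is dual to a coalescent-driven process; by the standard argument (e.g. \cite{EK86,Daw93}), for any test polynomial $\Phi\in\wh\Pi^\ast$ one has $\E[\Phi(\CG_t)] = \E[\langle\CG_0,\text{dual}_t\rangle]$, and letting $t\to\infty$ the dual process, run for infinite time, converges to an entrance law from "time $-\infty$"; the limit of the left-hand side is then $\int \Phi\,\dd\nu^{a,c,\theta}$, so the equilibrium $\nu^{a,c,\theta}$ is characterised by pairing against that entrance law. In the bar case (variable population size, DW) one conditions the dual on non-extinction, which produces the "conditional dual process" referred to in Remark~\ref{r.cond}; the rest of the argument is identical. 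The key input here is that the duality function class $\wh\Pi^\ast$ is measure-determining on $\wt\G_{\rm ultr}$ (which is part of the martingale-problem setup in Theorem~\ref{th.963-I}), so convergence of all dual expectations pins down the equilibrium law uniquely.

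For part (c), the plan is to read off the family-size structure directly from the equilibrium genealogy. In equilibrium the partition of the (infinite) vertex population into completely connected components corresponds precisely to the partition of $U_\infty$ into the balls of radius $2t$ as $t\to\infty$ — equivalently, into the ancestral families, i.e., the atoms/subpopulations descending from distinct "roots at $-\infty$". For the Fleming-Viot genealogy in equilibrium this allocation is the classical Poisson–Dirichlet / GEM structure with parameter governed by $c/d$; for the Dawson-Watanabe-with-immigration genealogy the family-size vector is the corresponding size-biased decomposition coming from the stable/Gamma subordinator underlying the CSBP with immigration (this is where the "classical key distributions in population dynamics and statistics" enter). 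The equivalence class of this frequency vector modulo permutation is a random element of $\ell_1(\N)$ with a law $\Gamma^{a,c,\theta}$ computable from those classical distributions; since, in the autonomous case, $\nu$ (hence the equilibrium of $\CG^\downarrow$ on $\G^{\langle\rangle}$) is a deterministic function of this unordered frequency vector — every completely connected component being a clique on a fraction-sized vertex set, so the connection-matrix law is the mixture over partitions of $\N$ generated by a paintbox with those frequencies — $\Gamma^{a,c,\theta}$ uniquely identifies the equilibrium connection-matrix distribution, which is the claim.

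The main obstacle I expect is part (c), and specifically two points within it: first, justifying rigorously that "completely connected components in the grapheme equilibrium" coincide with "ancestral families in the genealogical equilibrium" — this requires controlling the $t\to\infty$ limit of the threshold-$2t$ relation in~\eqref{e678} simultaneously with the degeneration of the ultrametric, which is exactly the delicate interplay the $\mathrm{LAW}^+$/transformation is designed to handle but which must be done carefully (a tightness-plus-identification argument on the space of partitions of $\N$, using that the dynamics "immediately moves into a smaller space" as noted after~\eqref{e544}); and second, identifying the law $\Gamma^{a,c,\theta}$ explicitly as a Poisson–Dirichlet-type law, which for the DW/variable-size case with immigration is less standard than the FV case and will require invoking the precise description of the equilibrium genealogy from \cite{DG19,ggr_GeneralBranching}. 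The convergence statement (a) and the duality characterisation (b) are, by contrast, relatively routine given the machinery already cited.
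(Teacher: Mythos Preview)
Your proposal is correct and follows essentially the same route as the paper: transfer the equilibrium results for the genealogy-valued diffusion $\CU$ to $\CG$ via the continuous functional~\eqref{e678}, use the duality of Theorem~\ref{th.853} (in its strong form~\eqref{e949}) to characterise the limit, and project onto the weight vector of the $2t$-balls to identify $\Gamma^{a,c,\theta}$ with the equilibrium of the classical measure-valued Fleming-Viot (resp.\ Dawson-Watanabe) process with immigration, citing~\cite{DGV95,DG96,GSW2016}. Two small points: your reasoning for $c=0$ is garbled --- distances do \emph{not} diverge like $2t$ in pure Fleming-Viot (the dual coalescent comes down from infinity, so all pairs eventually share an ancestor and $r_t(x,y)<2t$, whence $h_t\equiv 1$); and the obstacle you flag in part~(c) is handled in the paper by the structural observation that on $\G^{[]}_{\rm ultr}$ the connection function $h$ is itself a coarser ultrametric than $r$ (see~\eqref{S1}--\eqref{S3}), which makes the identification of completely connected components with $2t$-balls immediate rather than delicate.
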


In the sequel, we need the following extension.

\begin{corollary}[Marked version]
\label{cor.904}
The statements of Theorems~\ref{th.963-I}, \ref{th.963-II} and \ref{th.963-III} also hold for the $V$-marked version of the processes on $\G^{[],V},\G^{\langle\rangle,V}$. Here, the trivial equilibrium is $\int_V \theta(\dd v) \delta_{K_v}$ with $K_v$ the complete graph whose vertices all have the same type $v \in V$. \qed
\end{corollary}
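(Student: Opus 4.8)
The plan is to reduce the marked case to the unmarked one by systematically replacing the space $\U_1$ of genealogies by the space $\U_1^V$ of $V$-marked genealogies (cf.~\eqref{a666}). The corresponding $V$-marked genealogy-valued Fleming--Viot, Dawson--Watanabe and McKean--Vlasov diffusions, together with their martingale-problem characterisations and their duals, are available from \cite{GrevenPfaffelhuberWinter2013,DGP12,DG19,ggr_GeneralBranching}. Given a representative $(U_t,r_t,\kappa_t,\mu_t)$ of the $\U_1^V$-valued diffusion $\CU^V$ (with $\kappa_t\colon U_t\to V$ the type map), the grapheme construction of Definition~\ref{def.graphproc} carries over verbatim: one builds the connection function $h_t$ from $(r_t,\kappa_t)$ exactly as dictated by the marked rules of Section~\ref{ss.examples} (using the $2t$-genealogical threshold on $r_t$, the type map $\kappa_t$, and, for class (II), the superimposed edge-addition/removal mechanism), and the resulting equivalence class $[(U_t,r_t,h_t,\kappa_t,\mu_t)]$ defines the $\G^{[],V}$-valued process $\CG^V$ as a functional of $\CU^V$, just as in \eqref{e684}.

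First I would establish the analogue of Theorem~\ref{th.963-I}. The test-function algebra $\wh\Pi^{\ast,V}$ is generated by polynomials that sample finitely many vertices \emph{together with their types} and evaluate a bounded function of the induced marked graph; the generator $\wh\CL^{\ast,V}$ is the lift to $\G^{[],V}$ of the generator of $\CU^V$, and well-posedness of the $(\wh\CL^{\ast,V},\wh\Pi^{\ast,V},\wh\Gamma^{\ast,V})$-martingale problem is inherited from well-posedness of the $\U_1^V$-martingale problem by the same argument as in the unmarked case -- Sections~\ref{s.state}--\ref{s.compo} go through unchanged with $V$ carried along. The strong Markov property, the Feller property, continuity of paths and the second-order property of $\wh\CL^{\ast,V}$ again follow from the corresponding properties of $\CU^V$ via the functional representation, and the $\G^{\langle\rangle,V}$-valued functional process $\CG^{\uparrow,V}$ induced by $\nu$ is treated exactly as $\CG^\uparrow$. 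For the analogue of Theorem~\ref{th.963-II}, the finite marked-graph dynamics of Section~\ref{sss.compcase} induce finite marked-grapheme dynamics via \eqref{e678}; the convergence of the finite marked genealogies to $\CU^V$ is part of the cited theory, and tightness in $D([0,\infty),\wt\G^{[],V})$ plus identification of the limit via the marked martingale problem yields $\mathrm{LAW}[(\CG^{a,c,\theta;(n)}_t)_{t\ge0}]\La\mathrm{LAW}[(\CG^{a,c,\theta}_t)_{t\ge0}]$, using that $\G_n^{[],V}$ is Polish. Corollary~\ref{cor.1013} transfers verbatim.

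For the analogue of Theorem~\ref{th.963-III}, convergence to a unique equilibrium again follows by running the marked dual coalescent for an infinite time (the unmarked argument already covers both $a=d$ and $a=b$). The one genuinely new point is the identification of the \emph{trivial} ($c=0$) equilibrium. For $c=0$ the vertex genealogy comes down from infinity, so for large $t$ every finite sample of vertices has a single most recent common ancestor; hence all sampled edges are present (the complete graph), and, since under the marked rules descendants carry the type of their ancestor, every vertex of the sample eventually carries that single common type. The law of this shared type is the stationary law of the neutral type-dynamics, namely the reference type-measure $\theta$ (the preserved initial type law in the absence of mutation), so that $\mathrm{LAW}^+[\CG^{a,c,\theta}_t]\Tto\int_V\theta(\dd v)\,\delta_{K_v}$, with $K_v$ the complete graph whose vertices all have type $v$. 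For $c>0$ the non-trivial $\ell_1(\N)$-valued frequency description of Theorem~\ref{th.963-III}(c) persists, refined by assigning an independent $\theta$-distributed type to each completely connected component.

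The main obstacle is not conceptual but one of bookkeeping: one must check that every structural input quoted from the genealogy-valued diffusion literature is genuinely available in the full $V$-marked generality allowed here -- mutation, type-dependent resampling bias (selection), Cannings-type sweeps, and the source $\theta$ -- and, in particular, that the marked martingale problem is well-posed and dual to a marked coalescent carrying mutation and selection terms. Where the references treat marked Fleming--Viot/Dawson--Watanabe directly this is immediate; the remaining cases use the standard extensions of the duality. After that, the lift to graphemes and the passage to the functional $\nu$ on the (closure of the) completely-connected-component subspace reproduce verbatim the measurability and non-closedness subtleties flagged in Remark~\ref{r.938} -- including the fact that the marked dynamics also enters the subspace $\wt\G^{[],V}_{\rm comp}$ at infinite rate -- and are handled exactly as there.
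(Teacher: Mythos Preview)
Your proposal is correct and takes essentially the same approach as the paper: the corollary is treated as a routine parallel extension where $\U_1$ is replaced by $\U_1^V$, the test-function algebra $\wh\Pi^\ast$ by $\wh\Pi^{\ast,V}$, and the same references \cite{GrevenPfaffelhuberWinter2013,DGP12,DG19,ggr_GeneralBranching} supply the needed properties of the marked genealogy-valued diffusion $\CU^V$. The paper in fact gives no separate proof of this corollary, relying on the parallel marked setup developed throughout Sections~\ref{s.evol}--\ref{s.nccomp} (in particular Section~\ref{ss.marked} and the remark in Section~\ref{sss.process} that ``the latter are passed on to the descendants when $m=0$''), so your write-up is if anything more detailed than what the paper offers; your last paragraph slightly overreaches by worrying about mutation, selection and Cannings sweeps, which belong to Theorem~\ref{th.1838} rather than to this corollary.
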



\paragraph*{Duality.}

The \emph{dual process} of $\CG$ in the $\G$-valued case is indispensable to understand the behaviour of $\CG$. This process lives on $\G^{\{\}},\G^{[]}, \G^{\langle\rangle}$ and is \emph{driven} by a \emph{Kingman coalescent process} $C^t=(C^t_s)_{s \in [0,t]}$ with $t$ the time horizon for the duality. This coalescent is a process of partitions of $\N$, where pairs of partitions merge at rate $d$, respectively, $b$, and jump to a cemetery $S$ at rate $c$ (in the cemetery coalescence is suspended), where a location is chosen according to $\theta \in \CM_1(S)$. This coalescent leads to a sequence of simple Markov jump processes when $\N$ is replaced by $[n]$, which form a \emph{consistent family} in $n$. By a result of Kingman, we know that the entrance law  from the state $\N$ exists as a projective limit. We equip $\N$ with the ultrametric by setting the distance between $i$ and $j$ to twice the time they need to enter the same partition element before time $t$, respectively, $2t$ if that does not happen by time $t$. Taking the completion to get an ultrametric space $\wt U_t$, using the equidistribution on $\wt U_t$ (see~\cite{GrevenPfaffelhuberWinter2009} for the construction of the $\U_1$-valued coalescent), and using \eqref{e678} to define $h$ and taking equivalence classes, we get the \emph{dual grapheme} $(\CC_s^t)_{s \in [0,t]}$ process for time-horizon $t$ and the dual grapheme $\CC_t^t$ by evaluating it at time $t$, written shortly as $\CC_t \in \G^{[]}$.

In the case of $\bar \G$-valued processes the situation is different. Here the dual has values in $[0,\infty) \times \G^\ast, \ast = []$ or $\{\},\langle\rangle$, where a number in $[0,\infty)$ given by an exponential functional of the coalescent path is added. We have the same dual process for the $\G^\ast$-component as before, and the $[0,\infty)$-component is an exponential of a path integral of the functional of the coalescent, leading to a Feynman-Kac duality, and the collection of the finite dual processes is \emph{no longer consistent}. At the end of Remark~\ref{r.cond} we describe a way to circumvent this obstacle.

Furthermore, the \emph{duality functions $H$} on $E \times E^\prime$, where $E,E^\prime$ are the state spaces of the process, respectively, the dual process, are based on polynomials (and use \emph{finite} coalescents) and on a Feynman-Kac potential $\mathfrak{V}:E^\prime \to \R^+$, given by the number of dual pairs (see Section~\ref{s.duality} for detailed definitions of $H$ and $\mathfrak{V}$). This is a duality for the underlying genealogy process $\CU$, as well as for the functional $\nu^{(\infty)}$ of connection-matrix distributions.

\begin{theorem}[Duality]
\label{th.853}
$\mbox{}$\\
(a) For $\CG^{d,c,\theta}$ the following duality relation with duality function $H$ and dual process $\CC$ holds:
\begin{equation}
\label{e855}
\E \left[H\left(\CG_t^{d,c,\theta},\CC_0\right)\right]
= \E \left[H\left(\CG_0^{d,c,\theta},\CC_t\right)\right].
\end{equation}
(b) For $\CG^{b,c,\theta}$ the Feynman-Kac duality with potential $\mathfrak{V}$ holds:
\begin{equation}
\label{e859}
\E \left[H\left(\CG_t^{b,c,\theta},\CC_0\right)\right]
= \E\left[H\left(\CG_0^{b,c,\theta}, \CC_t \right) \exp\left(\int_0^t  \dd s\, \mathfrak{V}(\CC_{t-s})\right)\right].
\end{equation} \qed
\end{theorem}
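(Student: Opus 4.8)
\textbf{Proof strategy for Theorem~\ref{th.853} (Duality).}

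The plan is to derive the duality relations \eqref{e855} and \eqref{e859} from the generator characterisation in Theorem~\ref{th.963-I}, using the standard ``generator pairing'' criterion for duality: if one can exhibit an operator $G^\ast$ acting on the dual variable such that, for the duality function $H$ and its Feynman--Kac modification,
\begin{equation}
\label{e.dual-crit}
\wh\CL^\ast H(\cdot, \psi)(\varphi) = G^\ast H(\varphi, \cdot)(\psi) + \mathfrak{V}(\psi)\, H(\varphi,\psi)
\end{equation}
(with $\mathfrak V \equiv 0$ in the FV case), together with suitable integrability/regularity, then the duality identity follows by the usual semigroup argument of differentiating $s \mapsto \E[H(\CG_{t-s}, \CC_s)]$ (resp.\ its Feynman--Kac weighted version) and checking the derivative vanishes. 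I would first set up \eqref{e.dual-crit} at the level of the underlying $\U_1$-valued genealogy diffusion $\CU$, where the corresponding duality with the $\U_1$-valued Kingman coalescent (the ``tree-valued'' duality) is already available from \cite{GrevenPfaffelhuberWinter2013,DGP12,DG19}; the point is that the polynomial test functions $\wh\Pi^\ast$ are built precisely from $n$-sample expectations, and the generator $\wh\CL^\ast$ acts on an $n$-polynomial by producing a finite linear combination of $n$- and $(n\pm1)$-polynomials, which is exactly what the coalescent generator does on the dual side (pair-coalescence lowers the sample size, the $c$-transitions send blocks to the cemetery $S$).

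Concretely, the steps would be: (i) recall the explicit form of $\wh\CL^\ast$ on a monomial $\Phi_{n,\varphi}$ (resampling term: reweight over pairs, corresponding to merging two coordinates; for DW, a branching/birth-death term lowering or raising $n$ and producing the Feynman--Kac potential $\mathfrak V$ = number of dual pairs; for McKV, the $c$-term replacing a coordinate by a $\theta$-sample, which on the dual is a jump of a block to $S$); (ii) define $H(\CG,\CC)$ as the $\CC$-indexed polynomial evaluated on $\CG$ via a joint sampling of vertices from $\CG$ and partition blocks from $\CC$, with edges read off through the connection function $h$ and the $2t$-threshold of \eqref{e678}; (iii) verify \eqref{e.dual-crit} by matching terms — the genealogical distance encoded in $r$ is what makes $h$ computable from the coalescent, so the identity \eqref{e678} relating $h$ to $r\le 2t$ is the bridge that lets the coalescent time-horizon $t$ enter correctly on the dual side; (iv) establish the consistency of the finite coalescents in $n$ and invoke Kingman's theorem for the entrance law from $\N$, so that $H$ is well-defined in the projective limit; (v) run the semigroup/Dynkin argument, using well-posedness of the martingale problem (Theorem~\ref{th.963-I}) to justify that the process is a solution and that the relevant expectations are finite, and in the DW case pass the potential $\mathfrak V(\CC_{t-s})$ through the $\exp(\int_0^t \mathfrak V)$ factor as in classical Feynman--Kac duality for branching. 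For the marked and $\bar\G$-valued versions one repeats the computation with the type coordinate carried along and with the $[0,\infty)$-mass component handled by the exponential functional of the coalescent path; here the finite duals are no longer consistent, so one argues instead via the conditional dual (Remark~\ref{r.cond}) or via a direct moment computation for each fixed sample size.

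The main obstacle I expect is integrability and the passage to the infinite coalescent in the DW (branching) case: the Feynman--Kac potential $\mathfrak V$ grows with the number of dual pairs, so one must control $\E[\exp(\int_0^t \mathfrak V(\CC_{t-s})\,\dd s)]$, which requires good bounds on how fast the coalescent comes down from infinity (finitely many blocks instantly, so $\mathfrak V$ is finite for $s>0$ but may be large near $s=0$) and a truncation/approximation argument through the finite-$n$ duals where everything is bounded. A secondary but genuine difficulty is that $\G^{\langle\rangle}$ is only a non-closed measurable subset of the Polish space $\wt\CW$ (as flagged around \eqref{e544}), so the duality on the $\langle\rangle$-level must be read off as a functional identity inherited from the $\G^{[]}$-level after the dynamics has entered $\wt\G^\ast_{\rm comp}$; one cannot run a clean martingale argument directly on $\G^{\langle\rangle}$. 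Everything else — the algebra of the generator pairing, the semigroup differentiation, the use of the single-root case plus the $\vdash$-operation to get general initial states — is routine once the above two points are settled.
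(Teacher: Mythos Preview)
Your proposal is essentially the paper's own approach: verify the generator criterion \eqref{e.dual-crit} (the paper's \eqref{e2189}) by lifting the known duality for the $\U_1$-valued genealogy process $\CU$ from \cite{GrevenPfaffelhuberWinter2013,DGP12,DG19} to the grapheme process $\CG$, using that $h$ is itself a (coarser) ultrametric determined by $r$ and that the second-order resampling operator acts by the \emph{same} merging-of-variables map $\theta_{i,j}$ on the $\varphi_r$-factor and the $\varphi_h$-factor of a polynomial, so the criterion verified for $\varphi_r$ alone transfers automatically to products $\varphi_r\varphi_h$.

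One correction to your obstacle list: the integrability worry in the DW case is largely misplaced for Theorem~\ref{th.853} itself. The $H$-duality is stated for the \emph{finite}-$n$ dual $\CC$ living in $E'_n=S_n\times D_n$ (one $n$ per polynomial), so $\mathfrak V(\CC_s)\le b\binom{n}{2}$ is bounded and the Feynman--Kac factor is trivially finite. The ``coming down from infinity'' and entrance-law issues only enter for the \emph{strong} duality (the Corollary after Theorem~\ref{th.853}) and for the equilibrium identification via $\CC_\infty$, not for \eqref{e855}--\eqref{e859}. Similarly, the non-consistency of the finite DW duals that you flag is a feature of the $[0,\infty)$-mass component and is handled, as you note, via the conditional duality of Remark~\ref{r.cond}; it does not obstruct the basic generator-pairing argument for fixed $n$.
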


\noindent
Theorem~\ref{th.853} says that we can calculate the expectation in the left-hand side (which determines the law of $\CG_t^{b,c,\theta}$ uniquely) in terms of the Markov jump process $(\CC_t)_{t \geq 0}$, together with the distance growth that drives the grapheme process in the right-hand side.

In fact, we can strengthen this result to a \emph{strong} duality in the Fleming-Viot case. Here, a subtlety arises because we need the concept of \emph{pasting} of graphemes based on pasting one ultrametric space to another one (symbolic $\vdash$) in order to modify $(\wt \CU_t,\wt r_t)$ to incorporate \emph{general} $\CU_0=[U_0,r_0,\mu_0]$ that are \emph{different} from the \emph{single-root case}. The pasted $\wt r_t \vdash$ arises by adding $r_0$ and $\wt r_t$, and taking $\wt r_t \vdash (\iota,\iota^\prime) = r_0(\tau(\iota),\tau(\iota^\prime))+ \wt r_t(\iota,\iota^\prime)$, where $\tau$ is the matching of $\N$ with the $\mu_0$-sample sequence, respectively, its piece up to the number of partition elements at time $t$.

\begin{corollary}[Strong duality]
\begin{equation}
\label{e1094}
\CL\left[\CG_t^{d,c,\theta}\right] = \CL\left[\CG_0 \vdash \CC_t^{d,c,\theta,\ast}\right].
\end{equation}
\qed
\end{corollary}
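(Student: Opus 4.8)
The plan is to upgrade the (expectation-level) duality of Theorem~\ref{th.853}(a) to an identity of laws by exploiting the special structure of the Fleming–Viot case, where the dual coalescent is a \emph{consistent} family of finite Markov jump processes with a well-defined entrance law from the state $\N$ (no Feynman–Kac weight appears, unlike the $\bar\G$-valued Dawson–Watanabe case). First I would recall that the duality function $H$ is built from polynomials that sample finite subgraphemes according to $\mu_t$ and read off the genealogy via~\eqref{e678}; because the polynomials are \emph{measure- and convergence-determining} on $\G^{[]}$ (this is exactly the property recorded for $\wh\Pi^\ast$ in the martingale-problem formulation~\eqref{e336} and Theorem~\ref{th.963-I}), it suffices to show that the two laws $\CL[\CG_t^{d,c,\theta}]$ and $\CL[\CG_0 \vdash \CC_t^{d,c,\theta,\ast}]$ assign the same value to $\E[H(\,\cdot\,,\CC_0^{(k)})]$ for every finite dual configuration $\CC_0^{(k)}$ (a $k$-sample dual state), and then let $k\to\infty$ using the projective/consistency structure of the coalescent.

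The key steps, in order, are: (1) For the single-root initial state $\CG_0 = [\{1\},\uuline 0,\delta_1]$, Theorem~\ref{th.853}(a) gives $\E[H(\CG_t^{d,c,\theta},\CC_0)] = \E[H(\CG_0^{d,c,\theta},\CC_t)]$; since the right-hand side is, by construction of $H$ and of the $\U_1$-valued coalescent (cf.~\cite{GrevenPfaffelhuberWinter2009}), precisely the polynomial evaluated on the grapheme $\CC_t^{d,c,\theta,\ast}$ obtained from the coalescent run for time $t$ and equipped with the genealogical ultrametric, this already identifies all polynomial moments of $\CG_t^{d,c,\theta}$ with those of $\CC_t^{d,c,\theta,\ast}$, hence $\CL[\CG_t^{d,c,\theta}] = \CL[\CC_t^{d,c,\theta,\ast}]$ in the single-root case. (2) For a general initial state $\CG_0 = [U_0,r_0,\mu_0] \in \wt\G^{[]}_{\rm ultr}$, I would use the decomposition already flagged in Definition~\ref{def.graphproc}(a): $\CG_t^{d,c,\theta}$ is obtained from the single-root solution by the pasting operation $\vdash$, which adds the ancestral metric $r_0$ on top of the freshly-grown metric $\wt r_t$ via $\wt r_t\vdash(\iota,\iota') = r_0(\tau(\iota),\tau(\iota')) + \wt r_t(\iota,\iota')$, with $\tau$ the matching of $\N$ to the $\mu_0$-sample sequence truncated at the number of partition elements surviving at time $t$. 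Because in the Fleming–Viot dynamics the vertex population (and hence the genealogy above time $0$) evolves autonomously and independently of $\CG_0$, the pasted metric has exactly the right joint law: one runs the coalescent $\CC_t^{d,c,\theta,\ast}$, reads off how many ancestral lineages survive, matches them to the $\mu_0$-sample, and prepends $r_0$. (3) I would then check that the pasting operation commutes with the duality computation — i.e. evaluating a sampling polynomial on $\CG_0 \vdash \CC_t^{d,c,\theta,\ast}$ factorises into the ``above time $0$'' part (handled by the single-root duality of step (1)) and the ``ancestral'' part (handled by averaging over the $\mu_0$-sample and the partition structure of $\CC_t$) — and that this factorisation matches term-by-term the evolution of $\E[H(\CG_t^{d,c,\theta},\CC_0^{(k)})]$ under the generator $\wh\CL^\ast$ acting on the dual variable. (4) Finally, pass $k\to\infty$: the finite dual states form a consistent family and $\CC_t^{d,c,\theta,\ast}$ is their projective limit (Kingman's entrance law), while the polynomials are convergence-determining, so the equality of laws on all finite-$k$ polynomials upgrades to $\CL[\CG_t^{d,c,\theta}] = \CL[\CG_0 \vdash \CC_t^{d,c,\theta,\ast}]$.

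The main obstacle is step (3): making rigorous the claim that the pasting operation $\vdash$ is compatible with \emph{both} the forward grapheme dynamics and the dual coalescent simultaneously, i.e. that attaching the ancestral metric $r_0$ does not interfere with the coalescent-driven duality. Concretely, one must verify that the matching $\tau$ of $\N$ to the $\mu_0$-sample is consistent under coalescence events (so that when two lineages of $\CC_t$ merge, the corresponding $r_0$-distances are correctly identified via the partition), and that the resulting object is genuinely in $\wt\G^{[]}_{\rm ultr}$ — in particular that $h$ remains a \emph{coarser ultrametric than $r$} after pasting, which requires $r_0$ itself to be coarser than the threshold, i.e. uses precisely the restriction $\CG_0 \in \wt\G^{[]}_{\rm ultr}$. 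The subtlety that the Dawson–Watanabe dual is \emph{not} consistent (it carries a Feynman–Kac weight $\mathfrak{V}$, and the finite duals no longer cohere, as noted in the discussion preceding Corollary~\ref{cor.904}) is exactly what confines this strong duality to the Fleming–Viot case, and checking that this consistency genuinely holds for FV — uniformly in the number of lineages and under the pasting — is where the real work lies.
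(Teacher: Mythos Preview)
Your proposal is correct and the core mechanism---$H$-duality plus law-determining polynomials plus tightness/consistency of the finite coalescent duals to pass to the entrance law---is exactly what the paper uses. However, the paper takes a shorter route that sidesteps what you flag as the main obstacle (your step~(3), compatibility of the pasting $\vdash$ with the dual). Rather than verifying this compatibility directly at the grapheme level, the paper observes that the strong duality is \emph{already established in the literature} for the underlying $\U_1$-valued genealogy process $\CU$ (see~\cite{GrevenPfaffelhuberWinter2013}), and then uses that in these model classes $h_t$---and hence $\CG_t$---is a \emph{deterministic functional} of $\CU_t$ via~\eqref{e678}: the $2t$-ball decomposition of the ultrametric space gives precisely the completely connected components, both in the forward process and in the dual coalescent. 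Since the same functional applied to $\CC_t$ produces the dual grapheme, the strong duality $\mathrm{LAW}[\CU_t]=\mathrm{LAW}[\CC_t]$ lifts verbatim to~\eqref{e1094}, with the pasting inherited from the $\U_1$-level pasting already handled in the references. Your route would work too and is more self-contained, but it re-derives at the $\G^{[]}$ level what the paper imports wholesale from $\U_1$; the paper's shortcut buys you step~(3) for free.
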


\begin{remark}[Conditional duality]\label{r.cond}
{\rm For $\CG^{b,c,\theta}$ the situation is more subtle, but we get a similar structure when we condition on the full size process $(\bar \mu_t)_{t \geq 0}$. Conditioned on the latter path, the process of the $\U_1$-valued component of the state can be represented by a coalescent with coalescence rate $b(\bar \mu_{t-s})^{-1}$ and jump rate $c(\bar \mu_{t-s})^{-1}$ at the backward time $s$ of the coalescent (see~\cite{DG19} for more details).}\hfill$\spadesuit$
\end{remark}


\subsubsection{Discussion of state space properties}
\label{sss.stateprop}

\begin{remark}[Relation of component decomposition to Fleming-Viot]
{\rm To understand the structure of $\nu_t$ we note the following. The distribution of the stochastic process of the \emph{size-ordered} vectors $((\mu_t(A_i))_{i \in \N}$, with $(A_i)_{i \in \M}$ the completely connected components, only depends on the equivalence class of $(U_t,r_t,h_t,\mu_t)$ and can be characterised by the Fleming-Viot \emph{measure-valued} diffusion on $\CM_1([0,1])$ at rate $d$, with emigration/immigration at rate $c$ from a source $\theta \in \CM_1 ([0,1])$ (see~\cite{DGV95} and Section~\ref{sss.proc}). To make this connection, it is convenient to think of the marked setting: each component is given a mark, drawn independently from $[0,1]$ according to $\theta$, and this type is inheritable and assigned at the immigration time. The weights of the types at time $t$ uniquely specify an atomic measure, and hence an equivalence class of vectors corresponding to the unmarked case.}\hfill$\spadesuit$
\end{remark}

\begin{remark}[Fisher-Wright, $N$-type Feller]\label{r.1200}
{\rm If we modify the dynamics such that vertices have one of $N$ fixed inherited types and are connected whenever two vertices have the same type, then we get $N$-type Fisher-Wright models or the $N$-type Feller branching model, and we get models described by $N$-dimensional diffusions, to which our theory applies. However, these cases can be also treated in a simpler way, based on $N$-dimensional diffusion theory. Our approach is tailored for the cases with $N=\infty$.}\hfill$\spadesuit$
\end{remark}

The process $\CG^\uparrow$ has a somewhat delicate mathematical structure and does not fit so well into the theory of general Markov processes. We explain why in two remarks.

\begin{remark}[Initial states]\label{r.968}
{\rm The initial values appearing for $\CG$ on $\G^\uparrow$ for $\nu_0$ are the elements where we have finitely many, respectively, countably many components characterised by a probability measure on $\N$ with a finite, respectively, countable support (without loss of generality chosen to be $[n]$ or $\N$), or we have the grapheme given by $([0,1],H \equiv 0$, Lebesgue measure), i.e., all vertices are on their own. In these cases we have \emph{continuous paths}. If the process starts outside $\G^{[]}_{\rm comp}$ and not in the single-root case, then it instantaneously jumps into the latter state at time $0$.}\hfill$\spadesuit$
\end{remark}

\begin{remark}[Difficulties and properties of the $\G^{\langle\rangle}$-martingale problem]\label{r.964}
$\mbox{}$\\
{\rm (1) The martingale problem, (a) on all of $\G^{[]}$, (b) on \emph{all} of $\wt \G^{\langle\rangle}$ or $\G^{[]}$, would be \emph{non-standard}, since the space $\G^{\langle\rangle}$ is not closed and hence is not Polish. However, it is a subset of $\wt \CW$, the space of graphons, which is a Polish space (even a compact Polish space) in the chosen topology. Therefore we have a legitimate stochastic process, taking values in a Polish space, but the martingale problem has to be extended to general graphons (which form a Polish space) in order to become a martingale problem of the standard form. We would need to show that the paths stay in an invariant subspace of $\G^{\langle\rangle}$, which all the solutions of the martingale problem enter. Still, not all the standard conclusions from the theory of Markov processes need to hold. Through the $h$-component, we get similar effects in all of $\G^{[]},\G^{\{\}}$, and the closure would be a space where $h$ becomes a graphon, say, $\wh \CW^{[]},\wh \CW^{\{\}}$.\\
(2) For our dynamics the peculiarity arises that, in any positive time, the path moves from points in $\G^{\langle\rangle}$ outside of $\G^{\langle\rangle}_{\rm comp}$ to points inside the \emph{closure of} $\G^{\langle\rangle}_{\rm comp}$. The latter is only a small subset of the state space, but it turns out that this subset is entered because the cliques in finite graphs take over on \emph{smaller} time scales. There is only one entrance law from states outside, namely, those with non-degenerate completely connected components with continuous paths, or starting in a point mass, and $h \equiv 0$ when the process starts outside $\G_{\rm comp}$. In particular, we can conclude from our results that there is no extension of the entrance laws to all points in the graphon space that are not in $\G^{\langle\rangle}$ or not in $\G^{[]}_{\rm comp}$, nor to the dust case $[\{1\},\underline{0},\underline{0},\delta_1]$ with continuous paths, only c\`adl\`ag paths and instantaneous jumps at $t=0$ into the closure of $\wt \G_{\rm comp}$. Furthermore, we cannot get a standard martingale problem that for $t>0$ has values in $\G^{\langle\rangle}$, since the latter is not closed and the rate to jump is infinity off $\wt \G_{\rm comp}$ or for the single-root case.\\
(3) The only option would be to introduce a stronger topology that makes $\G^{\langle\rangle}$ closed without interfering with the equivalence relation. This is done by requiring the limit to have a $\{0,1\}$-valued $h$, but then we still have to allow for \emph{infinite} jump rates on part of the space in the martingale problem. One way to construct a solution despite these deficiencies would be to use the graphon space $\wh \CW$, which is compact, verify that the path of the process stays in $\G^{\langle\rangle}$, and use the duality to construct the process (see~\cite{depperschmidt2023duality}) by verifying that for $t \geq 0$ the path is in the closure of $\wt \G_{\rm comp}^{\langle\rangle}$.} \hfill$\spadesuit$
\end{remark}


\subsection{Consequences and extensions of main theorems}
\label{sss.conex}

In Section~\ref{sss.link} we defined $\G^{\{\}, V}$-valued and $\G^{\langle\,\rangle,V}$-valued processes $\CG^{c,d,\theta}$ and $\CG^{c,b,\theta}$, which we abbreviate as $\CG^{\rm FV},\CG^{\rm DW}$ in the sequel. We have the characterisation of their path properties and their long-time behaviour as stated in Theorems~\ref{th.963-I}, \ref{th.963-II} and \ref{th.963-III}. We proceed with some supplementary results. In Section~\ref{sss.proc} we give a detailed description of the \emph{equilibrium process}, including a \emph{phase transition} in the path behaviour with respect to the parameter $c/a$, where $a=b,d$, and extract the two key equilibrium statistics of completely \emph{connected component's age and size}. Here the size depends on the functional $\CG^{\langle\rangle}$ only, but the age depends explicitly on the $[U_t,r_t,\mu_t]$-component, in particular, on $r_t$. In Section~\ref{ss.difftyp} we extend the results to \emph{type-biased resampling} (referred to as \emph{selection}) and \emph{type-mutation}, leading to rewiring of edges. In Section~\ref{ss.allow} we turn to the dynamics without completely connected components (due to removal and insertion of edges). In Section \ref{sss.poss} we derive explicit representations for the laws of the equilibrium statistics.


\subsubsection{Equilibrium processes}
\label{sss.proc}


\paragraph{Geometric description of the equilibrium process: diffusing partitions.}

The embedding in $\CI$ helps us to better understand the \emph{equilibrium process} of $\CG^{a,c,\theta}$ with $a=b,d$ and $c >0$, because there exists a decomposition into disjoint balls $\CH^i_t$ of \emph{completely connected components} for any representative of $\CG_t$ of the equilibrium process $(\CG_t)_{t \geq 0}$ such that the induced composition of $\CI_t \times \CI_t$ has the property
\begin{equation}
\label{e1497}
\CI_t \times \CI_t = \mathop{\biguplus}\limits_{i,j \in \N}\left(\CH_t^i \times \CH^j_t \right),
\qquad
h(u,u^\prime) = \left\{
\begin{array}{lll}
1, &u,u^\prime \in \CH^i, & i \in \N, \\
0, &u \in \CH^i_t,  & i,j \in \N, i \neq j.
\end{array} \right.
\end{equation}
In other words, we have a \emph{diffusion process} of (an equivalence class of) a \emph{countable number of balls} that are completely connected in the grapheme sense and that carry weights charged by $\mu_t$. If we size-order this vector of weights, then we obtain an object that is a function of the equivalence class. This gives us a block-type representation of the grapheme, diffusing over time, from which we can read off the \emph{graphon process} $(\wh \CW_t)_{t \geq 0}$ via the embedding of the grapheme in $[0,1]$, as described in Remark~\ref{r.936}.

Summarising, we look at the equilibrium path and its law, or the equilibrium law of the equivalence class of the weight vectors (represented by the \emph{size-ordered version}):
\begin{equation}
\label{e1506}
\mathrm{LAW} \left[\left(\left(\mu_t\left(\CH^i_t\right)\right)_{i \in \N}\right)_{t \geq 0}\right],
\qquad \mu^\dagger_t= (\mu_t(\CH^i_t))_{i \in \N},  \qquad t \geq 0.
\end{equation}
The \emph{frequencies of edges processes} are given by $(((\mu_t(\CH^i_t))^2)_{i \in \N})_{t \geq 0}$.

Note that \emph{$\mu^\dagger$ determines $\nu^{(\infty)}$ uniquely}. We show that in our two model classes the equilibrium $\CL[\mu^\dagger]$ is given by the \emph{Poisson-Dirichlet distribution} on the unit ball of $\ell_1(\N)$, respectively, by a \emph{standard Moran subordinator} on $\ell_1 (\N)$ introduced in the next paragraph.


\paragraph{Identification of the equilibrium.}

We are able to characterise the equilibria $\nu^{c,\theta,d}$ and $\nu^{c,\theta,b}$ in terms of the dual process introduced in Theorem~\ref{th.853}, run for an \emph{infinite} time (which is induced by a coalescent, as explained in Section~\ref{s.duality}). This dual process converges to a limit state $\CC_\infty$ as $t \to \infty$, and can be used to construct the equilibrium state of the associated diffusion $(\CU_t^{a,c,\theta})_{t \geq 0}$ of genealogies, denoted by
\begin{equation}
\label{e.1160}
\text{$\CU_\infty^{a,c,\theta}$ with values in $\U_1$, respectively, $\U$},
\end{equation}
depending on whether we consider Fleming-Viot or Dawson-Watanabe dynamics.

In the \emph{Fleming-Viot} case, this can even be done via a Kingman coalescent started in countably many dual individuals and state $(\{n\}, n \in \N)$ run for an infinite time, for which we define a weighted ultrametric measure space giving an element of $\U_1$ via completion (also called strong duality). We can form the grapheme with that space by connecting all elements that are in the same partition element.

In the \emph{Dawson-Watanabe} case, the representation is more subtle: we have to take the total mass and represent the normalised state, which can again be treated as a time-\emph{in}homogeneous Fleming-Viot process that has a representation via a time-inhomogeneous coalescent (see~\cite{DG19}).

In fact, for both we can identify the \emph{law of $\nu^{(\infty)}$} of the equilibrium, which determines what we can say about the \emph{equilibrium in $\G^{\langle\rangle}$}, and contains information on the equilibrium in $\G^{[]},\G^{\{\}}$. This is done by identifying $\CL[\mu^\dagger]$ in \eqref{e1506}: in the Fleming-Viot case with the help of the \emph{Poisson-Dirichlet distribution}, in the Dawson-Watanabe case with the help of the \emph{Moran-gamma-process}, which are both recalled below.

For convenience of notation we work on the state space $\G^{[],V}$ (recall the paragraph after Remark~\ref{r.964}). Consider the law of $\wh \Gamma^{{\rm FV},d,c,\theta}$ on $\CM_1([0,1])$ given by
\begin{equation}
\label{e1160}
\mathrm{LAW} \left[ \sum_{i \in \N} V_i \left(\prod_{j=1}^{i-1} \left(1-V_j\right)\right) \delta_{(U_i)} \right],
\end{equation}
where $(U_i)_{i \in \N}$ are i.i.d.\ with distribution $\theta$ and \emph{$(V_j)_{j \in \N}$ are i.i.d.\ with distribution $B(1,\beta^{-1})$} with $\beta= \frac{c}{d}$. The law $\wh \Gamma^{{\rm FV},d,c,\theta}$ is characterised as the unique equilibrium of a measure-valued process, the $\CM_1([0,1])$-valued Fleming-Viot process with resampling at rate $d$ and with immigration and emigration at rate $c$ from the source $\theta \in \CM_1 ([0,1])$ (for details see~\cite[Section 2(a) and 2(b)]{DGV95}). The expression in \eqref{e1160} induces a vector $(V_i \prod_{j=1}^{i-1} (1-V_j))_{i \in \N}$ of weights whose law is called the \emph{Poisson-Dirichlet distribution} with parameter $(\theta, \frac{d}{c})$ on $\ell_1(\N)$ and is denoted by $\Gamma^{{\rm FV},d,c,\theta}$. This is also the law of $\mu^\dagger_t$ in the equilibrium of $\CG^{c,\theta,d}$, and characterises the law of the functional $\nu^\infty$ of the equilibrium in $\G^{[]}$, respectively, characterises the full equilibrium in $\G^{\langle\rangle}$.

\begin{corollary}[Identification of key statistics in the equilibrium $\nu^{d,c,\theta}$]
\label{cor.1165}
$\mbox{}$\\
Assume that $c >0$. Then, under $\nu^{d,c,\theta}$, the law of $\nu^{(\infty)}$ is given via the distribution $\Gamma^{{\rm FV},d,c,\theta}$ and determines the law $\mu^\dagger$ of the size-ordered weights of the completely connected components.
\qed
\end{corollary}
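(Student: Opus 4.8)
The plan is to obtain the law of $\nu^{(\infty)}$ by transporting the already-known equilibrium law of the component-mass vector $\mu^\dagger$ through Kingman's paintbox correspondence, in three steps. \emph{Step 1:} for $c>0$, identify the process $(\mu^\dagger_t)_{t\ge0}$ of size-ordered masses of the completely connected components of $\CG^{d,c,\theta}$ with the size-ordered atom-mass vector of the $\CM_1([0,1])$-valued Fleming--Viot diffusion with resampling rate $d$ and immigration/emigration at rate $c$ from the diffuse source $\theta$. \emph{Step 2:} invoke the classical fact~\cite[Section~2(a) and 2(b)]{DGV95} that this measure-valued diffusion has the unique equilibrium $\wh\Gamma^{{\rm FV},d,c,\theta}$ of~\eqref{e1160}; together with the uniqueness of the grapheme equilibrium in Theorem~\ref{th.963-III}(a) this gives $\mathrm{LAW}[\mu^\dagger]=\Gamma^{{\rm FV},d,c,\theta}$. \emph{Step 3:} observe that, conditionally on $\mu^\dagger=p$, the connection-matrix distribution $\nu^{(\infty)}$ is exactly Kingman's paintbox law with ranked colour frequencies $p$ (so in particular $\mu^\dagger$ determines $\nu^{(\infty)}$), and that the paintbox map $p\mapsto\mathrm{paintbox}(p)$ is injective, hence Borel-bimeasurable on the Kingman simplex; therefore $\mathrm{LAW}[\nu^{(\infty)}]$ is the push-forward of $\Gamma^{{\rm FV},d,c,\theta}$ under this map and conversely determines $\mathrm{LAW}[\mu^\dagger]$.

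For Step 1 I would use that $\CG^{d,c,\theta}$ was constructed in Definition~\ref{def.graphproc} as the functional $h_t(x,y)=1\iff r_t(x,y)\le 2t$ of the $\U_1$-valued Fleming--Viot genealogy diffusion $\CU^{d,c,\theta}$, whose driving coalescent coalesces pairs of lineages at rate $d$ and sends single lineages to the cemetery $S$ at rate $c$ with position drawn from $\theta$. The completely connected components of a representative of $\CG_t$ are then precisely the $r_t$-balls of radius $2t$, i.e.\ the \emph{families} descending from a single immigration event (or from the root), so $(\mu_t(\CH^i_t))_i$ is the family-mass vector; the projection of the genealogy-valued Fleming--Viot diffusion onto the measure-valued one under this family decomposition is standard (cf.\ the remark following Corollary~\ref{cor.904} and \cite{DGV95,GrevenPfaffelhuberWinter2013}), and size-ordering yields $\mu^\dagger$. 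In the $t\to\infty$ limit one passes, as in Theorem~\ref{th.963-III}, to the transformed ultrametric $1-\e^{-r_t}$, under which the condition ``$r_t\le 2t$'' collapses to ``$r_\infty<\infty$'' $=$ ``same family'', so the equilibrium connection structure is governed by the immigration-family decomposition and the limiting family-mass vector carries the equilibrium of the measure-valued Fleming--Viot diffusion. I expect this to be the main obstacle: making rigorous that the time-dependent threshold $r_t\le 2t$ produces, in the $\mathrm{LAW}^+$-limit, exactly the partition into immigration-families (nothing finer or coarser), and that the resulting mass process really is the measure-valued Fleming--Viot diffusion.

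For Step 3 the remaining points are elementary: a $\mu$-sample of size $n$ partitions the sampled vertices according to an i.i.d.$(p)$ colouring — with no without-replacement correction, since each family carries a diffuse portion of $\mu$ — and two sampled vertices are $h_\infty$-connected iff they share a colour, which is precisely the paintbox construction, so $\nu^{(\infty)}$ is the law of the $\{0,1\}$-connection matrix of that paintbox partition. Injectivity of the paintbox map is the uniqueness part of Kingman's representation theorem for exchangeable partitions, and since the map is continuous and injective between Polish spaces it is Borel-bimeasurable; hence the laws of $\mu^\dagger$ and of $\nu^{(\infty)}$ determine each other, and $\mathrm{LAW}[\nu^{(\infty)}]$ is obtained by pushing $\Gamma^{{\rm FV},d,c,\theta}$ forward along the paintbox map. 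Step 2 is then just a citation plus Theorem~\ref{th.963-III}(a).
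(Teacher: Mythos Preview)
Your proposal is correct and follows essentially the same route as the paper. Both arguments hinge on the identification of the mass vector of the $2t$-balls (the completely connected components) with the atom masses of the measure-valued Fleming--Viot diffusion with immigration/emigration, followed by the citation of~\cite{DGV95} for its explicit Poisson--Dirichlet equilibrium.

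The only difference is in where the detail lives. The paper carries out your Step~1 by an explicit generator comparison: it encodes the $2t$-ball masses as an atomic measure on $[0,1]$ (assigning i.i.d.\ uniform labels to the balls), restricts the grapheme martingale problem to polynomials $\Phi^{n,\varphi}$ with $\varphi$ an indicator of such balls (continuous by ultrametricity), and checks that the resulting operator coincides with the one in~\cite[Sections~2(a),(b)]{DGV95}. You instead take this projection as ``standard'' and appeal to the genealogy-to-measure projection in~\cite{GrevenPfaffelhuberWinter2013}; that is fine, but the generator check is what the paper does, and it is precisely the point you flagged as the main obstacle. Conversely, your Step~3 is more explicit than the paper's: the paper simply records, just before the corollary, that ``$\mu^\dagger$ determines $\nu^{(\infty)}$ uniquely'' without naming the paintbox correspondence, whereas you spell out both directions (paintbox and Kingman injectivity). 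Neither approach is more general; they are the same argument with different parts expanded.
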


There is a similar identification in the Dawson-Watanabe case, based on the equilibrium of a $\CM_{\rm fin}([0,1])$-valued Dawson-Watanabe process on $[0,1]$ with immigration and emigration at rate $c$ from the source $\theta \in \CM_{\rm fin}([0,\infty)^2)$, whose equilibrium is based on the \emph{Moran-gamma-subordinator} (see~\cite{DG96}). It is known that the multitype branching process with type space $[0,1]$ and with immigration from the source $\theta \in \CM_{\rm fin}(\R^2)$ at rate $c$ and emigration at the same rate $c$ has a unique equilibrium $\wh \Gamma^{b,c,\theta}$ of the form (see \cite{DG96})
\begin{equation}
\label{e1253}
\mathrm{LAW} \left[\sum_{i\in\N} W_i \delta_{(U_i)}\right],
\end{equation}
where the $(W_i,U_i)_{i \in \N}$ are the \emph{jump sizes} and \emph{jump times} of the \emph{Moran-gamma-process} up to ``time $\wt \theta$", with $\wt \theta=\theta([0,1])$. This is an \emph{infinitely divisible $[0,\infty)$-valued process} with $\theta([0,1])$ \emph{as time index}, which we can characterise uniquely by specifying its \emph{L\'evy-measure $\Lambda$ on $[0,\infty]$}. Indeed, define
\begin{equation}
\label{e1258}
\Lambda(\d x) =x^{-1}\,\e^{-\frac{c}{b}x} \d x, \quad x \in [0,\infty),
\end{equation}
and use this as the \emph{L\'evy-measure} of the infinitely divisible process, which is called the \emph{standard Moran-gamma-process} on $[0,\infty)$ with parameter $c/b$.

\begin{corollary}[Identification of key statistics in the equilibrium $\nu^{b,c,\theta}$]
\label{cor.1263}
Under the law $\nu^{b,c,\theta}$ on $\bar \G^{[]}$ (respectively, $\bar \G^{[],V}$), the law $\CL[\mu^\dagger_t]$ in \eqref{e1309} is given by $\Gamma^{b,c,\theta}$. \qed
\end{corollary}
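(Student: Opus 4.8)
\textbf{Proof proposal for Corollary~\ref{cor.1263}.}

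The plan is to mirror the Fleming--Viot argument behind Corollary~\ref{cor.1165}, replacing the Poisson--Dirichlet ingredient by the standard Moran--gamma process, and to exploit the \emph{conditional} Feynman--Kac duality of Theorem~\ref{th.853}(b) together with the conditional-coalescent representation of Remark~\ref{r.cond}. Concretely, I would first recall from the Remark on the relation of component decomposition to Fleming--Viot that the size-ordered weight vector $\mu^\dagger_t = (\mu_t(\CH^i_t))_{i \in \N}$ depends only on the equivalence class of $(U_t,r_t,h_t,\mu_t)$, i.e.\ on $\CU_t$, and that it evolves as a functional of the $\CM_{\rm fin}([0,1])$-valued Dawson--Watanabe process with immigration/emigration at rate $c$ from the source $\theta$. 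Since Theorem~\ref{th.963-III}(a) already gives existence and uniqueness of the equilibrium $\mathrm{LAW}^+[\CG^{b,c,\theta}_t] \Rightarrow \nu^{b,c,\theta}$, the task reduces to \emph{identifying} $\CL[\mu^\dagger_\infty]$ under that equilibrium. For this I would push forward the known equilibrium $\wh\Gamma^{b,c,\theta}$ of the measure-valued Dawson--Watanabe process on $[0,1]$ with immigration from $\theta \in \CM_{\rm fin}([0,1]^2)$, which by~\cite{DG96} has the form~\eqref{e1253} with $(W_i,U_i)_{i\in\N}$ the jump sizes and jump times of the Moran--gamma process run up to ``time $\wt\theta = \theta([0,1])$'', and whose normalised version is driven by the L\'evy measure $\Lambda(\d x) = x^{-1}\e^{-\frac{c}{b}x}\,\d x$ of~\eqref{e1258}. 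Reading off the size-ordered family of masses of the atoms of~\eqref{e1253} gives a random element of $\ell_1(\N)$ whose law we call $\Gamma^{b,c,\theta}$; the claim is that $\CL[\mu^\dagger_t]$ in the equilibrium equals this $\Gamma^{b,c,\theta}$.

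The steps, in order, would be: (1) Establish the \emph{component map}: each non-degenerate completely connected component $\CH^i_t$ of $\CG_t$ corresponds bijectively to an atom of the Dawson--Watanabe equilibrium measure, with matching mass, via the marked representation in which components carry i.i.d.\ $\theta$-marks assigned at immigration times (this is the content of the Remark relating the component decomposition to the measure-valued process). (2) Invoke Theorem~\ref{th.963-III}(b): the equilibrium on $\bar\G^{[]}$ is determined by the entrance law of the conditional dual process run for infinite time; by Remark~\ref{r.cond}, conditioned on the total-mass path $(\bar\mu_t)_{t\ge 0}$, the $\U_1$-component of the state is a time-inhomogeneous coalescent with coalescence rate $b(\bar\mu_{t-s})^{-1}$ and jump rate $c(\bar\mu_{t-s})^{-1}$, so that after integrating over the stationary total-mass process (a Feller branching diffusion with immigration/emigration, whose stationary law is a gamma law with the parameters dictated by $c/b$) one recovers exactly the Moran--gamma structure. (3) Match the Feynman--Kac potential $\mathfrak{V}$ (the number of dual pairs) against the exponential tilt $\e^{-\frac{c}{b}x}$ appearing in $\Lambda$: the potential contributes the exponential damping in the Lévy measure, while the $x^{-1}$ factor is the Moran/gamma combinatorial factor coming from the rate-$b$ coalescences of countably many dual individuals. (4) Conclude that the size-ordered vector of component masses under $\nu^{b,c,\theta}$ has law $\Gamma^{b,c,\theta}$, and that this law determines $\nu^{(\infty)}$ (hence the equilibrium in $\G^{\langle\rangle}$) by the observation, recorded just before Corollary~\ref{cor.1165}, that $\mu^\dagger$ determines $\nu^{(\infty)}$ uniquely. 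Finally (5) extend verbatim to the $V$-marked state space $\bar\G^{[],V}$, where each component additionally carries its inherited $V$-type, using Corollary~\ref{cor.904}.

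The main obstacle I expect is \textbf{Step (2)--(3)}: in contrast to the Fleming--Viot case, the finite dual processes are \emph{no longer consistent} in $n$ (as emphasised in the discussion of duality for $\bar\G$-valued processes), so one cannot simply take a Kingman-type projective limit of the dual and must instead work conditionally on the total-mass path and carefully justify the interchange of the $t\to\infty$ equilibrium limit with the conditioning and with the infinite-dual-individual limit. Making the time-inhomogeneous coalescent of Remark~\ref{r.cond} yield, after averaging over the stationary total mass, precisely the Lévy measure~\eqref{e1258} — rather than some other infinitely divisible law — is the crux: it requires identifying the stationary law of the total-mass diffusion and showing that the random-time-change it induces on the coalescent turns the rate-$b$ pair-coalescence into the $x^{-1}$ density and the rate-$c$ emigration into the $\e^{-\frac{c}{b}x}$ exponential tilt. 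Once this Moran--gamma identification is in hand, everything else is bookkeeping about the component decomposition and can be imported from~\cite{DG96,DG19} and from the already-proved Theorems~\ref{th.963-I} and~\ref{th.963-III}.
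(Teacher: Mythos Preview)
Your proposal is not wrong in spirit, but it takes a substantially more circuitous route than the paper. The paper's proof (Section~\ref{ss.proofcor}) is entirely \emph{forward}: it codes the weight vector of the $2t$-balls as an atomic measure on $[0,1]$ (by assigning i.i.d.\ uniform labels to the balls), then checks \emph{at the generator level} that this atomic-measure-valued functional solves the martingale problem of the classical $\CM_{\rm fin}([0,1])$-valued Dawson--Watanabe process with immigration/emigration. This is done by evaluating the grapheme operator on polynomials $\Phi^{n,\varphi}$ where $\varphi$ is an indicator of a $2t$-ball (continuous by ultrametricity) and observing that one recovers exactly the DW-with-immigration operator. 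Once this dynamical identification is in place, the equilibrium $\Gamma^{b,c,\theta}$ is simply quoted from~\cite{DG96}; no duality, no Feynman--Kac potential, no conditioning on the total-mass path enters.

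Your Steps~(2)--(3) --- invoking Theorem~\ref{th.853}(b), the conditional time-inhomogeneous coalescent of Remark~\ref{r.cond}, and then trying to \emph{derive} the L\'evy measure $x^{-1}\e^{-\frac{c}{b}x}$ by matching the Feynman--Kac potential $\mathfrak{V}$ to the exponential tilt --- is an interesting dual re-derivation of a result that the paper treats as already known in the literature. You correctly flag the obstacle (non-consistency of the finite duals, interchange of limits), and indeed making your heuristic ``$\mathfrak{V}$ gives the $\e^{-\frac{c}{b}x}$ factor, rate-$b$ coalescence gives the $x^{-1}$'' rigorous would require real work that is nowhere needed for the corollary. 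Your Step~(1) already contains the essential content; what is missing is the observation that the identification of the component-mass process with the classical measure-valued DW process can be made directly via the action of the operator on ball-indicator polynomials, after which~\cite{DG96} finishes the job without any appeal to duality.
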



\paragraph*{Phase transition for equilibrium paths: hitting complete graphs.}

The evolution of the frequencies of completely connected components in the equilibrium process has interesting features that can be captured by using the embedding of graphs and graphemes in a specific metric Polish space representing an equivalence class of an ultrametric measure space. In what follows we consider a statistic that describes a \emph{property} that can be read off from the \emph{functional $\nu^{(\infty)}$ projected from $U \times V$ to $U$} (and hence from the $\G^{\langle\rangle}$-valued process), namely, the property that the $\ell_1(\N)$-valued path of $(\mu_t^\dagger)_{t \geq 0}$ (recall~\eqref{e1512}) hits finitely many (or even a single) completely connected components.

\emph{Size-order} $(\CH^i_u)_{i \in \N}$ at a given time $u=s$, and observe the vector of weights at time $u=s+t$. If $\theta$ is diffusive, i.e., non-atomic, then for each entry there is a partition into continua and their weights follow a diffusion process for which we can define \emph{hitting times} by
\begin{equation}
\label{e1512}
\left(\mu_\infty\left(\CH^i_{s+t}\right)\right)_{t \geq 0} \text{ hits } 0 \text{ at time $t=T_i$},
\end{equation}
and ask whether, for $i \geq 2$, $T_i < \infty$ a.s.\ or $T_i=\infty$ a.s. The radius of the $(i)$-most charged ball grows like $2t$ as $t$ moves through $[0,\infty)$. At these exceptional times, if finite then the grapheme hits a state that has only finitely many components, namely, $i-1$ completely connected components for $i \geq 2$.

We have a \emph{phase transition} in the parameter $c/a$, with $a=b,d$, for the path property of the equilibrium path  \emph{to leave the state of countably many completely connected components} and hit the state of a \emph{single completely connected component}.

\begin{proposition}[Phase transition of path property: $\CG^{a,c,\theta}$, $a=b,d$]
\label{prop.1078}
$\mbox{}$\\
\textup{(a)} Depending on whether $c/d \geq 2$ or $c/d <2$, $T_i=\infty$ a.s., respectively, $T_i<\infty$ a.s.\\
\textup{(b)} The connected component $1$ may or may not hit the frequency $1$. If yes, then $T_i<\infty$ a.s.\ for all $i \geq 2$. If not, then $T_i=\infty$ a.s.\ for all $i \geq 2$, and there are only states with a countably infinite number of different completely connected components. In the former case there is a positive length random time interval during which hitting the complete graph occurs infinitely often, but the time of countably many components is a union of time intervals (the excursions in the countable regime) with full Lebesgue measure. In the latter case, paths have countably many distinct completely connected components.  \qed
\end{proposition}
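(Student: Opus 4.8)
The plan is to reduce Proposition~\ref{prop.1078} to the boundary behaviour, near the state $0$, of a one-dimensional diffusion extracted from the equilibrium dynamics of the size-ordered weight vector $\mu^\dagger$. By Corollary~\ref{cor.1165} (respectively Corollary~\ref{cor.1263}) and the discussion around~\eqref{e1506}, in equilibrium $(\mu^\dagger_t)_{t\ge0}$ is a measurable functional of the $\CM_1([0,1])$-valued Fleming--Viot process with resampling rate $d$ and immigration/emigration at rate $c$ from $\theta$ (respectively of the $\CM_{\rm fin}([0,1])$-valued Dawson--Watanabe analogue), with Poisson--Dirichlet (respectively Moran--gamma) equilibrium. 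Fixing the size-ordering at a time $u=s$ and following the component $\CH^i$ forward, the weight path $(\mu_\infty(\CH^i_{s+t}))_{t\ge0}$ in~\eqref{e1512} solves, after the standard comparison and time-change arguments for one-dimensional projections of Fleming--Viot diffusions, a Wright--Fisher type stochastic differential equation on $[0,1]$ of the form $\dd x_t=\tfrac12\big(\alpha_i(t)-\kappa\,x_t\big)\dd t+\sqrt{d\,x_t(1-x_t)}\,\dd B_t$: the diffusion coefficient is governed by the resampling rate $d$, while the ``immigration into $\CH^i$'' coefficient $\alpha_i$ is governed by the emigration/immigration rate $c$, since, as the real time advances, the ultrametric radius of $\CH^i$ grows like $2t$ (as noted before~\eqref{e1512}) and the ball thereby absorbs families newly founded by the immigration mechanism, which operates at rate $c$. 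The first step, which I expect to be the main obstacle, is to make this reduction rigorous: one has to decouple the near-$0$ behaviour of the $i$-th coordinate from the infinitely many remaining coordinates (using the exchangeable/GEM structure of the Poisson--Dirichlet equilibrium) and to verify that the resulting one-dimensional diffusion behaves near $0$ like a squared Bessel process of dimension $c/d$; this is precisely where the ultrametric embedding (genealogical distance $=2t$) is used.

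\textbf{Part (a).} Granting the reduction, perform a time change removing the factor $x_t(1-x_t)$, so that near $x=0$ the weight of $\CH^i$ behaves as a squared Bessel process of dimension $\delta=c/d$. By the classical boundary classification for squared Bessel processes, the state $0$ is an entrance boundary, hence never reached, when $\delta\ge2$, and a regular (accessible) boundary when $\delta<2$; equivalently, via Feller's test, the scale density of the original diffusion is $\sim x^{-c/(2d)}$ near $0$, so $s(0^+)$ is finite exactly when $c/d<2$ and $s(0^+)=-\infty$ when $c/d\ge2$. In the inaccessible case $T_i=\infty$ a.s.\ by definition. In the accessible case, the strictly negative drift $-\tfrac{\kappa}{2} x_t$ together with stationarity and ergodicity of the equilibrium law $\nu^{a,c,\theta}$ gives a $0$--$1$ dichotomy for the event $\{T_i<\infty\}$, and since this event has positive probability, $T_i<\infty$ a.s. For the Dawson--Watanabe case one first conditions on the total-mass path $(\bar\mu_t)_{t\ge0}$ and uses the time-inhomogeneous coalescent representation of~\cite{DG19} (cf.\ Remark~\ref{r.cond}); the one-dimensional projection then carries a random, a.s.\ strictly increasing time change that does not alter the boundary classification, so the threshold $c/b=2$ is unchanged, and the conditioning is removed at the end.

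\textbf{Part (b).} The event that component $1$ reaches frequency $1$ coincides with the event that the equilibrium path of $\CG^{a,c,\theta}$ hits the complete-graph state; since at such a time all components ranked $\ge2$ must have disappeared, on this event $T_i<\infty$ for every $i\ge2$. Conversely, if component $1$ never reaches frequency $1$, then applying part (a) successively on the lower-dimensional faces (via the strong Markov property at the successive death times) shows that the path never leaves the regime of countably many completely connected components, so $T_i=\infty$ a.s.\ for all $i\ge2$. Finally, in the case $c/d<2$ the complete-graph state is a regular boundary point from which the process instantaneously re-enters the countable regime, because immigration creates new families at rate $c>0$; hence during the (positive-length, random) excursion interval of $\mu^\dagger$ around the complete graph, the state $\delta_K$ is hit infinitely often, while --- exactly as for the zero set of a one-dimensional diffusion at a regular boundary --- the set of times spent at $\delta_K$ has Lebesgue measure zero, its complement being a countable union of excursion intervals of full Lebesgue measure. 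This gives the stated dichotomy and the remaining path assertions in (b).
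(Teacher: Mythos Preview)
The paper does not supply a proof of this proposition; it is stated with a \qed, and while the introduction to Section~\ref{ss.mainideas} lists it among the results to be justified, no dedicated argument appears in Sections~\ref{s.state}--\ref{s.compo}. So there is nothing to compare against, and I evaluate your proposal on its own.

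Your overall architecture---reduce to a one-dimensional Wright--Fisher-type diffusion and read off the boundary classification via a squared-Bessel comparison---is the natural route and almost certainly what the authors intend. The substantive gap is in the reduction itself. You assert that $x_t := \mu_\infty(\CH^i_{s+t})$ satisfies an SDE with a nontrivial ``immigration into $\CH^i$'' drift $\alpha_i(t)$, on the grounds that ``the ultrametric radius of $\CH^i$ grows like $2t$ and the ball thereby absorbs families newly founded by the immigration mechanism''. This absorption does not happen in the equilibrium process: individuals from distinct immigrant families have no MRCA in the system, hence untransformed genealogical distance $\infty$ (transformed distance $1$), so a growing ball around one family never engulfs another, no matter how large $t$ becomes. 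With $\theta$ diffusive, a \emph{fixed} immigrant family therefore evolves as $dx_t = -c\,x_t\,dt + \sqrt{d\,x_t(1-x_t)}\,dB_t$ (pure emigration drift, no immigration), for which $0$ is an exit boundary for \emph{every} $c/d>0$. The process you write down cannot exhibit the claimed phase transition, and your BESQ$(c/d)$ identification is asserted rather than derived.

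The quantity that does carry the dichotomy is the complement of the largest component, $y_t := 1 - \mu_\infty(\CH^1_{s+t})$: every emigrant from $\CH^1$ is replaced by a fresh-type immigrant landing in the complement, while emigrants from the complement are replaced by fresh types that remain in the complement, giving $dy_t = c(1-y_t)\,dt + \sqrt{d\,y_t(1-y_t)}\,dB_t$. Near $0$ this is a time-changed squared Bessel process, and Feller's test produces the threshold; whether that threshold is exactly $c/d=2$ depends on the normalisation of the resampling operator in~\eqref{e1465} and~\eqref{e1478}, which you should extract rather than guess. Once this is in hand, part~(b) is a restatement: component~$1$ hitting frequency~$1$ is precisely $y$ hitting~$0$, and the excursion structure together with the Lebesgue-null visiting set follow from standard regular-boundary theory for one-dimensional diffusions. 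Your part~(b) argument (``apply part~(a) on lower-dimensional faces'') does not work as written, since the individual $x_i$ for $i\ge 2$ hit $0$ regardless of $c/d$; the issue is whether they, \emph{together with all post-$s$ immigrant families}, can vanish simultaneously---which is exactly the event $\{y=0\}$.
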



\subsubsection{Extension: grapheme diffusions with mutation and/or selection of types}
\label{ss.difftyp}

In this section, we investigate what happens when we pass to the processes treated in Section~\ref{sss.theorem}, i.e., to evolution rules containing mutation and selection. In particular, we now consider $\G^{[],V},\G^{\langle\rangle,V}$ for a suitable mark space $V$.

First, we consider the case $m=0$. We use the marked version of the Fleming-Viot evolution rule, i.e., we \emph{connect vertices with the same type}, which characterises the most recent common ancestor of a connected component, and add \emph{selection} at rate $s$ with respect to types based on a fitness function $\chi$ (see \eqref{e857} and the sequel for formulas). Then we can use the same formula as before to define $\CG^{a,c,\theta,0,s}$ from the $\U^V_1$-valued process $\CU^{a,c,\theta,0,s}$. We add \emph{mutation} of types at rate $m$ and jump probabilities $M(u,dv)$ for a mutation from $u$ to $v$, where we distinguish between the cases where $M(u,\cdot)$ is diffusive, respectively, atomic. This addition requires serious changes, namely, we have to now define $\CG$ from $\CU$ by setting $h$ equal to $1$ for equal types, and we have to consider $\G_{\rm comp}^{[],V},\G_{\rm ultra}^{[],V}$ based on the decomposition of $U \times V$ (rather than just $U$) in union of balls in the $U$-component and single values in the $V$-direction, i.e., a \emph{marked subfamily decomposition}. With these adaptations the structure is similar.

In the cases $m>0$ and $m=0$, this gives us \emph{marked grapheme diffusions} $(\CG_t^{d,c,\theta,m,s})_{t \geq 0}$ and $(\CG_t^{d,c,\theta,s})_{t \ge 0}$, for which results analogous to Theorems~\ref{th.963-I}, \ref{th.963-II} and \ref{th.963-III} hold, based on the knowledge we have for the underlying $\U^V_1$-valued diffusions (see~\cite{DGP12}).

\begin{theorem}[Grapheme diffusions with selection and mutation]
\label{th.1838}
$\mbox{}$\\
All the statements in Theorems~\ref{th.963-I}, \ref{th.963-II} and \ref{th.963-III} hold for $\CG^{d,c,\theta,m,s}$ when $m=0$. For $m \neq 0$, Theorems~\ref{th.963-I} and\ref{th.963-II} hold too, but Theorem~\ref{th.963-III} holds if and only if the type-diffusion on $V$, arising by projecting $\mu_{\CI \times V}$ onto $V$, converges to an unique equilibrium. \qed
\end{theorem}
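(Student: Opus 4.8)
The plan is to leverage the fact that the grapheme processes $\CG^{d,c,\theta,m,s}$ are, by construction (see Definition~\ref{def.graphproc} extended to the marked setting described in Section~\ref{ss.difftyp}), deterministic functionals of the $\U^V_1$-valued genealogy diffusions $\CU^{d,c,\theta,m,s}$, for which the full package of results — well-posedness of the martingale problem, strong Markov and Feller property, continuous paths, duality with a coalescent, and equilibrium theory — is already available from \cite{DGP12,DG19}. The strategy therefore splits into three pieces, mirroring Theorems~\ref{th.963-I}, \ref{th.963-II} and \ref{th.963-III}, and in each piece the argument is: (i) recall the corresponding statement for $\CU^{d,c,\theta,m,s}$; (ii) push it through the functional $\CU_t \mapsto \CG_t$ given by \eqref{e678} (with the modification that now $h=1$ iff the two vertices carry the same type); (iii) check that the modification from the single-type embedding to the marked-subfamily decomposition of $U\times V$ does not destroy any of the structural properties.

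First I would establish the analogue of Theorem~\ref{th.963-I}. The generator $\wh\CL^\ast$ for the marked grapheme is defined as the extension to $\wh\Pi^\ast$ of the generator $\CL$ of $\CU^{d,c,\theta,m,s}$ composed with the functional; since the functional is continuous (in the $\U^V_1$-topology, convergence of sampled finite marked subspaces implies convergence of the associated marked graphemes, because the rule $h=1 \Leftrightarrow$ equal type is continuous on the relevant configurations) and the polynomials $\wh\Pi^\ast$ pull back to polynomials on $\U^V_1$, well-posedness of the grapheme martingale problem follows from well-posedness of the $\U^V_1$-martingale problem. Strong Markov, Feller, continuity of paths, and the second-order property of $\wh\CL^\ast$ are inherited the same way: a continuous image of a Feller diffusion with continuous paths, whose generator acts on a measure- and convergence-determining algebra, is again such a process, and the algebraic order condition of Definition~\ref{def.1947} is preserved under the algebra homomorphism induced by the functional. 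For $m=0$, mutation is absent and the type is carried inertly along each subfamily, so the marked subfamily decomposition is genuinely a product of the (single-type) $\U_1$-decomposition and a fixed type label; hence everything reduces verbatim to the $m=0,s$ case already covered by the underlying $\U^V_1$-theory, and Theorem~\ref{th.963-III} (equilibria) goes through without the extra hypothesis.

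Next I would treat the approximation statement (analogue of Theorem~\ref{th.963-II}): the finite-$n$ marked grapheme processes $\CG^{d,c,\theta,m,s;(n)}$ are the images under the same functional of the finite-$n$ marked genealogy processes $\CU^{n}$, whose convergence to $\CU^{d,c,\theta,m,s}$ in $D([0,\infty),\U^V_1)$ is known; by the continuous mapping theorem on path space (the functional extends to a continuous map $D([0,\infty),\U^V_1)\to D([0,\infty),\wt\G^{[],V})$, using that the $2t$-threshold in \eqref{e678} varies continuously in $t$) we get weak convergence of the grapheme paths. Duality (analogue of Theorem~\ref{th.853}) is obtained by the same device applied to the dual coalescent: the $\U^V_1$-valued selection-mutation coalescent of \cite{DGP12} induces, via \eqref{e678}, a dual grapheme, and the duality function $H$ — built from polynomials and, in the branching case, a Feynman–Kac potential — descends to the grapheme level because it only depends on $\CU_t$ through the sampled finite marked subspaces.

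The genuinely new point, and the expected main obstacle, is the "if and only if" in the $m\neq 0$ case: Theorem~\ref{th.963-III} for $\CG^{d,c,\theta,m,s}$ holds precisely when the type-marginal diffusion (the projection of $\mu_{\CI\times V}$ onto $V$) has a unique equilibrium. Here the subtlety is that, with mutation, the component (= type-class) structure is no longer frozen: types mutate, so a completely connected component can lose mass to a newly created type-class, and the long-time behaviour of the $\ell_1(\N)$-valued vector of component weights is governed by the equilibrium (or lack thereof) of the type process on $V$. For the "if" direction I would argue that, conditionally on convergence of the $V$-marginal, the genealogy-valued diffusion $\CU^{d,c,\theta,m,s}$ converges (this is the content of the Fleming–Viot-with-mutation equilibrium theory of \cite{DGP12}, where the marked ultrametric measure space stabilises once the type configuration stabilises), and then the functional delivers a grapheme equilibrium; the transformed metric $1-\e^{-r_t}$ of Theorem~\ref{th.963-III} handles the divergence of genealogical distances exactly as before. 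For the "only if" direction I would show that if the $V$-marginal fails to converge — e.g. a null-recurrent or transient mutation kernel $M$ producing an ever-drifting type distribution — then the connection function $h_t$, which tracks equality of types, cannot stabilise in law, so no grapheme equilibrium exists; this requires care because one must rule out the possibility that the $h$-structure (which only sees the \emph{partition} of $U\times V$ into type-classes, not the types themselves) stabilises even when the types do not. The resolution is that the \emph{relative} weights of the type-classes are measurable functions of the $V$-marginal (two vertices are $h$-connected iff same type, and the mass of each class is $\mu_{\CI\times V}$ of a fibre), so non-convergence of the $V$-marginal forces non-convergence of the class-weight vector, hence of $\mu^\dagger_t$ in \eqref{e1506}, hence of $\nu^{(\infty)}_t$, hence of the law of $\CG_t$ in $\G^{\langle\rangle,V}$ and a fortiori in $\G^{[],V}$. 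Assembling this equivalence carefully — in particular making precise which invariants of $\CU_t$ are actually seen by the grapheme functional in the marked case — is where the real work lies; the rest is bookkeeping on top of \cite{GrevenPfaffelhuberWinter2013,DGP12,DG19}.
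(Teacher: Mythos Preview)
Your proposal is correct and follows essentially the same route as the paper: both arguments lift the results for $\CU^{d,c,\theta,m,s}$ from \cite{DGP12} to $\CG^{d,c,\theta,m,s}$ via the functional of Definition~\ref{def.graphproc}, handle $m=0$ by noting that types are inertly inherited so the unmarked machinery applies verbatim, and for $m\neq 0$ tie the equilibrium of the grapheme to that of the $V$-marginal measure-valued diffusion. The one technical point the paper makes more explicit than you do is that, with mutation active, the completely connected components are no longer single $2t$-balls but \emph{countable unions} of open-and-closed balls in $U\times V$ (descendants of a mutant minus subsequent mutant subfamilies, iterated), and that to keep these unions open-and-closed one strengthens the topology on $V$ to the discrete one; this is precisely what justifies your assertion that ``the rule $h=1\Leftrightarrow$ equal type is continuous on the relevant configurations'' and that the tightness and approximation arguments go through.
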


\begin{remark}[Form of equilibria]
\label{r.1180}
$\mbox{}$\\
{\rm (1) No explicit form of the equilibrium law exists for $s,m >0$ when mutation is not state-independent, in which case Gibbs measures appear as equilibria (see~\cite{DGsel14}). In the state-independent case the equilibrium is the same as immigration/emigration with $c=m$ and $\theta(dv)=M(u,dv)$ for all $u \in V$.\\
(2) For the case $s=0,c=0$ with $m >0$ and $M$ such that there exists a measure $\theta$ on the type space with $\theta M= \theta$, the equilibrium has a functional $\nu^\infty$ such that the equilibrium distribution is the Poisson-Dirichlet distribution in Corollary~\ref{cor.1165} when we replace $c$ by $m$. Of special interest are the case where $\theta$ is a diffusive measure, for which we have countably many connected components with random weights, and the case where $\theta$ is finite atomic case, for which we have a finite number of such components. In both cases we have the explicit representation of this random structure from~\eqref{e1160}.\\
(3) For $s=0,m>0$ and $c=0$ with $\theta$ having a countable number of atoms and no diffusive part, there are models related to the \emph{``infinite allele models"} of population genetics, where equilibria for $\G^{\langle\rangle,V}$-valued grapheme diffusions appear for the mass of completely connected components based on the Poisson-Dirichlet distribution.}
\hfill$\spadesuit$
\end{remark}

\begin{remark}[Duality
]\label{r.1215}
{\rm There is also a duality theory for models with selection and mutation via \emph{function-valued} dual processes. The duality relation for the underlying process $\CU$ can be found in~\cite{DGP12}.} There is no simple strong duality. \hfill$\spadesuit$
 \end{remark}

The grapheme diffusions with mutation and selection provide interesting classes of models because a different type of equilibrium appears, but also from a modelling point of view because building in types with different fitness is important to get different rates of expansions of the associated completely connected components. We do not pursue this in more detail here.


\subsubsection{Grapheme diffusions with equilibria allowing for non-completely connected components}
\label{ss.allow}

In this section, we show that insertion and deletion of edges according to switches between virtual and true edges in the dynamics considered in Section~\ref{sss.theorem} allows for \emph{non-completely connected components in equilibrium}, and even for certain \emph{non-Markovian} evolutions for the $\G^{\langle\rangle}$-valued functional $(\nu_t)_{t \geq 0}$. We begin with the simplest case.

We \emph{prune} the edges in the diffusions that we had in the previous model classes with completely connected components described in Section~\ref{sss.theorem}, to which we refer as the \emph{background diffusion}, for which we have a unique associated solution $\CU$ at hand. For this candidate, we derive a martingale problem. The modified operator of the martingale problem consists of the \emph{second-order terms} considered in the main theorems and their extensions in the marked version, with additional \emph{first-order terms} that account for the addition and removal of edges at rates $a^+$, respectively, $a^-$ in the finite graph evolution (recall Section~\ref{sss.compcase}).

We obtain the $\G^{[]}$-valued process $\CG$ as before from the $\U$-valued, respectively, $\U^V$-valued processes $\CU^{\rm FV},\CU^{\rm DW}$, but only after pruning the edge-function $h_t$ as defined before, in an appropriate way via $\{0,1\}$-valued white noise (pruning). In other words, we first solve the martingale problems for $\CG^{\rm FV},\CG^{\rm DW}$ from Section~\ref{sss.theorem} of the form $([U_t,r_t,h_t,\mu_t])_{t \geq 0}$, which are processes with completely connected components, and then in the solution replace $(h_t)_{t \geq 0}$ by $(\wt h_t)_{t \geq 0}$, where
\begin{equation}
\label{e1188}
\wt h_t = \mathfrak{h}_t h_t \text{ pointwise}
\end{equation}
with
\begin{equation}
\label{e1191}
\mathfrak{h}_t = U_t\text{-indexed $\{0,1\}$-valued white noise},
\end{equation}
generated independently of the path of the underlying background diffusion. Remember, however, that the object arising in the grapheme is the \emph{law of $h_t$} under the sampling measure $\mu_t$. The \emph{connection-matrix distribution} is still a \emph{continuous} functional. However, (i) the operator changes; (ii) the properties of the states assumed by the path change as well. Nevertheless, the fact remains that the state decomposes into countably many $(c>0)$ or finitely many ($c=0$) balls of points, which can potentially be connected and which diffuse. This decomposition is explicitly using $\CU$. But the \emph{effective $\wt h$} is now a \emph{white-noise pruning} of a \emph{continuous function $h$}, and instead of completely connected components we now have only \emph{path-connected} components (paths of edges built with $\wt h$: the skeletons given by sample sequences decompose in that way).

The relation in~\eqref{e1188} defines a $\G^{[],V}$-valued grapheme diffusion
\begin{equation}
\label{e1237}
\text{$^{(a^+,a^-)} \CG^{a,c,\theta}$ with $a=b,d$ or with $(d,s,m)$.}
\end{equation}
The grapheme has the property that the countable graphs arising from a typical sampling sequence of the grapheme have the feature that they are obtained by pruning the edges in a representative of the grapheme, i.e., pruning in the countable graph arising from the background grapheme by realising a $\mu^{\otimes \N}$-distributed sample sequence. We thus find that, indeed, the evolution rules in class (II) in Section~\ref{sss.compcase} can be included in our results.

We can obtain Theorems~\ref{th.963-I}, \ref{th.963-II}, \ref{th.963-III} and~\ref{th.1838} after we adapt the martingale problem as indicated above, obtain a modified dual representation by the dual state of the background diffusion with pruned edges, and the identification of the equilibrium states by the dual of the underlying background diffusion. The grapheme again follows a \emph{diffusion} in $\G^{[]}$, respectively, $\G^{[],V}$.

\begin{theorem}[Grapheme diffusion with non-completely connected components]
\label{th.graphnon}
$\mbox{}$\\
\textup{(a)} With the above modification (i) and (ii) of the operators and the state properties of paths, Theorems~\ref{th.963-I}, \ref{th.963-II}, \ref{th.963-III} and the extension in Theorem~ \ref{th.1838} hold also for the process $\CG$ in \eqref{e1237}.\\
\textup{(b)} There exists a process (the background diffusion) with completely connected components that produces the path-connected components of the process $^{(a^+,a^-)} \CG^{a,c,\theta}$, which for $a^+ \in (0,1)$ are not completely connected. In particular, in every neighbourhood of a point $u$ in the support of $\mu_t$, the density of outgoing edges in equilibrium is $\frac{a^+}{a^+ + a^-}$, and for arbitrary states is given by the marginal of the Markov chain of the spin-flip system on $\{0,1\}$ with rate $a ^+,a^-$.\\
\textup{(c)} In equilibrium the connection-matrix distribution is an i.i.d.\ $\frac{a^+}{a^+ + a^-}$-pruning of a block matrix of $1$'s with block-frequencies given by the Poisson-Dirichlet distribution in~\eqref{e1160} associated with the background diffusion.\\
\textup{(d)} The duality relation of Theorem~\ref{th.853} holds for the dual process in~\eqref{e1094} of the associated process with $a^+=a^-=0$ by taking an i.i.d.\ pruning of the edges with probabilities according to the marginal distribution at time $t$ of the $\{0,1\}^\N$ spin-flip Markov chain with rates $a^+,a^-$. \qed
\end{theorem}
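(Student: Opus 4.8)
The plan is to construct the process ${}^{(a^+,a^-)}\CG^{a,c,\theta}$ on top of the already-established \emph{background} diffusion $\CG^{a,c,\theta}$ (the one with completely connected components) and to treat the edge field as an independent, explicitly solvable decoration; each of (a)--(d) then reduces to a property of the background already granted by Theorems~\ref{th.963-I}, \ref{th.963-II}, \ref{th.963-III} and~\ref{th.1838}, plus an elementary two-state spin-flip computation.

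\emph{Construction, well-posedness, strong Markov/Feller/diffusion (part (a)).} I would take the $\U$- (resp.\ $\U^V$-)valued background diffusion $\CU$ of Theorem~\ref{th.963-I} (resp.\ \ref{th.1838}) together with its grapheme functional $\CG$, for which $h_t$ is recovered from $r_t$ via~\eqref{e678}, and on top of $\CU$ build the pair-indexed $\{0,1\}$-valued white-noise field $\mathfrak h_t$ of~\eqref{e1191}: each pair-coordinate is an independent spin-flip chain with rate $a^+$ to $1$ and rate $a^-$ to $0$, carried along the genealogy (a newly born or resampled individual inheriting the coordinates of its parent, genuinely new coordinates drawn from the stationary law $\mathrm{Ber}(a^+/(a^++a^-))$), all independent of $\CU$. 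Then ${}^{(a^+,a^-)}\CG$ of~\eqref{e1237} is the measurable functional $[(U_t,r_t,\wt h_t,\mu_t)]$ with $\wt h_t=\mathfrak h_t h_t$ as in~\eqref{e1188}. Because $r_t$ still determines $h_t$, the grapheme ${}^{(a^+,a^-)}\CG_t$ retains the relevant information $(\CU_t,\wt h_t)$, and $(\CU,\mathfrak h)$ is strong Markov and Feller (the field has bounded rates and acts on coordinates independent of $\CU$); hence the strong Markov and Feller properties descend exactly as in the background case. Well-posedness of the $(\wh\CL^\ast+\wh\CL^{\pm},\,\wh\Pi^{\ast,\pm},\,\wh\Gamma^\ast)$-martingale problem --- with $\wh\CL^\ast$ the second-order background generator, $\wh\CL^{\pm}$ the \emph{first-order} edge add/remove term, and $\wh\Pi^{\ast,\pm}$ the polynomial algebra enlarged to sample the $\wt h$-values --- then follows from well-posedness for $\CU$ (Theorems~\ref{th.963-I}, \ref{th.1838}) by the standard fact that adjoining a bounded, independently driven first-order operator preserves well-posedness; the induced $\G^{\langle\rangle}$-valued functional remains a diffusion, since the connection-matrix distribution $\nu_t$ stays continuous (it averages the independent pair-coordinates, whose one-dimensional marginals solve the deterministic spin-flip forward equation).

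\emph{Duality (part (d)).} For the dual I would keep the Kingman / time-inhomogeneous coalescent dual $\CC_t$ and the Feynman--Kac potential $\mathfrak V$ of Theorem~\ref{th.853} (here $\mathfrak V$ counts dual pairs and is edge-independent, hence unchanged), and decorate $\CC_t$ by pruning each present dual edge independently with the probability that a two-state spin-flip chain equals $0$ at time $t$, an explicit $2\times 2$ matrix exponential that is the same for all edges because the white-noise field consists of independent copies of one chain. As the duality functions $H$ are polynomials, hence multilinear in the edge and non-edge indicators, and $\mathfrak h$ is independent of the background, conditioning on $\CC_t$ and pulling the pruning probabilities through the product rewrites $\E[H({}^{(a^+,a^-)}\CG_t,\CC_0)]$ as the background duality pairing against the edge-decorated dual; Theorem~\ref{th.853}(a) in the FV case, resp.\ (b) in the DW case, then closes the identity, and combining the strong-duality form~\eqref{e1094} of the $a^+=a^-=0$ process with the same i.i.d.\ pruning gives exactly the statement of (d).

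\emph{Equilibria and approximation (parts (b), (c); the analogues of Theorems~\ref{th.963-III} and~\ref{th.963-II}).} Letting $t\to\infty$, the background equilibrium is the Poisson--Dirichlet block structure of~\eqref{e1160} (Theorem~\ref{th.963-III}; Corollaries~\ref{cor.1165}, \ref{cor.1263}), while the white-noise field reaches its stationary law $\mathrm{Ber}(a^+/(a^++a^-))$, i.i.d.\ over pairs and independent of the blocks; the product is precisely the i.i.d.\ $\tfrac{a^+}{a^++a^-}$-pruning of the block matrix of $1$'s with Poisson--Dirichlet block frequencies, which is (c), while the local edge-density statement of (b), in and out of equilibrium, is the same decomposition read through the time-$t$ spin-flip marginal. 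The finite-graph approximation in $D([a,\infty),\wt\G^{[]})$ and the convergence of $\mathrm{LAW}^+[{}^{(a^+,a^-)}\CG_t]$ to the corresponding equilibrium then follow by coupling: the finite background graphemes converge by Theorem~\ref{th.963-II}, the finite edge fields are independent spin-flip systems with the same fixed rates on the converging block structures, and the map sending a pair (background, edge field) to the pruned grapheme is continuous for the relevant topologies.

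\emph{Main obstacle.} The technical heart is the construction step: defining the pair-indexed white-noise spin-flip field consistently over a genealogy whose index set is continually born, killed and resampled, identifying the induced first-order operator $\wh\CL^{\pm}$ as exactly the $n\to\infty$ limit of the finite edge-flip generators, and checking that $\wh\CL^\ast+\wh\CL^{\pm}$ is closable on $\wh\Pi^{\ast,\pm}$ with the domain-approximation properties used for $\CU$. In particular one must verify that the pruning does not destroy the instantaneous-entrance mechanism of Remark~\ref{r.938}, so that for $t>0$ the path lives in the (now pruned) restricted state space --- with path-connected rather than completely connected components --- and that the $\G^{\langle\rangle}$-valued functional, though its operator now carries first-order terms, still enters the closure of $\wt\G^{\langle\rangle}_{\rm comp}$ as discussed in Remark~\ref{r.964}. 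The duality of the second step then serves as an independent check that the constructed process is the intended one.
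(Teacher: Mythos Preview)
Your proposal is essentially correct and follows the same decomposition as the paper: autonomous background $\CU$ (equivalently $\CG^{(0,0)}$) plus an independent spin-flip layer, with the equilibrium obtained as the product of the Poisson--Dirichlet block structure and the i.i.d.\ Bernoulli$\big(\tfrac{a^+}{a^++a^-}\big)$ pruning, and duality obtained by decorating the coalescent dual with the time-$t$ spin-flip marginal.

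The one methodological difference worth noting concerns exactly what you flag as your ``main obstacle''. You propose to build an explicit $U_t\times U_t$-indexed spin-flip field $\mathfrak h_t$ carried along the genealogy, and then worry about consistency under births, deaths and resampling of the index set. The paper sidesteps this entirely by never constructing such a field pointwise: it works directly with the \emph{conditional martingale problem} for the functional $(\nu_t)_{t\ge 0}$, given the full path of $\CU$. Restricting the operator to $\wh\Pi^{\ast,\downarrow}$ recovers the well-posed martingale problem for $\CU$; on $\Pi^\ast$ one reads off, for each $m$, a time-inhomogeneous spin-flip generator on $\{0,1\}^{m\choose 2}$ in the environment $\CU$, which is trivially well-posed; integrating over the law of $\CU$ gives existence and uniqueness for the full problem. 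This avoids making sense of a white-noise field on an evolving continuum and replaces your ``closability of $\wh\CL^\ast+\wh\CL^\pm$ on $\wh\Pi^{\ast,\pm}$'' step by a straightforward splitting into two well-posed pieces. Your shortcut ``adjoining a bounded, independently driven first-order operator preserves well-posedness'' is correct in spirit, but the paper's conditional-MP route is how that heuristic is actually cashed out here.
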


In the proof of Theorem~\ref{th.graphnon}, we formulate a \emph{conditional} martingale problem for a given path of the background process that is the solution of the martingale problem of Theorem~\ref{th.963-I}. Note that the associated graphon $\wh \CW_t$ consists of blocks of fluctuating sizes specified by their weights under $\mu_t$, but the $0$'s and $1$'s occur with frequencies determined by the $a_+,a_-$, and by the time the system has evolved into equilibrium equal $\frac{a^+}{a^+ + a^-}$. Therefore the component of a single type is \emph{not} completely connected, only \emph{path-connected}. Recall that the latter holds also on a \emph{countable Erd\H{o}s-R\'enyi random graph}. Nevertheless, the graphon has values in $\{0,1\}$ and therefore still agrees with the grapheme from our $\wt \G^{[]}$-valued process projected on $\wt \G^{\langle\rangle}$.

Theorem~\ref{th.graphnon} also holds for the extensions to the generalisations given in Section~\ref{sss.compcase}. This holds literally for part (a). Parts (b)-(d) have to be adapted as follows. In (b) we now have a \emph{time-inhomogeneous} spin-flip system in a \emph{random environment} through the dependence on the underlying genealogy process. In (c), the pruning of the equilibrium is now given by random variables $A^+ = a^+(\CU_\infty)$ and $A^- = a^-(\CU_\infty)$ with $\CU_\infty$ the equilibrium genealogy. Part (d) changes analogously to (b).

\begin{corollary}[Extension]\label{cor.1304}
The modified theorem above also holds for the more general flip rates defined in Section~\ref{sss.compcase}.
\end{corollary}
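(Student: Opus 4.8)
The plan is to re-run, almost verbatim, the construction behind Theorem~\ref{th.graphnon}, the only new point being that nothing in that construction used constancy of $a^+,a^-$: boundedness and measurability is all that enters. First I would invoke the \emph{background diffusion} $\CU=(\CU_t)_{t\ge 0}$ with values in $\U$ (respectively $\U^V$), together with its well-posed martingale problem, strong Markov and Feller property, and continuous paths, as furnished by Theorem~\ref{th.963-I} (respectively Theorem~\ref{th.1838}); this object is completely insensitive to the edge dynamics, so it is unchanged. Along any fixed path $(\CU_t)_{t\ge 0}$ the ball decomposition $(\CH^i_t)_{i\in\N}$ of~\eqref{e1497} is measurable, and so are the two observables entering the flip rates: the age $\alpha^i_t$, which is the radius of $\CH^i_t$ read off from $r_t$, and the size $\sigma^i_t=\mu_t(\CH^i_t)$. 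Hence $a^\pm_t:=a^\pm(\alpha^i_t,\sigma^i_t)$ are, by hypothesis, uniformly bounded and progressively measurable functionals of the $\CU$-path; in the marked case one replaces the $U$-decomposition by the corresponding decomposition of $U\times V$.

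Next I would replace the constant-rate white noise $\mathfrak h_t$ of~\eqref{e1191} by a \emph{time-inhomogeneous, $U_t$-indexed spin-flip system} with flip rates $a^+_t,a^-_t$ acting on the $\{0,1\}$-valued pairs inside a common ball (inside a common type in the marked case), built on an independent family of Poisson clocks; since the rates are bounded, every $n$-coordinate projection is a finite-state continuous-time Markov chain with bounded time-dependent generator, the family is consistent in $n$, and the projective limit defines $(\mathfrak h_t)_{t\ge 0}$ and, via~\eqref{e1188}, the pruned connection function $\wt h_t=\mathfrak h_t h_t$. The resulting enriched process $([U_t,r_t,\wt h_t,\mu_t])_{t\ge 0}$ is the candidate $^{(a^+,a^-)}\CG^{a,c,\theta}$ of~\eqref{e1237}. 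The structural remark that makes part~(a) of Theorem~\ref{th.graphnon} go through is that $r_t$ records the genealogical distances, hence the ages $\alpha^i_t$, so $a^\pm_t$ are measurable functions of the \emph{current} state in $\G^{[]}$: the process on $\G^{[]}$ (resp.\ $\G^{[],V}$) is still Markov, with generator the background operator $\wh\CL^\ast$ (second order) plus the first-order flip term
\[
\big(\wh{\CL}^{a^+,a^-}_{\mathrm{flip}}\Phi\big)(\CG)
=\sum_{(k,l)}\Big(a^+(\alpha,\sigma)\,(1-h_{kl})\big[\Phi(h_{kl}{=}1)-\Phi\big]
+a^-(\alpha,\sigma)\,h_{kl}\big[\Phi(h_{kl}{=}0)-\Phi\big]\Big),
\]
the ages and sizes evaluated at the balls of the sampled pair $(k,l)$. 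Well-posedness of the conditional (time-inhomogeneous) martingale problem for $(\mathfrak h_t)_{t\ge 0}$ given the $\CU$-path is immediate, its generator being a bounded perturbation of the identity on the sampled coordinates; gluing it to the background martingale problem and integrating out the conditioning gives a well-posed martingale problem for $^{(a^+,a^-)}\CG^{a,c,\theta}$ on $\G^{[]}$, with uniqueness obtained as usual (any solution has a $\CU$-marginal solving the background problem, hence unique in law by Theorem~\ref{th.963-I}/\ref{th.1838}, and then the conditional law of $\wt h$ is forced). The strong Markov and Feller property and the path properties in the sense of Theorems~\ref{th.963-I} and Remark~\ref{r.964} are inherited from $\CU$, together with the observation — as in the constant-rate case — that each single flip is a $\mu_t$-null modification, so the connection-matrix-distribution functional $\nu_t$ stays \emph{continuous} in $t$: block by block it evolves through the scalar ODE $\dot p=a^+_t(1-p)-a^-_t p$ driven by the continuously moving block structure, whereas on $\G^{\langle\rangle}$ the process is well defined and Polish-valued but, since $\alpha^i_t$ is not a functional of $\nu_t$, genuinely \emph{non-Markovian}.

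For the remaining items I would argue as follows. The finite-graph approximation (Theorem~\ref{th.963-II} via Theorem~\ref{th.graphnon}) follows from the convergence of the background finite genealogies $\CU^n$ to $\CU$ plus continuity, along converging $\CU$-paths, of the conditional law of the pruning, which is governed by the solution map of the ODE $\dot p=a^+_t(1-p)-a^-_t p$, continuous in its bounded coefficients. For duality, the dual of $^{(a^+,a^-)}\CG^{a,c,\theta}$ is the coalescent-driven dual grapheme of the background diffusion (Theorem~\ref{th.853}, resp.~\eqref{e1094}), on which one performs an independent pruning whose per-block probability is the time-$t$ marginal of the spin-flip chain with rates $a^+(\alpha,\sigma),a^-(\alpha,\sigma)$, the age $\alpha$ now being the backward coalescent time inside the block and $\sigma$ its current mass; this is precisely the ``time-inhomogeneous spin-flip system in a random environment'' announced before the corollary, reducing to Theorem~\ref{th.graphnon}(d) when $a^\pm$ are constant. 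The equilibrium (Theorem~\ref{th.963-III} via Theorem~\ref{th.graphnon}(c)) is then the Poisson--Dirichlet (resp.\ Moran--gamma) block matrix of the background equilibrium $\CU_\infty$ pruned with the \emph{random} probability $A^+/(A^++A^-)$, where $A^\pm=a^\pm(\alpha^i_\infty,\sigma^i_\infty)$ is evaluated at the equilibrium age and mass of the $i$-th block. The passage to the selection/mutation setting of Theorem~\ref{th.1838} is identical once $U$ is replaced by $U\times V$ in the subfamily decomposition.

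The main obstacle is the analysis on $\G^{\langle\rangle}$ rather than on $\G^{[]}$: there the martingale problem is already non-standard (Remark~\ref{r.964}), the process is now not even Markov, and one has to justify carefully that the pair ``background genealogy $+$ pruned edges'' is a legitimate Polish-space-valued process with a continuous $\nu$-functional and a dual representation — that is, that making $a^\pm$ state-dependent has only moved the state-dependence into the \emph{random environment} seen by the edge dynamics, without spoiling well-posedness of the enriched $\G^{[]}$-valued problem from which everything is deduced. A secondary technical point is that the $n\to\infty$ limit of the pruning functional needs a mild continuity assumption on $a^\pm$ in the age/size variables (mere measurability is enough for the construction on $\G^{[]}$, but to pass the pruning to the limit one uses continuity of the ODE flow in its coefficients along $\CU^n$-to-$\CU$ convergence, e.g.\ via a Skorokhod representation).
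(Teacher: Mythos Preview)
Your proposal is correct and follows essentially the same approach as the paper: the paper does not give a separate proof for this corollary but treats it as immediate from the paragraph preceding it (noting that part~(a) holds literally while parts~(b)--(d) require the time-inhomogeneous spin-flip in random environment and the random pruning probabilities $A^\pm=a^\pm(\CU_\infty)$), relying on the conditional-martingale-problem argument of Section~\ref{s.compo} carrying over verbatim once the constant rates are replaced by the bounded $\CU$-dependent ones. Your write-up is in fact more detailed than what the paper provides, and your closing observations about non-Markovianity on $\G^{\langle\rangle}$ and the need for continuity of $a^\pm$ to pass the pruning through the $n\to\infty$ limit are apt and consistent with the paper's standing assumptions in Section~\ref{sss.compcase}.
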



\subsubsection{Explicit representations of laws of equilibrium statistics}
\label{sss.poss}

There are statistics for graphemes that can be explicitly calculated because they are the laws or important characteristics in models from population dynamics. Return to the models described in Section~\ref{sss.theorem}. We next introduce some quantities that concern the space-time structure of a current completely connected component.


\paragraph*{Density of edges in equilibrium.}

For the equilibrium distribution on $\G^{\langle\rangle}$ or $G^{\langle\rangle,V}$ in the Fleming-Viot case, the equilibrium induced by $\nu^{\langle\rangle,{\rm FV},c,\theta,d}$ is concentrated on states that decompose into completely connected components, with vertex-frequencies that in \emph{size-ordered} form are given by
\begin{equation}
\label{e1309}
(W_i^\geq)_{i \in \N}
\end{equation}
with $W_i$ given by the Poisson-Dirichlet distribution, i.e., the \emph{size-ordered vector}
\begin{equation}\label{e1330}
\left(V_i \prod_{j=0}^{i-1} \left(1-V_j\right)\right)_{i \in \N},
\end{equation}
which is called the \emph{GEM$(\frac{c}{d})$-distribution}. A key statistics is the frequency of edges among all possible edges:
\begin{equation}
\label{e1314}
\bar E=\sum_{i \in \N} W_i^2.
\end{equation}
This object describes the \emph{relative density} of edges, and measures the deviation from completeness of the graph.

A similar object arises in the Dawson-Watanabe case, where as additional objects we have the \emph{scaled limiting vertex number} and the \emph{vertex numbers of the connected components},
\begin{equation}
\label{e1318}
(W_i)_{i \in \N},
\end{equation}
where $\sum_{i\in\N} W_i < \infty$ is a \emph{$\gamma$-distributed} random variable with parameter $(1,\tfrac{c}{b})$. Define the analogue of the quantity in \eqref{e1314} as
\begin{equation}
\label{e1129}
\bar E = \sum_{i\in\N} \left(\frac{W_i}{\sum_{j \in \N} W_j}\right)^2.
\end{equation}
The challenge is to say more about $\bar E$ using~\eqref{e1330} and \eqref{e1318}.


\paragraph{Decomposition of edges in equilibrium: space-time structure.}

We now come to properties of the equilibrium process, which depend on the \emph{embedding in an ultrametric measure space} of \emph{genealogies}. In that space, we can define clusters (or cliques) of the set of edges arising from the time-$t$ descendants of a single immigrant, define the completely connected corresponding components, their size at time $t$, and the times of their formation some random time back in the past.

\medskip\noindent
\emph{(i) Decomposition w.r.t.\ immigration time.}
We can decompose the state in the space in which the graph is embedded at time $t$ into the connected components coming from a \emph{specific immigrant} (recall that we are in equilibrium, i.e., in the $t \to \infty$ limit where all individuals  descend from one immigrant only). Each component has a finite diameter, i.e., the supremum of pairwise distances, ordered by size:
\begin{equation}
\label{e1807}
s_1>s_2>\cdots
\end{equation}
The vertices of these components are the descendants of the \emph{immigrants} at times $t-s_1,t-s_2,\ldots$, which have frequencies $\mu_t^{(1)},\mu_t^{(2)},\ldots$ Note that $s_1,s_2,\ldots$ are realisations of \emph{random variables} $S_1,S_2,\ldots$ In particular, all edges that are present at time $t$ date back to an origin later than
\begin{equation}
\label{e1813}
\text{time } t-S_1 \text{, and at time $t$ the frequency of edges is } \sum_{i \in \N} (\mu_t^{(i)})^2.
\end{equation}
In other words, $t-S_1$ is a \emph{renewal point} for the edge structure and $1-\sum_{i\in\N} (\mu_t^{(i)})^2$ is the \emph{deviation from completeness}.

We want to determine the law of the \emph{ages} and \emph{sizes} of the \emph{completely  connected components} in \emph{equilibrium}:
\begin{equation}
\label{e1132}
\CL \left[(t-S_i,\mu^{(i)})_{i \in \N}\right],
\quad \CL \big[(t-S_i)_{i \in \N}\big],
\quad \CL \left[\sum_{i \in \N} (\mu^{(i)}_t)^2 \right].
\end{equation}
The best way to analyse these objects is to work with two approaches, each giving explicit expressions as follows. In the \emph{backward view}, the coalescent driven dual process of the measure-valued process on $\N$ defined by~\eqref{e1506} is a coalescent with jumps into a cemetery at rate $c$ and coalescence at rate $d$, with the cemetery state carrying the measure $\theta$. In the \emph{forward view}, we use the excursion theory of this measure-valued diffusion, which is a \emph{$k$-dimensional diffusion} with a \emph{random $k$}, starting from the zero-state.

\medskip\noindent
\emph{(ii) Decomposition of edges in completely connected components w.r.t.\ the time of their origin.} We can label the completely connected components $k \in \N$ by using the time of their appearance. In each completely connected component we can decompose the edges according to their \emph{origin in time}. Namely, we decompose the component immigrated at time $t-s_k$ for some $k \in \N$ into disjoint balls as follows. First, we take the ball of maximal radius, which is charged by $\mu_t^{(k)}$, and decompose it into disjoint balls of maximal radius less than $s_k$, say $s^1_k$. After that, we continue with the complement within the time $s_u$-ball, to get successively $s_k^2,s_k^3,\ldots$ We denote the corresponding random variables for the $k$-th component by $(S^{k,j})_{j \in \N}$. For every $k \in \N$ and every realisation, this gives decompositions of the edges in the component dating back to time $t-s^2_k,t-s_k^3,\ldots$ As weights $\mu^{(k)},\mu^{(k,1)},\mu^{(k,2)},\ldots$ for the latter random variables, we use $M^{k,j}$ as notation. The frequencies of the edges present after $t-s^j_k$ are obtained by connecting vertices between the \emph{$s_k^j$-balls} and using \emph{weights $\mu^{(k,j)}$}, $k \in \N,j \in \N$. The frequencies of the edges are given by the product of their weights, namely, $\mu^{(k,j)}(1-\mu^{(k,j)})$. Hence, the \emph{space-time structure} of the \emph{edges in the $k$-th component} for $k \in \N$ is described by the collection of random variables with values in $\R^+$, respectively, $[0,1]$, which we can define for the $\G^{[]}$-valued grapheme:
\begin{equation}
\label{e1146}
\big(S^{k,j},M^{(k,j)}\big)_{j \in \N}, \qquad
\big(S^{k,j},M^{(k,j)}(1-M^{(k,j)})\big)_{j\in \N}.
\end{equation}
The second components are also observables for the $\G^{\langle\rangle}$-valued process, but not the first.

This leaves us with the task to calculate the laws for the objects defined in~\eqref{e1132} and~\eqref{e1146} based on duality and excursion theory of diffusions, which is not addressed here.


\section{Grapheme evolution $1$: State space, topology and test functions}
\label{s.evol}

In this section we build up the topology to get a proper \emph{state space} for our grapheme dynamics and state basic properties of the \emph{topology}. This is done by specifying \emph{algebras of test functions} on $\G^\sim$, with $\sim=\langle\rangle,\{\},[]$, that we use to define \emph{convergence of sequences} in these spaces and thus a topology. Later, in Section~\ref{ss.go}, these functions will also be the basis for the \emph{domain of the operator} of our martingale problems. In Section~\ref{ss.space} we focus on the topology, on the test functions, and on tightness issues. In Section~\ref{sss.extens} we extend the setting to varying size, i.e., $\bar \G^\sim$. In Section~\ref{ss.marked} we extend further to $V$-marked versions $\G^{\sim,V},\bar \G^{\sim,V}$.


\subsection{Test functions, topology and tightness for grapheme spaces}
\label{ss.space}


\subsubsection{Test functions on grapheme spaces and topology}\label{sss.testf}

We introduce a \emph{topology} on the sets $\G^{[]},\G^{\{\}},\G^{\langle\,\rangle}$ defined in Section~\ref{s.introduction}, in order to specify Borel $\sigma$-algebras $\CB_{\CG^{[]}},\CB_{\CG^{\{\}}},\CB_{\CG^{\langle\rangle}}$ and, if possible, turn them into a Polish space. We do so by defining \emph{convergence of sequences} of elements of the space $\G^{[]},\G^{\{\}},\G^{\langle\rangle}$. This in turn is based on convergence of the sequences of \emph{evaluations} of a specific class of test functions on the spaces $(\CI,\CB_\CI)$ for $\G^{\{\}},(\CI,r)$ for $\G^{[]}$, respectively, $\{0,1\}^{\N \times \N}$ for $\G^{\langle\rangle}$. The proofs of the propositions stated in the present section are given in Section~\ref{s.state}.

\paragraph*{Functions.}
Our functions are based on evaluations for specific functions of points in sampled \emph{finite} subspaces from $\CI$ such that they only depend on the equivalence class of $(\CI,\CB_\CI)$ and on the connection-matrix distribution via $h$.
\begin{itemize}
\item
The algebra of functions, called \emph{polynomials}, arise as functions on \emph{sampled finite subspaces}, with their \emph{edges} obtained by sampling finitely many vertices with $\mu$ from $\CI$ (without replacement if $\mu$ has atoms), forming the \emph{finite version of the space $\CI^\ast$} with the restriction of $h$ defining a \emph{finite} \emph{subgraph} that is \emph{embedded} in $\CI$. If we put the equidistribution on the vertices, then we get a \emph{sampled} \emph{finite grapheme} (see~\eqref{e791},~\eqref{e341} and~\eqref{e403} below). On that grapheme we define functions on a representative of the edges and of the vertices as points in the underlying space $\CI$, which are taken from the finite space built as just indicated. Below we deal separately with two cases: (i) corresponding to $\G^{\langle\rangle}$, respectively, (ii) corresponding to $\G^{[]}$ or $\G^{\{\}}$.
\end{itemize}

\medskip\noindent
\textbf{(i) Functions on $\G^{\langle\rangle}$.}
Consider the functional $\nu$ of the connection-matrix distribution in an $\N$-sample, i.e., look at the space $\G^{\langle\rangle}$ and use test functions on $\G^{\langle\rangle}$ that can be lifted to functions on $\G^{[]},\G^{\{\}}$. Recall that an element of $\G^{\langle\rangle}$ is of the form $\langle(\CI,\tau),h,\mu\rangle$ for \emph{some} Polish space $\langle\CI,\tau\rangle$. Our first observation is that we can study the marginal distributions $\nu^{(m)}$, $m\in\N$, of $\nu$ via the \emph{subgraph count functions}, which are the \emph{monomials $\Phi$} of degree $m$, given as follows.

Fix $C = [m]$ and $A,B \subseteq C^2 \setminus \mathrm{Diag}(C^2)$ (denoted by $C^{2 \setminus}$) such that $A \cup B=C^{2 \setminus}, A \cap B = \emptyset$ and $|A | + |B| = {m \choose 2 }$ for some $m= \mid C \mid \in\N$ (this induces an $m$-subgraph with specified vertices from $C$ and its edges). Introduce functions $\wt \varphi_{A,B} = 1_A(1-1_B)$ that are indicators of the set of $m$-graphs with the pairs in $A$ connected and the pairs in $B$ not connected. In this way we get functions on the pair $(\CI^m,h)$:
\begin{align}
\label{e796}
\CF_m= \left\lbrace\varphi_{A,B} \colon\, A,B \subseteq C^{2 \setminus},
A \cap B = \emptyset,\, |A| + |B| = {m \choose 2} \right\rbrace,
\end{align}
where $\varphi_{A,B}$ on $(\CI^m,h)$ is defined via $\wt \varphi_{A,B}$ as
\begin{equation}
\label{e976}
\varphi_{A,B} (\uu) = \wt \varphi_{A,B} \left(\uuh(\uu)\right)
= \Prodl_{(i,j) \in A} h(u_i,u_j) \Prodl_{(i,j) \in B} \left(1-h(u_i,u_j)\right).
\end{equation}
Define
\begin{equation}
\label{e802}
\CF= \bigcup\limits_{m \in \N} \CF_m,
\end{equation}
which distinguishes equivalence classes of countable graphs, i.e., graphemes. Set
\begin{equation}
\label{e1575}
\Phi^{m,\varphi_{A,B}} (\CI,h,\mu) = \int_{\CI^m} \varphi_{A,B}\; (\uuh(\uu)) \mu^{\otimes m} (d\uu),
\end{equation}
where for $\mu$ with atoms we use $\mu^{\otimes m, \downarrow}$. Then the test functions on $\G^{\langle\rangle}$ are given by
\begin{equation}
\label{e806}
\Pi^{\ast,\CF} = \text{ the algebra generated by (=\,connection polynomials).}
\end{equation}
These polynomials determine asymptotically the behaviour of the \emph{normalised subgraphs counts} in sequences sampled according to $\mu$.

For $m \in \N$, consider more generally functions in $\Pi^{\ast,\CF}$ that are specified for some symmetric $\varphi$ vanishing on the diagonal,
\begin{equation}
\label{e787}
\varphi\colon\, \{0,1\}^{[m]\times[m]} \to \R.
\end{equation}
Call this set $\wt \CF_m$, with $\uuh$ denoting the connection matrix of an $m$-sample $\uu$ from $\CI$. Then the test functions are given by
\begin{equation}
\label{e791}
\Phi^{m,\varphi} \left([\CI,h,\mu]\right) = \int_{\CI^m} \varphi(\uuh)(\underline{u})\,\mu^{\otimes m}(\d\underline{u})
= \int_{\{0,1\}^m} \varphi(\uuh)\, \nu^{(m)}(\d \uuh), \qquad \varphi \in \wt \CF_m,\, m \in \N.
\end{equation}
 Note that~\eqref{e791} is only defined when $h$ is a \emph{measurable function}. The right-hand side shows that our quantities so far do \emph{not} depend on the choice of representatives of the equivalence class. We define the algebra generated by the monomials in~\eqref{e791} on $\G^{\langle\rangle}$ as
\begin{equation}
\label{e374}
\Pi^\ast = \text{ algebra of \emph{general connection polynomials}}.
\end{equation}

On $\G^{\langle\,\rangle}$ we want a topology that is based on these functions, requiring for \emph{convergence of a sequence $(\CG_n)_{n \in \N}$ to $\CG$} the convergence of all evaluations of $\CG_n$ by $\Phi \in \Pi^\ast$ to the one of $\CG$. This works because of the following properties.

\begin{proposition}[$\Pi^\ast$ law convergence determining for $\nu$]
\label{prop.928}
$\mbox{}$\\
(a) The algebra generated by $\Pi^\ast$ uniquely determines the $m \times m$ connection-matrix distributions $\nu^{(m)}$, $m \in \N_{\geq 2}$, and is by definition convergence determining on $\G^{\langle\rangle}$.\\
(b) The moments of the random grapheme $\CG$ with state space $\G^{\langle\,\rangle}$,
\begin{equation}
\label{14}
\left\lbrace \E\left[\Phi(\CG)\right],\Phi \in \Pi^\ast \right\rbrace,
\end{equation}
determine $\CL [\nu]$ uniquely and are convergence determining for laws on $\G^{\langle\rangle}$. \qed
\end{proposition}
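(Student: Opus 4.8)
The plan is to reduce everything to classical facts about exchangeable random arrays and the projective structure of the connection-matrix distributions $\nu^{(m)}$. First I would establish part (a). The key point is that the family $\{\wt\varphi_{A,B} : A,B\subseteq C^{2\setminus},\ A\cap B=\emptyset,\ |A|+|B|=\binom m2\}$ is a linear basis for the (finite-dimensional) space of real-valued functions on $\{0,1\}^{[m]\times[m]}$ that are symmetric and vanish on the diagonal, because each $\wt\varphi_{A,B}$ is the indicator of a single point of that finite configuration space and these indicators span. Consequently, for fixed $m$, the numbers $\{\Phi^{m,\varphi_{A,B}}(\langle\CI,h,\mu\rangle)\}_{A,B}$ are exactly the values $\nu^{(m)}(\{\uuh = (A,B)\})$, i.e.\ the full vector of atoms of the probability measure $\nu^{(m)}$ on the finite set $\{0,1\}^{[m]\times[m],\mathrm{sym},0}$; hence knowing all polynomials in $\Pi^*$ of degree $m$ is the same as knowing $\nu^{(m)}$. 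Letting $m$ range over $\N_{\geq 2}$ and invoking the discussion in Section~\ref{sss.cmd} that $\nu$ is precisely the projective limit of the consistent family $(\nu^{(m)})_{m\geq 2}$, we get that $\Pi^*$ determines $\nu$, and therefore separates points of $\G^{\langle\rangle}$ (two graphemes being $\langle\rangle$-equivalent iff they have the same $\nu$). That $\Pi^*$ is then convergence determining on $\G^{\langle\rangle}$ I would take essentially by definition: the topology on $\G^{\langle\rangle}$ is declared (in the paragraph preceding the proposition) to be the one in which $\CG_n\to\CG$ iff $\Phi(\CG_n)\to\Phi(\CG)$ for all $\Phi\in\Pi^*$; one checks that this is metrizable by identifying $\G^{\langle\rangle}$ with a subset of $\prod_{m\geq 2}\CM_1(\{0,1\}^{[m]\times[m],\mathrm{sym},0})$ via $\langle\CI,h,\mu\rangle\mapsto(\nu^{(m)})_m$ and using the product topology (each factor a simplex in a finite-dimensional space), so convergence in the topology is exactly convergence of all $\nu^{(m)}$, equivalently of all $\Phi\in\Pi^*$.

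For part (b) I would argue that $\Pi^*$ is an algebra of bounded continuous functions on $\G^{\langle\rangle}$ which separates points (by (a)) and contains the constants, so that — since $\G^{\langle\rangle}$ embeds, as above, into the compact metrizable space $\prod_{m\geq2}\CM_1(\{0,1\}^{[m]\times[m],\mathrm{sym},0})$ (each simplex being compact), and $\Pi^*$ restricted to this product is uniformly dense in $C$ of the closure by Stone–Weierstrass — the moments $\{\E[\Phi(\CG)]:\Phi\in\Pi^*\}$ determine $\CL[\CG]=\CL[\nu]$ uniquely. More concretely and without invoking closure: for each fixed $m$, $\E[\Phi^{m,\varphi_{A,B}}(\CG)] = \E[\nu^{(m)}_{\CG}(\{(A,B)\})]$, and since a probability measure on the Polish (here even compact metrizable) space $\prod_m \CM_1(\cdots)$ is determined by its finite-dimensional marginals, which in turn are determined by all joint moments of the coordinate maps $\nu^{(m)}$ — and these joint moments are exactly expectations of products of the basic monomials, hence of elements of the algebra $\Pi^*$ — the law $\CL[\nu]$ is pinned down. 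The convergence-determining statement for laws then follows from the portmanteau-type criterion: since $\Pi^*$ is convergence determining for points, bounded, continuous, an algebra, and separates points on a space that is a subset of a compact metric space, weak convergence of laws $\CL[\CG_n]\Rightarrow\CL[\CG]$ is equivalent to $\E[\Phi(\CG_n)]\to\E[\Phi(\CG)]$ for all $\Phi\in\Pi^*$; one direction is trivial (bounded continuous test functions), and the other uses tightness (automatic here, as the ambient space $\prod_m\CM_1(\cdots)$ is compact) plus the fact that any subsequential weak limit is identified by its $\Pi^*$-moments, which all agree with those of $\CL[\CG]$.

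The main obstacle I anticipate is not any single hard estimate but rather the bookkeeping of \emph{which} space one is really working in and making the Stone–Weierstrass / moment-problem step fully rigorous: $\G^{\langle\rangle}$ itself is (as emphasized in Remark~\ref{r.964}) \emph{not closed}, hence not compact and a priori only a measurable subset of the Polish space of graphons $\wt\CW$, so one must be careful to run the Stone–Weierstrass argument on a genuinely compact (or at least Polish) over-space — e.g.\ the product of simplices $\prod_{m\geq2}\CM_1(\{0,1\}^{[m]\times[m],\mathrm{sym},0})$ — and only afterwards restrict back to $\G^{\langle\rangle}$; in particular one should check that the map $\langle\CI,h,\mu\rangle\mapsto(\nu^{(m)})_{m\geq2}$ is a topological embedding onto its image with the subspace topology, and that the consistency (projectivity) relations among the $\nu^{(m)}$ cut out a closed subset of that product (so limits of graphemes that remain in $\G^{\langle\rangle}$ are handled correctly). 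The algebraic fact that the $\wt\varphi_{A,B}$ span all symmetric diagonal-vanishing functions on $\{0,1\}^{[m]^2}$, and the reduction of moments to $\nu^{(m)}$-atoms, are then routine linear algebra.
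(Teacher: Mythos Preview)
Your proposal is correct and essentially self-contained, but it proceeds along a genuinely different line from the paper's own argument. You work directly: for each fixed $m$ the indicator functions $\wt\varphi_{A,B}$ span all functions on the finite configuration space $\{0,1\}^{[m]^{2\setminus}}$, so the degree-$m$ monomials recover the atoms of $\nu^{(m)}$; projectivity then gives $\nu$; and for (b) you embed $\G^{\langle\rangle}$ into the compact product $\prod_{m\geq 2}\CM_1(\{0,1\}^{[m]^{2\setminus}})$, observe that the $\Phi^{m,\varphi_{A,B}}$ are coordinate projections there, and invoke Stone--Weierstrass plus automatic tightness. The paper, by contrast (Section~\ref{ss.algeb}), does not argue from first principles at all: it observes that on $\G^{[]}_{\rm ultr}$ the connection function $h$ can itself be viewed as a (coarser) ultrametric, so that $(\CI,h,\mu)$ has the same structure as an element of $\U_1$, and then imports the separating and law-determining properties of polynomials on $\U_1,\U$ directly from the existing literature \cite{GrevenPfaffelhuberWinter2009,DepperschmidtGrevenPfaffelhuber2011,DG19}. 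Your route is more elementary and transparent for this particular proposition, and it makes the role of the finite-dimensional simplex structure explicit; the paper's route has the advantage of treating Propositions~\ref{prop.928}--\ref{prop.1743} and \ref{prop.1}--\ref{prop.1202} uniformly via a single reduction to the $\U_1$-theory, which is the organising principle of the whole paper. Your careful handling of the non-closedness of $\G^{\langle\rangle}$ (running Stone--Weierstrass on the compact over-space and restricting back) is exactly the point the paper flags in Remark~\ref{r.964} but does not spell out in the proof itself.
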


We can extend these functions $\G^{\langle\rangle}$ from $\Pi^\ast$ to $\G^{[]}$ and $\G^{\{\}}$, since they do not depend on anything other than the functional $\nu$. We misuse notation by still talking about the algebra $\Pi^\ast$, even though strictly speaking we have $\Pi^{\ast,\sim}$ with $ \sim= \langle\rangle,[]$ or $\{\}$. Proposition~\ref{prop.928} holds for the functional $\nu$ on the spaces $\G^{[]},\G^{\{\}}$ as well.

\medskip\noindent
\textbf{(ii) Functions on $\G^{[]},\G^{\{\}}$.}
In these space we have to look also at \emph{convergence of the underlying spaces}, i.e., we consider functions for $(\CI^\ast,\mu)$, respectively, $(\CI,r,\mu)$ in order to be able to treat $\G^{[]},\G^{\{\}}$. Note that the equivalence relation leading to $\G^{[]}$ requires isometric measure-preserving bijections between the spaces $(\CI',\CB_{\CI'}),(\CI'',\CB_{\CI''})$, under the assumption that $\CI^\prime=\supp(\mu^\prime), \CI^{\prime\prime}=\supp(\mu^{\prime\prime})$. This \emph{equivalence} is denoted by $[]$. The topology on $\G^{[]}$ is therefore obtained by defining \emph{convergence of sequences} $\CG_n \to \CG$, with $\CG_n,\CG \in \G^{[]}$ as below in the paragraph on topology. We proceed similarly in $\G^{\{\}}$, replacing isometric by homeomorphic. This means that we need a set of test functions of $[\CI,r,\mu]$, respectively, $[\CI^\ast,\mu]$ that are \emph{separating}.

Take $\G^{[]}$ and consider test functions that are averages with respect to the sampling measure $\mu$ of functions of matrices of $m$-samples, namely, the \emph{monomials} $\Phi^{m,\varphi}$ on $\U_1$, i.e., functions of $[\CI^*,h,\mu]$. More precisely, on $\G^{[]}$ we choose a bounded continuous test function $\varphi(\underline{\underline{r}})$, with $\underline{\underline{r}}$ a continuous $\R$-valued function on $\CI \times \CI$, given by \emph{distances $r(\cdot,\cdot)$} between points in $\CI$, which determines the equivalence class of $(U,r,\mu)$ in $\U_1$ and is of the form (see~\cite{GrevenPfaffelhuberWinter2009}) generating the algebra that we call $\wh \Pi$:
\begin{equation}
\label{e341}
\Phi^{m,\varphi}([(\CI,r),h,\mu]) = \int_{\CI^m} \varphi (\uur)\, \d\mu^{\otimes m},
\qquad m \in \N,\, \varphi \in C_b(\R^2).
\end{equation}
In order to obtain the test functions for $\G^{[]}$, we take the function from $\Pi^\ast$ that determines the distribution of $h$ under $\mu$ sampling, consider $\varphi(\uuh)$ in~\eqref{e341} instead of $\varphi(\uur)$, take the integral of the product $\varphi_r(\uur) \varphi_h(\uuh)$ to get a monomial on $\G^{[]}$, and consider the generated algebra $\wh \Pi^\ast$.

For $\CI^*$ and $\G^{\{\}}$,
\begin{equation}
\text{in~\eqref{e341} use as integral a continuous function $\varphi(\underline{u})$ on $\CI^m$ (instead of $\varphi(\underline{\underline{r}})$)},
\label{e1730}
\end{equation}
induced by products of functions, say $(\varphi_i)_{i \in [m]}$, that are bounded and continuous on $\CI$. Take the product over $i \in [m]$. This identifies $((\CI,\tau),\mu)$ and generates an algebra $\wh \Pi$ on the equivalence classes of Polish measure spaces. To identify the grapheme including $h$, this has to be \emph{complemented by a further factor of the form in~\eqref{e791}}. We note that, in order to evaluate the function in another representative, the formula in the analogue of~\eqref{e341} involves the homeomorphism.

The above construction on $\G^{[]},\G^{\{\}}$ (we use the same notation for the algebras whenever no confusion can occur) leads to
\begin{equation}
\label{e403}
\wh \Pi^\ast = \text{ algebra of polynomials of $[(\CI,r),h,\mu]$, respectively, $\{\CI^\ast,h,\mu\}$}.
\end{equation}

\begin{proposition}[Law determining]
\label{prop.1743}
The algebra $\wh \Pi^\ast$ is separating for $\G^{[]}$ and $\G^{\{\}}$, and, when taking random elements as input expectations, determine the laws on $\G^{[]}$, respectively, $\G^{\{\}}$. \qed
\end{proposition}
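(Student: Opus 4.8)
I would reduce both assertions to the single statement that $\wh\Pi^\ast$ \emph{separates the points} of $\G^{[]}$ (respectively $\G^{\{\}}$). Granting that, the ``law-determining'' part is the routine functional-analytic step: $\wh\Pi^\ast$ is by construction an algebra of bounded continuous functions containing the constants (take $m=0$, or all factors $\varphi_i\equiv 1$), and the grapheme spaces are Polish in the topology these test functions generate (Section~\ref{s.evol}, building on \cite{GrevenPfaffelhuberWinter2009}); a point-separating subalgebra of $C_b(E)$ with $E$ Polish is measure-determining, since by Stone--Weierstrass it is uniformly dense on compacts and finite Borel measures on a Polish space are tight, hence determined by their $C_b$-integrals. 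So $\{\E[\Phi(\CG)]:\Phi\in\wh\Pi^\ast\}$ would determine $\CL[\CG]$. The real work is therefore the separation of points.

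\textbf{The $\G^{[]}$ case.} Take two graphemes with full-support representatives $((\CI',r'),h',\mu')$ and $((\CI'',r''),h'',\mu'')$, and suppose all $\Phi\in\wh\Pi^\ast$ agree on them. For fixed $m$ a generating monomial reads $\int_{\CI^m}\varphi_r(\uur)\,\varphi_h(\uuh)\,\d\mu^{\otimes m}$ (with $\mu^{\otimes m,\downarrow}$ when $\mu$ has atoms), i.e.\ a joint moment of the random pair consisting of the distance matrix $(r(U_i,U_j))_{i<j}$ and the connection matrix $(h(U_i,U_j))_{i<j}$ of an i.i.d.\ $\mu$-sample. Since $h$ is $\{0,1\}$-valued, the functions $\varphi_h$ exhaust all functions of $\uuh$, and products $\varphi_r\otimes\varphi_h$ form a point-separating subalgebra of $C_b(\R^{\binom{m}{2}}\times\{0,1\}^{\binom{m}{2}})$ containing the constants; by Stone--Weierstrass, agreement of all these monomials forces the joint distribution of the $m$-sample distance-and-connection matrix to coincide for the two graphemes, for every $m$. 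These distributions form a consistent projective family, so the full ``coloured distance matrix distribution'' on $\R_{\ge 0}^{\N\times\N}\times\{0,1\}^{\N\times\N}$ agrees. I would then invoke the reconstruction theorem for (ultra)metric measure spaces carrying a symmetric $\{0,1\}$-valued function on pairs: the metric part is Gromov's/Vershik's reconstruction in the form used in \cite{GrevenPfaffelhuberWinter2009}, and adjoining the pair-colouring is handled by the same random-sampling argument, in parallel with the point-marked case of \cite{DGP12}. This gives $\CG'=\CG''$ in $\G^{[]}$. (In the atomic case the atoms of $\mu$ and their masses are already read off from the $m\le 2$ monomials, and the cited reconstruction applies to general sampling measures, so sampling without replacement changes nothing.)

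\textbf{The $\G^{\{\}}$ case and the main obstacle.} Here the monomials use products $\prod_{i=1}^m\varphi_i(u_i)$ with $\varphi_i\in C_b(\CI)$, complemented by a $\varphi_h(\uuh)$-factor as in~\eqref{e791}. The plan is: fix a countable family $(\varphi_k)_{k\in\N}\subset C_b(\CI)$ separating the points of $\CI$ and generating its topology, embed $\supp(\mu)$ into the Hilbert cube by $u\mapsto(\varphi_k(u))_k$, and observe that the push-forward of $\mu$ together with its (closed) image recovers $(\CI^\ast,\mu)$ up to measure-preserving homeomorphism; that push-forward is exactly what the monomials $\int\prod\varphi_{k_i}(u_i)\,\d\mu^{\otimes m}$ encode (joint moments of $(\varphi_k(U_i))$), while the $\varphi_h$-factor restores the $h$-data as before. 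I expect this to be the hard step. Unlike isometry-equivalence, a finite configuration of sample points carries no finite-dimensional ``type'' in a merely topological space, so one cannot argue directly on $m$-sample matrices; one must pass through a common Hilbert-cube model and be careful on two fronts: that the chosen countable family genuinely recovers the \emph{topology} on $\supp(\mu)$ and not only its Borel structure (otherwise one would separate at best the $\langle\rangle$-classes), and that the monomials, which a priori depend on the representative, descend to well-defined functions on $\G^{\{\}}$. Once this is established, the separation conclusion and the law-determining corollary follow exactly as for $\G^{[]}$.
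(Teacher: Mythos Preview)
Your $\G^{[]}$ argument is correct and close in spirit to the paper's, but the paper takes a shortcut you do not: rather than treating $h$ as generic $\{0,1\}$-valued pair data adjoined to the distance matrix, it observes that on $\G^{[]}_{\rm ultr}$ the connection function $h$ is \emph{itself} an ultrametric (values $0$, $1$, and some number $>1$) that is coarser than $r$, so a grapheme is a \emph{bi-metric} measure space. The separation and law-determining properties then reduce wholesale to the multi-metric measure space theory of \cite{DGP23}, together with the $\U_1/\M_1$ results of \cite{GrevenPfaffelhuberWinter2009,DepperschmidtGrevenPfaffelhuber2011}; in the paper's models $h$ is even a continuous functional of $r$, which trivialises the ``combined structure'' step. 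Your Stone--Weierstrass plus coloured-distance-matrix reconstruction is more self-contained and does not require the ultrametric structure of $h$, so it is in principle more general; the paper's route buys modularity (everything is a citation) at the cost of being tied to $\G^{[]}_{\rm ultr}$.

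For $\G^{\{\}}$ neither you nor the paper gives a complete argument. The paper essentially defers, noting (Remark~\ref{r.969}) that the polynomials on $\G^{\{\}}$ are representative-dependent and that no countable, uniformly continuous, equivalence-invariant determining class is available; its proof section focuses on $\G^{[]}$ and invokes $\M_1,\M$ for the general metric case. Your Hilbert-cube plan is the natural attempt, and you correctly flag the two obstacles (recovering the topology, not just the Borel structure, and well-definedness on equivalence classes). One small overreach: you assert the grapheme spaces are Polish, but the paper only establishes this for $\G^{[]}_{\rm ultra}$ and for certain closures (Proposition~\ref{prop.813}); for the law-determining step you should either restrict accordingly or argue measure-determination without Polishness.
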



\paragraph*{Topology.}

We next define \emph{convergence} of sequences to a limit point, which has two ingredients.

\medskip\noindent
\textbf{(I)} The \emph{first} requirement for a sequence of graphemes from $\wt \G$ with $\sim\,=\langle \rangle,[],\{\}$ to converge to a limit grapheme is that the corresponding \emph{connection-matrix distributions} \emph{converge}. Formally, the sequence $([\CI_n^{\ast},h_n,\mu_n])_{n \in \N}$ from $\G^{\langle\rangle}$ converges to an element $[\CI^*,h,\mu] \in \G^{\langle\rangle}$ if and only if
\begin{equation}
\label{e810}
\Phi \left([\CI_n^{\ast},h_n,\mu_n]\right) \ntoo \Phi \left([\CI^\ast,h,\mu]\right) \qquad  \forall\,\Phi \in \Pi^{\ast,\CF},
\end{equation}
and hence for all $\Phi \in \Pi^\ast$.

\medskip\noindent
\textbf{(II)} For $\G^{\langle\rangle}$, the \emph{second} requirement is strengthening the graphon topology as follows. The set of $\{0,1\}^\N$-valued connection-matrix distributions is compact in the weak topology. But what we need in $\G^{\langle\rangle}$ are connection-matrix distributions of an infinite sample sequence guaranteeing that $(\nu_n)_{n \in \N}$ has limit points $\nu$ that arise from a sampling sequence evaluating a $\{0,1\}$-valued function on $\CI^\ast \times \CI^\ast$. For $\G^{\{\}}$ and $\G^{[]}$, the \emph{second} requirement is that the limit points $\nu$ from (I) have a suitable $(\CI^*,\mu)$ in which they can be embedded, and that these triples arise as the \emph{limit} along the sequence as well. In other words, we have to specify a topology for the space of equivalence classes of $(\CI^*,h,\mu)$ from either $\G^{\{\}}$ or $\G^{[]}$. For that purpose we set
\begin{equation}
\label{e964}
\begin{aligned}
&\text{$\T$ is the space of \emph{equivalence classes $[\CI^*,\mu]$} of Polish measure spaces},
\end{aligned}
\end{equation}
arising as limit of elements with finitely many points arising from elements of $\G_\infty$, and
\begin{equation}
\label{e1371}
\text{$\M$ is the space of \emph{equivalence classes} $[\CI,r,\mu]$,}
\end{equation}
arising of limits of \emph{finite} metric measure spaces. For convergence of a sequence $[\CI^\ast, h,\mu]$, respectively, $\{\CI^\ast,h,\mu\}$ we also have to require, beyond~\eqref{e810}, that
\begin{equation}
\label{e957}
\text{$((\CI_n^{\ast},\mu_n))_{n \in \N}$ converges to $(\CI^*,\mu)$ in the topology of $\T$,
respectively, $\M$ with $(\CI,r,\mu)$}.
\end{equation}
The topologies for $\M$, respectively, $\T$ are defined by specifying \emph{convergence of sequences}, which is obtained by requiring convergence of evaluations of the chosen class of test functions $\wh \Pi$ given in~\eqref{e341}, respectively,~\eqref{e1730} for $\G^{[]}$, respectively, $\G^{\{\}}$.

To get convergence of the joint structure of $h$ and $(\CI^\ast,\mu)$, we have to \emph{enrich} the algebra of polynomials to $\wh \Pi^\ast$, so that they separate equivalence classes of Polish measure spaces, and subsequently define the topology based on the evaluations from $\wh \Pi^\ast$. Hence we define \emph{convergence in $\G^{[]},\G^{\{\}}$} by requiring
\begin{equation}
\text{\emph{convergence of the evaluations of all functions from~\eqref{e403}}}.
\label{e1775}
\end{equation}

Thus, in (I) and (II) we have turned $\G^{[]},\G^{\{\}},\G^{\langle\rangle}$ into \emph{topological} spaces and we have Borel-$\sigma$-algebras to obtain measure spaces. The important point is to get, whenever possible, \emph{Polish} spaces, as state spaces for random variables and stochastic processes in their standard form.

\begin{proposition}[State spaces]
\label{prop.813}
$\G^{[]}_{\rm ultra}$ is a Polish space, and so is the closure of $\G^{\langle\rangle}$, called $\wh \CW$, and of $\G^{[]},\G^{\{\}}$, called $\wh \CW^{[]},\wh \CW^{\{\}}$. \qed
\end{proposition}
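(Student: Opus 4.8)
The plan is to split the four assertions into two groups. For $\wh\CW$, $\wh\CW^{[]}$ and $\wh\CW^{\{\}}$ the object is a \emph{closure}, so it suffices to realise $\G^{\langle\rangle}$, $\G^{[]}$, respectively $\G^{\{\}}$, as a subset of a Polish space and invoke that a closed subset of a Polish space is Polish. For $\G^{[]}_{\rm ultra}$ the statement is stronger — the space itself is Polish — and the genuine work is to show that it is \emph{closed} inside the ambient Polish space, which is where the rigidity of ultrametric balls enters.

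For $\wh\CW$ the ambient space is $\CM_1\big(\{0,1\}^{(\N\times\N)\setminus D}\big)$ with the weak topology; since $\{0,1\}^{(\N\times\N)\setminus D}$ is compact metrizable this is a \emph{compact} Polish space, $\G^{\langle\rangle}$ embeds into it by Proposition~\ref{prop.928} (the connection-matrix distribution $\nu$ is a point of it, and the $\Pi^\ast$-topology is the weak topology restricted to it), so $\wh\CW$, being a closed subset, is compact Polish — consistent with Remark~\ref{r.964}. For $\wh\CW^{[]}$ the ambient space $\CW^{[]}_{\mathrm{amb}}$ consists of equivalence classes $[(\CI,r),W,\mu]$ of ultrametric measure spaces carrying a measurable symmetric decoration $W\colon\CI^2\to[0,1]$, taken modulo measure-preserving isometries intertwining $W$, topologised by the polynomials $\int_{\CI^m}\varphi_r\big((r(u_i,u_j))_{i,j}\big)\,\varphi_W\big((W(u_i,u_j))_{i,j}\big)\,\mu^{\otimes m}(\d\uu)$ with $\varphi_r\in C_b(\R^{m^2})$, $\varphi_W\in C([0,1]^{m^2})$. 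This space is Polish by an adaptation of the Gromov-weak framework of \cite{GrevenPfaffelhuberWinter2013} (with a pair-valued decoration playing the role of a graphon over the space): a countable convergence-determining subfamily gives metrisability and separability, and completeness follows from the $\U_1$-tightness criterion applied to the $r$-parts, the automatic (compactness) tightness of the $[0,1]$-valued decoration, and a reconstruction theorem producing an actual space realising any consistent limiting family of sampling laws. By construction the algebra $\wh\Pi^\ast$ of Section~\ref{sss.testf} is the restriction of these polynomials to the locus $W\in\{0,1\}$, so $\G^{[]}$ embeds topologically into $\CW^{[]}_{\mathrm{amb}}$ (injectivity is Proposition~\ref{prop.1743}) and $\wh\CW^{[]}$ is its closure; replacing ``ultrametric'' by ``Polish'' and ``isometry'' by ``homeomorphism'' throughout gives $\wh\CW^{\{\}}$.

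For $\G^{[]}_{\rm ultra}$ I would prove closedness of $\G^{[]}_{\rm ultra}$ inside $\CW^{[]}_{\mathrm{amb}}$. Membership is described by conditions on the $2$- and $3$-sample laws: (i) $r$ obeys the ultrametric inequality; (ii) the decoration is $\{0,1\}$-valued; (iii) the $h$-components are $r$-balls, i.e.\ $\mu^{\otimes 3}$-a.s.\ $h(x,y)=h(y,z)=1\Rightarrow h(x,z)=1$ and $h(x,y)=1,\ r(x,z)\le r(x,y)\Rightarrow h(x,z)=1$. Conditions (i) and (iii) are determined by, and preserved under convergence of, the relevant polynomial evaluations, hence are closed. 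Condition (ii) is the subtle one: a general Gromov-weak limit of $\{0,1\}$-valued decorations can be truly $[0,1]$-valued — this is exactly why $\wh\CW^{[]}\supsetneq\G^{[]}$ — but under (iii) the decoration $h$ is nothing but the indicator of the equivalence relation \emph{same $r$-ball}, and the ball decomposition of an element of $\U_1$, including the masses of the positive-mass balls, varies continuously along convergent sequences; consequently the limiting decoration is again the $\{0,1\}$-valued indicator of a ball-partition, with no room for fuzzy boundaries. Assembling: from a Cauchy sequence in $\G^{[]}_{\rm ultra}$, the $r$-parts are Cauchy in the Polish space $\U_1$ and converge to some $[(\CI,r),\mu]$, the joint sampling laws converge to a consistent family realised over this space by a measurable $h$, conditions (i)--(iii) pass to the limit, and $[(\CI,r),h,\mu]\in\G^{[]}_{\rm ultra}$ is the $\wh\Pi^\ast$-limit of the sequence. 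Metrisability and separability of all four spaces are then routine (countable determining subfamilies of $\wh\Pi^\ast$, resp.\ $\Pi^\ast$, and density of the finite graphemes from $\G_\infty$ with rational weights).

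The main obstacle is precisely the completeness step for $\G^{[]}_{\rm ultra}$, and inside it the assertion that $\{0,1\}$-valuedness survives the limit, i.e.\ that condition (ii) is closed on the ultra locus. This rests on the continuity of the ball decomposition (with masses) in the $\U_1$-topology and on the fact that for an ultra-compatible grapheme $h$ carries no information beyond that decomposition; both follow from the structure theory of $\U_1$ in \cite{GrevenPfaffelhuberWinter2013}, but care is needed at configurations where positive-mass balls accumulate or where the diameter of one component is comparable to the separation scale of another, since there the naive formula $h=\mathbf{1}[\tilde r^h<1]$ (with $\tilde r^h$ the ultrametric obtained by raising $r$ to its bound between components) is not continuous. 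Once this is settled, everything else is bookkeeping of the kind above.
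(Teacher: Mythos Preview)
Your approach is sound and, for the closure statements $\wh\CW,\wh\CW^{[]},\wh\CW^{\{\}}$, actually more explicit than the paper, which essentially takes those for granted. For $\G^{[]}_{\rm ultra}$, however, the paper takes a shorter route that sidesteps precisely the obstacle you flag at the end. Rather than embedding into an ambient space of $[0,1]$-valued decorations and then arguing that the $\{0,1\}$-valued ultra-compatible locus is closed --- which, as you note, is delicate near accumulating balls or comparable scales --- the paper observes that on $\G^{[]}_{\rm ultra}$ the connection function $h$ \emph{is itself} an ultrametric, namely the three-valued $r^\ast$ of \eqref{S3}, coarser than $r$. Thus a grapheme $[(\CI,r),h,\mu]\in\G^{[]}_{\rm ultra}$ is the same data as a \emph{bi-ultrametric} measure space $[(\CI,r,r^\ast),\mu]$, and the Polish property is read off from the theory of multi-metric measure spaces in \cite{DGP23}, with the further simplification that $r^\ast$ is a coarse-graining of $r$, so Gromov-weak convergence of the $r$-part already controls the $r^\ast$-part. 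This reframing --- $h$ as a second metric rather than a $\{0,1\}$-valued function that might blur --- dissolves your edge-case worry: there is no ``$\{0,1\}$-valuedness survives the limit'' step, because the limit object is by construction a pair of compatible ultrametrics, and the coarser one automatically yields a $\{0,1\}$-valued $h$. Your route would work too with more care at the boundary configurations, but the paper's buys the result off the shelf.
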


\begin{remark}\label{r.1738}
{\rm We leave open the issue that if we introduce on $\G^\ast, \ast=\langle\rangle,\{\},[]$ a stronger topology for convergence of the $h$-component in the grapheme topology, guaranteeing that the functional $\nu$ satisfies an entropy condition on its range that is equivalent to having $\{0,1\}$-valued graphons, then the closure in this stronger topology gives a Polish space $\G^\ast$.} \hfill$\spadesuit$
\end{remark}

Since the closure of $\G^{\langle\rangle}$ corresponds to the space of graphons, the set $\G^{\langle\rangle}$ is strictly smaller than the closure and we have to therefore check compact containment conditions for our processes in order to obtain processes with values only in $\G^{\langle\rangle}$. Since in our theorems we obtained processes with values in $\G^{[]}$, this guarantees the required property after we pass to $\G^{\langle\rangle}$ and we know that we truly obtain a process with values in $\G^{\langle\rangle}$.

Does the same hold for $\G^{\{\}}$, or must we restrict to smaller spaces, like spaces of algebraic trees (see~\cite{LW21})? What remains is to settle metrisability (separability and completeness follow from the definition), i.e., to \emph{exhibit a metric} generating the topology. There is no obvious reference for this. However, we observe that if we restrict $\T$ to spaces that have a representative that is an \emph{algebraic tree}, then this closed subspace can be equipped with a concrete metric via an embedding, and for our processes we would get a dynamics in this Polish space. We will not pursue this task further in the present paper.

We had been able to do the construction for $\G^{\langle\rangle}, \G^{[]}$, since the topology can be based on a countable set of test functions that are \emph{universal} for all representations of an equivalence class, which is not available for $\G^{\{\}}$. But this comes at the price that we have to check on $\G^{\langle\rangle}$ whether our stochastic processes have paths with values in $\G^{\langle\rangle}$, which is not complete in the metric based on the subgraph counts.

\begin{remark}[$\G^{[]}$ versus $\G^{\{\}}$ or $\G^{\langle\rangle}$]
\label{r.969}
$\mbox{}$\\
{\rm  (1) These spaces are problematic for the $h$-compo\-nent, namely, the lack of completeness of $\G^{\langle\rangle}$, respectively, the fact that its completion does not represent countable graphs (they may be randomised), so that we cannot formulate standard martingale problems on the full space, except on the $\wt \G^\ast_{\rm ultra}$-versions, as we have demonstrated. \\
(2) There is a clear advantage to work with $(\CI,r)$ and on $\G^{[]}$. On $\G^{\{\}}$ we have the following further drawback. On $\G^{[]}$ the test functions are natural and are comparable between different representatives, since we deal with $\varphi$ that are \emph{universal} in the representatives. This is not the case on $\G^{\{\}}$ with $\varphi(\uu)$, where we have to use homeomorphisms between different representatives that are used to define $\varphi$ on a different representative than a tagged representative from the equivalence class. This problem can only be circumvented on $\G^{\{\}}$, when we embed the space of a representative of the equivalence class homomorphically into $([0,1], \CB_{[0,1]},\dd u)$ and work with \emph{uniformly} continuous functions on $[0,1]$. However, this yields bounded continuous functions on $\CI^*$, but as a function of $\CI$ we loose uniformity, because the embedding in $[0,1]$ does \emph{not} necessarily have this uniformity. This in turn means that we can define continuous bounded polynomials as test functions to define our topology, but we are lacking concrete \emph{uniformly continuous subclasses} of such functions that are also measure and convergence determining. This in turn means that we cannot exhibit on $\G^{\{\}}$ a countable measure-determining set of uniformly continuous test functions in $C_b(\G^{\{\}},\R)$, induced by a function on $\CI^*$ that is invariant under equivalence, as we can on $\G^{[]}$. This fact is an obstacle, for example, when we want to prove the strong \emph{Markov property} or a (generalised) \emph{Feller property} for stochastic processes with values in these spaces.}\hfill$\spadesuit$
\end{remark}

The polynomials on $\G^{\{\}}$ are a \emph{subclass} of the polynomials on $\G^{[]}$, since fewer Polish spaces are equivalent. In particular, the martingale problems on $\G^{\{\}}$ are more complicated than those on $\G^{[]}$, since we deal with invariants for larger sets.


\subsubsection{Tightness in grapheme spaces}

We often have to decide whether sequences of graphemes converge to a \emph{limit element} in the respective spaces $\G^\sim$ with $\sim\, = [],\{\},\langle\rangle$ when the evaluations of polynomials in $\wh \Pi^\ast$ converge to a limit number. In order to get laws and convergence of graphemes with different non-trivial limits of $h$ (i.e., not equal to $\underline{\underline{0}}$ or $\underline{\underline{1}}$), we need \emph{extra conditions}.

For convergence we must have that $((\CI_n^{\ast},h_n,\mu_n))_{n \in \N}$ is determined by the polynomials as test function and converges to a limit in $\G^\sim$ with $\sim\, = [],\{\},\langle\rangle$. Since we have defined the convergence of elements in $\G^{[]}$ as convergence of evaluations of polynomials, i.e., elements of the algebra $\wh\Pi^\ast$ giving us the natural \emph{topology} on $\G^{[]}$, it remains to prove that the \emph{limit of the evaluations} is an \emph{evaluation of a limit point} in the state space.

Recall that we require $\CI^*$ to be Polish only, so $\CI^*$ need not be compact or locally compact, nor does $\CM_1(\CI^*)$ need to be compact. Therefore we possibly \emph{cannot} sample from a limit object. This poses conditions on $([\CI_n^{\ast},h_n,\mu_n])_{n \in \N}$, namely, \emph{tightness properties} of the spaces in which we embed the graphemes of index $n$. It is here that the \emph{embedding} enters into the game. Furthermore, we have to guarantee that limiting elements equal a $\{0,1\}$-valued $h$, which requires that the connection-matrix distributions viewed as measures on $[0,1]^\N$ are tight in the stronger topology, strengthened by an \emph{additional requirement} guaranteeing $\{0,1\}$-valued rather than $[0,1]$-valued limits. Such a tightness criterion is the entropy criterion for sequences of finite samples, requiring the entropy of these samples to be $o(n^2)$ (see~\cite{GdHKW23} and~\cite{hatami2012entropy}), which restricts the closure of the finite graphemes to a subspace of $\wh \CW$. We will see in Section~\ref{ss.mainideas} that on $\G^{[]}_{\rm ultra}$ this is immediate, since in that case the $h_n$ are continuous and the metric is an ultrametric, so we need not involve the entropy criterion.

If we consider $\G^{[]}$, then we have metric measure spaces $(\CI,r)$. We also know that here we deal with the closed subspaces $\U_1,\U$, which evolve in $\G^{[]}_{\rm ultra}$. We can use the theory in~\cite{GrevenPfaffelhuberWinter2009,GrevenPfaffelhuberWinter2013,DepperschmidtGrevenPfaffelhuber2011} to get Polish spaces, convergence criteria and tightness criteria, the latter amounting to the so-called
\begin{equation}
\label{e1284}
\text{\emph{no dust} condition}.
\end{equation}
On $\U_1,\U$ this amounts to requiring that, for every $\varepsilon>0$, the sampling measure puts weight $\geq 1- \varepsilon$ on \emph{finitely many ancestors} a time $\varepsilon$ back. Indeed, this gives us a condition for the tightness of the spaces in which we embed the elements of our sequence. Furthermore, we need convergence of the connection-matrix distributions. However, if the evaluations using $\Pi^\ast \subseteq \wh \Pi^\ast$ converge and the spaces of embedding have a limit in which we can embed, then we have found the limit in $\G^{[]}$. Therefore with~\eqref{e1284} we get a tightness condition in $\G^{[]}$.

Next, we consider random graphemes and their laws. We can derive a tightness criterion because these live in $\CM_1(\G^{[]})$, which takes on the form $\U_1,\U$, namely,
\begin{equation}
\label{e.1784}
\begin{aligned}
&\text{the number of ancestors time $\varepsilon$ back that have descendants}\\
&\text{of total mass $\geq 1-\varepsilon$ is stochastically bounded.}
\end{aligned}
\end{equation}
On $\U$ we need, in addition, that the total masses are stochastically bounded. Finally, we need that the laws of the connection-matrix distributions are concentrated on the set of functions with values in $\{0,1\}$, for example, when we form local averages over $h$.

Replacing $\G^{[]}$ by $\G^{\{\}}$, we get into different territory and there is not much available in the literature. By the embedding in $[0,1]$ mentioned above, we can introduce the topology in the same way and obtain a topological space (in fact, a Polish space), but the algebra of polynomials is lacking nice properties. For $\G^{[]}$ we can work with \emph{distance-matrix distributions} to represent the state (analogously to the connection-matrix distributions $\nu^{(m)}$, $m \in \N$, used on $\G^{\langle\rangle}$). On $\G^{\{\}}$ we have \emph{sampled tuple distributions}, which via the embedding become sampled tuple distributions in $([0,1],\CB_{[0,1]})$. A formulation independent of the equivalence class is subtle, because we can only say that the polynomial transforms via the integral transformation formula on the equivalence class, hence is a function of that class, and so we can use the representation on a subset of $([0,1],\CB_{[0,1]})$. This means that we have test functions involving the space and the homeomorphisms, a much less canonical setting than functions of distances $\varphi$ that are the same on every representative, as for $\G^{[]}$. As pointed out before, we cannot gain much by pursuing this line further.

Turning to $\G^{\langle\rangle}$, we note that the elements of $\G^{\langle\rangle}$ correspond to a subset of probability measures on $\{0,1\}^{\N \choose 2}$ that is not closed. Hence, even though $\CM(\{0,1\}^{\N \choose 2})$ is a Polish space, and as a measure space on a compact set is classical, we require restrictions that lead to a \emph{subset}. Namely, the elements of this subset arise from i.i.d.\ sampling on \emph{some Polish space}, and the \emph{empirical function} $h$ is \emph{$\{0,1\}$-valued}. In other words, we need to check that limit points of sequences obtained from the connection-matrix distributions can indeed be realised, i.e., arise from an embedded sequence, and lead to a \emph{$\{0,1\}$-valued} function $h$. This need not be easy, as is exemplified by graphons with grey-shades, corresponding to sequences $(h_n)_{n\in\N}$ that do not have a limit $h$ with values in $\{0,1\}$, and hence do not define a grapheme based on the space $([0,1]$,Borel, $\mu$), because the limiting subgraph counts determine a unique $h$ with values in $[0,1]$. Therefore $\G^{\langle\rangle}$ is a subtle subset that is homeomorphic to a subset of the Polish space $\CM_1(\{0,1\}^\N)$. However, we can use the well-known fact that the space of graphons with the topology induced by the subgraph counts can be generated by the cut metric. Therefore we get with $\G^{\langle\rangle}$ a separable metrisable subset of a Polish space, which, however, is \emph{not} closed. For our dynamics there are topologically closed subspaces $\G^{[]}_{\rm ultr}$ that are closed under the dynamics and induce subspaces in $\G^{\langle\rangle}$.

For a sequence of graphemes in $\G^{\langle\rangle}$ to converge, it \emph{suffices} to find a sequence of elements from $\G^{\{\}}$, a limit candidate from that space, such that the corresponding \emph{subgraph counts converge}. The limit candidate for the embedding we find by showing that the sequence has a limit in $\G^{\{\}}$, including a limiting function $h$. This means that from tightness criteria in $\T$ we get \emph{sufficient} criteria for tightness in $\G^{\langle\rangle}$.

We can deal with $\G^{[]}$ in the same way, which gives us accessible sufficient tightness criteria. With~\eqref{e.1784} we get a criterion guaranteeing that we have limit points in $\G^{[]}$, and convergence when the subgraph counts converge as well, since the sequence of equivalence classes of metric measure spaces has a limit grapheme in $\G^{[]}$ and a limiting $\{0,1\}$-valued $h$. The latter is often immediate on $\G^{[]}_{\rm ultra}$. However, we know from the general theory of graphons that the topology can be generated by a metric (the \emph{cut distance} from~\eqref{e385}), so we get values in a subset of a Polish space.


\subsection{Extension to varying grapheme sizes}
\label{sss.extens}

We have to argue that the previous observations extend to varying grapheme sizes. To do so, we have to introduce polynomials $\Phi$ on $[U,r,\mu]$ of the form $\Phi = \bar \Phi \, \wh \Phi$ with $\bar \Phi \in C_b(\R^+,\R)$ and $\wh \Phi$ a polynomial of $[U,r,\wh \mu]$, in order to be able to identify the pair $\bar \mu$ and $[U,r,\wh \mu]$. Hence for $[U,r,\mu]$ we work with functions $\bar \Phi$ of $\bar \mu$, that are \emph{bounded} and continuous. In our models we will be able to work with polynomials based on $\mu$ rather than on $\wh \mu$, but we have to argue that we have the necessary \emph{integrability properties} for $\bar \mu$, and must include growth properties in the degree in order to ensure that they are separating for a subset of $\U$ that occurs as possible values of our random variables. In our examples based on the Galton-Watson process, we can work with $\bar \Phi(\bar \mu)=\bar \mu^n$, $n \in \N$, and consider the subset of $\mu$ for which $\bar \mu^n$ is integrable and the Laplace transform of $\bar \mu$ exists in some neighbourhood of $0$. This is described in detail in~\cite{DG19} and~\cite{ggr_GeneralBranching}.


\subsection{Extension to marked graphemes}
\label{ss.marked}

For the case of $V$-marked graphemes, take a Polish space $(V,r_V)$ and consider the \emph{$V$-marked grapheme}
\begin{equation}
\label{e568}
\left[(\CI \times V)^*,h,\mu_{\CI \times V} \right]
\end{equation}
with $\wt \kappa,\kappa$ denoting mark kernels, respectively, mark functions from $\CI$ to $\CB_V$, respectively, $V$ and
\begin{equation}
\label{e572}
\mu_{\CI \times V} = \mu \otimes \wt \kappa, \qquad \mu \in \CM_1 (\CI^\ast),
\end{equation}
where $\wt \kappa$ is a kernel assigning marks to vertices (typically, $\wt \kappa(\cdot,0)=\delta_{\kappa(\cdot)}(0)$ for a \emph{mark function} $\kappa$ that measurably maps $\CI$ into $V$), and consider the spaces
\begin{equation}
\label{e.578}
\begin{aligned}
&\wt \G^V = \text{ space of $V$-marked graphemes},\\
&\G^V \text{ with $h(i,i^\prime) =1 \Longrightarrow \kappa(i)=\kappa(i^\prime)$}.
\end{aligned}
\end{equation}
Recall that the corresponding state space
\begin{equation}
\label{e890}
\G^{\langle\cdot\rangle}, \G^{V,\langle\cdot\rangle}
\end{equation}
is a coarse-graining of $\G^{[]},\G^{[],V}$. In the marked case $\wt \G^V$, $\nu$ must be replaced by the
\begin{equation}
\text{\emph{marked connection-matrix distribution} $\nu_V$,}
\label{e1920}
\end{equation}
where we keep track of the edges and of the type of the vertices. Here,
\begin{equation}
\text{the entry $1$ is replaced by $(1,(v_1,v_2))$ \emph{and} the entry $0$ by $(0,(v_1,v_2))$, with $v_1,v_2 \in V$. }
\label{e.1925}
\end{equation}

\begin{proposition}[Determining properties]
\label{prop.1}
$\mbox{}$\\
(a) Any element in $\G^{[],V}$ or $\G^{\{\},V}$ uniquely determines $\nu_V$, and similarly after $\G$ is replaced by $\bar \G$.\\
(b) Consider the case of a non-atomic sampling measure $\mu$. Then every $\nu_V$, with the sub-matrix independence property for any two sub-matrices connected with two disjoint index sets, determines an element in $\G^{\langle\,\rangle,V}$. This element may, however, be trivial, i.e., equal the complete graph or the totally disconnected graph. \qed
\end{proposition}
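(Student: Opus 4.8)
The plan is to treat the two parts separately, in each case reducing the marked statement to its unmarked counterpart (Propositions~\ref{prop.928} and~\ref{prop.1743}) applied to the enlarged space $(\CI\times V)^\ast$ together with a bookkeeping of the mark coordinate. For part (a): given an equivalence class in $\G^{[],V}$ (the argument for $\G^{\{\},V}$, and for the bar-versions, is identical up to the choice of equivalence), pick a representative $[(\CI\times V)^\ast, h, \mu_{\CI\times V}]$ with $\mu_{\CI\times V}=\mu\otimes\wt\kappa$. Drawing an $m$-sample $(u_i,v_i)_{i\in[m]}$ from $\mu_{\CI\times V}^{\otimes m}$ (without replacement if $\mu$ has atoms, as in~\eqref{e791}), the marked connection matrix $\big((h(u_i,u_j),(v_i,v_j))\big)_{i\neq j}$ has a law $\nu_V^{(m)}$ that, by the change-of-variables / integral transformation formula underlying~\eqref{e791}, does not depend on the choice of representative: it is computed as an integral of a bounded measurable function of the sample against $\mu_{\CI\times V}^{\otimes m}$. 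First I would note that these $\nu_V^{(m)}$, $m\in\N$, form a consistent (projective) family — restriction to a sub-sample of size $m'\le m$ is exactly the marginal — so by Kolmogorov extension they determine a unique $\nu_V\in\CM_1\big((\{0,1\}\times V^2)^{(\N\times\N)\setminus D}\big)$ with the prescribed symmetry and diagonal conventions of~\eqref{e.1925}. Then I would invoke Proposition~\ref{prop.928}(a) (applied on the space $(\CI\times V)^\ast$ with the obvious enrichment of $\CF_m$ by continuous/bounded functions of the mark pairs) to conclude that the algebra of marked connection polynomials is measure-determining, hence the equivalence class of the grapheme indeed \emph{determines} $\nu_V$ and not merely produces one candidate. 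The bar-version follows by additionally carrying the total mass $\bar\mu$ as in Section~\ref{sss.extens}, multiplying test functions by $\bar\Phi\in C_b(\R^+,\R)$.

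For part (b), assume $\mu$ is non-atomic and suppose we are given $\nu_V$ with the stated sub-matrix independence property: for any two finite index sets $I,J\subseteq\N$ with $I\cap J=\emptyset$, the marked sub-matrices indexed by $I^{2\setminus}$ and $J^{2\setminus}$ are independent under $\nu_V$. The plan is to build a Polish space $\CI^\ast$, a non-atomic $\mu$, mark function $\kappa$ (or kernel $\wt\kappa$) and a $\{0,1\}$-valued symmetric $h$ realising $\nu_V$, i.e.\ such that sampling i.i.d.\ from $\mu\otimes\wt\kappa$ recovers $\nu_V$. The natural construction is the Aldous--Hoover / exchangeable-array route: the family $(\nu_V^{(m)})_m$ is exchangeable (invariant under $S_m$-relabelling of the sample) by its very definition, so by the Aldous--Hoover representation there is a measurable function $f$ on $[0,1]\times[0,1]^2\times[0,1]$ (symmetric in the two middle arguments) producing, from i.i.d.\ uniforms $(\xi_i)_{i\in\N}$ attached to vertices and $(\eta_{\{i,j\}})$ attached to pairs, a sample with law $\nu_V$. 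The sub-matrix independence hypothesis is precisely what forces the ``pair-randomness'' to enter only through the marks and the $\{0,1\}$-valued edge indicator — it rules out the genuinely random grey-shade behaviour — so that $f$ splits into a deterministic $\{0,1\}$-valued connection function of the vertex-coordinates and marks plus a mark assignment; this is the analogue of the unmarked fact, recorded after~\eqref{e510}, that for non-atomic $\mu$ the equivalence class is deterministic and $\nu$ is realised by a genuine $\{0,1\}$-valued $h$. Concretely I would take $\CI=[0,1]$ with Lebesgue measure for the vertex coordinate (non-atomic as required), read off $\kappa\colon\CI\to V$ from the vertex-marginal of $f$, and read off $h(x,y)\in\{0,1\}$ as the edge-value of $f$ given the coordinates/marks of $x,y$; measurability and symmetry of $h$ are inherited from $f$. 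One then checks that $h(i,i')=1\Rightarrow\kappa(i)=\kappa(i')$ holds (after, if necessary, deleting a null set and re-normalising, since edges between differently-marked vertices occur with $\nu_V$-probability $0$ under the marked graph constraint built into $\G^V$), placing the constructed object in $\G^{\langle\rangle,V}$, and finally records the caveat in the statement: the resulting $h$ may be identically $0$ on $\mathrm{supp}(\mu)\times\mathrm{supp}(\mu)$ minus the diagonal (totally disconnected) or identically $1$ within each mark-class (complete on each type), which is exactly the degenerate case mentioned.

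The main obstacle, and the step that needs genuine care rather than bookkeeping, is the realisability claim in part (b): showing that the given $\nu_V$ — an abstract measure on marked $\{0,1\}$-arrays with the sub-matrix independence property — is actually the sampling distribution of a \emph{bona fide} $\{0,1\}$-valued connection function on a Polish space, rather than only of a $[0,1]$-valued graphon. This is the marked analogue of the subtlety flagged repeatedly in Section~\ref{ss.space} and in Remark~\ref{r.969}: the space $\G^{\langle\rangle}$ (resp.\ $\G^{\langle\rangle,V}$) is a non-closed subset of the graphon space, and not every exchangeable array comes from a $\{0,1\}$-valued kernel. The hypothesis of sub-matrix independence is doing the work of the entropy/zero-one-valuedness condition discussed around~\eqref{e1284}; I would need to verify carefully that it indeed implies that the Aldous--Hoover kernel $f$ has no nondegenerate pair-level randomness in the edge coordinate (equivalently, that the conditional edge-probability given the two vertex-coordinates is $\{0,1\}$-valued $\mu\otimes\mu$-a.e.), which is the crux. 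The remaining items — consistency and Kolmogorov extension in (a), the null-set clean-up to enforce the $\G^V$ constraint, and the identification of the degenerate cases — are routine.
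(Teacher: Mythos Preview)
Your plan for part~(a) is sound and, in fact, considerably more explicit than the paper's own treatment: the paper simply invokes the existing literature on $\U_1^V,\U^V,\M_1^V,\M^V$ (\cite{GrevenPfaffelhuberWinter2009,DepperschmidtGrevenPfaffelhuber2011,DG19}) and observes that $h$ can itself be viewed as a (coarser) ultrametric, so that the determining properties of the polynomial algebras carry over from the genealogical setting. Your sampling/Kolmogorov-extension argument achieves the same conclusion directly and is a legitimate alternative.

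For part~(b), however, there is a genuine gap in your plan, and it sits exactly where you flagged the ``main obstacle''. You write that the sub-matrix independence hypothesis ``rules out the genuinely random grey-shade behaviour'' and forces the Aldous--Hoover kernel to have no nondegenerate pair-level randomness. This implication is false. Dissociation (independence of sub-arrays on disjoint index sets) is strictly weaker than absence of pair-randomness: the i.i.d.\ Bernoulli$(p)$ array with $p\in(0,1)$ --- the Erd\H{o}s--R\'enyi $\nu$ --- is dissociated, yet its Aldous--Hoover representation is $X_{ij}=\mathbf{1}_{\{\eta_{ij}<p\}}$, which depends \emph{only} on the pair variable. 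The associated graphon is the constant $p$, which is not $\{0,1\}$-valued, and there is no measurable $h\colon\CI^2\to\{0,1\}$ on any Polish $(\CI,\mu)$ whose sampling distribution recovers this $\nu$. So the verification you say you ``would need to carry out carefully'' will not close.

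What dissociation \emph{does} buy you is the elimination of the global Aldous--Hoover variable $\alpha$ (so that $\nu$ is an extreme point in the exchangeable simplex), but not the elimination of $\eta_{ij}$. The correct extra condition for $\{0,1\}$-valuedness of the induced graphon is that the conditional law of $X_{ij}$ given $(\xi_i,\xi_j)$ is a.s.\ degenerate; equivalently, $W(u,v):=\P(X_{12}=1\mid\xi_1=u,\xi_2=v)\in\{0,1\}$ a.e. This is not implied by sub-matrix independence alone. Either an additional hypothesis is needed (e.g.\ the entropy condition you allude to, or an explicit assumption that $\nu_V$ arises as a sampling distribution), or the proposition as stated should be read more narrowly than you are reading it. The paper's own proof for this proposition is a brief pointer to the literature and does not address this issue explicitly, so you should not expect to extract a clean argument from the statement alone.
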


We continue with the state space
\begin{equation}
\label{e830}
\begin{aligned}
&\G^{[],V} =  [(\CI \times V)^*,(h,\kappa),\mu_{\CI \times V}],\\
&\kappa\colon\, \CI \to V, \quad \mu_{\CI \times V} \in \CM_1((\CI \times V)^*), \quad h\colon\, \CI^2 \to \{0,1\},\\
&\mu_{\CI \times V} (\cdot,\cdot)\colon\,\mu (\cdot) \otimes \delta_{\kappa (\cdot)}^{(0)},\quad \mu \in \CM_1 (\CI^*).
\end{aligned}
\end{equation}
The equivalence class is formed with respect to $h$-preserving, $\kappa$-preserving and measure-preserving bijections between the supports of the measures. The objects that characterise the marked connection-type structure for finite $n$ are
\begin{equation}
\label{e839}
k_n\colon\,\CI^n \to \{0,1\}^{n(n-1)} \times V^n, \quad k_n (\CG) = \big((h(\iota_i,\iota_j))_{i,j \in [n]},
(\kappa(\iota_i))_{i \in [n]}\big),
\end{equation}
which gives us $(\nu^{\otimes n}_{\ast \kappa_N})_{n \in \N}$, the sequence of connection-type matrix distributions of order $n$. We proceed analogously when we work with $(\CI,r)$ and $\G^{[]}$.

We can generalise further by allowing, instead of mark functions, \emph{mark kernels}
\begin{equation}
\label{e1108}
\kappa\colon\, (\CI,\CB_\CI) \to (V,\CB_V).
\end{equation}
Then the space $\G^{\langle\rangle,V}$ is based on the \emph{connection-type matrix distribution $\nu_V$} as invariant. The state spaces depend on whether we choose $\CI^*$ or $(\CI,r)$:
\begin{equation}
\label{e847}
\G^{[],V}, \quad \G^{\{\},V}.
\end{equation}
We introduce a topology induced by defining convergence of evaluations of functions on $\G^{[],V}$ taken from an \emph{algebra of polynomials}, denoted by $\wh \Pi^{\ast,V}$.

Let us first look at the simpler case $\G^{\langle\,\rangle,V}$ again. We define $\Pi^{\ast,V}$ acting on $\G^{\langle\cdot\rangle,V}$. A \emph{monomial} $\Phi \in \Pi^{\ast,V}$ is given by a function $\varphi \in C_b(\{0,1\}^{n(n-1)},\R)$, $\Psi \in C_b(V^n,\R)$, $n \in \N$, and
\begin{equation}
\label{e.855}
\begin{aligned}
\Phi(\CG) &= \Phi \left([(\CI \times V)^*,h,\kappa,\mu_{\CI \times V}]\right)\\
&= \int_{(\CI \times V)^n} \varphi\big((h(\iota_k,v_k)(\iota_\ell,v_\ell))_{k,\ell \in [n]}\big)\,
\psi\big((\kappa(\iota_k))_{k \in [n]}\big)\, \mu^{\otimes n}_{\CI \times V} \underline{(\iota,v)}\\
&= \int_{\left(\{0,1\} \times V^2\right)^{n(n-1)}} (\varphi \otimes \psi)\,\d \nu^{(n)}_V
\end{aligned}
\end{equation}
with $\underline{\iota} =(\iota_1,\ldots, \iota_n),\underline{v}=(v_1,\ldots,v_n)$, and $\nu_V$ the connection-type matrix distribution on the edges and their end points.

In order to get to the topology on $\G^{[],V},\G^{\{\},V}$, we need to control the embedding in the space $\CI^*$, and also need finer test functions
\begin{equation}
\label{e985}
\wh \Pi^{\ast,V}
\end{equation}
to fully characterise the elements of $\G^{[],V}$. We consider~\eqref{e.855} and replace the test function $\varphi$ by \emph{products} of the two test functions
\begin{equation}
\label{e1849}
\varphi_h\big((h((\iota_k,v_k),(i_\ell,v_\ell))_{k,\ell \in [n]}\big) \varphi_r\big((r(\iota_k,i_\ell))_{k,\ell \in [n]}\big),
\end{equation}
where $\varphi_h$ denotes the $\varphi$ in~\eqref{e.855}, and $\varphi_r\colon\,[0,\infty)^{n^2} \to [0,\infty)$ is continuous.

If we turn to $\wt \G^{\{\},V}$, then we replace $\varphi_r$ by a continuous function on $\CI^n$. We cannot rely on the results in~\cite{GrevenPfaffelhuberWinter2009,GrevenPfaffelhuberWinter2013,DepperschmidtGrevenPfaffelhuber2011}, because for $\G^{\{\},V}$ we have no nice subset that is measure-determining and convergence-determining and uniformly continuous. For the dynamics in the present paper this is not really crucial, since we work on $\G^{[],V}$ anyway.

\begin{proposition}[The state space: $V$-marked]
\label{prop.1202}
$\G^{[],V}_{\rm ultra}$ and the closure of $\G^{\langle\rangle,V},\G^{[],V}$ are Polish.
\qed
\end{proposition}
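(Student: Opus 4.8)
The plan is to reduce Proposition~\ref{prop.1202} to the Polishness results already recorded in Proposition~\ref{prop.813}, combined with the theory of $V$-marked (ultra)metric measure spaces from \cite{DepperschmidtGrevenPfaffelhuber2011,GrevenPfaffelhuberWinter2013,DGP12} and $V$-decorated graphon theory. There are three objects to treat: $\G^{[],V}_{\rm ultra}$, the closure of $\G^{\langle\rangle,V}$, and the closure of $\G^{[],V}$. For each we exhibit a concrete Polish space into which the grapheme space embeds either as its whole self (the two closures) or as a closed subspace ($\G^{[],V}_{\rm ultra}$), and then check that the embedding is a homeomorphism onto its image for the topology defined in Section~\ref{ss.marked} via convergence of the polynomial evaluations from $\wh\Pi^{\ast,V}$.

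First, $\G^{[],V}_{\rm ultra}$. An element is an equivalence class $[(\CI\times V)^\ast,(h,\kappa),\mu_{\CI\times V}]$ in which $(\CI,r)$ is ultrametric and, by definition of ``ultra'', $h$ is the indicator of an ultrametric coarser than $r$, subject to $h(x,y)=1\Rightarrow\kappa(x)=\kappa(y)$. Such a datum is faithfully encoded by recording, under $\mu^{\otimes\N}$-sampling $(x_i)_{i\in\N}$, the joint law of $\big((r(x_i,x_j))_{i,j},(h(x_i,x_j))_{i,j},(\kappa(x_i))_i\big)$, a Borel probability measure on $\R_+^{\N\times\N}\times\{0,1\}^{\N\times\N}\times V^{\N}$, which is Polish because $V$ is Polish. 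By the marked version of Gromov reconstruction, equivalently Proposition~\ref{prop.1}(a), this encoding is injective on $\G^{[],V}$, and its image is cut out by conditions that are all closed (or $G_\delta$) in the weak topology: exchangeability/consistency of the array, ``$r$ ultrametric a.s.'', ``$h$ indicator of a coarser ultrametric than $r$ a.s.'', mark compatibility, and the no-dust restriction~\eqref{e1284} characterising $\U_1^V$. The inherited topology is thus Polish, and on $\G^{[],V}_{\rm ultra}$ it coincides with the $\wh\Pi^{\ast,V}$-topology because the marked polynomials~\eqref{e.855}--\eqref{e1849} are exactly averages of bounded continuous functions of finite marked distance-and-connection sub-matrices and, by Proposition~\ref{prop.1}, they separate points and are convergence-determining, the argument being the marked counterpart of the one behind Proposition~\ref{prop.813} run with the marked distance-matrix distributions of \cite{DepperschmidtGrevenPfaffelhuber2011}. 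The key simplification peculiar to the ``ultra'' subspace, already noted in Section~\ref{ss.mainideas}, is that here the $h_n$ are automatically continuous and determined by the ultrametric, so the entropy/realizability obstruction of general $\G^{\langle\rangle}$ does not arise: once the marked ultrametric measure spaces converge, the $h$-components converge too and the limiting $h$ is automatically $\{0,1\}$-valued.

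For the closure of $\G^{\langle\rangle,V}$: by Proposition~\ref{prop.1}(b) an element corresponds to a marked connection-matrix distribution $\nu_V$ with the sub-matrix independence property; dropping the requirement that $\nu_V$ be realised by a genuinely $\{0,1\}$-valued function and passing to the weak closure yields the space of $V$-decorated graphons, which, topologised by the marked cut distance (the marked analogue of \cite{LovaszSzegedy2006graphon}; cf.\ the discussion around~\eqref{e385}), is a compact, hence Polish, metric space, and is by construction the closure of $\G^{\langle\rangle,V}$. For the closure of $\G^{[],V}$: combine the Gromov-weakly closed, hence Polish, space of $V$-marked metric measure spaces $(\CI,r,\mu_{\CI\times V})$ from \cite{DepperschmidtGrevenPfaffelhuber2011} with the marked graphon closure for the $h$-component, using the product-type topology generated by $\wh\Pi^{\ast,V}$ (equivalently the product of the marked Gromov--Prokhorov metric and the marked cut metric); completeness follows since both factors are complete and the constraints $\mu_{\CI\times V}=\mu\otimes\wt\kappa$, symmetry and measurability of $h$ are closed, while separability and metrizability come from a countable $\wh\Pi^{\ast,V}$-determining subalgebra and the explicit product metric. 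We denote the resulting Polish spaces $\wh\CW^{\langle\rangle,V}$ and $\wh\CW^{[],V}$.

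The main obstacle I expect is the identification of the $\wh\Pi^{\ast,V}$-topology with the concrete Polish topologies above: showing that the marked polynomial algebra is \emph{convergence-determining} (not merely separating), and that the limit of a sequence with convergent polynomial evaluations actually lies in $\G^{[],V}_{\rm ultra}$, i.e.\ is realised by some Polish space carrying a genuinely $\{0,1\}$-valued $h$ and a $V$-valued mark map with $h(i,j)=1\Rightarrow\kappa(i)=\kappa(j)$. On the ``ultra'' subspace this is controlled by the no-dust tightness criterion~\eqref{e.1784}, the automatic continuity of $h$, and the closedness of mark compatibility together with tightness of the $V$-marginals (automatic since $V$ is Polish and the kernel $\wt\kappa$ is fixed); but checking all of this carefully and in the right order is where the real work lies, exactly as in the unmarked Proposition~\ref{prop.813}. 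Issues of varying size ($\bar\G^{[],V}$), which would additionally require the integrability and Laplace-transform control of the total mass $\bar\mu$ from \cite{DG19,ggr_GeneralBranching}, do not enter here since the statement concerns $\G$, not $\bar\G$.
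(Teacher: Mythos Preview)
Your approach is correct and essentially the same as the paper's, though you spell out in more detail what the paper leaves to citation. The paper's own proof (which treats Propositions~\ref{prop.813} and~\ref{prop.1202} together in a single short paragraph) simply observes that on $\G^{[]}_{\rm ultra}$ the function $h$ induces a second ultrametric coarser than $r$, so that the resulting object is a \emph{multi-metric} measure space in the sense of~\cite{DGP23}, and then invokes that reference as a black box for Polishness; the marked case is dispatched by the same argument combined with the marked $\U_1^V$ theory from~\cite{DepperschmidtGrevenPfaffelhuber2011}. Your explicit encoding via the joint law of the sampled $(r,h,\kappa)$-array is exactly what underlies the multi-metric framework, so the two arguments coincide; the only minor wrinkle is that the no-dust condition~\eqref{e1284} you list among the defining constraints is in the paper a \emph{tightness} criterion rather than part of the definition of $\U_1^V$, but this does not affect the outcome.
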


\noindent
As before, the question is whether the same holds for $\G^{\{\},V}$ and what is the situation for $\G^{\langle\rangle,V}$. Here we refer to the discussion in the non-marked case.


\section{Grapheme evolution $2$: Examples and martingale problems}
\label{s.exmart}

In this section we give the details of the construction behind the Fleming-Viot and the Dawson-Watanabe grapheme diffusion on the state space $\G^{[],V}$ by using \emph{well-posed martingale problems}, focussing mainly on the specifications of the \emph{operators} acting on the domains of test functions $\wh \Pi^\ast$, respectively, $\wh \Pi^{\ast,V}$, which we  specified in Section~\ref{s.evol}.

Section~\ref{ss.fromto} specifies the \emph{finite-grapheme} Markov pure-jump processes, their rates and transitions and their deterministic part (the growth of distances), and gives their \emph{generators}. Section~\ref{ss.go} defines the \emph{operator} of the martingale problem for the grapheme diffusions in $\G^{[]},\G^{[],V}$ in the case of the dynamics with completely connected components.  Section~\ref{ss.ext} proceeds with the case of dynamics with \emph{non}-completely connected components, which arises after we allow independent insertion and deletion of single edges at rates $a^+,a^-$, or its \emph{generalised version} with $\CU$-dependent $a^+,a^-$.


\subsection{From finite-graph processes to grapheme processes: two classes of examples}
\label{ss.fromto}


\subsubsection{Fleming-Viot and Dawson-Watanabe: finite grapheme processes}
\label{sss.fincase}

We use the rules presented in Section~\ref{sss.compcase} for a jump process of finite graphs, specify all the parameters, extend the description systematically to the \emph{marked} version, and give the graph the structure of a grapheme by using a specific embedding of the graph in a measure space. We do this by constructing an equivalence class of finite ultrametric measure spaces from our evolving graphs. We define the Markov jump processes with deterministic component on $\G_\infty$ and introduce a notation for the corresponding grapheme processes. Recall that, due to the increase of distances in time, the evolving environment also has a deterministic continuous component and not only jumps. The formulation with types and marked graphemes is needed only when we extend to rules with mutation and selection. Otherwise we can think of the unmarked case. Nonetheless, it is useful to have the marks available to see some parts of the construction more clearly.


\paragraph*{Key objects.}

The finite \emph{$V$-type graph} of size $n$ is described as a grapheme, i.e., as the equivalence class
\begin{equation}
\label{e402}
\CG =[\CI^\ast \times V,h,\kappa,\mu_{\CI \times V}]
\end{equation}
of a quadruple $(\CI^\ast,h,\kappa,\mu)$, where $\CI$ is a set of vertices ($\CI^\ast = (\CI,\CB_\CI)$ is a measurable space) and $\mu$ is a probability measure on $\CB_\CI$. The mark function $\kappa\colon\,\CI \to V$ is the type-\emph{function} that associates with each vertex a type drawn from the set $V$, where $(V,\CB_V)$ is a measurable space based on a Polish space $V$. (Recall that for elements in $\U^V_1,\U^V$ this can in general be a transition kernel from $(\CI,\CB_\CI)$ to $(V,\CB_V)$.) The measure $\mu_{\CI \times V}$ on $\CI \times V$ is defined as $\mu \otimes \kappa$. In the case where we have a mark \emph{function}, the connection-type matrix
\begin{equation}
\label{e407}
h\colon\, \CI \times \CI  \to \{0,1 \},\, \text{  respectively, $(\CI \times V)^2 \to (\{0,1\} \times V)^2$}, h \text{ measurable}
\end{equation}
defines the edges $(a,a^\prime)$, with $a,a^\prime \in \CI$, and the types $v,v^\prime \in V$ with $v=\kappa (a),v^\prime= \kappa(a^\prime)$, by requiring that
\begin{equation}
\label{e1399}
\text{$(a,a^\prime)$ is an edge if and only if $h(a,a^\prime)=1$, respectively, $h((a,v),(a^\prime,v^\prime))=1$.}
\end{equation}
The measure $\mu$ is the \emph{sampling measure}, and allows us to draw $m$-samples from the graph by independently drawing $m$ vertices $(a_1,\ldots, a_m)$ according to $\mu$ without replacement. Two bijections between $\mathrm{supp}(\mu)$, $(\CI,r,h,\kappa,\mu)$ and $(\CI^\prime,h^\prime,\kappa^\prime,\mu^\prime)$, are equivalent if there exists a bijection $\varphi$ of $\mathrm{supp}(\mu)$ to $\mathrm{supp}(\varphi)$ such that $\mu$-a.s.\ $h^\prime=h \circ \varphi^{-1}, \kappa^\prime=\kappa \circ \varphi^{-1}$ and $\mu^\prime= \mu_{\ast \varphi}$. The set of all such objects is called
\begin{equation}
\label{e414}
\wt \G^V, \qquad  \G^V = \left\lbrace \wt \CG \in \G^V \colon\, h (a,a^\prime) = 1
\Longrightarrow \kappa (a) = \kappa (a^\prime) \right\rbrace.
\end{equation}


\paragraph*{The embedding in a discrete metric space.}

We have defined two classes of Markov pure-jump processes of finite graphs. From our evolving \emph{finite} graph at time $t$ we can build a \emph{finite ultrametric measure space} with an \emph{$h$-function}, namely, $(\CI_t,h_t,\mu_t)$. For example, interpret invasion as a birth event. Then we can define \emph{ancestors} and \emph{descendants} in the obvious way. For the set $\CI$ of current vertices we can define the \emph{distance of $(i,i^\prime)$} as twice the time back to the last common ancestor of $i$ and $i^\prime$. In this way we obtain a \emph{finite ultra-metric space}, giving us $(\CI_t,r_t)$. Hence our \emph{finite graph} gets embedded in a \emph{metric space} and we have
\begin{equation}
\label{e2005}
\text{constructed $((\CI_t,r_t),h_t,\mu_t)$ and defined a (finite) grapheme process.}
\end{equation}
If we add the types, then we similarly obtain the marked grapheme and its evolution.


\paragraph*{Finite-grapheme dynamics.}

We now turn the graph dynamics into a grapheme jump processes with deterministic part taking values in $\wt \G^{[],V}$: (i) the \emph{Fleming-Viot process} and the \emph{Dawson-Watanabe process} with \emph{immigration and emigration}; (ii) the same processes as in (i) but with \emph{insertion or deletion} of edges.

As possible initial states we consider the space $\G^{[],V}$, which requires that edges occur only between vertices marked with the same type. The processes are defined via the following three rules, each specifying at which \emph{rate} which \emph{jump} occurs.


\paragraph*{$\blacktriangleright$ Fleming-Viot evolution rule.}

\begin{itemize}
\item
At rate $d {n \choose 2}$ sample a pair $(a,a^\prime)$ from $(\CI \times \CI) \setminus D_{\CI\times\CI}$ based on $\mu$. If $\kappa(a)=\kappa(a^\prime)$, then nothing happens, while if $\kappa(a) \neq \kappa(a^\prime)$, then with probability $\tfrac12$
\begin{equation}
\begin{aligned}
&a^\prime \text{ takes over the type of $a$}, \\
&a \text{ takes over the type of $a^\prime$},
\end{aligned}
\end{equation}
the edges of the looser are removed, and the looser is connected to all the vertices the winner is connected to, $\mu$ is fixed and $\nu$ changes to $\nu-\delta_{(a,\kappa(a))} +\delta_{(a,\kappa(a^\prime))}$, respectively, $\nu-\delta_{(a^\prime,\kappa(a^\prime))}+\delta_{(a,\kappa(a))}$. Another way of saying this is that individuals $a^\prime,a \in \CI$ die and are replaced by two children, of individual either $a$ or $a^\prime$, inheriting the type (in population dynamics this resampling is known as the Moran model). This choice also defines the evolution of $(\CI_t,r_t)_{t \geq 0}$. Namely, distances of two vertices grow at rate $2$, and at a birth event the distance of the newborn to the ancestor is $0$ and is the same for all other vertices. We start from the space where all distances are $0$, and all vertices are identified with one point with full mass $\mu$. If we have types, then the $n$ points get different types drawn from the set $V$ according to some diffusive measure $\kappa_0$ on $V$.
\item
At rate $cn$ a vertex is a sampled and removed (together with all its connections), a new vertex $a^\prime$ enters with the type $v^\prime$ chosen according to the measure $\theta$ on $V$, $\mu$ is fixed and $\mu_{\CI \times V}$ changes to $\mu_{\CI \times V} - \delta_{(a,\kappa(a))} + \delta_{(a^\prime,v^\prime)}$ with probability $\theta (dv^\prime)$.
\end{itemize}


\paragraph*{Extensions of the Fleming-Viot rule with types and emigration/immigration.}

Consider marked graphemes $[\CI, h,\mu,\kappa]$, where $\kappa\colon\,\CI \to V$ is a mark function (i.e., a measurable function from $\CI$ to $V$), and consider additional rules for marked populations and their genealogical processes with values in $\U_1^V$. Of interest are cases where \emph{mutation} of types and/or \emph{selection} w.r.t.\ types is added, leading to new processes
\begin{equation}
\label{e857}
\left(\CG_t^{d,c,\theta,m,s}\right)_{t \geq 0}, \qquad \left(\CG_t^{b,c,\theta,m,s}\right)_{t \geq 0},
\end{equation}
that allow for statements like in Theorems~\ref{th.963-I}, \ref{th.963-II}, \ref{th.963-III} and~\ref{th.1838} for the Fleming-Viot, Dawson-Watanabe and McKean-Vlasov evolution rules, extended by additional \emph{mutation of types}.  The type-$b$ component mutates to type $b^\prime$
\begin{equation}
\label{e1687}
\text{at rate $m$ with transition probability $M(b,db^\prime)$, $b,b^\prime \in K$},
\end{equation}
and edges are connected and disconnected accordingly. To keep the notation simple, we only discuss the case when $M(b,\cdot)$ is \emph{diffusive}, so that always \emph{new} types appear. Furthermore, we have weak \emph{selection} of types with fitness function $\chi$, i.e., at rate $s/n$, $s \in (0,\infty)$, a vertex pair $x,y$ is picked, and with probability
 \begin{align}\label{e1925}
&\frac{\chi(x)}{\chi (x)+\chi(y)}\colon &x \to y \text{ replacement takes place,}\\
&\frac{\chi(y)}{\chi (y)+\chi(x)}\colon &y \to x \text{ replacement takes place.}
\nonumber
\end{align}


\paragraph*{$\blacktriangleright$ Dawson-Watanabe evolution rule.}

In the Dawson-Watanabe case, consider supercritical or subcritical branching. If $s>0$, then at rate $s$ a new vertex is born and is connected with every vertex. If $s<0$, then at rate $-s$ a vertex is removed (including all its edges). Furthermore, a vertex changes type at rate $m$ and with transition probability kernel $M(\cdot,\cdot)$ as above, and all old edges are removed and edges are added between all vertices carrying the new type.


\paragraph*{$\blacktriangleright$ Marked Fleming-Viot evolution rule with immigration.}

\begin{itemize}
\item
Every initial individual gets its own type, drawn from the set $[0,1]$. All individuals of the same type are connected.
\item
Pairs are sampled from the population at rate $d$. If a pair is sampled and the two individuals are of different type, then a coin is thrown to decide which of the two types is adopted by the pair, all connections are updated, one individual is \emph{disconnected} from all the individuals of its component and is \emph{connected} to all the individuals of the new component. Note that this is the same as saying that one individual duplicates the other and dies.
\item
At rate $c$ a vertex is \emph{removed} from its component and gets a \emph{new type} with law $\theta$ on $[0,1]$. In particular, the measure $\theta$ gives rise to different types of mechanisms:  if $\theta$ is non-atomic, then incoming types are \emph{always new} and create a new component, while if $\theta$ is atomic, then this is not so and $\theta$ is in fact a parameter.
\end{itemize}
The accompanying changes in the measure $\mu_{\CI \times V}$ are left to the reader. We obtain the same dynamics of connection matrices as before, but now the \emph{inheritable type of a vertex} allows us to impose new evolution rules.


\paragraph*{$\blacktriangleright$ Additional insertion and deletion of edges.}

We discuss two choices:
\begin{itemize}
\item
Simple model: At rate $a^+ {n \choose 2}$ a pair $(a,a^\prime)$ is chosen according to $\mu$ and the edge $(a,a^\prime)$ is connected, at rate $a^- {n \choose 2}$ a pair $(a,a^\prime)$ is chosen according to $\mu$ and the edge $(a,a^\prime)$ is disconnected, $\mu$ and $\mu_{\CI \times V}$ are fixed, and $h$ is jumping, as indicated.
\item
Modify the above by considering rates $a^+(\cdot),a^-(\cdot)$ that are functions of the population size of the type and/or of the age of the population, which is the maximal distance in the subpopulation. (For example, larger and older populations may be less pruned to loose a connection and more pruned to reinstall a lost connection). Here, $a^+(\cdot),a^-$ are bounded continuous functions.
\end{itemize}


\subsubsection{Finite grapheme processes and their generators}
\label{sss.graph}

The rules specified above via \emph{jump transitions} and their \emph{rates} uniquely define a \emph{Markov pure-jump process} on $\G_n$ or $\G_\infty$ with additional deterministic growth of distances.

To find the operator of the grapheme evolution, we use the observation that we can view the dynamics of the finite graphs as driven by a specific process, namely, the Moran process. A priori we have no guidance from the finite jump processes of abstract graphs on how to best embed it in a Polish space. Even for a metric structure on $\G_n^{\langle\,\rangle}$ we have to work with the finite embedded graph (whose embedding we have defined in Section~\ref{ss.fromto}), which is given via the \emph{genealogical Moran process}, respectively, the \emph{genealogical Galton-Watson process} for the case of Dawson-Watanabe. This gives us an \emph{enrichment} of the \emph{abstract finite-graph dynamics} via a topological structure on each element of the \emph{sequence of finite graphs} $(\CG^{(n)})_{n\in\N}$, which is moving in (a subset of) the space $\U_1$, respectively, $\U$ of equivalence classes of ultrametric measure spaces, and represents the \emph{genealogy} of the population of vertices evolving according to the Moran model. Namely, the distance of two individuals is twice the time to the most recent common ancestor. The sampling measure is the equidistribution, the weight is $2$ at birth and $0$ at death.

This gives us our $\G_\infty$-valued Markov processes, with $\sim$ either bar or no bar, and dropping the parameters $d,c,\theta$ we write
\begin{equation}
\label{e434}
(\CG^{{\rm FV},(n)}_t)_{t \geq 0} \text{, respectively, } \left(\CG_t^{\rm DW,(n)}\right)_{t \geq 0}.
\end{equation}
These simple processes can be defined as solution to a \emph{well-posed martingale problem}, with the test functions being the polynomials $\wh \Pi^\ast$ from Section~\ref{sss.testf}, as follows.

The \emph{operator} (here a classical generator) of the martingale problem is of the following form, where we split into resampling and emigration/immigration:
\begin{align}
\label{e1447a}
&\wh \CL^{\ast, {\rm FV},d,c,\theta,(n)}
= \wh \CL^{\ast,{\rm FV},d,(n)} + \wh \CL^{\ast,{\rm FV},c,\theta,(n),{\rm em-im}} \text{  on  } \G_n^{[]},\\
&\wh \CL^{\ast, {\rm DW},b,c,\theta,(n)}
=  \CL^{\ast,{\rm FV},b,(n)} +  \CL^{\ast,{\rm DW},c,\theta,(n),{\rm em-im}} \text{  on  } \bar \G^{[]}_\infty.
\label{e1447b}
\end{align}
Recall that we used graphemes embedded in an ultrametric space $\U_1$ or $\U$ that is a \emph{continuum}. To that end we had passed to $\G_n^{[]}$ and had given the finite system a structure as a \emph{finite ultrametric probability measure space}. Consequently, we have operators arising from the structure of the ultrametric measure space in which we \emph{embed our graph dynamics}, and obtain a grapheme for each $n$ that in the limit as $n \to \infty$ converges to a continuum grapheme.

Consider first the case without immigration or emigration. Then the operator (generator in this case) $\wh \CL^{\ast, \rm FV,(n)}$ is given on $\wh \Pi^\ast$ (recall~\eqref{e341}) by the formula
\begin{equation}
\label{e1458}
\wh \CL^{\ast, {\rm FV},(n)} = \wh \CL^{\ast, {\rm res},d} + \wh \CL^{\ast, {\rm grow}},
\text{ respectively, } \CL^{\ast, \rm DW, (n)}=\wh \CL^{\ast, \rm res,b} + \wh \CL^{\ast, \rm grow}.
\end{equation}
The first-order term, giving rise to the deterministic increase of distances with time, is given by
\begin{equation}
\label{e1462}
\wh \CL^{\ast, \rm grow} \Phi^{(n),\varphi} = 2 \langle \nu^{(n)}, \bigtriangledown (\varphi)\rangle.
\end{equation}
The second-order term is given via $\theta_{k,\ell}$, the operator replacing variable $\ell$ by variable $k$ (see~\eqref{e911}),
\begin{equation}
\label{e1465}
\wh \CL^{\ast, \rm res,d}\Phi^{(n),\varphi}
= \frac{d}{2} \sum_{k,\ell \in [n]} \big( \langle \nu^{(n)},
\varphi \circ \theta_{k,\ell} \rangle - \langle \nu^{(n)},\varphi\rangle \big).
\end{equation}
The generator $\CL^{\ast,{\rm DW},(n)}$ is similar, except that in~\eqref{e1465} we drop the term $\langle\nu^{(n)},\Phi^{(n),\varphi}\rangle$.

In both cases the polynomial for sampling \emph{without} replacement is given by
\begin{equation}
\label{e14699}
\Phi^{(n),\varphi} = \langle \nu^{(n),\varphi}, \varphi \rangle,
\end{equation}
and $\varphi$ is a function of $\uur=((r(u_i,u_j)))_{i,j \in [n]}$ with $(u_i)_{i \in [n]}$ the $n$-sample from $\CI$. Replacing $\varphi (\uur)$ by $\varphi(\uuh)$, with $\uuh$ the connection matrix, we have as operator for the functional $\nu$ on $\Pi^\ast$ also
\begin{equation}
\label{e1474}
\wh \CL^{\ast, {\rm FV},d,(n)} \Phi^{(n),\varphi} = \CL^{\ast, {\rm res},d} \Phi^{(n),\varphi}.
\end{equation}

We also need the operator for \emph{immigration} and \emph{emigration}, which reads
\begin{equation}
\label{e1478}
\CL^{\ast, {\rm FV},c,\theta,(n),{\rm em-im}} \Phi^{(n),\varphi}
= \sum_{k \in [n]} \big(\langle\nu^{(n)}_{(k)},\Phi^{(n),\varphi}\rangle
- \langle\nu^{(n)},\Phi^{(n),\varphi}\rangle \big),
\end{equation}
where $\nu_{(k)}$ arises from $\nu$ by sampling the mark of the $k$-th member from a source $S$ with measure $\mu \otimes \theta$ on $\CI^\ast \otimes (S)^\ast$ rather than $\mu \otimes \kappa$.

Note that if in the formulas above we instead sample \emph{with replacement}, i.e., draw from $\mu^{\otimes n}$, then we get
\begin{equation}
\label{e2210}
\wh \CL^{\ast, \rm res,d}\Phi^{(n),\varphi}
= \frac{d}{2} \sum_{k,\ell \in [n]} \big(\langle \nu^{(n)},
\varphi \circ \theta_{k,\ell} \rangle - \langle \nu^{(n)},\varphi\rangle \big) + O\left(\frac{1}{n}\right).
\end{equation}


\subsection{Construction: Fleming-Viot and Dawson-Watanabe grapheme operators}
\label{ss.go}

In order to write down the martingale problem arising in the $n \to \infty$ limit for the processes introduced in Section~\ref{sss.fincase}, we define the \emph{operator} for the $\wt \G^{[]},\wt \G^{[],V}$-valued Fleming-Viot process ($\sim\,=$ no bar) and Dawson-Watanabe process ($\sim\,=-$). This dynamics has two components, one from the \emph{abstract graph} (evolution of the connection-matrix distributions) and one from the evolution of the spaces in which we \emph{embed} the abstract graph, here associated with elements from  $\U_1,\U$, respectively, $\U_1^V,\U^V$. For both components of the evolution we have to specify the operators of the martingale problem.

We start with the pure Fleming-Viot and Dawson-Watanabe operator, where the \emph{second-order term} represents the \emph{essence of the grapheme diffusion process}. We first turn to the \emph{dynamics} of the \emph{connection matrix}, which induces a dynamics for the functional $\nu$ on $\G^{[]},\G^{[],V}$, and deduce from that the operator on $\G^{\langle\rangle},\G^{\langle\rangle,V}$. Afterwards we turn to the evolution of the space in which we embed the vertices and combine this with the operator on $\G^{[]}$. Finally, we turn to the \emph{first-order terms}.


\subsubsection{Fleming-Viot and Dawson-Watanabe operators on functions of connection matrix}
\label{sss.flemwa}

Consider test functions on $\G^{[]}$ of the form
\begin{equation}
\label{10}
\Phi (G) = \Phi \left([\CI,h,\mu]\right) = \int_{\CI^m} \varphi\left(\left(h(u_i,u_j)\right)_{i,j \in [m]}\right)
\mu(\d u_1)\ldots\mu(\d u_m), \quad \varphi \in C_b \left([0,\infty)^m\right),
\end{equation}
with $G \in \G^{[]}$. We denoted the set of test functions of this form by $\Pi^\ast_{\rm mon}$ and the generated algebra by $\Pi^\ast$. Consider the map $\wt \theta_{i,j}$ acting on $(u_1,\ldots, u_m)$ via the replacement of $i$ by $j$ defined by
\begin{equation}
\label{e911}
\wt \theta_{i,j}\big(u_1,\ldots,u_m\big) = \big(u_1,\ldots ,u_{i-1},u_j,u_{i+1},\ldots, u_{j-1},u_j,u_{j+1},\ldots,u_m\big).
\end{equation}
Thus, $\wt \theta_{i,j}$ maps an $m$-sample to an $(m-1)$-sample, and $\wt \theta_{i,j}$ induces the map $\theta_{i,j}$ on matrices $(h(u_k,u_\ell))_{k,\ell \in [m]}$. Set, with $\underline{u}=(u_1,\ldots,u_m)$,
\begin{equation}
\label{12}
\CL^{\ast,\rm FV,res,d}\Phi = \sum_{i,j \in [m]} \int_{\CI^m}
\left[\varphi\left(\theta_{i,j}(h(u_k,u_\ell)_{k,\ell \in [m]})\right)
-\varphi\left((h(u_k,u_\ell))_{k,\ell \in [m]}\right)\right] \mu^{\otimes m}(\d\underline{u}),
\end{equation}
where we note that it suffices to consider $\varphi$ of the form in~\eqref{e976}. This extends easily to an operator that acts on $\Pi^\ast$ and determines the evolution of $(\nu_t)_{t \geq 0}$.

\begin{remark}[Lifting from $\G^{[]},\G^{\{\}}$ to $\G^{\langle\,\rangle}$]
\label{r.1429}
{\rm Note that in~\eqref{10} for $G \in \G^{[]}$ we can write, for a unique $G^{\langle\,\rangle} \in \G^{\langle\,\rangle}$,
\begin{equation}
\label{e.1431}
\Phi(G)=\int_{\{0,1\}^{m^2}} \varphi(\uuh)\, \nu^{(m)}(\d \uuh)
=\Phi^{\langle\,\rangle}(G^{\langle\,\rangle}),
\end{equation}
with $\Phi^{\langle\,\rangle}$ a polynomial on $\G^{\langle\,\rangle}$, i.e., $\Phi^{\langle\,\rangle} \in \Pi^{\ast,\langle\,\rangle}$. Therefore in~\eqref{12} we can define an operator $\CL^{\langle\,\rangle, \rm FV}$, with the operation $\theta^{\langle\,\rangle}_{i,j}$ instead of $\theta_{i,j}$, to obtain the
\begin{equation}
\label{e1435}
(\CL^{\langle\,\rangle,\rm FV},\Pi^{\ast,\langle\,\rangle},\Gamma^{\langle\,\rangle})
\text{-martingale problem on $\G^{\langle\,\rangle}$},
\end{equation}
which is solved by a solution of the $(\CL^{\ast, \rm FV},\Pi^\ast,\Gamma^\ast)$-martingale problem \emph{lifted from $\G^{[]}$ to $\G^{\langle\,\rangle}$}.}
Recall, however, that $\G^{\langle\rangle}$ is not a Polish space, so standard theory may not apply, or may apply only after restriction to a subspace, for example, $\G^{\langle\rangle}_{\rm ultra}$, the equivalence classes in $\G^{\langle\rangle}$ induced by elements in $\G_{\rm ultra}^{[]}$.
\hfill$\spadesuit$
\end{remark}

Next, we must \emph{extend} this operator from $\Pi^\ast$ to $\wh \Pi^\ast$ by using the well-known operator on $\wh \Pi^{\ast,\downarrow}$ for the dynamics $(\CU_t)_{t \geq 0}$ on $\U_1$, respectively, $\U$, and lifting these to operators on $\G^{[]}$, respectively, $\bar \G^{[]}$, and also their marked versions on $\G^{[],V},\G^{\{\},V}$, in order to obtain the evolution of the \emph{embedded grapheme} via the \emph{evolution of the random environment} (specification of the embedding). Of course, we also have to pass to $\wh \Pi^\ast$, the algebra where the joint law of $\nu$ \emph{and} $\CU$ is determined.


\subsubsection{Operators for functionals of the genealogy}\label{sss.op}

We next specify the operator $\wh \CL^\ast$ on $\wh \Phi^{\ast,V} \in \wh \Pi^{\ast,V}$, i.e., we \emph{extend the operator} to the \emph{algebra $\wh \Pi^{\ast,V}$} generated by $\Pi^{\ast,V} \cup \Pi^{\ast,V,\downarrow}$. We recall the operator for the \emph{genealogy-valued Fleming-Viot process} $\CU$ associated with our graphemes, which is the sum of four terms, and extend it to $\wh \Pi^\ast$ or $\wh \Pi^{\ast,V}$, which will be immediate after we look at the object in the right way, namely, by using that $h$ is given by a specific distance function in our definition.

The limiting genealogical process $\CU$ for Fleming-Viot and Dawson-Watanabe population processes have been constructed as follows (see \cite{GrevenPfaffelhuberWinter2013,DG19,ggr_GeneralBranching}). On the domain $\wh \Pi^{\ast, V,\downarrow}$ (i.e., the functions in $\wh \Pi^\ast$ restricted to functions not depending explicitly on $h$), call this operator $\wh \CL^{\ast,\rm FV}$. This specifies a $(\wh \CL^{\ast,\rm FV},\wh \Pi^{\ast,V,\downarrow}, \wh \Gamma)$-martingale problem for the process $\CU$ on $\U_1$ or $\U$, for which a solution has been constructed in the above references and for which a \emph{duality relation} has been checked, ensuring uniqueness of solutions. The dual is a \emph{function-valued} process driven by a \emph{coalescent}. At the same time, we obtain a martingale problem for the process $(\nu_t)_{t \geq 0}$, with $\nu_t$ arising as a functional induced by $h_t$ for a given measure space $((\CI_t,r_t),\mu_t)$ (see the references above). The four terms of the operator are as follows.

\medskip\noindent
(I) The so-called \emph{growth operator}, with growth referring to the increase of distances with time, is obtained after taking the test function defined earlier, replacing $h$ in~\eqref{10} by $r$, and acting on the test function as in~\eqref{e1462}.

\medskip\noindent
(II) The \emph{resampling operator} $\CL^{\ast, V, \rm res}$ defined above describes, in the $\U^V_1$-valued processes, the change of the genealogy due to the birth of new individuals, at distance $0$ from the ancestor, and the sharing with other individuals of the distance to the most recent common ancestor. This is a second-order operator (acting on products of test functions in the same way as differential operators act by the product rule). The extension for the limit process runs well when we work with $(\CI,r)$ (hence on $\G^{[]}$), since in that case we have a similar form for the extended operator $\wh \CL^{\ast, {\rm FV}, {\rm res}}$, namely,
\begin{equation}
\text{replace connections $h$ by distances $r$ in \eqref{10} and \eqref{12} to get }
\CL^{\ast,{\rm FV}, {\rm res}} \text{ on } \wh \Pi^{\ast,V,\downarrow}_r,
\label{e1614}
\end{equation}
where the subscript $r$ indicates that the functions do \emph{not} depend on $h$. In this way we also get the dynamics of the spaces in which we embed our abstract graph dynamics, which gives us the extension of the second-order term $\CL^{\ast,V,{\rm res}}$. To justify the latter, we use the approximation with finite processes, for which it was proved in~\cite{GrevenPfaffelhuberWinter2009} that the same formula holds (by using an observation we explain in Section~\ref{sss.func}).

\medskip\noindent
(III) The operator of \emph{emigration} and \emph{immigration} is first order, and is defined on a subclass of the polynomials on $\CU_t$ by~\eqref{e1478}. This definition works on $\wh \Pi^{\ast,V}$ and its subsets. As above, we can use \cite{DG19} to obtain the formula on $\wh \Pi^{\ast,\downarrow}$ on $\bar \G_n, \bar \G_\infty$. We have to first do this on $\Pi^\ast$, and afterwards lift to $\wh \Pi^\ast$. Here, we use again the approximation by finite-$n$ models explained in the above reference.

\medskip\noindent
(IV) The first-order operators capturing \emph{mutation} and \emph{selection}, denoted by $\CL^{\ast, \rm FV,mut}$ and $\CL^{\ast, \rm FV,sel}$, respectively, are defined on $\wh \Pi^{\ast,\downarrow}$ in the literature (see~\cite{DGP12}). We have to extend these operators first to $\Pi^\ast$ first, and afterwards to $\wh \Pi^\ast$. We begin with the action on $\wh \Pi^{\ast,\downarrow}$.

The ingredients are the rates $m$ and $s$, the mutation kernel $M(b,db^\prime)$ from $K$ to $(K,\CB_K)$ in~\eqref{e1687}, and the fitness matrix or fitness function $\chi\colon\,K \to [0,1]$ from~\eqref{e1925}. Finally, we have to extend from $\wh \Pi_r^{\ast,V}$ to $\wh \Pi^{\ast,V}$, where now \emph{products} of functions $\varphi_h$ and $\varphi_r$ depending on $h$, respectively, $r$ appear.

In the sequel we use the polynomials defined in~\eqref{e.855}. For the \emph{selection operator} on $\wh \Pi^{\ast,\downarrow}$ we have
\begin{equation}
\label{e2162}
(\CL^{\rm sel}\Phi)(G) = s \sum_{k=1}^n \int_{(U \times V)^{n+1}} \varphi(\uur (\underline{\iota}))
\left[\psi(\underline{v}) \chi (v_k)-\psi(\underline{v})\chi(v_{n+1})\right]
\mu^{\otimes n+1}_{U \times V} \left(d(\underline{\iota},\underline{v})\right).
\end{equation}
The \emph{mutation operator} on $\wh \Pi^{\ast,\downarrow}$ reads as
\begin{equation}
\label{e2146}
(\CL^{\rm mut}\Phi)(G)=\sum_{k=1}^n \left[(M_k \Phi)(G)-\Phi(G)\right],
\end{equation}
where, for $k \in [n]$, we set
\begin{equation}
\label{e2150}
\left(M_k \Phi \right)(G)
= m \left\lbrace \int_K \Phi_k (G^{b^\prime}) M(b_k,db^\prime)-\Phi (G) \right\rbrace,
\end{equation}
with
\begin{equation}
\label{e.2154}
\Phi_k (G)=\int_{(U \times K)^n} \varphi(\uur(\underline{\iota}))\,
\psi((\underline{b}^k))\, \mu^{\otimes n}_{U \times V} (d(\underline{\iota},\underline{b}^k)),
\qquad \underline{b}^k = \left(b_1,\ldots,b_{k-1},b^\prime,b_{k+1},\ldots,b_n\right).
\end{equation}
On $\Pi^\ast$ the operator acts as above, leaving $\varphi(\uuh)$ fixed but acting on $\psi (\underline{b})$. We extend this to $\wh \Pi^\ast$ by following the same principle as before.


\subsubsection{Dawson-Watanabe operator}

It remains to specify the Dawson-Watanabe operator. Here, only the second-order term changes, namely, in~\eqref{12} the second part with the minus term is dropped (see~\cite{DG19} for details).


\subsection{Extension to non-completely connected components}
\label{ss.ext}

Here the states are of a somewhat different nature, even though in the background there still is an autonomously fluctuating process in the form of a decomposition of the vertices into open and closed (potentially completely connected) subsets such that, \emph{given} the whole \emph{(time-)path}, the state is given by a \emph{pruning} of the edges of the \emph{potential edges} by the noise, where the potential edges are given by the processes with completely connected components studied previously.

We look at such \emph{edge process} in a \emph{random medium}. This means that, starting from our desired process $\CG$, we now take the process that is associated with the $\G^{[]}$-valued process for the case where $a^+=a^-=0$, which we call $(\CG_t^{(0,0)})_{t \geq 0}$ and which is associated uniquely with a process $(\CU_t)_{t \geq 0}$ (see~\eqref{e678}). Then we take a field $\CA_t$ indexed by $U_t \times U_t$ of i.i.d.\ $\{0,1\}$-valued Markov spin-flip chains with rates $a^+,a^-$ to generate a representative of the state of the desired process $\CG^{(a^+,a^-)}$. To make this rigorous, we have to argue on $\G^{[]}$ and write down the operator of $\CG^{(a^+,a^-)}$. We modify the action of this operator on $\Pi^\ast$ (the $h$-dependent functions) and also modify~\eqref{e678} by adding a further requirement on the evolution via the field $(\CA_t(u_1,u_2))_{t \geq 0}$, namely, that no flip has occurred.

The evolution of the condition is well understood, since it is simply the underlying $\U_1$ (or $\U$)-valued process $(\CU_t)_{t \geq 0}$, which is autonomous since there is no feedback from the status of being an active or a non-active edge to the underlying process $\CG^{(0,0)}$. The effect of the pruning is captured in a \emph{conditional} martingale problem involving an induced process for the connection-matrix distributions. Given the condition, this is a distribution that arises as the pruned distribution of $(\nu_t)_{t \geq 0}$ for the solution $(\CG_t^{(0,0)})_{t \geq 0}$ of the martingale problem \emph{without} the additional mechanism of insertion and deletion, where the frequency of pruning is given via the value $a_t$ of the frequency of current virtual states among all eligible states. We have to justify this splitting of the dynamics.

For the finite approximations, we can split the process into an autonomous process $\CU^n$, creating the potential edges, and an evolution of the edges by the field $\CA$ acting on the potential edges as a spin-flip dynamics, giving $\CG^n$. The evolution of the edges does not influence $\CU^n$. Therefore the splitting of the condition (autonomous) and the flipping of virtual edges works well.

We have to show that this also holds in the limit dynamics, which can be deduced by using \emph{only} the martingale problem. For that we observe that the operator can be split into the action of the operator corresponding to $\CG^{(0,0)}$ and the operator acting on functions of the state of the edge, i.e., the specification active or virtual on functions that are products of functions of each of the components, one of which observes the active-virtual information, representing the spin-flip dynamics. On $\wh \Pi^\ast$ restricted to functions not depending on $h$,  i.e., $\wh \Pi^{\ast,\downarrow}$, we find the generator of $\CU$. On $\Pi^\ast$ we obtain the spin-flip generator acting on $h$. On $\wh \Pi^\ast$ it suffices to consider the product of a function from $\wh \Pi^{\ast,\downarrow}$ and a function from $\Pi^\ast$.

The additional operator acts on functions of the form as in~\eqref{e.855} like
\begin{equation}
\label{e.2147}
(\CL_t^{\rm ins,del} \Phi) (G)= \bigg(a^+\sum_{k,\ell=1}^n (\Phi^{+,t}_{k,\ell}-\Phi)
+a^- \sum_{k,\ell=1}^n (\Phi^{-,t}_{k,\ell} -\Phi)\bigg)(G),
\end{equation}
where, for $G=(U,r,h,\mu)$, we define at time $t$, with the states at time $t$,
\begin{equation}
\label{e.2151}
\Phi^{+,t}_h(G)=\int_{U^n} \varphi_h \big(\uuh^{k,\ell,\sim}(\uu)\big)
\varphi_r(\uur(\uu)) 1(r(u_k,u_\ell)<2t) \mu^{\otimes n} (d \uu), \qquad  \sim\, \in \{+,-\},
\end{equation}
and
\begin{equation}
\label{e2155}
\big(\uuh^{k,\ell,+}(\uu)\big)_{(i,j)} =1 \; \text{ for } \; k=i,\,j=\ell,\,\sim\,=+,
\end{equation}
and
\begin{equation}
\label{e2158}
\big(\uuh^{k,\ell,\sim}(\uu)\big)_{(i,j)} = \uuh (\uu) \text{ otherwise},
\end{equation}
and similarly for $\Phi^{-,t}$.

We can avoid \emph{time-dependent terms} if we introduce \emph{marks} characterising the \emph{founder} of a family surviving at time $t$ for all $t >0$ and requiring identity of type in the indicator of~\eqref{e.2151} instead of having the same time-$0$ ancestor.


\section{Proofs for state spaces and topologies}
\label{s.state}

In Section~\ref{ss.algeb} we derive the properties of the \emph{algebras of test functions} $\wh \Pi^\ast, \wh \Pi^{\ast,V}$ on the spaces $\wt \G^{[]},\wt \G^{[],V}$, respectively, $\wt \G^{\langle\rangle}, \wt \G^{\langle\rangle,V}$, with $\sim$ being bar or no bar. In Section~\ref{ss.polish} we derive the properties of the topologies that were introduced, in particular, the property of being \emph{Polish}.


\subsection{Algebra of test functions}
\label{ss.algeb}

In this section we prove Proposition~\ref{prop.928}--\ref{prop.1743} and Proposition~\ref{prop.1}--\ref{prop.1202}. To do so, we exploit the literature for the respective properties on $\U_1,\U,\U_1^V,\U^V$, respectively, the more general version on $\M_1,\M,\M^V_1,\M^V$, of which the former are closed subspaces (see \cite{GrevenPfaffelhuberWinter2009,DepperschmidtGrevenPfaffelhuber2011,DG19}). What remains on $\G^{[]}, \G^{[],V}$ (or $\bar \G^{[]},\bar \G^{[],V}$), and is not covered by the literature on genealogies of populations for functions on $\M_1,\M$ and their marked versions, is to provide the same information for functions of $[U_t,r_t,\mu_t]$ that are used to study the \emph{connection-function part}, i.e., the functional $(\nu_t)_{t \geq 0}$ induced by $h$, and to get the complete object $([(\CI_t,r_t), h_t,\mu_t])_{t \geq 0}$.

However, on $\G^{[]}_{\rm ultr}, \bar \G^{[]}_{\rm ultr}$ the function $h$ can be viewed as an ultrametric, and so $(\CI,r,\mu)$ and $(\CI,r,h)$ are of a similar structure, so that we can get the properties of both $\wh \Pi^{\ast,\downarrow}$ and $\Pi^\ast$ from the literature on $\U_1,\U$. It remains to get the combined structure from the algebra that is generated by taking products of elements of the two algebras, as studied in~\cite{DGP23} in detail. In our context we can use that the $h$-component is a continuous functional of the $r$-component of the state. We have to also extend to $\bar \G$. However, since $\bar \G$ consists of objects that are pairs of a total mass component and an element of $\G$, this can be achieved in the same way as for the space $\U$ from $\U_1$ (see, for example, \cite{DG19,ggr_GeneralBranching}). In the general case $\G^{[]},\bar \G^{[]}$, we use the results on $\M_1,\M$ (recall~\eqref{S3} and the sequel).


\subsection{Polish spaces}
\label{ss.polish}

In this section we prove Propositions~\ref{prop.813} and~\ref{prop.1202}. We have to show that, by adding $h$ to the structure $((U,r),\mu)$ to get $[(U,r,h,\mu)]$, we get a Polish space. This fact is based on the knowledge that the equivalence classes of a metric measure space $[U,r,\mu]$ from $\U_1$ or $\U$ form a Polish space in the Gromov weak topology \cite{GrevenPfaffelhuberWinter2009}. We have that also $\G^{[]}_{\rm ultr}$ is a Polish space in the topology specified in Section~\ref{s.evol}. Namely, since $h$ is just inducing a second ultra-metric by defining the $h$-path connected components as the $1$-balls, this space is like a \emph{multi-metric} measure space (as studied in~\cite{DGP23}), for which the claim holds. Note, however, that here we are in the simpler setting of $\G_{\rm comp} \subseteq \G_{\rm ultr}$, with the $h$ that we used for our models, because one is a coarse-graining of the other, so that convergence of the $r$-topology implies convergence of the $h$-topology on those sets of states (because we have ultrametric spaces). The same applies in the more general setup where $h$ can be viewed as a metric but not an ultrametric, where the second metric generates a coarser Borel-algebra.


\section{A prelude on duality}
\label{s.duality}

In this section we consider the dynamics without additional insertion/deletion and introduce: (i) the dual process $\CC$ driven by the coalescent process $C^t=(C^t_s)_{s \in [0,t]}$, defined for every time horizon $t$, (ii) the duality function $H$. In Section~\ref{s.nccomp} we will extend the duality to the general setting. 


\paragraph{(i) The dual process.}

The state space for the dual process is $E^\prime_n=S_n \times D_n$, with $S_n$ the set of partitions of $[n]$, $D_n$ the set of $(n \times n)$-$\mathbb{R}_+$-valued distance matrices. The dynamics is such that each pair of partition elements, \emph{coalesces} at rate $d$, and the distance of each pair of different vertices \emph{grows} at speed~$2$. This defines an $E^\prime$-valued Markov jump process, $(\CC_s=(c_s,\uur_s,\underline{\underline{h}}_s))_{s \ge 0}$, with a deterministically evolving part (distance growth). Its generator is given as follows: for $(C,\underline{\underline{r}}) \in S_n\times D_n$, $g:S_n\times D_n$ bounded and continuously differentiable in distances,
\begin{equation}
\label{e:001}
\begin{aligned}
&G^{\mathrm{dual}}g\big(C,\uur\big)\\
&= d\sum_{\varpi_1\not=\varpi_2\in C}\Big(g\big(C\setminus\{\varpi_1,\varpi_2\}
\cup\{\varpi_1\cup\varpi_2\},\uur\big)-g\big(C,\uur\big)\Big)
+ 2\sum_{1\le i<j\le n}\frac{\partial}{\partial r_{i,j}}g\big(C,\uur\big)\mathbf{1}_{i \nsim_C j},
\end{aligned}   
\end{equation}
where as usual we write $i\sim_c j$ if and only of there is $\varpi\in c$ with $i,j\in\varpi$.

\paragraph{(ii) The duality function.}
The duality function combines the duality relation for the two components of $\CG_t$, namely, for $\CU_t$ in $\CC_t^t$ and for the connection function $h$ in $\CC_t^t$ (the first is well known in the literature, the second is specific for graphemes). Both have to be integrated into a single duality function, link two $\G^{[]}$-grapheme processes, and give the strong duality.

First, we define the duality function for the process $(\CU_t)_{t \geq 0}$ and the dual $(C^t_s)_{s \in [0,t]}$. This is obtained by putting in the duality function for $\CG$ and $(C^t_s)_{s \in [0,t]}$ in~\eqref{e2174} below the function $\varphi_h \equiv 1$. Define $H\colon\, E \times E^\prime \to \R$, with $E=\G^{[]}$ and $E^\prime= \mathop{\bigcup}\limits_{n \in \N} E^\prime_n$, as
\begin{equation}
\label{e2174}
H\big([U,r,h,\mu],(C,\uur^\prime)\big):=\int_{U^n} \varphi_r \left(\uur^C (\uu)
+\uur^\prime\right)\varphi_h\left((h^C(i,j))_{1\le i<j\le n}\right)\, \mu^{\otimes n} (d \uu),
\end{equation}
where $n\in\mathbb{N}$ such that $C\in S_n$, $r^C$ is the distance matrix of the sample $\uu$ given by $r^C_{i,j}=r_{C(i),C(j)}$, with $C(i)$ being the partition element containing $i \in [n]$, and $h^C(i,j)=1$ if and only if $j \in C(i)$.

Next, we turn to $h$, and hence to the functional $(\nu_t)_{t \geq 0}$, and combine them into a duality for the process to $(\CG_t)_{t \geq 0}$. The duality w.r.t.\ $H$ as well as the strong duality for $\CG$ are, for \emph{our} dynamics, in fact consequences of the known \emph{strong} duality relation for $\CU$. Recall that the connection function arises as a \emph{function} of the distances, both for the process and for the dual process. Namely, a decomposition of the sets $U_t,\wt U_t$ (for process, respectively, dual process) gives the completely connected components as $2t$-balls in both cases. These are given by the disjoint decomposition into open and closed $2t$-balls, represented by partition elements in the dual, which have a closure that is a \emph{completely connected component}, and their (disjoint) union is $\wt U_t$. Therefore the strong duality for $\CU$ and $\CC$ \emph{implies} the \emph{strong duality of the grapheme process} $\CG$ and $\CC$ in our setting.

However, if we want to get information about the uniqueness of the martingale problem for $\CG$, then we need to obtain the equivalence of $[U_t,r_t,h_t,\mu_t]$ and $[\wt U_t,\wt r_t,\wt h_t,\wt \mu_t]$ by checking that expectations of the functions given by $H$ are the same, \emph{just} using the forward and the backward (dual) operators (see~\eqref{e2189} below). This has been done in the literature for the $\CU$-part, because we saw that the $h$-part has the same structure, so that the relation must hold for our operator on $\G^{[]}$, because the merging of variables allows us to extend from $\wh \Pi^{\ast,\downarrow}$, and from $\Pi^\ast$ functions to products, and hence to $\wh \Pi^\ast$. The general criterion we use here is as follows. If the family $\{H(\cdot,c),c \in E^\prime \}$ is law determining, then we have to check the following generator relation (which is done in~\cite{GrevenPfaffelhuberWinter2013} for our case):
\begin{equation}
\label{e2189}
(G^{a,c,\theta} H)(\cdot,c)(u) = (G^{\rm dual}H)(u,\cdot)(c), \qquad u \in E,c \in E^\prime,
\qquad a=d,
\end{equation}
while for $a=b$ we must add the term $\mathfrak{V}(\cdot)H(u,c)$ to the right-hand side. 


\section{Proofs for grapheme diffusions with completely connected components}
\label{s.nccomp}

In Section~\ref{ss.mainideas} we explain the main ideas, list the tasks that remain, and give the key arguments for Theorems~\ref{th.963-I},~\ref{th.963-II},~\ref{th.963-III},~\ref{th.853} and~\ref{prop.1078}. In Section~\ref{ss.proofcor} we prove their corollaries. Theorem~\ref{prop.1078} is extended in Section~\ref{ss.add}, and details on the references are worked out in Section~\ref{ss.proofth1}.


\subsection{Main idea for the proof}
\label{ss.mainideas}

As in~\cite{AHR21}, for the special case where $c,\theta$ are both $0$ we have chosen simple evolution rules for finite graphs $([n],h,\mu)$ arising from population dynamics of individuals in $\CI$. In addition to this reference, we now use that we also have complete information about the evolution of the embedding space $([\CI_t,r_t,\mu_t])_{t \geq 0}$ process. Namely, these processes have been treated in the literature, as far as the analogues of Theorems~\ref{th.963-I} and \ref{th.853}, and Theorem~\ref{th.1838} for the genealogy $\CU$, are concerned.

We must complete the proof of the claims on $\G^{[]}$, where the \emph{connection function $h$} is an additional part besides the \emph{genealogy} $\CU$. Moreover, the \emph{joint law} of these two components has to be treated also. This has to be done by using that the process $\CG$ is given as a \emph{functional} of the \emph{genealogy process} $\CU$ associated with our population genetic models, namely, $(\CU_t^{a,c,\theta})_{t \geq 0}$ with $a=b,d$ for Fleming-Viot, respectively, Dawson-Watanabe. In the sequel we specify the theorems in the literature that relate to our theorems, and handle the two problems just mentioned. We do so in the last two points in Section~\ref{sss.process}, where we exhibit the specific structure of the processes we deal with, which substantially facilitates the task and the details of the argument given in Section~\ref{sss.func}. Additional ideas are needed in the presence of mutation, which are postponed to Section~\ref{ss.add}.


\subsubsection{The process: from genealogies to graphemes}
\label{sss.process}

In this section we look at the process $\CU_t=( [\CU_t,r_t,\mu_t])_{t \geq 0}$ associated with our models, recall their characterisation as solution to a well-posed martingale problem, and describe path properties, equilibria and convergence to them and converging finite approximations, in order to see how we can include the connection function $h$ into the picture. This involves both initial states and dynamics.

A key property of the dynamics is that a large part of $\G^{[]}$ is transient and is left instantaneously toward $\G^{[]}_{\rm comp}$, and the states are completely connected components built by the time-$t$ descendants of finite or countably many  (for $c,\theta >0$) founders from the time-$0$ population. We can start in the marked case in \emph{any} initial configuration of types, but at positive times our evolution leads to a state with finitely or countably many types, and consequently to completely connected components only. This means that our evolution takes place in a subset of $\G^{[],V}_{\rm ultra} \subseteq  \G^{[],V}$ and, in fact, stays in $\G^{[],V}_{\rm comp}$ when started there:
\begin{equation}
\label{e.2000}
\G^{[],V}_{\rm comp},\G^{[],V}_{\rm ultra} \text{ are \emph{dynamically closed} subsets of } \G^{[],V}.
\end{equation}
Only the latter is \emph{closed topologically}.


\paragraph{(i) Initial states.}
First we have to decide what \emph{initial states} $[(\CI_0,r_0),h_0,\mu_0]$ are \emph{possible} for the grapheme evolution, since we have defined it as functional of $\CU$. It turns out that we can choose the initial state only with $\CU_0 \in \U_1$, respectively, $\U$ for reasons of \emph{consistency}. Namely, in order to obtain \emph{c\`adl\`ag paths} of graphemes, we have to take $h$ such that the initial $h_0$ is determined by $\CU_0$. In particular, we still have to give a version of the formula relating $\CG$ to these initial states, which are different from the single-root case for $\CU_0$.

First, we consider a special case: start the genealogical part in $[\{1\}, \underline{\underline{0}},\delta_1]$, i.e., \emph{all} initial particles form a single root, and we follow the arguments for the dynamics based on \eqref{e678} to get $\CG$ from $\CU$. How do we get general initial states? We embed the process $\CU$ in a $[0,1]$-marked process $\CU^{[0,1]}$. We \emph{dissolve} the root by taking as type space $V=([0,1],\CB_{[0,1]})$ and starting in $\mu_{U \times V,0}=\delta\otimes \pi$, with $\pi$ the Lebesgue measure on $[0,1]$, so that every initial individual gets a different type. In this way, at time $t>0$ we have finitely or countably many atoms in $\mu_{U_t \times V,t}$, and this enables us to distinguish all \emph{founders} by their type in $[0,1]$. We call founders at time $s >0$ all individuals having descendants at some time $t>s$. In other words, introducing \emph{inheritable types} allows us to simulate all different $\CU_0$-states by using a marked model and identifying specific marks.

We have to adapt \eqref{e678} when we do \emph{not} start the process $\CU$ in a single root. First, if the initial state is a more general $\CG_0=[(U_0,r_0),h_0,\mu_0] \in \G_{\rm comp} \subseteq \G^{[]}_{\rm ultr}$, then we add a further condition, namely, that the types of two vertices have to agree if and only if there is an edge between them. We then define the process $\CG$ by using the functional $\CU^\ast_t$ instead of $\CU_t$ in the analogue of~\eqref{e678}, where $\CU^\ast_t$ is the $t$-truncation of $\CU_t$. Therefore $h_0$ has a form that represents the partition of the set of founders for the later states at time $t >0$ in completely connected components (i.e., $1$ if and only if both arguments are from one element of the partition), so that components of the first initial condition are merged according to $\CG_0$. This gives us the Fisher-Wright model of~\cite{AHR21}, also for non-trivial $c,\theta$.

For general $h_0$, we decompose $h_0=h_0^1+h_0^2$, with $h_0^1$ associated with completely connected components and $h_0^2$ the remainder, and we follow the diffusion corresponding to $h^1_0+\wt h^1_0$, with $\wt h^1_0$ the $0$-component on the points where $h^0_2$ was supported (which is \emph{an instantaneous jump} to a state treated above). This gives us a c\`adl\`ag path, but one that jumps at an infinite rate at time $0$.

\paragraph{(ii) Dynamics.}
We next formulate our basic grapheme diffusions arising from graph evolutions, as described in Section~\ref{ss.examples} and above. Take the $\G^{[],V}$-valued or $\bar\G^{[],V}$-valued processes $\CG$, driven by the $\U^V$-valued processes $(\CU_t^{d,c,\theta})_{t \geq 0}$, $(\CU_t^{b,c,\theta})_{t \geq 0}$ as in Definition~\ref{def.graphproc}, and \eqref{e678}:
\begin{itemize}
\item
Fleming-Viot process with rate $c \geq 0$ of immigration and emigration from a source $[[0,1], \text{Euclidean distance}, \theta]$, $\theta \in \CM_1([0,1])$, and sampling rate $d\colon\,\CG^{{\rm FV},c,d,\theta}$.
\item
Add selection to get $\CG^{{\rm FV},c,d,\theta,s,m}$ in the case $m=0$, written $\CG^{\rm FV,c,d,\theta,s}$, which allows for the same reasoning as above despite the marks, since the latter are passed on to the descendants when $m=0$.
\item
Dawson-Watanabe process with rate $b >0$ on $[0,1]$, with rate $c \geq 0$ of immigration and emigration, source $\theta \in \CM_{\rm fin}([0,1])$: $\CG^{{\rm DW},c,b,\theta}$.
\end{itemize}
We need information on $\CU$. To cover non-trivial $c,\theta$, we use~\cite{GrevenPfaffelhuberWinter2013,DGP12,DG19} and~\cite{GSW2016} to get \emph{existence} and \emph{uniqueness} of the solution to the martingale problem and \emph{continuous paths} of the $\U_1$-valued, respectively, $\U$-valued \emph{driving processes} $\CU$ and their \emph{genealogy} underlying $\CG$. Therefore, in our definition of $\CG$ with $h_t$ based on $\CU_t$ at each time $t$, we have obtained as grapheme diffusion a \emph{unique stochastic processes} with values in $\G^{[]}$ with \emph{continuous paths} (since $h$ is continuous in that case), and it is known from the references mentioned that $(\CU_t)_{t \geq 0}$ has the strong Markov and Feller property, and for $t \to \infty$ converges to a limiting \emph{equilibrium} state. A further observation is the following: the process $\CU$ is approximated in \emph{path space} by individual-based models (with $n$ individuals) in the limit as $n \to \infty$. By our formula relating $\CU$ and $\CG$, we have, in particular, also defined a \emph{finite approximation} of $\CG$ via grapheme processes with values in $\G_\infty$, called $\CG^n$, in path space. For detailed references, see~Section~\ref{ss.proofth1}.

In order to prove Theorems~\ref{th.963-I},~\ref{th.963-II} and ~\ref{th.963-III}, we want to view the processes induced by $\U$ as \emph{strong Markov processes} solving a \emph{well-posed martingale problem} on state space $\G^{[]}$, and get the $t \to \infty$ limit as an \emph{equilibrium} of a Markov process. Furthermore, this process has to be proven to be second-order, i.e., to be a \emph{diffusion}. Here, the order is defined by following the generalisation of the concept of order of differential operators to operators acting on functions $\G^{[]},\G^{\{\}},\G^{\langle\,\rangle}$ based on $\U_1$ and $\U$ in~\cite{DGP12}, which we recalled in Remark~\ref{def.1947}. The issue is whether the criteria that were used hold for the lifted operators as well. This means that, based on our definition of a version of $\CG$ for a given genealogy process $\CU$, we need to show that the respective properties on $\U_1$ or $\U$ indeed \emph{imply} the properties of the process $\CG$ on both $\G^{[]},\G^{\langle\rangle}$, In turn, this means that we need to identify the proper quotations of results on $\CU$ and formulate the \emph{implication} for the evolution of the \emph{connection-matrix distribution} process and the \emph{joint law} of the process $(h_t)_{t \geq 0}$ with the underlying $(\CU_t)_{t \geq 0}$. The transfer arguments are set up as follows.

The key points in the argument for our grapheme processes are the following: (i) lift the results on $\CU^{d,c,\theta},\CU^{b,c,\theta}$ from $\U_1,\U$ to $\CG^{[]}$ in order to obtain a solution to the martingale problem for $\CG^{[]}$, by taking a functional from the underlying $\U$-valued process, as specified in~\ref{def.graphproc} and~\eqref{e678}, respectively, for general initial conditions; (ii) establish \emph{duality} based on the duality established for $\CU$, which makes the solutions unique; (iii) lift the \emph{approximation results} on $\U$-valued processes to $\G^{[]}$, i.e., lift $\CU^{(n)} \to \CU$ as $n \to \infty$ to $\CG^{(n)} \to \CG$ as $n\to\infty$, i.e., convergence of the grapheme Fleming-Viot process. In particular, establish the statements about the process $(\nu_t)_{t \geq 0}$ ($\nu_t$ is the law of $h_t$ under the sampling measure $\mu_t^{\otimes 2}$) based on the properties of the $\CU$-processes.

All these points are resolved in the following proposition.

\begin{proposition}[From $\CU$ to $\CG$]
\label{prop.2351}
The following hold for the various model classes considered, except for insertion/deletion (treated in Section~\ref{s.compo}).\\
(a) If $\CU^n$ converges to $\CU$ in path space, then $\CG^n$ converges to $\CG$ for the $\G^{[]}$-valued process, as well as for the $\G^{\langle\rangle}$-valued functional.\\
(b) If $\CU$ satisfies the $(\CL^\CU,\wh \Pi^{\ast,\downarrow},\Gamma_\CU)$-martingale problem and the latter has a unique solution, then the $(\CL^\CG,\wh \Pi^{\ast},\Gamma_\CG)$-martingale problem associated with $\CU$ has a unique solution $\CG$ that is given by the construction in Section~\ref{sss.link}. Furthermore, the functional $(\nu_t)_{t \geq 0}$ solves the martingale problem of~\eqref{e1435} (see Remark~\ref{r.1429}, where the growth operator is suppressed and the domain of the operator is $\Pi^\ast$).\\
(c) The equilibrium of $\CG$ is obtained by applying the construction of Section~\ref{sss.link} to the equilibrium state of $\CU$.\\
(d) The strong dual representation of $\CU$ implies the strong dual representation of $\CG$, in particular, the $H$-duality relation lifted from $\U_1$ to $\G^{[]}$.\\
(e) The second-order property of the operators of $\CG$ can be transferred from $\CU$. \qed
\end{proposition}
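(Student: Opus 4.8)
\emph{Plan.} All five statements rest on the single structural fact that, for each fixed $t\ge 0$, the grapheme $\CG_t$ is the image of the genealogy $\CU_t$ under an explicit time-dependent map: set $\Theta_t\colon\U_1\to\G^{[]}$ (and $\U\to\bar\G^{[]}$) by $\Theta_t([U,r,\mu]):=[U,r,\mathbf{1}\{r\le 2t\},\mu]$, composed with the pasting operation $\vdash$ of~\eqref{e1094} for initial states other than the single root, so that $\CG_t=\Theta_t(\CU_t)$ in the sense of~\eqref{e678}. I would first isolate the regularity of $\Theta$ on which everything else hinges: (i) $(t,[U,r,\mu])\mapsto\Theta_t([U,r,\mu])$ is jointly measurable; (ii) for every polynomial $\Phi\in\wh\Pi^\ast$, the composite $[U,r,\mu]\mapsto\Phi(\Theta_t([U,r,\mu]))$ is continuous at every point whose sampling measure puts no $\mu^{\otimes m}$-mass on the threshold set $\{\uu:\ r(u_i,u_j)=2t\text{ for some }i\ne j\}$, which on $\G^{[]}_{\rm ultra}$ is automatic because there $\mathbf{1}\{r\le 2t\}$ is a coarser ultrametric than $r$ and the topology of $\G^{[]}_{\rm ultra}$ was built precisely so that $r$-convergence forces $h$-convergence (Proposition~\ref{prop.813}, Section~\ref{ss.polish}); (iii) the forgetful maps $\G^{[]}\to\U_1$ (drop $h$) and $\G^{[]}\to\G^{\langle\rangle}$ (keep only $\nu$) are continuous. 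Since our driving processes $\CU$ a.s.\ have continuous paths with the no-dust property and sampling measures whose $m$-fold products charge neither the (countable, hence null) threshold set nor, as a set of times, a Lebesgue-null set, the induced path map $(\CU_s)_{s\ge0}\mapsto(\Theta_s(\CU_s))_{s\ge0}$ is continuous at $\mathrm{LAW}[\CU]$-a.e.\ path (it suffices to check this through the polynomial evaluations generating the topology), and $t\mapsto\CG_t$ inherits continuity from $t\mapsto\CU_t$, the threshold sweeping through a $\mu_t^{\otimes m}$-null set so that $\nu_t^{(m)}$ and the distance-matrix distributions move continuously.

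\emph{Parts (a) and (b).} For (a), applying the continuous-mapping theorem to the path map above turns $\CU^n\to\CU$ in $D([0,\infty),\U_1)$ (resp.\ $\U$) into $\CG^n\to\CG$ in $D([0,\infty),\G^{[]})$, and composing with $\G^{[]}\to\G^{\langle\rangle}$ gives the statement for the $\nu$-functional; the only delicate point is the explicit time-dependence of $\Theta_t$, which is harmless because the threshold $2t$ moves deterministically and the set of times at which a sampled pair sits on it is null. For (b), existence is by push-forward: for $\Phi=\Phi_r\cdot\Phi_h\in\wh\Pi^\ast$ (a product of an $r$- and an $h$-polynomial, as in~\eqref{e403}), $\Phi\circ\Theta_t$ is for each $t$ an $r$-polynomial, and the operator $\wh\CL^\ast$ was constructed in Section~\ref{ss.go} exactly so that its action on $\wh\Pi^\ast$ reproduces $\CL^\CU$ on these pull-backs plus the deterministic growth term~\eqref{e1462}; hence $t\mapsto\Phi(\CG_t)-\int_0^t\wh\CL^\ast\Phi(\CG_s)\,\d s$ is a martingale by the space-time martingale property of $\CU$ applied to $\Phi\circ\Theta_\bullet$, and restricting to $h$-type test functions yields the operator of~\eqref{e1435} for $(\nu_t)_{t\ge0}$ (the growth term drops out, the domain being $\Pi^\ast$). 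Uniqueness follows because the $r$-component of any solution $\CG'$ solves the well-posed $\CL^\CU$-martingale problem (restrict the test functions to $\wh\Pi^{\ast,\downarrow}$, on which $\wh\CL^\ast=\CL^\CU$ by construction), fixing $\mathrm{LAW}[(\CU'_t)_{t\ge0}]$, while testing against $h$-polynomials forces $h_t=\mathbf{1}\{r_t\le 2t\}$ a.s.; alternatively, the duality of part (d) with the law-determining polynomial family $\{H(\cdot,c)\}$ (measure-determining by Proposition~\ref{prop.1743}) fixes the one-dimensional marginals, hence with the strong Markov property the full law.

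\emph{Parts (c), (d), (e).} For (c), working with the bounded transformed ultrametric $1-\e^{-r}$ (so the threshold $1-\e^{-2t}$ has a genuine limit $1$), the equilibrium $\CU^{a,c,\theta}_\infty$ exists by the cited genealogy results, $\Theta_t$ extends continuously to $t=\infty$ with $\Theta_\infty([U,r,\mu])=[U,r,\mathbf{1}\{r<\infty\},\mu]$ away from the set $\{r=\infty\}$ (which carries no $\mu_\infty^{\otimes m}$-mass, the equilibrium component weights being diffuse), and the continuous-mapping theorem gives $\CG_t=\Theta_t(\CU_t)\Rightarrow\Theta_\infty(\CU_\infty)=:\CG_\infty$, the claimed equilibrium. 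For (d), the dual grapheme $\CC$ is obtained from the coalescent-with-distance-matrix $C^t$ by the same recipe $\mathbf{1}\{r\le 2t\}$ (Section~\ref{s.duality}); applying $\Theta_t$ to the known strong duality $\mathrm{LAW}[\CU_t]=\mathrm{LAW}[\CU_0\vdash(\text{Kingman entrance law})_t]$ and using that $\Theta_t$ commutes with $\vdash$ — pasting adds $r_0$ to distances, leaving the threshold condition unchanged on same-time-$0$-ancestor pairs — yields~\eqref{e1094}, while the $H$-duality~\eqref{e855} follows from the generator identity~\eqref{e2189}, which holds on $\wh\Pi^{\ast,\downarrow}$ and on $\Pi^\ast$ separately and extends to products, hence to $\wh\Pi^\ast$, since the resampling operators act on products by merging variables; the Dawson-Watanabe case~\eqref{e859} is the same with the Feynman-Kac potential $\mathfrak{V}$ added. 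For (e), on $\wh\Pi^{\ast,\downarrow}$ the operator equals $\CL^\CU$, second order by~\cite{DGP12}; its extension to $\wh\Pi^\ast$ adds only the $h$-resampling term~\eqref{12} — a Fleming-Viot operator of identical algebraic form, hence also second order — and first-order terms (the growth term~\eqref{e1462}, which is a derivation, and the emigration/immigration, mutation and selection terms), and a sum of a second-order and first-order operators on $\wh\Pi^\ast$ still satisfies the cubic identity of Definition~\ref{def.1947} and fails the quadratic one, so $\wh\CL^\ast$ (and likewise $\CL^\ast$ on $\G^{\langle\rangle}$) is second order.

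\emph{Main obstacle.} The recurring difficulty, present across (a)--(d), is reconciling the discontinuity of $\mathbf{1}\{r\le 2t\}$ in the metric variable with the time-inhomogeneity it injects: one must check that along the relevant paths the threshold is charged neither by the sampling measures $\mu_t^{\otimes m}$ nor, as a set of times, by Lebesgue measure, so that $\Theta$ is effectively continuous, the martingale computation in (b) is legitimate despite $\Phi\circ\Theta_t$ being only piecewise continuous in $r$, and the $t\to\infty$ passage in (c) is a true limit. This is precisely the step where the restriction to $\G^{[]}_{\rm ultra}$ earns its keep — there $\mathbf{1}\{r\le 2t\}$ is a coarser ultrametric than $r$, the spaces have no dust, and the entropy criterion of Section~\ref{ss.space} is not needed — which is why the transfer runs cleanly on that state space and would be problematic on the non-ultrametric side or on $\G^{\{\}}$.
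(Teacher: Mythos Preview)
Your proposal is correct and takes essentially the same approach as the paper: both rest on viewing $\CG_t$ as the image of $\CU_t$ under the map that sets $h_t=\mathbf{1}\{r_t\le 2t\}$, and exploit the key structural fact that on $\G^{[]}_{\rm ultra}$ this $h$ is a coarser ultrametric than $r$ (balls are open and closed, so the threshold indicator is effectively continuous). The paper packages this via an explicit embedding $\psi\colon\G^{[]}_{\rm comp}\hookrightarrow\U_1^\ast\subset\U_1$ into three-valued ultrametric spaces rather than via your time-indexed map $\Theta_t$ and the continuous mapping theorem, and for uniqueness in~(b) it goes directly through duality (your stated alternative) rather than your first argument that ``$h$-polynomials force $h_t=\mathbf{1}\{r_t\le 2t\}$'' --- which would need more care since the martingale problem does not encode that relation pointwise --- but the substance and the five-part organisation match.
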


The details on how to prove these results are given next.


\subsubsection{Proofs: functionals of connection-matrix distributions and joint laws}
\label{sss.func}


\paragraph*{The structure of grapheme spaces.}

We are in a similar situation as in the theory of measure-valued processes, where, for example, the Fleming-Viot process in any positive time enters the set of atomic measures (which is a trap), after which it has a simpler structure. Here we consider the version of the process on $\wt \G_{\rm ultra}^{[]}$ (recall the definition~\eqref{e849}). The task to move from $\CU$ to $\CG$ is facilitated by the structural property of such a grapheme $\CG$: $h$ is in fact a \emph{coarser} ultrametric than $r$, the one of the space $(U,r,\mu)$ in which we embed the graph. Namely, we can view $h$ as a function defining an ultrametric taking three values, $0$, $1$ and a further value $>1$, and define a \emph{closed subspace} $\U^\ast_1$ of $\U_1$ (and similarly for $\U$). In \emph{our} models, the ultrametric structure on the state given by $\CU_t \in \U_1$ is a \emph{refinement} of the state from $\U_1^\ast$ given by $h$, namely, $(U_t,h_t,\mu_t^\downarrow)$, the arrow indicating restrictions of $\mu_t$ to the coarser $\sigma$-algebra of the ultrametric $h$.

The more general properties are the following :
\begin{itemize}
\item
For $t>0$ our process has only completely connected components, i.e., lives in $\G_{\rm comp}^{[]},\G_{\rm comp}^{\langle\rangle}$.
\item
The solution of the martingale problem for the $h$-functional can itself be seen as a $\U_1^\ast$-valued process for some closed subspace $\U^\ast_1$ of $\U_1$, defined by the property that the metric takes only three values, $0$, $1$ and $>1$, with operators similar to the ones obtained in population genetics for the genealogy process. Being a coarse-graining, the solution is compatible with the underlying $r$.
\end{itemize}
The above observation defines a \emph{grapheme space $\G^\ast \subseteq \G^{[]}$}: (i) where our processes turn out to live and where the underlying embedding in a metric space $\CU=[U,r,\mu]$ is required to be from $\U_1$ (or $\U$); (ii) $h$ is required to be coarser than $r$. In that case we can give an embedding explicitly, which allows us to view \emph{$\G_{\rm comp}^{\langle\,\rangle}$ or $\bar \G_{\rm comp}^{\langle\rangle}$ as contained in a closed subspace of the space $\U_1^\ast$} of ultrametric probability measure spaces, such that the ultrametric takes on only three different values via an embedding. Here are the details of the construction.

When all connected components are completely connected, we can project, respectively, embed the subset of graphemes $\G_{\rm comp}^{[]} \subseteq \G^{[]} ,\G_{\rm comp}^{\langle\rangle} \subseteq \G^{\langle\rangle}$ into $\U_1$ via a map $\psi$ as
\begin{equation}
\label{S1}
\G_{\rm comp}^{[]} \overset{\psi}{\hookrightarrow} \U_1, \quad \text{ the image $\U_1^{\rm graph}$
of $\G_{\rm comp}^{[]}$ is contained $\U^\ast_1$,}
\end{equation}
where $\U_1$ is the space of equivalence classes of ultrametric measure spaces. Here, on $\G^\ast$, $\psi$ is given by
\begin{equation}
\label{S2}
[(\CI,r),h,\mu] \overset{\psi}{\longrightarrow} [\CI,r^\ast,\mu^\ast] \in \U^\ast_1,
\end{equation}
where $\mu^\ast$ is the restriction of $\mu$ to the $\sigma$-algebra associated with $r^\ast$, and $r^\ast$ is an ultrametric given by
\begin{equation}
\label{S3}
r^\ast(x,y)= \left\{
\begin{array}{ll}
0, & \mbox{  if  } x=y, \\
1, & \mbox{  if  } h(x,y)=1, \\
\infty, & \mbox{  if  } h(x,y)=0 \mbox{  and  } x \neq y.
\end{array} \right.
\end{equation}
Via the transformation $r \to 1-\e^{-r}$ we obtain an element of the classical $\U_1$ (with \emph{finite} values for the metric), with distances taking values $\{0,1,\infty\}$, respectively, $\{0,e^{-1},1\}$. On $\G^{[]}_{\rm ultra}$, this mapping is a coarsening of the state, while on $\G^{\langle\rangle}_{\rm ultra}$ it is an embedding and
\begin{equation}
\label{S4}
\U^\ast_1 \text{ is a closed subspace of $\U_1$}.
\end{equation}
In the general case $\G{[]},\G^{\langle\rangle}$ the function $h$ can be viewed as metric (not necessarily an ultrametric) when the third value is less than $2$.

\begin{proof}[Proof of Proposition~\ref{prop.2351}]\label{pr.2287}
We proceed in five steps (i)-(v).

\medskip\noindent
(i) We first consider initial states in $\G^{[]}_{\rm comp}$, or the single-root case, generating a closed set topologically and dynamically.

\emph{Case $\G^{[]}$.} The above observation (together with the fact that balls in ultrametric spaces are open and closed) yields that the weak convergence of $\CU_t^{(n)}$ to $\CU_t$ in $\U_1$ as $n \to \infty$ implies the convergence of $\CG_t^{(n)}$ to $\CG_t$ when the component $h^{(n)}$, respectively, $h$ is taken into account. This is due to the definition of $\CG^{(n)},\CG$ as specific functionals of $\CU^{(n)},\CU$. In particular, if we look at the finite-$n$ versions of the functional $\nu$ giving the process $(\nu_t)_{t \geq 0}$, then we see that this process converges based on what is known about the underlying processes $\CU^{(n)}$ and their limit $\CU$, namely, their $2t$-ball decomposition is also the decomposition in completely connected components, and the former converges as object in $\U_1,\U$.

\emph{Case $\G^{\langle\rangle}$.} Consider evolution rules where the population dynamics has genealogies that evolve as \emph{$\U_1$-valued processes}, arising as limits of individual-based models, which are \emph{strong Markov processes $(\CU_t)_{t \geq 0}$} and are characterised by a well-posed martingale problem. We have defined a function $h$, based on $\CU$, so that we obtain a functional~\eqref{S2} giving the connection-matrix distribution $\Psi$ that maps elements of $\U_1$ to $\G^{\langle\,\rangle}$, and gives rise to a \emph{well-defined grapheme evolution $(\CG^{\langle\,\rangle}_t)_{t \geq 0}$} on the state space $\G^{\langle\,\rangle}$, of which we need that it arises as the limit of finite-grapheme evolutions. In particular, the subgraph counts converge to the ones obtained as moments under the connection-matrix distribution. The latter comes from the fact that the frequency of $k$-subgraphs can be expressed in terms of $(\mu_n(t)(H^i))_{i \in \N}= (\mu_n^\ast(t)(H^i))_{i \in \N}$ with $H^i$, $i \in \N$, the completely connected components, and from the fact that the expressions converge when the processes $(\CU_t^{(n)})_{t \geq 0}$ converge to $(\CU_t)_{t \geq 0}$, since the completely connected components are open and closed balls in the metric given on $\CU_t$.

We have to identify the possible \emph{initial states} for the $n\to \infty$ limit, i.e., we need to know how c\`adl\`ag or continuous path moves inside $\G^{[]}_{\rm comp}$, or move from the complement to the inside as the entrance law from the single-root case as continuous path, or instantaneously jump inside at time $0$ and afterwards evolve like the latter entrance law. Finally, we can have mixtures of these three extreme types of behaviour.

The above means that we have to show that if $H$ is not $\equiv 0$ or $\equiv 1$ within sets of a decomposition of the basic set, or is a mixture of both, then there is the instantaneous jump at time $t=0$ into the $H \equiv 0$ state, taking place on the complement of the union of the completely connected components. In other words, looking first at the case $c>0$, only completely connected components can avoid immediate destruction, by the growing of completely connected components, founded by single vertices whose descendants survive until time $t >0$ that are all connected due to the resampling mechanism, but to \emph{no} vertices that are \emph{not} in their clique. Thus, descendants of immigrants are connected precisely with all their subfamilies, which reduces the need to show this for all initial survivors for $t >0$, based on the $c=0$ mechanism. A similar but more subtle mechanism works for the case $c=0$. Here we have to focus on the edges between different \emph{weakly} completely connected components, where connections to other components or points are allowed. For the part of $h \equiv 0$, we have the same situation as with immigrants. We can focus on the completely connected components. We need that the change by resampling leads to a loss of connection to the outside of the weakly completely connected component from the lost vertices, and duplication of the connections to the outside that go out from the duplicated vertex. Therefore the unbiased subsequent changes are of order $n$, whereas the number of changes of vertices in a weakly completely connected component is of order $1$. Therefore, as $n \to \infty$ we hit $0$ with the number of edges between weakly completely connected components before the vertices begin to fluctuate. This jump follows also from the convergence of $\CU^{(n)}$ to $\CU$, with finitely many surviving initial individuals or immigrants entering before a time $s$ for positive time in $\CU$, absorbing the full measure and thus forcing out the complement of the closure of $\wt \G^{[]}_{\rm comp}$ by right continuity.

Thus, altogether, we have proved part (a) of Proposition~\ref{prop.2351} on $\G^{[]}$ and on $\G^{\langle\rangle}$.

\medskip\noindent
(ii) Next, we show that the structure introduced above implies that $\CG$ on $\G^{[]}$ solves a well-posed martingale problem for $\CG_0 \in \G^{[]}_{\rm comp}$ or the single-root case. The action of the operator on $\wh \Pi^{\ast,\downarrow}$ arises from the construction of $\CG$ as a functional of $\CU$, and $\CU$ itself evolves according to an operator not containing information on $h$, and follows the action of the operator of the process $\CU$ acting on $\Pi^{\ast,\downarrow}$, which gives the first piece of our martingale problem for $\CG$. We have to add functions from the algebra $\Pi^\ast$ that determine $h$, as follows: $h$ is a function on $[U_t,r_t,\mu_t]$, the change of the latter gives the action of the operator on $\Pi^\ast$, with the same action of merging variables because $h$ can be viewed as giving an ultrametric, and the growth operator is identically zero because $h$ registers only when a distance is in $(0,\infty)$, so there is no change for $t>0$. We next take the union of the algebras $\wh \Pi^{\ast,\downarrow}$ and $\Pi^\ast$, and consider the generated algebra $\wh \Pi^\ast$ (recall Section~\ref{s.evol}) where we give the action on products of a function from each of the sets, which still is the merging of variables. This gives us the martingale problem that is solved by $\CG$, for $t>0$.

We have to also discuss general $h_0$ and $\CU_0$ not in the \emph{closure of $\wt \G_{\rm comp}$}, and do so based on the martingale problem. We see from the approximation that in our scaling the connections between \emph{weakly}  completely connected components disappear at a larger rate than the latter grow to full frequency of completely connected components. Since the martingale problem implies the duality via the generator criterion, and since we have right-continuous paths, we must have the $h$-component not in the closure of $\G^{[]}_{\rm comp}$ (which is dynamically closed) jump to dust, i.e., to $h \equiv 0$, at infinite rate at time $0$, as prescribed.

Next, we turn to $\G^{\langle\rangle}$. The claim is that $(\CG^{\langle\,\rangle}_t)_{t \geq 0}$ is itself a Markov  process  on $\G^{\langle\,\rangle}$, specified by a well-posed martingale problem that, however, is not defined on a closed subspace of $\wh \CW$, except when restricted to the closure of $\G^{\langle\rangle}_{\rm comp}$. For every stochastic process $(\CU^\ast_t)_{t \geq 0}$ taking values in $\U^\ast_1$, via the embedding $\psi$ of~\eqref{S1}--\eqref{S2} restricted \emph{on} $\G^{\langle\rangle}_{\rm comp}$, we obtain a stochastic process of graphemes and hence of connection-matrix distributions
\begin{equation}
\label{S5}
\left(\nu_t\right)_{t \geq 0} = \left(\psi^{-1}(\CU^\ast_t)\right)_{t \geq 0}.
\end{equation}
Consider the $(\CL^\ast, \Pi^\ast, \CG_0)$-martingale problem, where $\Pi^\ast$ is the set of \emph{connection polynomials} on $\G^{[]}$, and $(\CL^\ast,\Pi^\ast)$ arises as the \emph{extension of $(\CL,\Pi)$ from $\U_1^\ast$ to $\G^{[]}$}, with $\Pi$ the polynomials in $\U^\ast_1$. Namely, the coarsening of $\CU$ induced by the map $\Psi$ on $\CG^{\langle\rangle}$ gives a $\U^\ast_1$-valued process whose operator we can determine from the one of the underlying $\CU$. We see from~\eqref{S3} that in our case, for $t >0$,
\begin{equation}
\label{13}
\CL^{\ast,\rm FV}(\Phi) [\CU] = \CL^{\rm FV} ([\CU])-\CL^{\rm growth} ([\CU]), \qquad \CU \in \U_1,
\end{equation}
with $\CL^{\rm FV}$ the operator for the process $(\CU_t)_{t \geq 0}$ on $\U_1$. This means that for this functional not the metric but \emph{only the sampling measure} evolves further for $t>0$. To incorporate $t=0$, note that the metric jumps instantaneously to $1$ for pairs of descendants of a founder as time starts running, and to a larger number otherwise. However, this information we have \emph{only in the associated $\CU$}. Therefore we have to view the process as an entrance law from time $0$ in the dust state, where $h=0$ on $\G^{\langle\rangle}$ and not in the union of completely connected components. This process is Markovian on $\G^{\langle\rangle}$, which completes the first task. This latter follows because the operator does not rely on information about $\CU$, and uniqueness follows from duality.

This settles the second part of part (b) of Proposition~\ref{prop.2351}.

\medskip\noindent
(iii) We have a moment duality, linking the operator $\CL^{\ast, \rm FV}$ to the coalescence operator $\CL^{\ast, \rm coal}$ on $\Pi^\ast$, when we work with the polynomials on $\U_1^\ast$, since the corresponding Fleming-Viot processes $\CU$ are in duality including the growth operator. The Fleming-Viot operator and the growth operator separately satisfy the generator criterion and hence, in the backward \emph{and} the forward direction, the growth operator is dropped when passing from $r_t$ to $r_t^\ast$, which carries over to the functional that solves the $H$-duality for positive time. More precisely, for the duality relation of the underlying $\U_1(\bar \U)$-valued process $(\CU_t)_{t \geq 0}$ of $(\CG_t)_{t \geq 0}$, we know that the dual on $\U_1$ is driven by a \emph{coalescent} on the partitions of $\N$. Indeed, the right-hand side has the \emph{coalescent} with \emph{emigration} at rate $c$ into a cemetery (the entrance law from infinity, where the partition elements at time $t$ give the connected components of the grapheme $\CC_t^\ast$ for the coalescent $\CC_t$). We have to combine this with a duality covering the joint law of $h$ and the $\CU$-component, by having in the duality function~\eqref{e2174} both $\varphi_r$ and $\varphi_h$ different from just constants. But this is immediate, since in the action of the dual operator of the second-order part the merging of variables is the same in both parts.

This settles the $H$-duality.

We want to reformulate the duality in the \emph{framework of $\G^{[]}$} to a stronger statement, i.e., we want to translate the duality relation on $\U^\ast_1$ into a \emph{strong duality relation on $\G^{[]}$}. The duality relation for $\CU^{\rm FV}$ on $\U^\ast_1$ says that if we transform the distances of the state $\CU$ as in~\eqref{S3}, then the new state, denoted by $\CU^\downarrow$, satisfies
\begin{equation}
\label{e943}
\mathrm{LAW} \left[\CU_t^{\rm FV, \downarrow}\right] = \mathrm{LAW} \left[\CC_t\right],
\end{equation}
where $\CC_t$ is the ultrametric probability measure space spanned by the \emph{entrance law of the coalescent}. By the definition of $h$ as functional of $\CU$, respectively, $\mathcal{C}$ in the forward, respectively, the backward direction, this gives (recall that here $h$ is also an ultrametric)
\begin{equation}
\label{e949}
\mathrm{LAW} \left[\CG_t^{d,c,\theta}\right] = \mathrm{LAW}\left[\CG_0 \vdash \CC_t^{\ast,d,c,\theta}\right], \qquad t > 0.
\end{equation}
This dual representation is associated with a duality relation that has duality function $H$, and is connected to a \emph{generator relation}, which is known and proved on $\U_1$ in the literature, and above for the $h$-part, and which guarantees uniqueness of the martingale problem on $\G^{[]}$.

From the duality relation w.r.t.\ $H$ we obtain the strong duality by using the \emph{tightness} properties for the collection of the finite dual processes (see~\cite{GrevenPfaffelhuberWinter2013}) and the fact that $H(\cdot,\cdot)$ determines states and the law of states. Transforming both sides of the duality relation, we get \eqref{e949}, and the above embedding in $\U^\ast_1$ carries over to $\G^{\langle\rangle}_{\rm comp}$, and trivially to $\G^{\langle\rangle}$, since we add the immediate jump to the former by modifying~\eqref{e949}, replacing $\CG_0$ by $\CG_0$ projected on the closure of $\G^{[]}_{\rm comp}$ in the right-hand side.

This settles part (d) of Proposition~\ref{prop.2351}.

\medskip\noindent
(iv) From~\eqref{e949} we conclude that the equilibrium process of $\CG^{\langle\rangle}$ arising as functional of $\CG^{[]}$ is described by the equilibrium of a known \emph{measure-valued process}, namely, the Fleming-Viot process with emigration/immigration. Indeed, project the process $\CG^{[]}$ on its measure component $(\mu_t)_{t \geq 0}$, and afterwards to the subalgebra given by the $r^\ast$-balls for $r^\ast=1$, to get the weight vector of the $2t$-balls. Take the same operations in the right-hand side of~\eqref{e949}. This leads to the same right-hand side in~\eqref{e949} as in the duality relation for the well studied \emph{measure-valued} Fleming-Viot process with emigration/immigration, allowing us to use~\cite{DGV95}, where the identification of the equilibrium as well as convergence to equilibrium is given.

This settles part (c) of Proposition~\ref{prop.2351}.

\medskip\noindent
(v) The fact that the operator for $\CG$ is second order follows from the fact that the first-order terms can be verified as such with~\cite{DGP12}, whereas the second-order term acting on the $h$-component and the $r$-component is of the same form and hence~\cite{DGP12} \emph{again} can be used to get Proposition~\ref{prop.2351}(e).

This completes the proof of Proposition~\ref{prop.2351}.

Let us look in more detail at the martingale problem for the underlying process $\CU$ and the facts that are needed as assumptions for Proposition~\ref{prop.2351}. The \emph{existence} of a solution to the martingale problem on $\U_1$ arises via the finite-population approximation, by considering a suitable dynamics with $n$ individuals and letting $n \to \infty$. Since the dynamics we have given for the vertices and the edges can be viewed as the individual-based Moran process or the Galton-Watson process, both enriched with edges, the convergence properties for graphs fit with the convergence properties for populations. We can therefore use the argument that the genealogies $\CU^{d,c,\theta,(n)}, \CU^{b,c,\theta,(n)}$ of the finite $n$-populations converge to a limit, namely, $\CU^{d,c,\theta},\CU^{b,c,\theta}$, respectively. This is carried out in~\cite{GrevenPfaffelhuberWinter2013} for the Fleming-Viot process, in~\cite{DG19} for the Galton-Watson process, and in~\cite{DGP12} for the model with mutation and selection, leading to a limiting process on $\U$. For the model with immigration/emigration we have to use the results in~\cite{DG19,GSW2016,depperschmidt2023duality,ggr_GeneralBranching}, but the necessary tightness nevertheless comes out of the same ideas in the proof (see, in particular, \cite[Section 2]{GSW2016} and \cite{DG19}, where this is exhibited for a particular convergence result, but the technique carries over to our setup).

The \emph{uniqueness} for the martingale problem for $\CU$ follows from the duality, as well as the $t\to \infty$ convergence of that limiting dynamics to \emph{equilibrium}, as verified in the same references for the underlying $\CU^{d,c,\theta},\CU^{b,c,\theta}$, where the latter is proved via convergence of the dual, which gives the claim right away (the former is a general fact for standard martingale problems).

In the proof of Theorem~\ref{th.963-II}, which concerns the extension to mechanisms including \emph{selection}, $\CL$ requires marks. The addition of \emph{mutation} requires not just marks, but marks that are \emph{not} inherited anymore. This is addressed separately in Section~\ref{ss.add}.

Having \emph{only additional selection} works fine, namely, the approximation by finitely many vertices follows for the underlying $\CU$ from~\cite{GrevenPfaffelhuberWinter2013,DG20}, respectively,~\cite{DG19}, which give tightness in path space, in particular, the dust-free condition for the grapheme and convergence to a limit grapheme. Selection is handled via~\cite{DGP12}. We give the list of relevant references in Section~\ref{ss.proofth1}.

Results from the literature together with Proposition~\ref{prop.2351} settle the claims in Theorems~\ref{th.963-I},~\ref{th.963-II},~\ref{th.853},~\ref{r.cond}, and in some cases of Theorem~\ref{th.1838}, namely, without mutation.
\end{proof}


\subsection{Proof of two corollaries}
\label{ss.proofcor}

In this section we complete the proof of Corollaries~\ref{cor.1165} and~\ref{cor.1263}. We have to argue that the weight of $\mu_t$ of the completely connected components is given by the measure-valued diffusions with emigration and immigration in~\cite{DGV95}, respectively,~\cite{DG96} with values in $\CM_1([0,1])$, respectively, $\CM_{\rm fin}([0,1])$. Start at time $0$ in the Lebesgue measure. Then the states are atomic measures for every $t>0$, and for $t \to \infty$ converge to an equilibrium that can be specified explicitly, as we did in~\eqref{e.1160} and~\eqref{e1253}. How to connect this to our process $\CG$?

We have to code the weight vector given by the mass of the $2t$-balls by an atomic measure in $[0,1]$, respectively, $[0,\infty)$, by associating with the ball a unique number in $[0,1]$. Take the size-ordered mass vector and assign i.i.d.\ uniformly distributed numbers from $[0,1]$ to the corresponding balls. Then take the atomic measure of the observation with the weight under $\mu_t$ of the corresponding ball.

Next, we look at the dynamics of this object under the process $(\CU_t)_{t \geq 0}$ underlying the process $(\CG_t)_{t \geq 0}$. For that we consider the action of the generator on a subclass of test functions of the martingale problems for the process $\CG$, which are given by polynomials $\Phi^{n,\varphi}$, where the function $\varphi$ identifies the mass content of the different $2t$-balls. We can choose here the indicators of these balls, which are continuous functions due to ultrametricity. Then we obtain, via formulas~\eqref{10},~\eqref{12}, respectively, Section~\ref{sss.op} for the Dawson-Watanabe models, the same expression as in the given references for the measure-valued process with immigration and emigration treated in~\cite[Chapters 2(a) and 2(b)]{DGV95}, documenting the necessary results and their origin in the literature. Similarly, for the $\bar \G$-valued case and the Dawson-Watanabe model use~\cite{DG96}: the results from there prove our claims.


\subsection{Adding mutation}
\label{ss.add}

If in the extension of our basic processes $\CG^{c,d,\theta}$, respectively, $\CG^{c,b,\theta}$ (with values in $\G^{[],V}$ beyond the process $\CG^{c,d,\theta,s}$ with selection) we add \emph{mutation} to the selection, then in order to get $\CG$ from the underling genealogy process $\CU$ we have to change the definition of $h$ as a function of $\CU$, which must now be based on the \emph{types} rather than on the \emph{founder} at time $0$ only, and which we might or might not describe via marks. Recall that now two vertices are connected when both carry the same type. In particular, the ultrametric induced by $h$ is no longer related as easily as before to the ultrametric $r_t$. Typically, \emph{different} founders with \emph{part} of their descendants give rise to the completely connected components. Nonetheless, it is still true that completely connected components are finite or countable unions of balls of $U \times V$ that are open and closed, because the vertices descending from an individual without mutation are connected, since the time of the last mutation. Because they have the same type, this corresponds to a ball of radius twice the time back to the common ancestor and lost mutant. The part suffering mutation in that time span founds a new family, which we have to take out of the ball. Iterate this scheme in the order of the relevant mutations in the tree trunks.

We also need to give attention to the topology of the mark set, to suit our purposes. Before we do so we observe the following. The union of balls taken out leaves a countable union of balls in $U \times V$ that are completely connected. Hence the vertices descending from an individual without mutation without further mutation are a completely connected subset, because they have the same type. This correspond to a ball in $U$ of radius twice the time back to the common ancestor, but all marked with a specific type. Hence we are still able to conclude the approximation via finite models by using the tightness properties that we have for $\U_1,\U$ from the literature. To control the distribution of masses, in particular, their uniform summability, we control the number of ancestors of at least $1-\varepsilon$ of the mass over bounded time intervals, using backward the dual and forward the diffusion of the frequency of the part of the population descending from a finite set of ancestors. Here we use that the limit of the process projected on $V$ is a well-known measure-valued diffusion, which for $t >0$ has atomic states and is the limit of its finite counterparts. The $\U_1$-valued process projecting on the genealogy has a subfamily decomposition in $2(t-s)$-balls (disjoint and open and closed) for $s \in [0,t)$, of which at most countably many are charged by the sampling measure (compare with~\cite{DGsel14}). Also this decomposition arises as the limit of the decomposition of the finite counterparts.

We next give more detail on the decomposition above and the necessary topology of $K$. The claim is that $U \times V$ is decomposed for finite $n$, and in the limit consists of at most countably many balls in $U$, each marked with a single type. These are completely connected components, since the descendants of a mutant surviving at the reference time are elements of a ball with radius $2(t-t_{\rm mut})$, with $t_{\rm mut}$ the time of mutation. The mutants occurring in this ball and surviving have a different type, and give rise to a sub-ball taken out of all the descendants at time $t$. If we \emph{strengthen the topology} on $V$ to the \emph{discrete} topology, then we obtain a decomposition into open and closed balls, thereby creating the completely connected components.

The remaining parts of the argument are the same as in the models we treated before.

Regarding the equilibrium states, it is known from~\cite{DGP12} that the underlying process $(\CU_t^{c,d,\theta,m,s})_{t \geq 0}$ has a unique equilibrium. For the process arising when we project on the mark component, this observation becomes a statement (in the case we are treating) about a measure-valued process on the type space $V$. Therefore, in the case $m=0$, it is clear that we can transfer the argument to the grapheme process, which is a function of $\CU$. With mutation the argument becomes a bit more subtle as we saw above, since now the function giving $\CG$ from $\CU$ is at first sight of a somewhat different character, and we have to show that the process run for time $t \to \infty$ converges to a state obtained by applying the new function applied to $\CU_\infty$ as well. But, this follows from the fact that the level sets of $h$ as finite or countable union of balls are continuous.


\subsection{Completion of the proof of the main theorems}
\label{ss.proofth1}

In this section we complete the proof of Theorems~\ref{th.963-I}, \ref{th.963-II} and \ref{th.963-III}, the duality Theorem~\ref{th.853}, and the extension Theorem~\ref{th.1838}. We do so by providing more details on the \emph{references} given in Sections~\ref{sss.process},~\ref{sss.func} in order to justify the \emph{claims }made for the \emph{genealogy process $\CU$}. There we saw how to make the step from $(\CU_t)_{t \geq 0}$ to $(\CG_t)_{t \geq 0}$. Relevant references and theorems to the properties of genealogy-valued processes $\CU$ necessary for a particular claim on $\CG$ are listed below for each of our theorems.

A general introduction to the ideas and orientation in the field can be found in~\cite{DG20}, which gives some orientation to be able to work with the references we suggest below. A further general remark is that the proof in~\cite[Section 2]{GSW2016} contains many steps and arguments that are \emph{easily adapted to different situations}.

Here is the list of references:
\begin{itemize}
\item Theorem~\ref{th.963-I}:
\cite{GSW2016} Section 1.3.1, in particular, Theorem 1.12, Theorem 1.17 and the arguments in Section 2.
\item Theorem~\ref{th.963-II}:
\cite{GrevenPfaffelhuberWinter2013} Proposition 2.22, \cite{GSW2016}, Section 2.
\item Theorem~\ref{th.963-III}:
\cite{GSW2016} Theorem 1.17, Section 2.
\item Theorem~\ref{th.853}:
\cite{GSW2016} Theorem 1.17, Section 2.
\item Theorem~\ref{th.1838}:
\cite{DGP12} Theorem 1, Theorem 3.
\end{itemize}


\section{Proofs for grapheme diffusions with non-completely connected components}
\label{s.compo}

In this section, we add \emph{insertion and deletion} of edges in order to obtain grapheme diffusions with equilibria whose (\emph{path} connected) components are not necessarily \emph{completely} connected. The key steps are as follows.

We show that for the process denoted by $\CG^{(a^+,a^-)}$ (being a  solution of the martingale problem) there exists a unique underlying process $\CU$ that generates a unique grapheme process \emph{without} insertion and deletion (as we know already), say $\CG^{(0,0)}$. In the model considered in this paper, the edges no longer evolve as a function of the $(U,r,\mu)$, because additional randomness enters.

We analyse the situation by \emph{conditioning} $\CG^{(a^+,a^-)}$ on the entire path of the underlying process $([U_t,r_t,\mu_t])_{t \geq 0}$, which evolves \emph{autonomously} as a strong Markov process, and study the evolution $(h_t)_{t \geq 0}$, or better the connection-matrix distribution $(\nu_t)_{t \geq 0}$, which itself does \emph{not} influence the evolution of $(\CU_t)_{t \geq 0}$. Note that conditioning on $\CU$ is \emph{equivalent} to conditioning on $\CG^{(0,0)}$, since the latter is a \emph{function} of $\CU$ and also contains $\CU$.

Thus, we have a martingale problem generating the autonomous evolution of the path $(\CU_t)_{t \geq 0}$, referred to as the solution of the martingale problem of the condition, i.e., this process exists uniquely with continuous paths and is strong Markov. (The same holds for $\CG^{(0,0)}$, as we already know.) Given a complete path of $\CU$, we obtain a conditional martingale problem from~\eqref{e.2147}--\eqref{e.2151} for $(h_t)_{t \geq 0}$ or $(\nu_t)_{t \geq 0}$, by inserting into the expression the data from the given path $\CU$. This gives us a specific time-inhomogeneous spin-flip system operator in random environment, with the latter given by the condition.

The strategy to treat the martingale problem in Section~\ref{ss.proofth2},~\ref{ss.proofcoralt} is as follows:\\
(i) \emph{Given a path} of $(\CU_t)_{t \geq 0}$, we let $\nu_s$ be the time-$s$ marginal of the law of the functional $\nu$ of the $\N \times \N$ connection-matrix distribution given by the evolution $(\nu_s((\CU_t)_{t \geq 0}))_{s \geq 0}$, which we characterise by the \emph{conditional} martingale problem derived above (for which we must show that it is well-posed and depends \emph{measurably on $\CU$}).\\
(ii) We show that the full martingale problem \emph{implies} that the condition $\CU$ evolves according to its own well-posed martingale problem.\\
(iii) We show that $(\nu^\infty_s)_{s \geq 0}$ must solve uniquely the \emph{conditional martingale problem} for almost surely all realisations of the path $\CU$. Integration over the condition (recall (i) for measurability) gives the unique solution $\CG^{(a^+,a^-)}$ of the martingale problem on $\G^{[]}$.\\
(iv) With the help of the conditional duality, we can treat the equilibrium as well: we show that the equilibrium is the $t \to \infty$ limit of $\CG^{(a^+,a^-)}_t$ and arises by integrating the equilibrium of $\CG_\infty^{(0,0)}=[(U_\infty,r_\infty,\mu_\infty,h_\infty)]$ for the state $[(U_\infty,r_\infty,\mu_\infty,h_\infty)]$.


\subsection{Completion of the proof: martingale problem}
\label{ss.proofth2}

In this section we complete the proof of Theorem~\ref{th.graphnon} concerning the martingale problem part and properties of the solution, i.e., parts (a), (b) and (d). The fact that we deal with a conditional martingale problem requires some care, even though the basic idea is simple. We proceed in three steps to realise the program described above.

\medskip\noindent
\emph{Step 1}.
We have to specify the evolution of the \emph{connection-matrix distribution}, which we do by giving the evolution of the finite-dimensional $(\nu^m_t)_{t \geq 0}$, $m \in \N$, and hence of $(\nu_t)_{t \geq 0}$. This arises when we restrict the action of the operator for a given path of $\CG^{(0,0)}$ to functions in $\Pi^\ast$.  Note that the \emph{connection-matrix distribution} of the \emph{virtual edges} $(\nu_t^{(0,0)})$ is given. We obtain a process that has continuous path, since $\CG^{(0,0)},\CU$ have this property. Consider the operator of the martingale problem as introduced in Section~\ref{ss.ext}. Via the corresponding conditional martingale problem (recall~\eqref{e.2147}) this operator determines \emph{uniquely}, for every path of $(\CU)_{t \geq 0}$, a process $(\nu_t)_{t \geq 0}$ with values in the connection-matrix distributions, since it is the generator of a time-inhomogeneous spin-flip system with independently acting components on $\{0,1\}^\N$. The latter is, of course, well-posed as the evolution equation of a simple Markov pure-jump process.

\medskip\noindent
\emph{Step 2}.
From the martingale problem of the process $\CG^{(a^+,a^-)}$, we identify the martingale problem of the underlying process $\CU$. This in turn determines a unique process $\CG^{(0,0)}$, where the insertion/deletion of edges is suppressed. For that purpose, we consider the action of the operator for $\CG^{(a^+,a^-)}$ on the test function. This operator depends only on $[U_t,r_t,\mu_t]$ and is, by inspection, immediately identified as the operator acting on functions of the underlying process $\CU$ in $\wh \Pi^{\ast,\downarrow}$. Note that $\CU$ agrees completely with the one we obtain as underlying process for $\CG^{(0,0)}$. Furthermore, it determines uniquely a solution in $\U_1$, and therefore $([U_t,r_t,\mu_t])_{t \geq 0}$ must indeed be a path of the process $\CU$ underlying $\CG^{(0,0)}$ \emph{and} $\CG^{(a^+,a^-)}$. The point here is that $\wh \Pi^\ast$ is the algebra generated by $\wh \Pi^{\ast,\downarrow}$ and $\wh \Pi^\ast$, each of which belongs to one of the components of the grapheme, $[U_t,r_t,\mu_t]$, respectively, $[h]$ and, due to the form of the operator for the two components, the operator is lifted immediately from the one of the $\G^{[]}$-valued martingale problem.

\medskip\noindent
\emph{Step 3}.
Here we come to the modified dual representation. We have a dual representation of the underlying process $\CG^{(0,v)}$ or $\CU$ and of the conditional martingale problem. This means that the process $\CG^{(0,0)}$ or $\CU$, and the conditional process giving $h^\ast$ based on $h^{(0,0)}$, are uniquely determined and are continuous in the initial condition. Therefore there is a unique process, arising from the recipe above, that solves the original martingale problem. To see that this is the \emph{unique} solution of the latter, it suffices to have a duality for the original martingale problem. But this follows from the form of the operator, and from the duality relations for $\CG^{(0,0)}$ and the additional generator term associated with the spin-flip system, dual to the \emph{reversed} spin-flip system.


\subsection{Completion of the proof: equilibrium}
\label{ss.proofcoralt}

What remains to be shown is that the equilibrium exists and is as described in part (c) of the theorem. However, this follows from the dual representation, which guarantees existence of the limit as $t \to \infty$. The dual of $\CG^{(0,0)}_t$ converges as $t \to \infty$ to an equilibrium that we identified via the dual, and for which we explicitly obtained the forward equilibrium state of the functional $\nu$. Subsequently, we observe that the pruning of the edges yields the frequencies of edges as the current law of the spin-flip system on $\{0,1\}^\N$, which approaches the limit state as $t \to \infty$ given by $(\frac{a^+}{a^+ +a^-}, \frac{a^-}{a^+ + a^-})^{\otimes \N}$.

The equilibrium of the conditional process is independent of the condition. Therefore the convergence of $\CG^{(0,0)}$ to an equilibrium, which is independent of the edge process, indeed means convergence to the claimed equilibrium, which is the product of the law of an $\N$-sample $h^{(0,0)}$ of $\CG^{(0,0)}_\infty$ and of the spin-flip system $h^\ast_\infty$ on $\{0,1\}^\N$. The equilibrium $v_\infty$ is obtained as the law of (denote with $\times$ the point-wise multiplication of two function on the same space $\N$)
\begin{equation}
\label{e2468}
h^{(0,0)} \times h^\ast.
\end{equation}


\section{Discussion and further thoughts}
\label{s.discussion}

In Sections \ref{s.introduction}--\ref{s.compo} we introduced the concept of \textit{graphemes} as a novel framework for studying dynamic graphs. By embedding graphs in Polish spaces, particularly ultrametric spaces, and drawing inspiration from population dynamics models, we developed a rigorous approach to model the evolution of continuum graphs. Our work extends and modifies previous research on graphon dynamics \cite{AHR21}. While graphon dynamics offers a valuable approach to studying evolving graph limits, our grapheme framework provides a more comprehensive description by explicitly incorporating the genealogy of the graph evolution. This allowed us to address open questions in graphon dynamics, such as the existence of non-trivial equilibria and the strong Markov property, and to characterize the space-time path process of dynamic graphs.

In this section we look back at what we have achieved and offer some further thoughts and guidance on the technical context. 


\subsection{Remarks on the state spaces}
\label{sss.dyn}

We wanted to construct stochastic processes on $\G^{[]}$, $\G^{\{\}}$,$\G^{\langle\rangle}$ that arise as limits of processes of embedded finite graphs when the number of vertices tends to infinity. We focussed on $\G^{[]}$, which is the only candidate for a Polish space so that we can use martingale problems to characterise processes. The construction was carried out in detail in Section~\ref{ss.examples} and worked out further in Sections~\ref{s.evol}--\ref{s.exmart}. In view of the way we defined graphemes, the proper notion of convergence was built on \emph{convergence of sampled embedded subgraphs}. Finite-graph dynamics are \emph{Markov pure-jump processes} that are strong Markov and Feller (i.e., continuous in the initial state). In order to get \emph{limiting} processes that are also strong Markov and Feller, with \emph{regular paths} (i.e., continuous or c\`adl\`ag) on Polish state spaces (but typically not $\sigma$-compact or locally compact state spaces), many standard techniques from stochastic analysis are not readily applicable. In fact, we needed to work with a dense class of \emph{equicontinuous test functions} on our state space, and this class needed to be countable, separating and measure determining (see Section~\ref{s.evol}).

A $\G^{\{\}}$-valued process induced by a $\G^{[]}$-valued Markov process need \emph{not} be Markov. However, we built $\G^{[]}$-valued processes with good properties such that, for all initial states in $\G^{[]}$, the induced process in $\G^{\{\}}$ is the unique Markov process solving a martingale problem on a subspace of $\G^{\{\}}$. A similar fact holds for the $\G^{\langle\rangle}$-valued induced process. Recall that the latter had to be realisable via countable graphs embedded in some Polish space, which is why we preferred to view them as a \emph{functional} of the $\G^{[]}$-valued process that \emph{automatically generates existence} of a concrete embedding. Otherwise we needed to verify that the limiting subgraph counts correspond to a connection-matrix distribution arising in \emph{some} arbitrary Polish space embedding. Note, however, that for martingale problems we needed Polish state spaces, which was guaranteed on $\G^{[]}$ for the environment component without further restrictions, but on $\G^{\{\}}$ only by restricting to tree-like spaces, and on $\G^{\langle\rangle}$ by restricting to states that decompose into completely connected components (also dust would do, but that would be too restrictive). We left this issue open (see Section~\ref{s.evol} for further details).

We put forward the view of the grapheme being represented by \emph{equivalence classes} of countable \emph{partitions of a continuum} via an embedding in a Polish space, and stochastic processes in these equivalence classes. To pass to \emph{equivalence classes of paths} is a topic of ongoing research. We demonstrated that, for the class of finite-graph dynamics introduced in Section~\ref{ss.examples}, for $t >0$ we have completely connected components, which provides a concise description of the grapheme dynamics on $\G^{[]}$, and therefore is preferable to knowledge of the dynamics of the subgraph densities only. We showed that a modification of this idea works even for \emph{not} necessarily completely connected components, provided the definition of a component is properly modified. Moreover, the additional information about the embedding space can be used to obtain information about the \emph{history} behind the subgraph densities at a given time as they evolve in time. This means that we get a hold of the space-time process, even the space-time path process, of the subgraph densities up to any given time (see Section~\ref{ss.choice} for further details).

In the weak topology of measures, the state space for the dynamics of the connection-matrix distribution (which reflects the subgraph densities) is compact and hence its law is tight. However, this state space includes as possible limit points the connection matrices $\underline{\underline{0}}$ and $\underline{\underline{1}}$, which are \emph{improper} for many of our purposes because they are traps for our dynamics. Furthermore, limit points may lack the property to be a connection-matrix distribution of a sample sequence in a Polish space. We had to \emph{exclude} both, and work on a smaller admissible state space that, unfortunately, no longer is closed. This meant that we had to check \emph{compact containment properties} for the paths of our stochastic processes.

Our approach opens up the possibility to choose the embedding in a way that is \emph{adapted} to the dynamics. For dense finite graphs the dynamics typically involves changes that are \emph{not} local. Therefore choosing the embedding is a subtle issue, even though its main role is to allow us to \emph{sample} a countable graph in the $n \to \infty$ limit. We must therefore choose a stochastic process
\begin{equation}
\label{e508}
(\CI^*_t,h_t,\mu_t)_{t \geq 0}
\end{equation}
i.e., the spaces we embed in are \emph{random} and \emph{time-dependent}. This affects the definition of the topology on the state spaces $\G^{[]}$, $\G^{\{\}}$, since convergence also requires convergence of the spaces in which we embed. This allowed us to store enough information to compensate for the fact that the subgraph densities at a fixed time only reveal part of the grapheme: first, they only provide information that is \emph{averaged} over the sample sequence, and second, they give no information on the embedding of the vertices. Note that, due to the form of the equivalence classes, without loss of generality we may assume that $\supp(\mu_t) = \CI_t$.

\begin{remark}[Continuum limits]
{\rm When dealing with countable limit objects we must assume that $\CI$ is a \emph{continuum}. Our grapheme processes may start in any subset of $\G^{[]}$, including sets of non-dense graphs, but after any positive time they fall in the set of dense graphs. In the present paper we focussed on \emph{dense} graphs. For \emph{non-dense} graphs, not treated in the present paper, an adaptation of the concept of a grapheme would be needed.}  \hfill$\spadesuit$
\end{remark}

\begin{remark}[Choice of spaces]
{\rm Having both $\G^{[]}$ and $\G^{\{\}}$ to work with resembles the situation in population models, where spaces of tree-like metric measure spaces and algebraic tree-measure spaces appear, which build on weaker equivalences than measure spaces \cite{GrevenPfaffelhuberWinter2009, LW21}. The space $\G^{\langle\rangle}$, which is the most general object, still corresponds to a subset of \emph{countable graphs}, just like objects in the \emph{completion} of finite \emph{graphs} with respect to the topology of subgraph counts give rise to the space of graphons (see~\cite{Jan13}).} \hfill$\spadesuit$
\end{remark}

\begin{remark}[Exchangeabile arrays vs.\ graphemes]
\label{rem:exchangeable-arrays-graphemes}
{\rm Recall \eqref{e510}. Even though $\nu^{(m),n}$ are not exactly (jointly) exchangeable\footnote{= invariant under permutations of the indices}, the limiting distributions $\nu_m$ and $\nu$ are. The Aldous-Hoover theorem~\cite{Aldous1981} guarantees that there exists a representing measurable function $h \colon [0,1]^2 \to [0,1]$ of the form
\begin{align}
\nu = \mathrm{LAW} \left[ \{ h(U_i, U_j)\}_{i, j \in \N^2 \setminus D} \right],
\end{align}
where $U_i$ are i.i.~uniformly distributed on $[0,1]$. (See also \eqref{eq:homdensity} for a link with homomorphism densities.) The grapheme framework assumes more structure than just exchangeability of the connection matrix distribution. The grapheme framework puts the embedding space $(\CI^*,h,\mu)$ into focus. In particular, we used more restrictive invariance notions than just exchangeability (cf.~Definition~\ref{def.grapheme}). This allowed us to define stochastic grapheme-valued processes (see Section~\ref{s.exmart}).}\hfill$\spadesuit$
\end{remark}

\begin{itemize}
\item
As  shown in Section~\ref{s.state}, $\G^{[]}$ is a Polish space\footnote{In order to avoid Russell’s antinomy (self-referential definition), we assume that the elements of representatives of equivalence classes from $\G^{[]}$ are not metric spaces themselves.} and is the state space on which we construct our grapheme dynamics, because it allows us to obtain \emph{strong Markov processes} $(X_t)_{t \geq 0}$ with \emph{regular path} properties solving standard martingale problems. The key point is that the information on the \emph{space-time process} $(t,X_t)_{t \geq 0}$
with values in $\R_+ \times \G^{\langle\rangle}$, which arises as genealogies (as we explain in Section~\ref{sss.link}), provides the most natural and minimal choice of embedding in a Polish space necessary to control the space-time path processes of edges via a Markov process. The state space $\G^{[]}$ has the advantage that it applies the machinery of stochastic analysis most efficiently and is fully understood, whereas the two other state spaces have some structural deficiencies and raise a number of \emph{open questions} that remain to be resolved.

\item
On $\G^{\{\}}$ the nice process properties mentioned above may in principle fail (although we show that they do not for the classes of evolution rules we treat). This state space is conceptually important, because it satisfies the \emph{minimal structural requirements} that we need when we want to embed our countable graph in a Polish space $\CI$, and want to have a stochastic process for which the paths can be embedded in $C([0,\infty),\CI)$ or $D([0,\infty),\CI)$.

\item
The role of $\G^{\langle\rangle}$ is to link grapheme dynamics to graphon dynamics on $\wt \CW$, because an element of $\wt \CW$ is uniquely characterised by the subgraph densities of any of its representative (see Appendix~\ref{ss.graphons}), and is an element of a Polish space. We saw that we can associate with our grapheme process a graphon process by taking a $\G^{\langle\rangle}$-valued functional as long as the graphon can be interpreted as a countable graph, i.e., our graphemes always have \emph{$\{0,1\}$-valued connection-matrix distributions}. This can result in block graphons, but also in graphons with not necessarily completely connected components. A stochastic process with values in $\G^{\langle\rangle}$, which is part of a Polish space (the graphons) but is itself \emph{not} a Polish space (only in a stronger topology), must therefore have paths of embeddings where the associated graphon is $\{0,1\}$-valued without exceptional points.
\end{itemize}

In Section~\ref{sss.intermezzo} we explain what is behind the choice of the spaces $\CI^*$ or $(\CI,r)$, and what are the guiding principles for that choice when we are interested in \emph{stochastic processes} taking values in $\G^{[]}$, $\G^{\{\}}$, $\G^{\langle\rangle}$: $\G^{\{\}}$ is the state space that is needed to obtain an \emph{embedded countable graph} that we can follow as time evolves, $\G^{\langle\rangle}$ keeps the structure of the embeddable countable graph without fixing an equivalence class of embeddings, while $\G^{[]}$ provides the most manageable setting that allows for a detailed study of the corresponding process of countable graphs by means of stochastic analysis. We address the question how to choose $[\CI^*,h,\mu]$ when we start from a given (finite) graph dynamics on $\G_n$ or $\G_\infty$. The initial state plays a special role. For $t=0$, this is an arbitrary state, whereas for $t>0$ the dynamics typically moves into subsets of $\G^{[]},\G^{\{\}}$ of a special form. In Section~\ref{ss.choice}, we emphasise the intrinsic role of genealogies, and list some challenges and perspectives.


\subsection{The issue of embedding}
\label{sss.intermezzo}


\paragraph*{Representability as a grapheme and dynamics in random environment.}

For finite graphs, the natural embedding set is $\CI_n = [n] = \{1, 2, \dots, n\}$, with a probability measure $\mu_n$ to sample points (vertices).  Typically, $\mu_n$ would be the uniform measure, assigning probability $1/n$ to each vertex.  Given a sequence of such finite graphemes and a corresponding sequence of connection-matrix distributions, we want to consider the limit as $n \to \infty$, i.e.,
\begin{equation}
\label{e629}
\left[\CI_n,h_n,\mu_n\right] \longrightarrow (\nu_n^{(m)})_{m \in \N}, \qquad n \to \infty,
\end{equation}
where $\nu_n^{(m)}$ represents the distribution of the connection matrix for a sample of size $m$ from the $n$-vertex graph. We want this convergence to be such that the evaluations of our test functions (polynomials) on both sides converge, leading to a limiting sequence
\begin{equation}
\label{e633}
(\nu^{(m)})_{m \in \N} \text{ of connection-matrix distributions.}
\end{equation}
This sequence $(\nu^{(m)})_{m \in \N}$ describes the limiting probabilities of observing various finite subgraphs. However, this sequence alone doesn't guarantee that we have a well-defined ``infinite'' graph.  We need an ``embedding'' to ensure that we can actually "sample" vertices and obtain a meaningful countable graph.

This is where the embedding in a Polish space $\CI^*$ (or $(\CI, r)$ for the metric version) becomes crucial.  The embedding provides a ``substrate'' on which we can realize the limiting graph.  From the embedding and the sampling measure $\mu$, we can draw an infinite sequence of vertices $(x_i)_{i \in \mathbb{N}}$ according to $\mu^{\otimes \mathbb{N}}$ (assuming $\mu$ is non-atomic; otherwise, we sample without replacement). The connection matrix of this sampled sequence, $(h(x_i, x_j))_{i,j \in \mathbb{N}}$, will then represent a ``typical'' countable graph consistent with the limiting connection-matrix distributions $(\nu^{(m)})_{m \in \N}$.

However, we often have choices for how to embed the finite graphs.  It is not always best to simply use $\CI_n = [n]$. A more insightful approach is to choose an embedding that reflects the \textit{expected structure} of the limit.  This is where the connection to population dynamics and the use of ultrametric spaces become important. We want an embedding that anticipates the genealogical relationships that will emerge in the limit.

\textbf{Example \ref{exm:dynamic-social-network} (continued):}  Recall our social network Example~\ref{exm:dynamic-social-network}. Instead of just labeling individuals with numbers $1, \dots, n$, we could embed them in a space that reflects their ``influence distance'' at each time step.  This ``influence distance'' would be our evolving ultrametric $r_t$.  This embedding is not arbitrary: it is \textit{adapted} to the dynamics of the system.

We can think of the limiting object, $(\CI^\ast, h, \mu)$, as defining a ``random environment'' for the graph.  The space $\CI^*$ and the measure $\mu$ tell us \textit{where} to sample vertices, and the connection function $h$ tells us \textit{how} to connect them. The key is that this environment itself can be \textit{dynamic}, evolving according to its own stochastic process. This is why we consider the process $(\CI^*_t, h_t, \mu_t)_{t \geq 0}$.

If we have a limiting object, say $(\CI^\ast,h,\mu)$, then we can generate a finite sampled sequence, respectively, an infinite sampled sequence. Indeed, take a sequence $(i_k)_{k \in \N}$ by sampling i.i.d.\ according to $\mu$. Let
\begin{equation}
\label{e653}
\wh\uuh=(h(i_k,i_\ell))_{k,\ell \in \N},
\end{equation}
be the \emph{empirical connection matrix} for the \emph{skeleton} $(i_k)_{k \in \N}$, which is a graph with vertex set $\N$. If $\CI^\ast$ is a Polish space, then the empirical measures $\wh\mu^{(m)}$ satisfy
\begin{equation}
\label{e659a}
\begin{aligned}
\wh\mu^{(m)} = \frac{1}{m} \sum_{k \in [m]} \delta_{i_k}
&\mtoo \mu,\\
\frac{1}{m^2} \sum_{k,\ell \in [m]} \delta_{(i_k,i_\ell)}
&\mtoo \mu \otimes \mu,\\
\frac{1}{m^L} \sum_{k_1,\ldots,k_L} \; \delta_{(i_{k_1},\ldots, i_{k_L})}
& \mtoo \mu^{\otimes L},\\
\frac{1}{m} \sum_{i \in [m]} \delta_{(\wh \uuh_L)(\tau_i \cdot)}
& \mtoo \nu^{(L)}, \quad L \in \N.
\end{aligned}
\end{equation}
Hence we obtain the \emph{continuum grapheme} as a limit of the \emph{sampled finite graphemes}.

Thus, starting from an evolution of finite graphs in $\G_n$ for $n \in \N$, the \emph{basic question} is what spaces $\CI^\ast$ we can take to form the elements $[\CI_n^\ast,h_n,\mu_n]$ of the sequence and its limit
\begin{equation}
\label{e637}
\left[\CI^\ast,h,\mu\right],
\end{equation}
which has the embedding property (denoted by $\hookrightarrow$):
\begin{equation}
\label{e641}
\begin{aligned}
&(i) \left[\CI_n^\ast ,h_n,\mu_n\right] \hookrightarrow [\CI^\ast ,h,\mu], \quad n \in \N,\\
&\quad \CI_n \hookrightarrow \CI,\, h_n=h_{\CI_n},\,\mu_n \hookrightarrow \wt \mu_n,\\
&\quad \wt \mu_n \longrightarrow \mu,\, h_n \rightarrow h,\, h \text{ measurable and $\{0,1\}$-valued},\\
&(ii) \left(\CI^*_n,h_n,\mu_n\right) \Tno \left(\CI^*,h,\mu\right).
\end{aligned}
\end{equation}
Here, (i) holds in the sense that $\nu_n \Rightarrow \nu$ or, equivalently,
\begin{equation}
\Phi([\CI_n^\ast,h_n,\mu_n]) \ntoo \Phi([\CI^\ast,h,\mu])), \qquad \Phi \in \Pi^\ast.
\end{equation}
For (ii) (i.e., convergence of the environment sequence fitting either $\G^{[]}$ or $\G^{\{\}}$), see Section~\ref{ss.space}. Of course, more than one choice is possible for the embedding. However, the procedure is \emph{consistent}.

\begin{proposition}[Consistency]
\label{prop.consist}
Every marked grapheme is the limit of its finite samples $\nu$-a.s. \qed
\end{proposition}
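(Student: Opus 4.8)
The plan is to recognise this statement as the almost-sure version of the convergences recorded in~\eqref{e659a}, and to prove it by combining the law of large numbers for empirical measures on Polish spaces (Varadarajan's theorem) with the fact, established in Section~\ref{s.evol}, that the grapheme topology on $\G^{[]}$ (and on $\G^{[],V}$, and their barred versions) is precisely the one induced by the countable convergence-determining algebra of polynomials $\wh\Pi^\ast$, respectively $\wh\Pi^{\ast,V}$ (Propositions~\ref{prop.1743}, \ref{prop.1}, \ref{prop.1202}, and Proposition~\ref{prop.928} for the induced functional $\nu$). First I would fix a representative $((\CI,r),h,\mu)$ of the (marked) grapheme, assume without loss of generality $\supp(\mu)=\CI$, and — since a purely atomic component is statistically invisible in the countable graph (see the discussion following~\eqref{e510}) and a finitely supported $\mu$ makes the claim trivial — reduce to the case in which $\mu$ is non-atomic. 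Draw $(i_k)_{k\in\N}$ i.i.d.\ according to $\mu$, and for $m\in\N$ let $\CG^{(m)}$ denote the finite sampled grapheme carried by $\{i_1,\dots,i_m\}$ with the uniform weights, the restriction of $h$, and the restriction of $r$ (in the marked case also the restriction of $\kappa$).

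Next I would argue that, for each monomial $\Phi^{L,\varphi}\in\wh\Pi^\ast$ of degree $L$ — i.e.\ a bounded continuous $\varphi$ of the $L\times L$ combined distance-and-connection matrix — its value on $\CG^{(m)}$ equals, up to the without-replacement correction which is $O(1/m)$ for non-atomic $\mu$, the average $\int\varphi\,\mathrm{d}(\wh\mu^{(m)})^{\otimes L}$, which converges $\mu^{\otimes\N}$-a.s.\ to $\Phi^{L,\varphi}((\CI,r),h,\mu)$ by Varadarajan's theorem ($\wh\mu^{(m)}\Rightarrow\mu$ a.s.) together with continuity of $\mu\mapsto\mu^{\otimes L}$ and of integration of the bounded continuous $\varphi$; this is exactly the content of the displayed convergences in~\eqref{e659a}. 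Each such $\Phi$ carries one $\mu^{\otimes\N}$-null exceptional set; since $\wh\Pi^\ast$ contains a countable convergence-determining subfamily, the union of these null sets is still null, and off it all evaluations converge simultaneously, which is by definition the convergence $\CG^{(m)}\to[(\CI,r),h,\mu]$ in $\G^{[]}$, respectively $\G^{[],V}$. The marked case uses the monomials of~\eqref{e.855} and~\eqref{e1849}, the varying-size case the product polynomials $\Phi=\bar\Phi\,\wh\Phi$ of Section~\ref{sss.extens} (invoking the integrability properties of $\bar\mu$ quoted there), and along the same a.s.\ sample the empirical connection matrix $\wh\uuh$ of~\eqref{e653} realises the limiting connection-matrix distribution $\nu$ (resp.\ $\nu_V$), so that the metric-measure convergence is genuinely a convergence of graphemes.

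The step I expect to be the main obstacle is one of packaging rather than of substance: producing the joint a.s.\ convergence of the $(\CI,r)$-component and of the $h$-component as a single statement, and disposing cleanly of the atomic part of $\mu$. For the joint convergence I would use the structural observation already exploited in Section~\ref{ss.algeb} — on $\G^{[]}_{\rm ultra}$ the connection function $h$ is itself a (coarser) ultrametric, so the combined distance-connection matrix distributions of an $m$-sample are exactly the distance-matrix distributions of a two-ultrametric measure space, to which the almost-sure Gromov-weak reconstruction results of~\cite{GrevenPfaffelhuberWinter2009} and their marked/multi-metric extensions~\cite{DepperschmidtGrevenPfaffelhuber2011,DGP23} apply verbatim; in the general (possibly non-ultrametric, not completely connected) case one instead combines Gromov-weak a.s.\ convergence of metric measure spaces with the a.s.\ convergence of the empirical connection-matrix distribution furnished by the Aldous--Hoover representation. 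The without-replacement subtlety for an atomic component is absorbed by noting that $h$ has been extended by $0$ on the diagonal and that the diagonal and coincidence terms carry vanishing weight in the averages, so sampling with or without replacement yields the same limit; and, as already noted, the atomic mass does not affect the countable graph, which reduces the atomic part of the statement to the non-atomic one.
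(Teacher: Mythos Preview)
Your proposal is correct and follows essentially the same route as the paper: the paper treats the proposition as an immediate consequence of the law-of-large-numbers convergences displayed in~\eqref{e659a} (empirical measures and empirical connection-matrix distributions converging $\mu^{\otimes\N}$-a.s.), and states it with a \qed\ without further argument. Your write-up simply makes explicit the ingredients the paper leaves implicit---Varadarajan's theorem, the countable convergence-determining algebra $\wh\Pi^\ast$ from Section~\ref{s.evol}, and the union-of-null-sets step---so there is no genuine difference in approach.
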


How to best choose the space in which we embed? From graphon theory we know that we can always choose $([0,1],\CB_{[0,1]}, \mu)$ and we get a grapheme if the entropy of the sampled finite graphs of size $n$ grows slower than $n^2$ (see~\cite{GdHKW23} and~\cite{hatami2012entropy}). But this need \emph{not} be the best choice for an embedding of the sample sequence. The space has to be rich enough to support the partition into completely connected components induced by the connection function at any fixed time. But we want more, namely, a stochastic process with \emph{regular paths}. This would allow us to get convergence in path space for the finite-graph dynamics, which is not yet reflected in~\eqref{e641}, which we strengthen to
\begin{equation}
\label{e2000}
\CL \left[\left(\left[ \CI_{n,t},h_{n,t},\mu_{n,t}\right]\right)_{t \geq 0}\right]
\utoo \CL \left[\left(\left[\CI_t,h_t,\mu_t\right]\right)_{t \geq 0}\right]
\end{equation}
or the same with $\{\}$. At this point we can either choose a \emph{universal} space, e.g.\ $([0,1],\CB_{[0,1]})$, with a fitting $\mu$, or \emph{random time-dependent} triples $(\CI^*_t,h_t,\mu_t)$ \emph{adapted} to the dynamics, and try to prove convergence of the finite-graph dynamics by choosing for $\CG^{(n)}$ any of the finite versions of $(\CI^\ast_t,h_t,\mu_t)$. As pointed out in Sections~\ref{sss.theorem} and \ref{sss.conex}, it is indeed useful to work with the latter choice and pick the \emph{space of genealogies} associated with the graph dynamics, since this is an \emph{effective} coding of the \emph{space-time process}, which we want to understand better through our constructions.


\paragraph*{Outlook on questions and further research $1$.}

We have already addressed the following questions for graphe\-mes. When is there a \emph{measurable} (or possibly even \emph{continuous}) $h$ for $[\CI^*,h,\mu]$, given a sequence $(i^{(m)})_{m \in \N}$ sampled with $\mu$, such that $(h(i_k,i_\ell))_{k,\ell \in \N}$ yield the connection matrix of the sequence? Do we have tightness in $n$? Is there a minimal or convenient embedding space? We have answered these questions when the states arise from the processes defined in Section~\ref{ss.examples}, viewed as processes taking values in $\G^{[]}$ or $\G^{[],V}$. The issue is whether we can view the evolving grapheme as a \emph{process in random environment} with a well-posed evolution of $\nu$ in $\G^{\langle\rangle}$, and with the environment having its own evolution, either \emph{autonomous} or arising from a \emph{well-posed} Markov process. These properties hold for the examples of Markov processes of graphs given in Section~\ref{ss.examples}, since for the underlying dynamics we know from the literature that $[\CI_n,r_n,\mu_n]$ converges to $[\CI,r,\mu]$. The dynamics of the vertices is exactly as in standard models, and we showed that $[(\CI_n,r_n),h_n,\mu_n]$ converges to a limit $[(\CI,r),h,\mu)]$ and $n\to\infty$.

What we have to look for are further types of dynamics that can be tackled in the same way. There are two interesting directions:

(1) Models where a \emph{fraction of edges} is removed and reconnected at any given time, giving jump processes. Here our approach can be based on so-called $\Lambda$-Cannings processes and their genealogies (see \cite{Lancestral}) and leads to the same theorems, without an explicit identification of the equilibrium.

(2) Consider processes where events as we have treated them above do \emph{not} come about in a Markovian way. This can again be tackled by using \emph{population models}, but now with \emph{seed-banks}, which allow us to remove the non-Markovianess by introducing countably many seed-banks (see~\cite{greven2020spatial, greven2021spatial,greven2022spatial}). Here, the genealogy process has not yet been treated, but would allow for a similar analysis as given here.


\paragraph*{Summary of the challenges.}

We face the following challenges when we start with a specific finite-graph dynamics:
\begin{itemize}
\item
Tightness, non-degeneracy and existence of $h$. Are \emph{further conditions} needed to guarantee the \emph{existence of  $\CG$} with values in $\G^{[]}$, i.e., do \eqref{e637}--\eqref{e2000} hold for a given dynamics?
\item
Is $(\CI^\ast, h,\mu)$ \emph{non-trivial}?
\item
We can always find an $h$ on $\CI \times \CI$ that is measurable as a random variable. Even when we have a sampled sequence $(i_n)_{n \in \N}$ and a connection matrix $(h(i_k,i_\ell))_{k,\ell \in \N}$, a key question is in what cases we can, instead of a random variable, obtain a continuous function on $\CI \times \CI$ via a continuum decomposition on $\supp(\mu^{\otimes 2})$, so that we have an evolving partition of a continuum for the completely connected components. In our examples we obtain an $h$ that allows for a \emph{continuous} continuation to $\CI \times \CI$ or, even better, to $(\supp(\mu))^2$. The question is how this construction can be generalised. We saw a possibility in our last theorem on the model with insertion and removal.
\end{itemize}
We have already resolved these challenges for our earlier classes of dynamics, but we must see how this can be generalised as a recipe, in particular, for the model where edges switch between active and non-active (Fleming-Viot with insertion or removal). In the other examples, we can identify $h$ and $\mu$ explicitly, because we have a \emph{genealogy state} $\CU$ generating the connection matrix that evolves according to a $\U$-valued stochastic process, which itself evolves autonomously. The latter yields our $\G^{[]}$-valued process, which has a natural topology on $\CI$, as well as the property of being \emph{dust-free}.


\subsection{Choice of state space, challenges and perspectives: intrinsic role of genealogies}
\label{ss.choice}

Often we are able to generate $\CI^\ast=(\CI,\CB)$ by considering a convenient Polish space with a Borel-$\sigma$-algebra like $[0,1]$ with the Euclidean distance. We always have a metric $r$ in any Polish space, but the problem is that it might not be easy to obtain a proper dynamics for this metric, in such a way that we get a choice for all $t \geq 0$ and for every realisation of the randomness. Since the measure changes as we follow the evolution, it is appropriate to choose  also $(\CI_t,\CB_t)_{t \geq 0}$ \emph{randomly}, at the cost that we do \emph{not} get a \emph{universal} space with \emph{only $\mu$} depending on time and on the underlying randomness.

For the classes of examples of finite-graph dynamics and grapheme dynamics we look at here, there is a \emph{canonical choice} based on the underlying \emph{ultrametric measure space} $(\CU_t,r_t)$ of the $\U$-valued Fleming-Viot or Dawson-Watanabe process generated by the stochastically evolving \emph{genealogy} $\CU_t$ of the population at time $t$ of the underlying population model. We saw that the ultrametricity of the spaces in which we embed is crucial to get a continuous representative of $h$. Does this mean that the choice had in fact an \emph{intrinsic character}? The answer is \emph{yes}, the reason being that the \emph{genealogy} of the population stores all the information about the \emph{space-time path process} up to time $t$ of the $\G^{\langle\rangle}$-valued version of our process in a \emph{natural and minimal way} in the state of the $\G^{[]}$-valued process, as we next explain.

If $(X_t)_{t \geq 0}$ is a Markov process, then we can consider the \emph{space-time path} process $(\CX_t)_{t \geq 0}$ with
\begin{equation}
\label{e1265}
\CX_t = (s,X_s)_{s \in [0,t]}.
\end{equation}
In our context and for finite graphs, if at time $t$ we ask at what earlier time a vertex got its connection to its current connected component from (by looking backwards in time), then we find an ancestral path and an imbedded search tree. This defines an ultrametric space $(\CI_t,r_t)$ that has this search tree as a skeleton. This tree codes the pivotal event that two vertices are in the same connected component. In particular, this space is rich enough to encode the information on the edges and their \emph{history}, and therefore so is the canonical space $(\CI,r)$ for the state at time $t$ for every dynamics that arises from a finite-graph dynamics where edges are added or removed by connecting or disconnecting vertices from their connected components. Here, we simply use the \emph{space-time process} to see the \emph{intrinsic tree structure}. The observation above also indicates that the path in the state space $\G^{[]}$ is the minimal one that unravels the information on the \emph{space-time path  process in $\G^{\langle\rangle}$} if we choose the space given by the underlying $\U$-valued genealogy process. These are also the right spaces to get limiting dynamics as the number of vertices diverges.


\paragraph*{Outlook on questions and further research $2$.}

In fact, ongoing work on population models shows that we can embed realisations of the grapheme as countable graphs at different times in a single grounded $\R$-tree equipped with a collection of sampling measures. In this way we get a representation of the \emph{whole space-time path process} of the \emph{countable graphs}, in a single tree-like metric measure space for all times simultaneously.

Several avenues for future research emerge from this work:
\begin{itemize}
    \item \textbf{Non-ultrametric embeddings:} Exploring other types of embedding spaces beyond ultrametric spaces to capture different aspects of graph dynamics.
    \item \textbf{Non-Markovian extensions:} Investigating non-Markovian grapheme dynamics, potentially by incorporating age-dependent or history-dependent evolution rules.
    \item \textbf{Beyond dense graphs:} Extending the grapheme framework to study dynamic sparse graphs, potentially by incorporating graphex-like constructions.
\end{itemize}

In conclusion, the grapheme framework offers a promising new direction for the study of dynamic graphs. By leveraging tools from population dynamics and stochastic analysis, it provides a rigorous and versatile approach to model and analyze the complex evolution of network structures over time.

\appendix


\section{Graphon dynamics}
\label{ss.graphons}


\paragraph*{Static graphons.}

Let $\CW$ be the space of measurable symmetric functions $h\colon\,[0,1]^2 \to [0,1]$. The elements of $\CW$ are called \textit{graphons} \cite{LovaszSzegedy2006graphon}. A finite simple graph $G$ with $n$ vertices can be represented as a graphon. Specifically, define $h^{G} \in \CW$ in a natural way as 
\begin{equation}
\label{graphondef}
h^{G}(x,y) = \left\{ \begin{array}{ll}
1 &\mbox{if there is an edge between vertex } \lceil{nx}\rceil \mbox{ and vertex } \lceil{ny}\rceil,\\
0 &\mbox{otherwise},
\end{array}
\right.
\end{equation}
which leads to an \textit{empirical graphon} comprised of $n \times n$ zero-one blocks, see Figure~\ref{fig:graph-and-graphon}. The space of functions $\CW$ is endowed with the \emph{cut distance}
\begin{equation}\label{e385}
d_{\square} (h_1,h_2) = \sup_{S,T\subset [0,1]}
\left|\int_{S\times T} \dd x\,\dd y\,[h_1(x,y) - h_2(x,y)]\right|,
\qquad h_1,h_2 \in \CW.
\end{equation}
On $\CW$ there is a natural equivalence relation $\sim$, which is a proxy of the graph isomorphism relation. Namely, let $\Sigma$ be the space of measure-preserving bijections $\sigma\colon\, [0,1] \to [0,1]$. Write $h^\sigma(x,y):=h(\sigma x,\sigma y)$. Then, $h_1,h_2 \in \CW$ are said to be equivalent, written $h_1\sim h_2$, when, for some $\sigma \in \Sigma$, $h_1 \equiv h_2^{\sigma}$ a.e.~w.r.t.~Lebesgue measure on $[0,1]$. Denote the set of graphon equivalence classes by $\wt\CW = \CW / \sim$. Define the \textit{cut-distance} between equivalence classes as
\begin{equation}
\label{deltam}
\delta_{\square}(\tilde{h}_1,\tilde{h}_2)
= \inf _{\sigma_1,\sigma_2 \in \Sigma} d_{\square}(h_1^{\sigma_1}, h_2^{\sigma_2}),
\qquad \tilde{h}_1,\tilde{h}_2 \in \wt\CW,
\end{equation}
where $\tilde{h}$ denotes the equivalence class of which $h$ is a representative. To summarize, the equivalence relation $\sim$ yields the \emph{quotient space} $(\wt\CW,\delta_{\square})$, which is a compact Polish space (see Theorem 5.1. in \cite{LS07}) and which is the proper object to describe countable limits of finite graphs, see Figure~\ref{fig:graph-and-graphon}. 

Moreover, it is natural to consider $\wt\CW$ as a state space for random variables and even stochastic processes.

\begin{figure}[htbp]
\centering
\begin{tikzpicture}

\begin{scope}[shift={(-4,0)}, scale=1.2]
    \node[draw, circle, fill=black, inner sep=2pt] (1) at (0,2) {};
    \node[draw, circle, fill=black, inner sep=2pt] (2) at (2,2) {};
    \node[draw, circle, fill=black, inner sep=2pt] (3) at (2,0) {};
    \node[draw, circle, fill=black, inner sep=2pt] (4) at (0,0) {};
    \node[draw, circle, fill=black, inner sep=2pt] (5) at (1,1) {};
    
    \node at (0,2.3) {1};
    \node at (2,2.3) {2};
    \node at (2,-0.3) {3};
    \node at (0,-0.3) {4};
    \node at (1,1.3) {5};
    
    \draw[thick] (1) -- (2);
    \draw[thick] (2) -- (3);
    \draw[thick] (3) -- (4);
    \draw[thick] (4) -- (1);
    \draw[thick] (1) -- (5);
    \draw[thick] (2) -- (5);
    \draw[thick] (3) -- (5);
    \draw[thick] (4) -- (5);
\end{scope}

\begin{scope}[shift={(2,0)}]
    \def\adjacencymatrix{{
        {0, 1, 0, 1, 1},
        {1, 0, 1, 0, 1},
        {0, 1, 0, 1, 1},
        {1, 0, 1, 0, 1},
        {1, 1, 1, 1, 0}
    }}
    
    \begin{scope}[scale=2.5]
    \foreach \i in {0,...,4} {
        \foreach \j in {0,...,4} {
            \pgfmathsetmacro{\edge}{\adjacencymatrix[\i][\j]}
            \pgfmathsetmacro{\x}{\i/5}
            \pgfmathsetmacro{\y}{\j/5}
            \ifnum\edge=1
                \fill[black] (\x,\y) rectangle ++ (1/5,1/5);
            \else
                \fill[white] (\x,\y) rectangle ++ (1/5,1/5);
                \draw[black, very thin] (\x,\y) rectangle ++ (1/5,1/5);
            \fi
        }
    }
    
    \draw[thick] (0,0) -- (1,0);
    \draw[thick] (0,0) -- (0,1);
    \foreach \i in {0,...,5} {
        \draw[thick] (\i/5,0) -- ++ (0,-0.05);
        \draw[thick] (0,\i/5) -- ++ (-0.05,0);
        \node[below, font=\tiny] at (\i/5,0) {$\frac{\i}{5}$};
        \node[left, font=\tiny] at (0,\i/5) {$\frac{\i}{5}$};
    }
    \end{scope}
\end{scope}

\end{tikzpicture}
\caption{\small Left: A finite connected graph $G$ with 5 vertices and 8 edges. Right: Emprirical graphon $h^G$ representation of $G$, as a heat map with two levels: black = 1, white = 0.}
\label{fig:graph-and-graphon}
\end{figure}

Crucially, the space of graphon equivalence classes $(\widetilde{\mathcal{W}}, \delta_{\square})$, when viewed as a topological space, can also be seen as a compactification of the space of finite graphs $G$ (more precisely, of empirical graphons $h^G \in \CW$), when equipped with the topology of \textit{subgraph density convergence}.

Specifically, for $h \in \CW$ and $F$ a finite simple graph with $m$ vertices and edge set $E(F)$, define
\begin{equation}
\label{eq:homdensity}
t(F,h) = \int_{[0,1]^m}  \dd x_1\ldots \dd x_m \prod_{\{i,j\} \in E(F)} h(x_i,x_j).
\end{equation}
Then, the \emph{homomorphism density} (= subgraph density) of $F$ in $G$ equals $t(F,h^G)$, where $h^G$ is the empirical function defined in \eqref{graphondef}. Note that $t(F,h)$ is the same for all representatives $h$ in the equivalence class $\tilde{h}$. Any graphon $\tilde{h}\in\wt{W}$ is uniquely determined by its subgraph densities, and can be approximated by empirical graphons. The important mathematical fact is that convergence in $(\wt\CW,\delta_{\square})$ is \emph{equivalent} to convergence of all the subgraph densities (see Fig.~\ref{fig:graphon-convergence}) so that this space \emph{equals} the compactification of the subspace of empirical graphons equipped with the topology introduced by subgraph counts on $\wt \CW$. For a more detailed description of the structure of the space $(\wt\CW,\delta_{\square})$ we refer the reader to \cite{BCLSV08,BCLSV12,DGKR15}.

Graphons are suited to describe limits of \emph{dense} graphs, for which the number of edges is of the order of the square of the number of vertices. For \emph{non-dense} graphs, we get a single equivalence class of constant zero graphons only, which is not particularly informative, and a multi-scale analysis is required, leading to new objects, for example, a \emph{graphex} (see, e.g., \cite{VeitchRoy2015}).

\begin{figure}[htbp]
\centering
\includegraphics[scale=0.4]{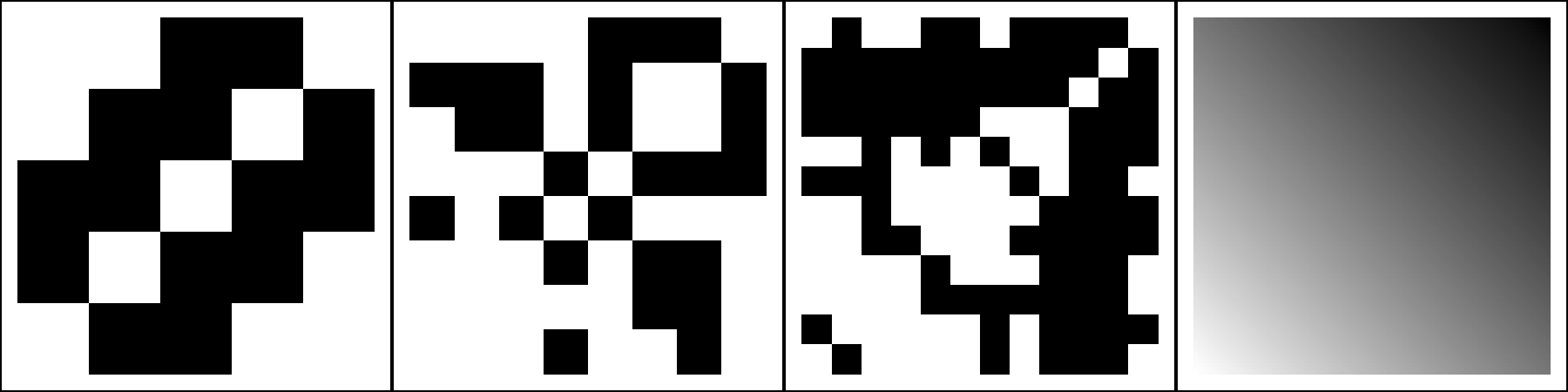}
\caption{\small Convergence of (black and white) empirical graphons to a (grayscale) limiting graphon as $n \to \infty$. From left to right $n = 5, 8, 12, \infty$.}
\label{fig:graphon-convergence}
\end{figure}


\paragraph*{Graphon-valued stochastic processes.}

The first attempts to construct \emph{dynamics} on the space $\wt \CW$ of graphons were made in \cite{Crane2016, Crane2017}, using the Aldous-Hoover theory for exchangeable infinite arrays. This work led to $\wt \CW$-valued Markov processes with c\`adl\`ag paths of locally bounded variation, i.e., a mixture of a stochastic pure-jump process and a deterministic flow. A further generalisation of this construction, beyond just zero-one-valued infinite adjacency matrices, was obtained in \cite{CK18}. Specifically, when it comes to agent-based modelling, the agents typically interact in a way that depends on the current interaction strength between them, or on other features that can be associated with an edge in the graph linking them. It is natural to allow not only for discrete edge features, such as interaction versus no interaction, but also for continuous edge features. In fact, the latter makes the theory of Markovian infinite exchangeable arrays more transparent. Finally, \cite{CK18} highlights the fact that finite restrictions of infinite exchangeable Markovian arrays are not necessarily Markov processes, unless the infinite array has the Feller property, see \cite[Section~4.1 and Theorem~6.3]{CK18}.

Subsequently, building on models from population dynamics, in \cite{AHR21} \emph{diffusion-like} graphon-valued processes were constructed. Specifically, stochastic processes with continuous paths describing the non-trivial \textit{evolution of subgraph densities} were constructed. The dynamics of subgraph densities were driven by diffusion processes from population genetics. Even though \cite{AHR21} provided a large class of examples, intended as a \emph{proof of concept}, no general theory was developed and consequently many structural questions remained open. For instance, it was not clear whether or not the examples constructed in \cite{AHR21} are strong graphon-valued Markov processes that can be described by a generator acting on a dense class of test functions on the graphon space.  Moreover, all the examples had \emph{trivial} equilibria concentrated on constant graphons.

Further steps were taken in \cite{Bra20,Bra22}, where \emph{sample-path large deviation principles} were derived for dynamic Erd\H{o}s-R\'enyi  random graphs (generalising the static large deviation principle in \cite{CV11}; see also \cite{C15}), and for random graphs in which each vertex has a type that fluctuates randomly over time, in such a way that \emph{collectively} the paths of the edges and the vertex types up to a given time determine the probabilities that the edges are present or absent at that time.

In the present paper, we take a different route towards building a limiting dynamics, based on a new notion we call \emph{grapheme}, which views a countable graph as an object embedded in a measure space built from a \emph{Polish space} (and therefore also looks at finite embedded graphs). This embedding is necessary to build a \emph{stochastic process} of graphemes via \emph{martingale problems}. Like in \cite{AHR21}, we exploit the machinery developed for models from population dynamics, but we add the idea to also exploit explicitly the \emph{tree structure} behind the \emph{genealogy} of populations, which we review as we go along and which is both natural and mathematically convenient, as we will see. This approach allows for storing information on the history of the \emph{edge structure in the current state}, and hence for studying the \emph{space-time path process} of countable graphs.


\section{Connection to the literature}
\label{s.connections}

We next address the question how Theorems~\ref{th.963-I} and \ref{th.963-II} and Corollary~\ref{cor.932} relate to earlier work in the \emph{graphon literature}.


\subsection{Graphons}

\begin{remark}[Connection to graphons and~\cite{AHR21}]
\label{r.936}
$\mbox{}$\\
{\rm
(1) For each $t >0$ we can represent the state in $\G^{\langle\rangle}$ as a partition of $[0,1]$, namely, the interval lengths under the law $\mu_t$ of the completely connected components ordered by size (up to equivalence). For $c=0$ and for positive times, these evolve like a multi-dimensional diffusion. However, for $c>0$, any attempt to get this as an $\ell_1(\N)$-valued diffusion fails due to the fact that there are \emph{countably} many intervals, with countably many intervals either disappearing or being newly inserted during every positive time interval.\\
(2) The above embedding in $[0,1]$ can be used to represent the induced $\G^{\{\}}$-valued process via the space ($[0,1]$, Euclidean distance, uniform distribution). However, this does \emph{not} work for the $\G^{[]}$-process, because the embedding is not necessarily isometric.\\
(3) From the representation on $[0,1]$ we can obtain the $\wh \CW$-valued process of \emph{graphons} as a \emph{functional} of $(\CG_t^{a,c,\theta})_{t \geq 0}$, written
\begin{equation}\label{e1030}
(\CG_t^{\rm graph, (a,c,\theta)})_{t \geq 0},
\end{equation}
as a process with continuous paths and with a stationary distribution, arising as the limit of finite-grapheme processes. This process on $\wh \CW$ does \emph{not} arise as the solution of a standard well-posed martingale problem with the strong Markov property and the Feller property and with continuous paths. Whether or not such a martingale problem exists is questionable for $c >0$, while for $c=0$ existence can be shown with the help of the so-called Petrov equations (see~\cite{Pet09})\footnote{The Petrov equations are a countable system of coupled partial differential equations (PDEs) that describe the evolution of the \textit{ranked} frequencies $p_1(t) \geq p_2(t) \geq \ldots$ of types with $\sum_i p_i = 1$ in a multi-type Fleming-Viot diffusion \textit{without} mutation or immigration/emigration, i.e., the $c=0$ case): $\partial_t p_i(t) = \frac{1}{2} \left[ \sum_{j=1}^\infty p_j(t) \left( \partial^2_{p_i p_j} - \partial^2_{\partial p_i^2} \right) - \partial_{p_i} \right] \left(\sum_{k=1}^\infty p_k(t)^2 \right)$, $i \in \N$.}, establishing the existence of a process of interval partitions.

On a subspace of $\wh \CW$, for the graphons that are in $\G^{\langle\rangle}_{\rm comp}$, this can be done (see also~\cite{GdHKW23}), but it cannot be extended to $\wh \CW$ in the standard form, as we argued above.\\
(4) For $c=0$, the subclass of those $\G^{[]}$-valued processes corresponding to Fisher-Wright diffusions with immigration and emigration contains the processes given in~\cite{AHR21}.} \hfill$\spadesuit$
\end{remark}

\begin{remark}[Dynamics of subgraph-counts for $\G^{\langle\rangle}$-valued process]
\label{r.948}
$\mbox{}$
{\rm Even though the $\G^{\langle\rangle}$-valued process $(\CG_t^{a,c,\theta})_{t \geq 0}$ is uniquely determined by the equations following from the martingale problem, we can only conclude that it is a $\CG^{\langle\rangle}$-valued process because it arises as a functional of $(\CG_t^{a,c,\theta})_{t \geq 0}$. Indeed, this guarantees that we have a \emph{$\langle\rangle$-grapheme}, i.e., there exists an embedding of the countable graph in some Polish space along the \emph{whole path}. The question whether or not we could obtain the latter from properties of the subgraph-counts as paths, without reference to the $\G^{[]}$-valued process, has to be based on approximation and duality, and on an entropy criterion (see~\cite{GdHKW23}).} \hfill$\spadesuit$
\end{remark}


\subsection{Relation to the graphon literature}
\label{sss.discus}

Since in~\cite{AHR21} the graphon dynamics was based on the Fisher-Wright dynamics, which is closely related to our Fleming-Viot dynamics, it is worthwhile to connect the statements in \cite{AHR21} to what we obtained via graphemes. In our framework the parallel process arises if initially in our Fleming-Viot dynamic $(\CG^{d,0,0})_{t \geq 0}$ we assign inheritable types $0$ or $1$ to our types and connect all the completely connected components carrying $1$ with edges to all the other vertices, so that a random number of completely connected components are \emph{merged} into a single component.

\begin{remark}[Connection matrices and density representation]
\label{r.1197}
$\mbox{}$\\
{\rm
(1) The choice of embedding in a Polish space suggested by the literature on graphons is obtained by putting $(\CI,\tau,\mu) = ([0,1],d,\mu)$ with $d$ the Euclidean distance and $\mu$ the uniform distribution. A measurable function $H$ can be read off in $\CW$, the space of $h$-functions defined in Section~\ref{ss.graphons}. Namely, we can identify a graphon, which is an element of the space $\wh\CW$ of equivalences classes of $h$-functions, with the property that there exists a measure $\Gamma$ on $[0,1]$ such that, for a symmetric $[0,1]$-valued function $H$ (representing the graphon function),
\begin{equation}
\label{e706}
\Phi \left(\left[\CI^*,h,\mu \right]\right) = \int_{[0,1]^m} \Gamma^{\otimes m} (\d \underline{x})
\prod_{i,j \in [m],\,i \neq j} H(x_i,x_j),
\end{equation}
i.e., $\Phi$ is the `all connected monomial'. This object relates to our concept of grapheme $[\CI^*,h,\mu]$ as follows. If the unique $H$ is \emph{not} $\{0,1\}$-valued, then ($[0,1]$, Borel, $H,\Gamma$) does \emph{not} relate to a grapheme in $\G^{\langle\rangle}$, so graphons arising from graphemes can converge to a limit graphon without the graphemes converging themselves, since there is no space in which a \emph{countable graph} can be realized having the limiting subgraph counts.\\
(2) Conversely, we obtain graphons as \emph{functionals} of graphemes. Indeed, for the cases treated in this paper we can consider the measure $(\mu_t)_{t \geq 0}$, take the weights of the completely connected components, giving the vector $(\bar \mu_t(i))_{i \in \N}$, size-order this vector, place it on the interval $[0,1]$, and follow the separation points in time. If  (and only if) $c=0$, then for the Fleming-Viot evolution rule this gives paths of embedded \emph{nested multi-type Fisher-Wright diffusions}, whose number is finite and random, solving the Petrov equations~\cite{Pet09}. This specifies a process of graphemes based on $(([0,1],d),h_t,\mu_t)_{t \geq 0}$, with $d$ the Euclidean metric, where we can read off $(h_t)_{t \geq 0}$ from the diffusions, as well as a process of graphons with values in $\wh \CW$. In this way we can formulate another martingale problem, which produces a grapheme dynamics with the same marginal distributions as our $\G^{\langle\rangle}$-valued and $\G^{\{\}}$-valued dynamics, but \emph{different} marginals on $\G^{[]}$. Nevertheless, in both cases strong Markov processes result.\\
(3) If $c>0$, then we get a \emph{countable} set of partitions, we \emph{cannot} follow the simple procedure of (b) above because there are countably many reshufflings occurring to restall the ranking, and we do \emph{not} get a nice process of graphemes in $\G^{[]}$ based on $(([0,1],d),\wt \mu_t)_{t \geq 0}$, as we explain below. We have to bring in our genealogical structure to resolve the open issues, as demonstrated in our theorems.
}
\hfill$\spadesuit$
\end{remark}

Alternatively, we could connect our grapheme diffusion to the graphon diffusion constructed in~\cite{AHR21}, by noting that we can view $H_t$ in \eqref{e706} as densities giving $\mu_t^{\otimes m}$ when $\mu$ is the Lebesgue measure on $[0,1]$. Then $H_t$ generates the sampling of vertices and hence of edges, and thus produces a \emph{connection matrix $h_t$} that is based on the embedding in $[0,1]$. However, the evolution of the resulting $h_t$ is \emph{not} interpretable in terms of the underlying population dynamics. The function $H_t$ is a certain \emph{average} over the state $h_t$, giving the connections \emph{based on the Euclidean metric}. If we would take the ultrametric induced on a subset of $[0,1]$ and choose the same embedding of the finite graphs in our construction, then we would get different elements in $\G^{\{\}}$ and $\G^{[]}$, but the same element in $\G^{\langle\rangle}$. To get a grapheme dynamics in $\G^{[]}$ or $\G^{\{\}}$ we would need to prove that, for the embedding in~\cite{AHR21}, the convergence takes place in the sense of $\G^{[]}$ and $\G^{\{\}}$. This is difficult because the spaces for embedding are \emph{not} well adapted to the dynamics.

In \cite{AHR21}, graphons in $\wh \CW$ describe the continuum evolution as the evolution $(\mu_t)_{t \geq 0}$, which defines a $\G^{[]}$-valued dynamics, and is only necessary due to the choice $\CI^\ast=([0,1],\CB_{[0,1]})$ for all $t$, which is deterministic. Both this dynamics and our dynamics have the same $\G^{\langle\rangle}$-valued functional. The deterministic choice of $\CI^\ast$ restricts the possibilities of getting a manageable Markovian dynamics. In our approach we choose to work with graphemes instead of a graphons, getting proper evolutions of countable graphs.

Another way to get a connection with the strategy in~\cite{AHR21} is via the embedding constructed on~\cite[page 47]{AHR21} on $[0,1]^2$ in Euclidean space. Getting an analogue for our embedding in $(\CU_t)_{t \geq 0}$ works whenever we have a \emph{skeleton} and an \emph{ultrametric $r$ on $\CI$} such that we can use small balls to cover the edges, count these small balls, and set the normalised number equal to the value of $H_n$ of the size-$n$ graph approximation of the grapheme. The limiting $H$ we get in this way depends on the choice of $(\CI,r)$ due to the implicit local averaging, and the same is true for $\wh\CW$ with the Euclidean distance in $[0,1]^2$. The $H$ thus constructed acts as a graphon and is a block function in our examples, except the one with insertion and deletion, where $H \equiv 1$ with "frequency" $\frac{a^+}{a^+ + a^-}$ on a sequence of blocks, and $H \equiv 0$ on the complement of these blocks, where the block masses are fluctuating in time, which is different from the Erd\H{o}s-R\'enyi grapheme. Furthermore, this $H$ is a \emph{functional} of our dynamics and is embedded in a \emph{richer} structure.


\bibliographystyle{alpha}
\bibliography{graphon-large-paper}

\newcommand{\etalchar}[1]{$^{#1}$}
\begin{thebibliography}{GdHKW24}

\bibitem[AdHR21]{AHR21}
S.~Athreya, F.~den Hollander, and A.~Röllin.
\newblock Graphon-valued stochastic processes from population genetics.
\newblock {\em Ann. Appl. Probab.}, 31(4):1724--1745, 2021.

\bibitem[Ald81]{Aldous1981}
D.J. Aldous.
\newblock Representations for partially exchangeable arrays of random
  variables.
\newblock {\em J. Multivar. Anal.}, 11(4):581--598, 1981.

\bibitem[BCL{\etalchar{+}}08]{BCLSV08}
C.~Borgs, J.T. Chayes, L.~Lov\'asz, V.T. S\'os, and K.~Vesztergombi.
\newblock Convergent graph sequences {I}: {S}ubgraph frequencies, metric
  properties, and testing.
\newblock {\em Adv.\ Math.}, 219:1801--1851, 2008.

\bibitem[BCL{\etalchar{+}}12]{BCLSV12}
C.~Borgs, J.T. Chayes, L.~Lov\'asz, V.T. S\'os, and K.~Vesztergombi.
\newblock Convergent sequences of dense graphs {II}: {M}ultiway cuts and
  statistical physics.
\newblock {\em Ann.\ Math.}, 176:151--219, 2012.

\bibitem[BdHM22]{Bra22}
P.~Braunsteins, F.~den Hollander, and M.~Mandjes.
\newblock Graphon-valued processes with vertex-level fluctuations.
\newblock Available at arXiv:2209.01544[math.PR] (to appear in Stoch.\ Proc.\
  Appl.), 2022.

\bibitem[BdHM23]{Bra20}
P.~Braunsteins, F.~den Hollander, and M.~Mandjes.
\newblock A sample-path large deviation principle for dynamic
  {E}rd\"{o}s-{R}\'{e}nyi random graphs.
\newblock {\em Ann. Appl. Probab.}, 33:3278--3320, 2023.

\bibitem[Cha15]{C15}
S.~Chatterjee.
\newblock {\em Large Deviations for Random Graphs}.
\newblock \'Ecole d'\'Et\'e de Probabilit\'es de Saint-Flour XLV. Springer,
  2015.

\bibitem[CK18]{CK18}
J.~Cern\'y and A.~Klimovsky.
\newblock Markovian dynamics of exchangeable arrays.
\newblock In {\em Genealogies of Interacting Particle Systems, Lecture Notes
  Series 38, Institute for Mathematical Sciences, National University of
  Singapore}. World Scientific, Singapore, 2018.

\bibitem[Cra16]{Crane2016}
H.~Crane.
\newblock Dynamic random networks and their graph limits.
\newblock {\em Ann. Appl. Probab.}, 26:691--721, 2016.

\bibitem[Cra17]{Crane2017}
H.~Crane.
\newblock Exchangeable graph-valued {F}eller processes.
\newblock {\em Probab. Theory Relat. Fields}, 168:849--899, 2017.

\bibitem[CV11]{CV11}
S.~Chatterjee and S.R.S. Varadhan.
\newblock The large deviation principle for the erd\"os-r\'enyi random graph.
\newblock {\em European J. Comb.}, 32:1000--1017, 2011.

\bibitem[Daw93]{Daw93}
D.A. Dawson.
\newblock {\em Measure-valued {M}arkov processes}, volume 1541 of {\em Lecture
  Notes in Mathematics}.
\newblock Springer, 1993.
\newblock Ecole d'\'Et\'e de Probabilit\'es de Saint-Flour XXI---1991.

\bibitem[DG96]{DG96}
D.A. Dawson and A.~Greven.
\newblock Multiple space-time scale analysis for interacting branching models.
\newblock {\em Electron. J. Probab.}, 1(14):1--84, 1996.

\bibitem[DG14]{DGsel14}
D.A. Dawson and A.~Greven.
\newblock {\em Spatial {F}leming-{V}iot models with selection and mutation},
  volume 2092 of {\em Lecture Notes in Math.}
\newblock Springer Verlag, 2014.

\bibitem[DG19]{DG19}
A.~Depperschmidt and A.~Greven.
\newblock Genealogy-valued {F}eller diffusion.
\newblock Available at arXiv:1904.02044[math.PR], 2019.

\bibitem[DG20]{DG20}
A.~Depperschmidt and A.~Greven.
\newblock {\em {\rm Stochastic evolution of genealogies of spatial populations:
  State description, characterization of dynamics and properties}}, pages
  39--79.
\newblock In: {\it Genealogies of Interacting Particle systems}, World
  Scientific, Singapore, 2020.

\bibitem[DGKR15]{DGKR15}
P.~Diao, D.~Guillot, A.~Khare, and B.~Rajaratnam.
\newblock Differential calculus on graphon space.
\newblock {\em J. Combin. Theory Ser. A}, 133:183--227, 2015.

\bibitem[DGP11]{DepperschmidtGrevenPfaffelhuber2011}
A.~Depperschmidt, A.~Greven, and P.~Pfaffelhuber.
\newblock Marked metric measure spaces.
\newblock {\em Electron. Commun. Probab.}, 16:174--188, 2011.

\bibitem[DGP12]{DGP12}
A.~Depperschmidt, A.~Greven, and P.~Pfaffelhuber.
\newblock Tree-valued {F}leming-{V}iot dynamics with mutation and selection.
\newblock {\em Ann. Appl. Probab.}, 22(6):2560--2615, 2012.

\bibitem[DGP23a]{depperschmidt2023duality}
A.~Depperschmidt, A.~Greven, and P.~Pfaffelhuber.
\newblock Duality and the well-posedness of a martingale problem.
\newblock Available at arXiv:1904.01564[math.PR], 2023.

\bibitem[DGP23b]{DGP23}
A.~Depperschmidt, A.~Greven, and P.~Pfaffelhuber.
\newblock Evolving genealogies of interacting {F}leming-{V}iot processes
  including recombination.
\newblock preprint, 2023.

\bibitem[DGV95]{DGV95}
D.A. Dawson, A.~Greven, and J.~Vaillancourt.
\newblock Equilibria and quasi-equilibria for infinite collections of
  interacting {F}leming-{V}iot processes.
\newblock {\em Trans. Amer. Math. Soc.}, 347:2277--2360, 1995.

\bibitem[EK86]{EK86}
S.N. Ethier and T.~Kurtz.
\newblock {\em Markov {P}rocesses. {C}haracterization and {C}onvergence}.
\newblock John Wiley, New York, 1986.
\newblock Paperback reprint in 2005.

\bibitem[GdH23]{greven2022spatial}
A.~Greven and F.~den Hollander.
\newblock Spatial populations with seed-bank: finite-systems scheme.
\newblock {\em Electron. J. Probab.}, 28:1--66, paper no. 84, 2023.

\bibitem[GdHKW24]{GdHKW23}
A.~Greven, F.~den Hollander, A.~Klimovsky, and A.~Winter.
\newblock The grapheme-valued {W}right-{F}isher diffusion with mutation.
\newblock {\em Theoretical Population Biology}, 158:76--88, 2024.

\bibitem[GdHO22]{greven2020spatial}
A.~Greven, F.~den Hollander, and M.~Oomen.
\newblock Spatial populations with seed-bank: well-posedness, duality and
  equilibrium.
\newblock {\em Electron. J. Probab.}, 27:1--88, paper no. 18, 2022.

\bibitem[GdHO25]{greven2021spatial}
A.~Greven, F.~den Hollander, and M.~Oomen.
\newblock Spatial populations with seed-bank: Renormalisation on the
  hierarchical group.
\newblock {\em Memoirs Amer. Math. Soc.}, 307(1553):1--207, 2025.

\bibitem[GKW23]{Lancestral}
A.~Greven, A.~Klimovsky, and A.~Winter.
\newblock Ancestral web and measure-valued path representations.
\newblock 2023.

\bibitem[GPW09]{GrevenPfaffelhuberWinter2009}
A.~Greven, P.~Pfaffelhuber, and A.~Winter.
\newblock Convergence in distribution of random metric measure spaces
  ({$\Lambda$}-coalescent measure tree).
\newblock {\em Probab. Theory Relat. Fields}, 145(1):285--322, 2009.

\bibitem[GPW13]{GrevenPfaffelhuberWinter2013}
A.~Greven, P.~Pfaffelhuber, and A.~Winter.
\newblock Tree-valued resampling dynamics: martingale problems and
  applications.
\newblock {\em Probab. Theory Relat. Fields}, 155(3--4):789--838, 2013.

\bibitem[GRG21]{ggr_GeneralBranching}
A.~Greven, T.~Rippl, and P.~Glöde.
\newblock Branching processes -- a general concept.
\newblock {\em ALEA}, 18:635--706, 2021.

\bibitem[GSW16]{GSW2016}
A.~Greven, R.~Sun, and A.~Winter.
\newblock Continuum space limit of the genealogies of interacting
  {F}leming-{V}iot processes on {$\Bbb Z$}.
\newblock {\em Electron. J. Probab.}, 21:Paper No. 58, 64, 2016.

\bibitem[HN13]{hatami2012entropy}
H.~Hatami and S.~Norine.
\newblock The entropy of random-free graphons and properties.
\newblock {\em Combin. Probab. Comput.}, 22(4):517--526, 2013.

\bibitem[Jan13]{Jan13}
S.~Janson.
\newblock Graphons, cut norm and distance, couplings and rearrangements.
\newblock {\em NYJM Monographs}, 2013.

\bibitem[LS06]{LovaszSzegedy2006graphon}
L.~Lov\'asz and B.~Szegedy.
\newblock Limits of dense graph sequences.
\newblock {\em J. Combin. Theory Ser. B}, 96(6):933--957, 2006.

\bibitem[LS07]{LS07}
L.~Lovasz and B.~Szegedy.
\newblock Szemeredi’s lemma for the analyst.
\newblock {\em Geometric and Functional Analysis}, 1(17):252--270, 2007.

\bibitem[LW21]{LW21}
W.~L\"{o}hr and A.~Winter.
\newblock Spaces of algebraic measure trees and triangulations of the circle.
\newblock {\em Bull. Soc. Math. France}, 149(1):55--117, 2021.

\bibitem[Pet09]{Pet09}
L.A. Petrov.
\newblock A two-parameter family of infinite-dimensional diffusions on the
  kingman simplex.
\newblock {\em Funct.Anal.Appl.}, 43:279--296, 2009.

\bibitem[VR15]{VeitchRoy2015}
V.~Veitch and D.M. Roy.
\newblock The class of random graphs arising from exchangeable random measures.
\newblock Available at arXiv:1512.03099[math.PR], 2015.

\end{thebibliography}

\end{document}